\documentclass{article}
\usepackage{amsmath,amssymb,amsthm,mathtools}
\usepackage{cite}
\usepackage{tikz,tikz-cd}
\usepackage{enumitem}
\setlist[enumerate]{label={\rm(\alph*)},ref={\rm(\alph*)}}
\usepackage[unicode]{hyperref}
\hypersetup{
 pdftitle={Orientations for non-compact Calabi-Yau 4-folds},
 pdfauthor={Ivan Karpov and Felix Thimm}
}
\theoremstyle{plain}
\newtheorem{theorem}{Theorem}[section]
\newtheorem{lemma}[theorem]{Lemma}
\newtheorem{proposition}[theorem]{Proposition}
\newtheorem{corollary}[theorem]{Corollary}
\theoremstyle{definition}
\newtheorem{definition}[theorem]{Definition}
\newtheorem{remark}[theorem]{Remark}
\numberwithin{equation}{section}
\title{GW/PT-correspondence for CY $4$-folds-I:\\Canonical Orientations}
\author{Ivan Karpov}
\date{}
\begin{document}
\maketitle
\begin{flushright}
\textit{To Misha Finkelberg with admiration}\hspace*{3em}
\begin{abstract}
    We build a cohesive topological theory of orientations on moduli stacks for non-compact complex-analytic Calabi-Yau $4$-folds by extending and generalizing the work of Joyce and Upmeier, and the work of Bojko.

    As a result, we get, in particular, canonical family orientations for all moduli spaces of Pandharipande-Thomas pairs.

    Our proofs use the algebraic topology of mapping spaces representing $KU$- and $KO$- theory, as well as some properties of the Lie groups $E_8$ and, surprisingly, $\operatorname{HSpin}(16)$.

    This is part of a joint project with Felix Thimm dedicated to the proof of the $GW/PT$-correspondence for Calabi-Yau $4$-folds by implementing Pardon's programme in dimension $4$.
\end{abstract}
\end{flushright}
\setcounter{tocdepth}{2}
\tableofcontents

\section{Introduction}
\label{sec:introduction}

\subsection{Orientations in Algebraic Geometry}
\label{intro:orientations}

\subsubsection{}
Many questions in algebraic geometry concern counting geometric objects. For example, one may wish to count
curves in a complex space in a certain homology class. The purpose of
such a count is usually to obtain numbers which remain unchanged when
the space is deformed: for example, various versions of \textit{Donaldson-Thomas invariants}.

In many geometric counting problems these contributions have signs. An
\emph{orientation} is part of the data which determine them: the
simplest example is the choice of a positive direction on a line. 

It is also useful to collect the geometric objects under consideration into a
space whose points represent those objects, called a \emph{moduli space}.
An orientation problem then asks whether the necessary choices of signs
can be made consistently over this whole space, and whether this can be done canonically.

We are interested here in orientations needed to count algebraic objects on
Calabi--Yau fourfolds. These are complex spaces of dimension four with
additional geometric structure. The constructions of Borisov and Joyce
\cite{BorisovJoyce2017}, and of Oh and Thomas \cite{OhThomas2023}, attach
counts to suitable moduli spaces on these fourfolds, once orientations
have been chosen. 

Joyce and Upmeier \cite{JU} developed a general method for studying these
questions using topology. Under a condition on the topology of the
fourfold, they prove that orientations exist for counting problems on \textit{compact} Calabi-Yau $4$-folds, that they are invariant under suitable bordisms, and that in certain cases they are canonical.

Our main subject in the present note is the extension of their theory to the \textit{non-compact} case, and, in particular, providing relevant canonical orientations. Being arguably interesting on its own, this extension is needed even if one's original interest is in compact
fourfolds. 

For example, a holomorphic curve in a Calabi-Yau $4$-fold has small open neighbourhoods which are often quite manageable but usually not
compact. Many arguments in curve counting simplify significantly if one can compare the original problem
with a problem on such a neighbourhood. 

We show in the present work that this comparison can be carried out for the orientation data. This is known as the \textit{excision} principle, and turns out to be a suitable framework for carrying out many applications: for example, to Pandharipande-Thomas theory.

The main motivation for this is the use of this machinery in the upcoming
joint work with Felix Thimm which will prove the analogue of Pardon’s theorem
(see \cite{Pardon2023}) for Calabi-Yau 4-folds.

\subsubsection{}
Arguably, the simplest  Calabi-Yau $4$-folds are those of the form $\operatorname{Tot}_Y(K_Y)$ where $Y$ is a $3$-fold. For these local
fourfolds there is an elementary and explicit construction of an orientation, analogous
to a construction discussed by Shi \cite{Shi}. Below we compare this explicit
orientation with the canonical one obtained by our general method. 
\subsubsection{}
We also prove that the spaces
of stable pairs used in the theory of Pandharipande and Thomas
\cite{PandharipandeThomas2010} have preferred orientations.
(We recall that a stable pair is a way of recording a holomorphic curve together with additional
algebraic data; its precise definition is explained below.)

Our construction
uses the topology of a neighbourhood of the curve and the excision property discussed above.

Notably, the canonical orientations of Joyce-Upmeier are defined \textit{only for Calabi-Yau varieties which satisfy a certain topological condition}. However, our canonical orientations for PT-moduli spaces exist \textit{always}: the reason is the excision property.

We also study what happens when the $4$-fold varies, and provide canonical orientations in families as well.
\subsubsection{}
Finally, we prove that \textit{orientations do not always exist}. We construct an explicit smooth
noncompact Calabi--Yau fourfold and a family of objects on it for which
the compactly supported moduli
space is  not orientable showing, thus, that Joyce-Upmeier's topological
condition is non-empty. As far as we understand, this is the first such example in the literature.

All our results also have a complex-analytic (that is, non-algebraic) versions which we prove as well.

\subsection{Main results}
\label{intro:main-results}

 We start with a brief formal recollection of the orientation data for Calabi-Yau $4$-folds.

 Let $X$ be a smooth complex Calabi--Yau fourfold, with a chosen
trivialisation of its canonical bundle, and let
$\mathcal M=\mathcal M_X^{\mathrm{cs}}$ be the derived moduli stack of
perfect complexes with compact support on $X$. Write $\mathcal E$ for the
universal complex on $\mathcal M\times X$, and $\pi$ for the projection
to $\mathcal M$. The complex
\[
\mathcal A=\mathbf R\pi_*\mathbf R\mathcal Hom(\mathcal E,\mathcal E)
\]
has fibre $\mathbf R\operatorname{Hom}_X(E,E)$ at an object $E$.
The tangent and cotangent complexes of $\mathcal M$ are
\[
\mathbb T_{\mathcal M}\simeq\mathcal A[1],\qquad
\mathbb L_{\mathcal M}\simeq\mathcal A^\vee[-1]
\simeq\mathcal A[3].
\]
Here the last isomorphism follows from Serre duality, which gives
$\mathcal A^\vee\simeq\mathcal A[4]$. At $E$, this duality pairs
$\operatorname{Ext}^i_X(E,E)$ with
$\operatorname{Ext}^{4-i}_X(E,E)$; in particular, it gives a symmetric
pairing on $\operatorname{Ext}^2_X(E,E)$.

Taking determinants, we obtain a line bundle
$K_{\mathcal M}=\det\mathbb L_{\mathcal M}\cong\det\mathcal A$.
Serre duality equips this line bundle with a nondegenerate quadratic form,
written as an isomorphism
\[
Q_{\mathcal M}:K_{\mathcal M}^{\otimes2}
\xrightarrow{\ \cong\ }\mathcal O_{\mathcal M}.
\]
An \emph{orientation} is a trivialisation
$o:K_{\mathcal M}\xrightarrow{\cong}\mathcal O_{\mathcal M}$ satisfying
$o^{\otimes2}=Q_{\mathcal M}$; see
Joyce and Upmeier~\cite[Definition~13.2]{JU}.
There are two such choices locally,
differing by a sign. They form a principal $\mathbb Z_2$ bundle
$O_{\mathcal M}^{\omega}$, called the \emph{orientation bundle}.
An orientation is a global section of this bundle (which may, or may not, exist).

There is an analogous topological construction, using the \textit{categorified index} in
complex $K$-theory to define an orientation bundle over the space
representing $K$-theory with compact support on $X$, see
Subsection~\ref{sec:algebraic-topological-orientations}.

\subsubsection{Non-compact fourfolds}
\label{intro:noncompact}

Very roughly speaking, the method of Joyce and Upmeier \cite{JU} associates orientation data to compact
spin-manifolds equipped with bundles, and compares these data along bordisms. This helps them to consider orientations as functors from \textit{bordism categories} using certain analytic machinery, and subsequently reduce the problem to the topological one.

In our paper, the manifolds can be non-compact, but bundles are trivial outside a compact set. We formalise this using what can be described as \textit{relative} (or \textit{compactly-supported})
bordism categories.

The main change from the compact theory is in the construction of orientation
functors. In order to perform it, we use pushforwards in real and complex $K$-theory in place of
indices of differential operators used by Joyce and Upmeier. The comparison with algebraic
orientations is then proved in \S\ref{sec:comparison-without-quasi-projectivity}
and \S\ref{compactly-supported-comparison}. The argument for complex
analytic spaces is given in Appendix~\ref{sec:appendix-analytic-comparison}.

Once this comparison is established, the compactly-supported analogues of bordism calculations of Joyce
and Upmeier give the following result.

\begin{theorem}\label{intro:main-orientation-theorem}
Let $X$ be a smooth complex algebraic Calabi--Yau fourfold, not necessarily
proper, with a chosen trivialization of $K_X$. Write
$\mathcal M_X^{\rm cs}$ for the derived moduli stack of perfect complexes
with compact support, and $O^\omega_{\mathcal M_X^{\rm cs}}$ for its
orientation $\mathbb Z_2$-bundle. Suppose that
\begin{equation}\label{intro:steenrod-condition}
 \int_X\bar\alpha\smile\operatorname{Sq}_c^2(\bar\alpha)=0
 \quad\text{in }\mathbb Z_2
 \quad\text{for every }\alpha\in H_c^3(X;\mathbb Z),
\end{equation}
where $\bar\alpha$ is reduction modulo $2$ and
$\operatorname{Sq}_c^2$ is the Steenrod square on cohomology with compact
support. Then:

\begin{enumerate}
\item The bundle $O^\omega_{\mathcal M_X^{\rm cs}}$ is trivializable.
A flag structure on $X$ natural at zero, understood with compact supports
when $X$ is noncompact, determines such a trivialization. For the definition of flag structures, see \S\ref{sec:canonical-orientations-and-flag-structures}.
\item For $\xi\in K_c^0(X)$, let $\mathcal M_{X,\xi}^{\rm cs}$ be the
substack on which the compactly supported topological $K$-theory class is $\xi$.
If
\begin{equation}\label{intro:kappa-condition}
 \kappa(\xi):=c_2(\xi)-c_1(\xi)^2=0
 \quad\text{in }H_c^4(X;\mathbb Z),
\end{equation}
then the induced orientation on $\mathcal M_{X,\xi}^{\rm cs}$ is
independent of the flag-structure natural at zero. It is therefore
canonical. These canonical orientations are compatible with excision
under open embeddings preserving the Calabi--Yau structures, whenever
the theorem applies to both source and target.
\end{enumerate}
The same conclusions hold for a complex analytic Calabi--Yau fourfold
and its moduli stack of analytic perfect complexes with compact support.
\end{theorem}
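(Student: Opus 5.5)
The proof has three stages. First, compare algebraic orientations with topological ones. Second, apply a compactly-supported version of the Joyce--Upmeier bordism computation. Third, check canonicity and excision.

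\emph{Stage one: reduction to topology.} Using the comparison of \S\ref{sec:comparison-without-quasi-projectivity} and \S\ref{compactly-supported-comparison}, together with Appendix~\ref{sec:appendix-analytic-comparison} in the analytic case, we identify $O^\omega_{\mathcal M_X^{\rm cs}}$ with the pullback, along the topological realisation map $\mathcal M_X^{\rm cs}\to\mathcal C_c(X)$, of a topological orientation bundle $O^{\rm top}\to\mathcal C_c(X)$. Here $\mathcal C_c(X)$ is the mapping space representing compactly supported $KU^0$. The bundle $O^{\rm top}$ is built from the categorified index, i.e.\ the $KO$/$KU$ pushforward of $\mathrm{End}(E)$ twisted by the spin structure coming from the trivialisation of $K_X$. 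It is therefore enough to trivialise $O^{\rm top}$ functorially in $X$, since any trivialisation pulls back.

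\emph{Stage two: trivialising $O^{\rm top}$.} The bundle $O^{\rm top}$ is multiplicative under direct sum, and it is natural under open embeddings, because the pushforwards are compatible with extension by zero. Its monodromy is therefore a homomorphism $\pi_1(\mathcal C_c(X))\cong K_c^{-1}(X)\to\mathbb Z_2$. Following the Joyce--Upmeier method, we compute this homomorphism by a relative bordism argument. A loop of compactly supported bundles gives a class on $X\times S^1$. The monodromy is then the mod $2$ index of a real skew operator, which we express as a $KO$-pushforward to $KO^{-1}(\mathrm{pt})=\mathbb Z_2$. The key step, and the one I expect to be the main obstacle, is identifying this pushforward with the characteristic class
\[
\alpha\mapsto\int_X\bar\alpha\smile\operatorname{Sq}_c^2\bar\alpha,
\]
where $\alpha$ is the $c_2$-type class in $H^3_c(X;\mathbb Z)$ attached to the loop. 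Off the diagonal the form is additive, so it suffices to compute the diagonal. My plan is to reduce to a universal computation: restrict to $\mathrm{SU}$ or $E_8$ bundles, for which compactly supported $K$-classes on $X\times S^1$ are controlled by $c_2$ through $BE_8\simeq K(\mathbb Z,4)$ in low degrees. Then compare with Joyce--Upmeier's compact computation by gluing into a compact spin $8$-manifold, using excision for $KO$-pushforwards. The appearance of $\operatorname{HSpin}(16)$ would enter through the real adjoint representation of $E_8$ when computing the $KO$-class of $\mathrm{End}$. Under \eqref{intro:steenrod-condition} the homomorphism vanishes, so $O^{\rm top}$ is trivialisable, which proves (a).

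\emph{Stage three: choosing and comparing trivialisations.} Trivialisations of $O^{\rm top}$ form a torsor over $\mathrm{Hom}(\pi_0\mathcal C_c(X),\mathbb Z_2)$, compatibly with the additive structure. A flag structure natural at zero, in compactly supported form, fixes this ambiguity exactly as in \cite{JU}. This gives the trivialisation claimed in (a). Two flag structures differ by a class in $H^4_c$-type data paired against $\kappa(\xi)$, and the sign change on $\mathcal M_{X,\xi}^{\rm cs}$ is $(-1)^{\langle\,\cdot\,,\kappa(\xi)\rangle}$. So if \eqref{intro:kappa-condition} holds, the orientation is independent of the choice, which is the canonicity statement in (b). Excision compatibility follows from the naturality in Stage two. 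Under an open embedding $U\hookrightarrow X$, extension by zero maps $\mathcal C_c(U)\to\mathcal C_c(X)$, and flag structures restrict. Since the canonical orientation does not depend on the flag structure, the orientations match. The analytic statement uses the same topological argument, replacing only the comparison in Stage one with the analytic comparison of Appendix~\ref{sec:appendix-analytic-comparison}.
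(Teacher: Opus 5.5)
\textbf{Where your outline works.} Stage one is the paper's. Your route to the first sentence of (a) is also viable, and is more elementary than the paper's route through existence of flag structures. Monodromy along a loop only involves a compact support, so you can double a compact neighbourhood of that support and quote Joyce--Upmeier's compact monodromy formula; the paper uses exactly this doubling in the proof of Proposition~\ref{nonorientable-example}. Under \eqref{intro:steenrod-condition} every monodromy then vanishes, so the bundle is trivializable. Two corrections to Stage two:
\begin{itemize}
\item The class to slant is $\kappa=c_2-c_1^2$, not $c_2$.
\item ``Restricting to $\operatorname{SU}$ or $E_8$ bundles'' is not how the reduction works, since a general $K$-class is not an $E_8$-bundle. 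Also, $\operatorname{HSpin}(16)$ plays no role in this theorem; it appears only in the local-curve comparison.
\end{itemize}

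\textbf{The gap is in Stage three.} A monodromy computation only shows that, on each component, $O^{\not D}_{\mathcal C_X^{\rm cs}}$ is abstractly isomorphic to the pullback along $\kappa$ of the cohomology orientation torsor. It gives no preferred isomorphism between them. So nothing in your argument lets a flag structure ``determine'' a trivialization, or controls how that trivialization changes when the flag structure changes. Your sign formula assumes the missing statement: that the orientation on the component of $\xi$ depends on the flag structure only through its value at $\kappa(\xi)$.

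The paper gets this from a fixed monoidal natural isomorphism
$\lambda^{\operatorname{U},{\rm c}}_{8,m}:\mathrm N_{8,\mathbb Z_2}^{\operatorname{Spin},\operatorname{U}(m),{\rm c}}\Rightarrow\mathrm H_8^{\mathbb Z,{\rm c}}\circ F_{B\operatorname{U}(m)}^{K(\mathbb Z,4),{\rm c}}$
on the relative bordism categories (Lemma~\ref{rescompc}, Proposition~\ref{BU-BSU-transition}). It is built as follows:
\begin{itemize}
\item Orientations are transported along the complex-type morphisms $\jmath_m:\operatorname{U}(m)\to\operatorname{SU}(m+1)$ and $\iota_8:\operatorname{SU}(8)\times\operatorname{U}(1)\to E_8$. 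The complementary representations are complex, hence canonically oriented (Lemma~\ref{comptype}). The step $\jmath_m$ is what turns $c_2$ into $c_2-c_1^2$.
\item These are composed with the $E_8$-to-$K(\mathbb Z,4)$ comparison $\lambda_8^{\rm c}$, with a fixed orientation convention.
\item The flag structure is then pulled back along $\kappa$ (Proposition~\ref{canonical-orientation-at-zero-class}).
\end{itemize}

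Doubling cannot replace this. A compactly supported flag structure on $X$ induces none on the double, which may even violate the Steenrod condition. Comparing $\mathrm H_8$-torsors on $X$ with those on the double already requires the relative bordism category and independence of the chosen neighbourhood.

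Three smaller inaccuracies:
\begin{itemize}
\item Two flag structures natural at zero differ by an arbitrary function $\epsilon:H^4_c(X;\mathbb Z)\setminus\{0\}\to\mathbb Z_2$, not by a pairing. The sign is $(-1)^{\epsilon(\kappa(\xi))}$. If the difference were a pairing, naturality at zero would be vacuous, whereas it is exactly what gives canonicity when $\kappa(\xi)=0$.
\item Trivializations form a torsor over all functions on $\pi_0$, not over $\operatorname{Hom}$.
\item Excision compatibility needs $\lambda^{\operatorname{U},{\rm c}}_{8,m}$ to commute with extension by zero. This holds only because $\lambda^{\operatorname{U},{\rm c}}_{8,m}$ is defined on the universal relative category; independence of the flag structure alone does not give it.
\end{itemize}
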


The theorem follows from
Proposition~\ref{relative-flag-existence-and-classification} and
Proposition~\ref{canonical-orientation-at-zero-class}, together with the
comparison results above. The analytic assertion uses
Theorem~\ref{thm:analytic-comparison}. 

\subsubsection{Local fourfolds}
\label{intro:local-fourfolds}

On the total space of the canonical bundle of a threefold, Serre duality gives
an explicit orientation. The construction is analogous to the orientation
data discussed by Shi \cite{Shi} and by Joyce and Upmeier
\cite[Example~4.11]{3-folds}, so we call it Shi's orientation. 

Our comparison
uses the compatibility with bordism to reduce the question to local
curves, and we do the calculation for those curves in \S\ref{S6C2}.

\begin{theorem}\label{intro:local-theorem}
\begin{enumerate}
\item Let $Y$ be a smooth complex algebraic threefold and
$p:X=\operatorname{Tot}_Y(K_Y)\to Y$. There is a canonical orientation
$o_{\rm Shi}$ on $\mathcal M_X^{\rm cs}$. More precisely, for the
universal complex $\mathcal E$, put
$\mathcal G=\mathbf R(\operatorname{id}\times p)_*\mathcal E$ and
$\mathcal V=\mathbf R\pi_*\mathbf R\mathcal Hom(\mathcal G,\mathcal G)$,
where $\pi:\mathcal M_X^{\rm cs}\times Y\to\mathcal M_X^{\rm cs}$.
The orientation is given by the canonical trivialization
\[
 K_{\mathcal M_X^{\rm cs}}
 \cong\det\mathcal V\otimes(\det\mathcal V)^\vee
 \cong\mathcal O_{\mathcal M_X^{\rm cs}}.
\]

The analogous
trivialization defines $o_{\rm Shi}$ on the topological $K$-theory space which we denote by 
$\mathcal C_{X}^{\rm cs}$.

\item Let $N$ be an almost  complex manifold of real dimension six,
let $K_N=\det_{\mathbb C}(T_N)^\vee$, and consider the Thom pair
$(Z_N,\partial Z_N)=(D(K_N),S(K_N))$ of the disc bundle and the corresponding sphere bundle. 

For
$\beta\in K^0(N)$, set
$\xi=\operatorname{Th}^{KU}_{K_N}(\beta)\in K^0(Z_N,\partial Z_N)$.
The topological version of \eqref{intro:steenrod-condition} holds on
$(Z_N,\partial Z_N)$. Moreover, if
$c_2(\xi)-c_1(\xi)^2=0$ in $H^4(Z_N,\partial Z_N;\mathbb Z)$, then
\[
 o_{\rm Shi}|_\xi=o_{\rm JU}|_\xi,
\]
where $o_{\rm JU}$ is the canonical orientation obtained from the
relative bordism construction. 

\item Let $C$ be a compact Riemann surface and $E\to C$ a holomorphic
vector bundle of rank three with an isomorphism $\det E\cong K_C$.
On the compactly-supported $K$-theory space $X=\operatorname{Tot}_C(E)$, both orientations are well-defined everywhere and agree.
\end{enumerate}
\end{theorem}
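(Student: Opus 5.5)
The plan handles the three parts in order, with (b) carrying the real content and (a), (c) largely formal.

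For part (a), I would build the trivialization directly from Grothendieck duality along $p$. Since $p$ is affine, $\mathcal G=\mathbf R(\mathrm{id}\times p)_*\mathcal E$ is a perfect complex on $\mathcal M_X^{\rm cs}\times Y$ with proper support. The adjunction $p^*\dashv p_*$ and the Koszul resolution of the zero section give an exact triangle
\[
\mathbf R\mathcal Hom_X(\mathcal E,\mathcal E)\to \mathbf R\mathcal Hom_Y(\mathcal G,\mathcal G)\to \mathbf R\mathcal Hom_Y(\mathcal G,\mathcal G\otimes K_Y)[-1]
\]
after pushing forward. Relative Serre duality on $Y$, which holds with compact support, identifies $\mathbf R\pi_*\mathbf R\mathcal Hom(\mathcal G,\mathcal G\otimes K_Y)$ with $\mathcal V^\vee[-3]$. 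This yields
\[
K_{\mathcal M}\cong\det\mathcal A\cong\det\mathcal V\otimes(\det\mathcal V^\vee[-4])^{-1}\cong\det\mathcal V\otimes(\det\mathcal V)^\vee,
\]
and the evaluation pairing trivializes the right-hand side. I would then check that the square of this trivialization equals $Q_{\mathcal M}$. This comes down to compatibility of the four-dimensional Serre duality on $X$ with the three-dimensional one on $Y$ via the triangle, where $K_X\cong p^*K_Y\otimes p^*K_Y^{-1}$ is trivial; it is a sign bookkeeping exercise. The topological version on $\mathcal C_X^{\rm cs}$ is obtained by replacing each step with pushforward in $KU$ along $p$ and the Thom isomorphism. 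The comparison results of \S\ref{sec:comparison-without-quasi-projectivity} show the two versions match.

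For part (b), I would first verify the Steenrod condition. $H^3(Z_N,\partial Z_N)\cong H^1(N)$ via the Thom isomorphism, and $\mathrm{Sq}^2$ of a Thom class is $w_2(K_N)\cup U=c_1(K_N)\cup U \bmod 2$. The integrand therefore becomes $\int_N \bar a\cup(\bar a^2+c_1(K_N)\bar a)\cdot e$. Here the Euler class $e=c_1(K_N)$ arises because $U^2=e\,U$, and everything vanishes since $\bar a^2=\mathrm{Sq}^1\bar a$ is the reduction of an integral class of order $2$ and pairs to zero. With the condition verified, Theorem~\ref{intro:main-orientation-theorem} gives $o_{\rm JU}|_\xi$. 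Both $o_{\rm Shi}$ and $o_{\rm JU}$ are natural under relative bordism of almost complex six-manifolds $N$ together with $\beta$: for $o_{\rm Shi}$ because the construction in (a) is a $KU$-pushforward and thus bordism-invariant, and for $o_{\rm JU}$ by construction. Their ratio therefore defines a homomorphism from the relevant bordism group $\Omega_6^{U}(BU\times\ldots)$ restricted to the locus $\kappa(\xi)=0$ to $\mathbb Z_2$. Next I would reduce by generators. Using the excision property and the fact that the relevant bordism classes are detected by characteristic numbers, I would show this group is generated by classes supported near embedded curves, that is, Thom spaces over surfaces times $\mathbb C$-directions, which in turn reduce to $\operatorname{Tot}_C(E)$ with $\det E\cong K_C$. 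This reduction to local curves is the step I expect to be hardest. It needs an explicit computation of the bordism group, presumably via an Atiyah--Hirzebruch spectral sequence argument, and the constraint $\kappa(\xi)=0$ must kill exactly the generators on which flag structures disagree.

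For part (c), $X=\operatorname{Tot}_C(E)$ has $H^3_c(X)\cong H^{-3}(C)=0$, so the Steenrod condition holds. Moreover $H_c^4(X)\cong H^{-2}(C)=0$, so $\kappa$ vanishes identically and both orientations are defined everywhere. Agreement then reduces to $K^0_c(X)\cong K^0(C)$, which is generated by a point class and a line bundle class. On each generator I would compute both orientations explicitly, using the calculation of \S\ref{S6C2}. Because the difference of two orientations is a homomorphism $K^0(C)\to\mathbb Z_2$, this suffices, and feeding it back gives the base case needed in (b).
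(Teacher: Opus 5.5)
Your part (a), the Steenrod check in (b), and the skeleton of the comparison in (b) all match the paper. That skeleton is: the Shi/JU ratio is a homomorphism on a complex bordism group, its generators live near curves, and everything reduces to (c).

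The genuine gap is in (c), where you assert that the difference of the two orientations is a homomorphism $K^0(C)\to\mathbb Z_2$. A priori the ratio of two trivializations of $O^{\not D}_{\mathcal C_X^{\rm cs}}$ is only a function on $\pi_0=K^0_c(X)\cong K^0(C)$. It is additive for direct sums only if $o_{\rm Shi}$ and $o_{\rm JU}$ are both multiplicative under the direct-sum isomorphisms $O_{\xi_1}\otimes O_{\xi_2}\cong O_{\xi_1+\xi_2}$, with the same sign conventions, and nothing in your plan establishes this. For $o_{\rm Shi}$, the cross terms $\mathbf R\operatorname{Hom}(\mathcal G_1,\mathcal G_2)\oplus\mathbf R\operatorname{Hom}(\mathcal G_2,\mathcal G_1)$ are trivialized through the self-dual triangle, not through the Serre pairing used in the direct-sum isomorphism. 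If the two conventions differ by a sign depending on $(\xi_1,\xi_2)$, the ratio is only quadratic, and its values on two generators determine nothing.

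The monoidality you legitimately have is for disjoint union in the bordism category; that is why the ratio in (b) is a homomorphism. To use it here you would have to separate supports by translating along a nowhere-vanishing section of $E$, and know that $o_{\rm Shi}$ is compatible with excision to the resulting open pieces.

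Moreover, \S\ref{S6C2} does not apply directly to your generators. It treats $\operatorname{Map}_*(X_0^+,B\operatorname{SU})$ for $X_0=S^6\times\mathbb C$, whereas $\operatorname{Th}^{KU}_E(1)$ is supported on all of $C$, not in a ball. The paper sidesteps both problems. It reduces to $B\operatorname{SU}$-classes via $E\mapsto E\oplus\Lambda^{\rm top}E^*$ and passes by excision to $X_0$. It then proves $o_{\rm Shi}=o_{\rm JU}$ on every component $z=kx$ at once (Propositions~\ref{local-curve-JU-characterization}, \ref{local-curve-Shi-characterization} and~\ref{local-curve-BSU-comparison}), so no additivity is needed.

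In (b) you leave the decisive computation open. Also, ``$\Omega^U_6(BU\times\ldots)$ restricted to $\kappa=0$'' is not a group until the condition is imposed on bordisms as well, because $\kappa(\xi)\in H^4(Z_N,\partial Z_N)\cong H^2(N;\mathbb Z)$ is not a bordism invariant. The paper's device is to use rank-zero $\operatorname{SU}$-classes $\beta$ on $N$. For these, $(c_1^2-c_2)(\xi)=u\smile\bigl(c_1(\beta)+\binom{r}{2}c_1(K_N)\bigr)$ vanishes automatically, so the domain is simply $\widetilde\Omega^U_6(B\operatorname{SU})\cong\mathbb Z^2$. Its basis is $(L-1)^2$ and $(L-1)(M-1)$ on $\mathbb P^2\times\mathbb P^1$, supported on $\{\mathrm{pt}\}\times\mathbb P^1$ and on a line times a point. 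The neighbourhoods of these curves in $Z_N$ are $\operatorname{Tot}_C(\nu_{C/N}\oplus K_N|_C)$, which has determinant $K_C$ by adjunction.

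Your remark that $\kappa=0$ ``must kill exactly the generators on which flag structures disagree'' misplaces the issue. On $\kappa=0$ components $o_{\rm JU}$ is already independent of the flag structure (Proposition~\ref{canonical-orientation-at-zero-class}). What must vanish on the generators is the Shi/JU ratio, which is exactly the local-curve computation.

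Two slips:
\begin{itemize}
\item Applying $\mathbf R\mathcal Hom(-,\mathcal E)$ to $\bar p^*(\mathcal G\otimes K_Y^\vee)\to\bar p^*\mathcal G\to\mathcal E$ gives third term $\mathbf R\pi_*\mathbf R\mathcal Hom(\mathcal G,\mathcal G\otimes K_Y)\cong\mathcal V^\vee[-3]$, with no extra $[-1]$. With your shift the determinant would be $(\det\mathcal V)^{\otimes2}$, which is not trivial.
\item $\operatorname{Sq}^2(U\bar a)=U^2\bar a$, so the integrand is $\int_N\bar e^{\,2}\bar a^2$. It vanishes because $\bar a^2=\operatorname{Sq}^1\bar a=0$ for the reduction of an integral class, not because of a pairing with $2$-torsion.
\end{itemize}
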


Part~\textup{(a)} is Proposition~\ref{canonical-Shi-orientation},
part~\textup{(b)} is Theorem~\ref{local-fourfold-orientation-comparison},
and part~\textup{(c)} follows from
Proposition~\ref{local-curve-orientation-comparison} straightforwardly.

\subsubsection{Excision and families of stable pairs}
\label{intro:excision-and-families}

Excision identifies orientation choices before one has chosen an
orientation.

More precisely, for an open embedding $j:U\hookrightarrow X$ preserving
the Calabi--Yau structures, extension by zero gives an isomorphism
\begin{equation}\label{intro:excision-isomorphism}
 \operatorname{Exc}_{U,X}:
 O^\omega_{\mathcal M_U^{\rm cs}}
 \xrightarrow{\ \sim\ }
 j_*^*O^\omega_{\mathcal M_X^{\rm cs}}.
\end{equation}
These isomorphisms respect composition of open embeddings, and are explained in \S\ref{bordcatextr}.

We prove that canonical orientations are compatible via these isomorphisms.

We prove also the following theorem.

\begin{theorem}\label{intro:PT-family-theorem}
(a) Let $\pi:\mathfrak X\to B$ be a smooth, topologically locally trivial
complex analytic family of fourfolds with a relative Calabi--Yau
structure. Let $\mathcal P=\mathcal P(\mathfrak X/B)$ be the relative
stack of stable pairs $s:\mathcal O_{X_b}\to\mathcal F$, where
$\mathcal F$ is pure of dimension one with proper support and
$\operatorname{coker}s$ has support of dimension at most zero. Then
$O^\omega_{\mathcal P}$ has a canonical trivialization compatible with
base change. 

(b) On each fibre admitting a flag structure natural at zero,
this trivialization agrees with the induced Joyce--Upmeier orientation.

\end{theorem}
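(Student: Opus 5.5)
The plan is to build the trivialization locally around the support curve of each stable pair, using excision \eqref{intro:excision-isomorphism}, and then glue. A stable pair $s:\mathcal O_{X_b}\to\mathcal F$ has proper support $C=\operatorname{supp}\mathcal F$, a compact curve. It determines the compactly supported complex $I^\bullet=[\mathcal O\to\mathcal F]$, whose deformation theory and orientation bundle are the ones pulled back along $\mathcal P\to\mathcal M^{\rm cs}$. More precisely, the relevant class is the one of $\mathcal F$ (or $I^\bullet$ differing from $\mathcal O$ by a compactly supported class). So $O^\omega_{\mathcal P}$ is the pullback of $O^\omega_{\mathcal M^{\rm cs}_{X_b}}$ along $[s]\mapsto[\mathcal F]$, up to the canonical identification coming from $\operatorname{RHom}(I,I)$ versus $\operatorname{RHom}(\mathcal F,\mathcal F)$. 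I would first make this identification canonical by determinant calculus on the triangle $I\to\mathcal O\to\mathcal F$, using Serre duality to cancel the $\operatorname{RHom}(\mathcal O,\mathcal F)$ and $\operatorname{RHom}(\mathcal F,\mathcal O)$ terms in pairs.

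Next, I would choose a small tubular neighbourhood $U\supset C$ in $X_b$ which retracts onto $C$. Then $H^*_c(U)\cong H^*(U,\partial U)$ is Thom-type cohomology of a real-$2$-dimensional complex. Both conditions of Theorem~\ref{intro:main-orientation-theorem} hold automatically on $U$:
\begin{itemize}
\item $H^3_c(U)$ and the Steenrod pairing vanish for dimension reasons, since by Poincaré duality $H^3_c(U)\cong H_5(U)=H_5(C)=0$;
\item $\kappa(\xi)\in H^4_c(U)\cong H_4(C)=0$.
\end{itemize}
So $U$ admits a flag structure natural at zero, and the orientation on $\mathcal M^{\rm cs}_{U,\xi}$ is canonical, independent of that choice. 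Transporting it by $\operatorname{Exc}_{U,X_b}$ gives an orientation at points of $\mathcal P$ with support in $U$, and this set is an open substack. Independence of $U$ follows from the compatibility of canonical orientations with excision for $U'\subset U$: any two neighbourhoods contain a common smaller one, which again satisfies the hypotheses. Hence these local orientations glue into a global section of $O^\omega_{\mathcal P}$ on each fibre.

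For families, topological local triviality of $\pi$ lets me choose, over small $B'\subset B$, a family of neighbourhoods $\mathfrak U\to B'$ with constant topology. The family versions of the flag structure and bordism-invariance then give a continuous section over $\mathcal P|_{B'}$. The independence statements above make these agree on overlaps, and base change compatibility is immediate because everything is built fibrewise by pullback. For part (b), if $X_b$ has a flag structure natural at zero, I would restrict it to $U$; it remains natural at zero. Compatibility of canonical orientations with excision, together with the uniqueness from Theorem~\ref{intro:main-orientation-theorem}(b) on $U$, shows that the Joyce--Upmeier orientation on $X_b$ pulls back to ours. One caveat is that the class of $\mathcal F$ on $X_b$ need not satisfy $\kappa=0$. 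Here I would use that the flag-structure orientation restricts under excision to the flag-structure orientation, which on $U$ is the canonical one.

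The main obstacle I expect is that the neighbourhood $U$ cannot be chosen uniformly: the support varies, and $C$ may jump or acquire more components. I would handle this by showing the orientation is locally constant in the following sense. Near a point $[s_0]$, all nearby pairs have support inside a fixed neighbourhood $U_0$ of $C_0$, and on $U_0$ the canonical orientation is already a continuous section over the whole open substack. Continuity therefore only needs to be checked in that fixed chart, and the gluing argument above supplies consistency. A secondary subtlety is verifying that the $I$-versus-$\mathcal F$ identification is compatible with excision and introduces no sign. I would check this on the topological side via $K$-theory pushforwards, where the $\mathcal O$ summand contributes a class supported away from $C$.
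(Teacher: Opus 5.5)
Your proposal is correct and follows the paper's proof essentially step for step: you take a relatively compact neighbourhood $U$ of the support retracting onto $C$ with $H^3_c(U)=H^4_c(U)=0$ (Lemma~\ref{PT-curve-neighbourhoods}), transport the canonical orientation on $\mathcal C^{\rm cs}_U$ by excision, get independence of $U$ via nested neighbourhoods, and use a local topological trivialization plus properness of supports to keep nearby pairs inside a fixed $\mathfrak U$; for (b) you pull the flag structure back to $U$, where naturality at zero pins it down. The one difference is the families step: the paper makes your vaguer ``family flag structure'' step precise via Theorem~\ref{family-canonical-orientations}, which uniquely extends the zero-section trivialization over the connected, simply connected fibres of $\mathcal H^4_{\rm cs}(\mathfrak U/V)$. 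One correction: your caveat in (b) is unfounded, because $-[\mathcal F]=j_!\xi_U$ for some $\xi_U\in K^0_c(U)$ and $\kappa$ commutes with extension by zero, so $\kappa(-[\mathcal F])=j_!\kappa(\xi_U)\in j_!H^4_c(U;\mathbb Z)=0$. Hence the Joyce--Upmeier orientation at a PT pair is already independent of the flag structure, and (b) follows directly from the excision compatibility in Theorem~\ref{intro:main-orientation-theorem}(b).
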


This is Proposition~\ref{PT-canonical-orientation-in-families}. Its proof is local near the support of a pair.

More precisely, a compact curve has
arbitrarily small neighbourhoods on which the hypotheses needed for a
canonical orientation are automatic, by
Lemma~\ref{PT-curve-neighbourhoods}. We orient the problem on such a
neighbourhood and use excision to obtain an orientation on the
original space. Excision also shows that this choice is independent of
the neighbourhood. 

Theorem~\ref{family-canonical-orientations} makes
the construction compatible in families.

\subsubsection{A moduli space which is not orientable}
\label{intro:nonorientable}

The hypothesis \eqref{intro:steenrod-condition} turns out to have a genuine geometric
content: in \S\ref{counterexample} we construct a smooth noncompact
quasiprojective Calabi--Yau fourfold for which the moduli stack of
perfect complexes with compact support is \textit{not orientable}. We find this quite striking indeed. 

The construction
starts with the elliptic curve $\mathbb C/(\mathbb Z+i\mathbb Z)$ and
its symmetric square which a smooth surface.

A projective bundle over this surface helps to construct a
compact singular divisor, which we place in a smooth Calabi--Yau
fourfold. We then build a family of objects supported on the divisor, so that the orientation bundle is non-trivial along a specific loop in this family. This is proved in Proposition~\ref{nonorientable-example}, and immediately gives non-orientability of the whole stack.

This gives a
negative answer, for noncompact fourfolds, to the corresponding
orientability question discussed in \cite[\S 1]{JU}. 
\subsection{Applications to curve counting}
\label{intro:pardon}

The motivation for this paper is an application to the programme of
Pardon \cite{Pardon2023}. Pardon studies curve counting problems on
complex threefolds, allowing both the curves and the
ambient threefold to vary. His construction crucially depends on the existence of canonical \textit{virtual fundamental classes} on PT-moduli spaces in families of \textit{non-compact complex-analytic varieties}.

In further joint work with Felix Thimm we implement this programme for Calabi--Yau fourfolds. Unfortunately, in that situation the canonical virtual fundamental classes in \textit{complex-analytic} families are not constructed yet: our work with Thimm explicitly assumes their existence. 

One necessary condition for this existence in the world of $4$-folds is, however, the existence of canonical orientations in such families. As we already discussed, this is precisely what the present work supplies.

We hope to give the full construction of canonical virtual classes on PT-moduli spaces in families of (perhaps, non-compact) complex-analytic Calabi-Yau $4$-dimensional varieties elsewhere by using the canonical orientations from the present paper and by mildly modifying the methods of the work \cite{BorisovJoyce2017}.

One may have tried to generalize the more widely used work \cite{OhThomas2023} instead; however, at present, its methods require what is known as \textit{resolution property}, and this is absent for general complex-analytic spaces.

\subsection{Some past work}
We would like to put here our work into some context.

First of all, we would like to mention one more time that all material below was directly inspired and much influenced by the results of \cite{JU}.

Second, some very similar results on orientations for non-compact quasi-projective CY4s were
obtained by Bojko in his extremely interesting work \cite{Bojko2021}, which was part of his thesis.

Bojko proves the existence of orientations on moduli spaces for relevant quasi-projective Calabi-Yau $4$-folds there but provides neither explicit calculations for
local curves, nor canonicity results, nor results about families.

The equivalence between the construction in this part of our theorem and that of Bojko
is an elementary exercise in our excision formalism. 

Though our methods are very different, needless to say, the work \cite{Bojko2021} was an important
starting point (and a source of inspiration) for our investigations, and we thank Arkadij for
writing it.

 \begin{remark} Let us also mention that \cite{Bojko2021} was written before Joyce and Upmeier modified their
erroneous argument in \cite{CaoGrossJoyce2020} to the one in \cite{JU}.
Once this modification was made, Bojko’s original argument for orientability (which was originally
dependent on \cite{CaoGrossJoyce2020}), on the nose, stopped working whenever $H_3^c (X)\neq 0$, that is, under
conditions somewhat weaker than the condition (1.1).

However, after the first author of the present work discussed with A. B. some ideas used
below, Bojko was able to introduce minor changes which now make his train of thought work
for all quasi-projective Calabi-Yau $4$-folds satisfying (1.1).
\end{remark}

Finally, as far as we understand, our work answers some questions raised in \cite{BaeKoolPark}.

\subsection{Some future work}
It
turns out that, besides the problems discussed in the present paper, its analogue for \textit{3-folds} can be used to

(a) prove a certain modification, sufficient for all enumerative purposes, of the conjecture
about orientations from the work of Maulik and Toda \cite{MaulikToda};

(b) prove that canonical orientations for Calabi-Yau 3-folds defined in \cite{JU} and in \cite{JoyceUpmeierSpin2021}
are equivalent.

This will appear in the forthcoming work \cite{Karpov} of the first author. Also, we hope that
this approach will help to shed some light on the very intriguing question about strong (in
the language of Joyce and Upmeier) orientation data (see \cite{3-folds}).

\subsection{The plan of the paper}
\label{intro:plan}

Section~\ref{sec:k-theory-comparison} begins with pushforwards in real
and complex $K$-theory. We define the topological spaces which describe
the orientation problem and compare their orientation bundles with
the algebraic ones. We first discuss the proper case and then pass to
objects with compact support. The section ends with the interpretation
in \textit{real $K$-theory} used to construct the orientation functors.

In \S\ref{sec:bordism-orientations} we prove invariance under bordism
and define the relative and compactly supported bordism categories.
We construct the orientation functors, describe their factorizations,
and apply the bordism calculations of Joyce and Upmeier. This, in particular, gives the explicit description of the
obstruction to orientability in the non-compact case and the canonical orientations when $c_2-c_1^2=0$ and this obstruction vanishes.

Section~\ref{sec:local-orientations} treats local fourfolds. We construct
the explicit orientations and compare them with the orientations
from bordism. The calculation on $S^6\times\mathbb C$ is the main ingredient in this
comparison for local curves. Notably, this calculation uses both exceptional groups $\operatorname{HSpin}(16)$ and $E_8$.

Section~\ref{sec:orientation-in-families}
then shows how the canonical orientations fit together in topologically
locally trivial families under the stated cohomological hypotheses.

In \S\ref{sec:examples-and-PT} we construct the nonorientable example
and prove orientability for stable pairs. We finish by using excision
to construct their canonical orientations in families without imposing
the general obstruction condition on the fibres.

Appendix~\ref{sec:appendix-analytic-comparison} proves the comparison
needed for complex analytic spaces and families. We use the work of
Levy \cite{Levy1987,Levy2008} to compare analytic and topological
pushforwards, with properness imposed on the support. We also check
the compatibility with Serre duality which identifies the orientation
bundles.

\subsection{Acknowledgements}
\label{intro:acknowledgements}

I would like to thank Dominic Joyce, Davesh Maulik, Miguel Moreira, and Richard Thomas for their helpful input to the
present paper.

I would like to separately express my utmost gratitude
to Dominic Joyce and Markus Upmeier for writing their brilliant
monograph \cite{JU}, and to Davesh Maulik for suggesting to think about orientations and for being constantly attentive to my somewhat eccentric topological interests.

This work also owes its existence to Jeremy Hahn
who provided me with his extremely useful explanations concerning some abstract nonsense, gave the right way to think about the topological Serre duality, and wisely suggested to ``get rid of that
variable $X$''.

All sorts of very kind help from Felix Thimm served as a catalyser for starting writing this down, as well as in many other roles.

Special thanks go to Arkadij Bojko for generously coordinating an almost
simultaneous release of the present work and the update of
\cite{Bojko2021}.

\section{\texorpdfstring{$K$}{K}-theory and comparison of orientations}
\label{sec:k-theory-comparison}
\subsection{Pushforwards in \texorpdfstring{$K$}{K}-theory}\label{sec:push}

Let $X$ be a smooth manifold of real dimension $d$, equipped with a stable
complex structure.  We first suppose that $X$ is compact and without boundary,
and write $\pi_X:X\to\operatorname{pt}$ for the projection.  A stable complex
structure induces a canonical $\operatorname{Spin}^c$ structure, and hence an
orientation in complex $K$-theory.

Choose an embedding $e:X\hookrightarrow\mathbb R^N$, and let $\nu_e$ be its
normal bundle.  The stable isomorphism
\[
TX\oplus\nu_e\cong\underline{\mathbb R}^{N}
\]
induces a stable complex structure on $\nu_e$, and therefore the Thom class
\[
u^{KU}_{\nu_e}\in K^{N-d}_{\rm c}(\nu_e).
\]
If $p:\nu_e\to X$ denotes the projection, the corresponding Thom
isomorphism is
\[
\operatorname{Th}^{KU}_{\nu_e}(\alpha)
=p^*\alpha\cdot u^{KU}_{\nu_e}.
\]
After a tubular neighbourhood of $X$ in $\mathbb R^N$ has been chosen, extension by
zero and Bott periodicity define the pushforward
\[
\begin{split}
\pi_{X!}^{KU,\mathbb C}:K^q(X)
&\xrightarrow{\ \operatorname{Th}^{KU}_{\nu_e}\ }
K^{q+N-d}_{\rm c}(\nu_e)
\xrightarrow{\ e_!\ }
K^{q+N-d}_{\rm c}(\mathbb R^N)\\
&\xrightarrow{\ \operatorname{Bott}\ }
K^{q-d}(\operatorname{pt}).
\end{split}
\]
Its homotopy class is independent of the embedding, tubular neighbourhood,
and stabilization.  Moreover, by the Atiyah--Singer index theorem,
$\pi_{X!}^{KU,\mathbb C}$ is the index of the Dirac operator associated with
the induced $\operatorname{Spin}^c$ structure.

Following Joyce--Tanaka--Upmeier
\cite[Definition 2.21]{JoyceTanakaUpmeier2020},  we put
\[
\mathcal C_X=\operatorname{Map}(X,BU\times\mathbb Z).
\]
At the level of representing spaces, the preceding construction gives
\[
\pi_{X!}^{KU,\mathbb C}:\mathcal C_X
\longrightarrow \Omega^\infty\Sigma^{-d}KU.
\]

More generally, let $f:X\to Y$ be a proper smooth map of relative dimension
$d_f=\dim X-\dim Y$, and suppose that its stable relative tangent bundle
\[
T_f=TX-f^*TY
\]
is equipped with a stable complex structure.  Applying the preceding
construction to an embedding of the graph of $f$ into
$Y\times\mathbb R^N$ defines
\[
f_!^{KU,\mathbb C}:K^q(X)\longrightarrow K^{q-d_f}(Y).
\]
These pushforwards are functorial and satisfy the projection formula.

If $T_f$ is equipped with a spin structure, the Atiyah--Bott--Shapiro real
Thom class of the normal bundle of the graph defines the pushforward
\[
f_!^{KO}:KO^q(X)\longrightarrow KO^{q-d_f}(Y).
\]
Complexifying the Atiyah--Bott--Shapiro Thom class gives another pushforward
in complex $K$-theory, which we denote by $f_!^{KU,\mathrm{spin}}$.
Writing $c$ for complexification, one has
$c(f_!^{KO}(y))=f_!^{KU,\mathrm{spin}}(c(y))$.
If the spin structure corresponds to a square root $K_f^{1/2}$ of
$K_f=\det_{\mathbb C}(T_f)^\vee$, then
\begin{equation}\label{complex-spin-pushforward}
f_!^{KU,\mathbb C}(x)
=f_!^{KU,\mathrm{spin}}\bigl(x\otimes K_f^{-1/2}\bigr).
\end{equation}
In particular, a trivialization of $K_f$ determines a spin structure for which
the two pushforwards in complex $K$-theory agree.

If $X$ is spin, write
\[
\mathcal C_X^{\mathbb R}=\operatorname{Map}(X,BO\times\mathbb Z),
\]
We then obtain, in particular,
\[
\pi_{X!}^{KO}:\mathcal C_X^{\mathbb R}
\longrightarrow\Omega^\infty\Sigma^{-d}KO.
\]

We shall also use compactly supported pushforwards.  For an arbitrary smooth
manifold $X$, let $X^+$ be its one-point compactification and put
\[
K^q_{\rm c}(X)=\widetilde K^q(X^+),
\qquad
KO^q_{\rm c}(X)=\widetilde{KO}^q(X^+).
\]
A proper embedding $X\hookrightarrow\mathbb R^N$ and the same Thom
construction give the first map below when $X$ has a stable complex structure,
and the second when $X$ has a spin structure:
\[
\pi_{X!}^{KU,\mathbb C}:K^q_{\rm c}(X)\longrightarrow
K^{q-d}(\operatorname{pt}),
\qquad
\pi_{X!}^{KO}:KO^q_{\rm c}(X)\longrightarrow
KO^{q-d}(\operatorname{pt}).
\]
The spin structure also defines $\pi_{X!}^{KU,\mathrm{spin}}$ as above.  More
generally, a stable complex structure and a spin structure on $T_f$ give,
respectively,
\[
f_!^{KU,\mathbb C}:K^q_{\rm c}(X)\longrightarrow K^{q-d_f}_{\rm c}(Y),
\qquad
f_!^{KO}:KO^q_{\rm c}(X)\longrightarrow KO^{q-d_f}_{\rm c}(Y).
\]
We write
\[
\mathcal C_X^{\rm cs}
=\operatorname{Map}_*(X^+,BU\times\mathbb Z),
\qquad
\mathcal C_X^{\mathbb R,\rm cs}
=\operatorname{Map}_*(X^+,BO\times\mathbb Z)
\]
for the corresponding compactly supported $K$-theory spaces.

If $X$ is compact with boundary and $X^\circ=X\setminus\partial X$, then
\[
K^q_{\rm c}(X^\circ)\cong K^q(X,\partial X),
\qquad
KO^q_{\rm c}(X^\circ)\cong KO^q(X,\partial X).
\]
Equivalently,
\[
\mathcal C_X^{\rm cs}
\simeq\operatorname{Map}_*(X/\partial X,BU\times\mathbb Z),
\qquad
\mathcal C_X^{\mathbb R,\rm cs}
\simeq\operatorname{Map}_*(X/\partial X,BO\times\mathbb Z).
\]
A stable complex structure and a spin structure on $X$ define, respectively,
the pushforwards
\[
\pi_{X!}^{KU,\mathbb C}:\mathcal C_X^{\rm cs}
\longrightarrow\Omega^\infty\Sigma^{-d}KU,
\qquad
\pi_{X!}^{KO}:\mathcal C_X^{\mathbb R,\rm cs}
\longrightarrow\Omega^\infty\Sigma^{-d}KO.
\]
When $d$ is divisible by $8$, Bott periodicity identifies the targets with
$BU\times\mathbb Z$ and $BO\times\mathbb Z$, respectively.

Finally, let $\pi:\mathfrak X\to U$ be a topologically locally trivial smooth
family of manifolds of real dimension $d$. Suppose that the relative tangent
bundle has a stable complex structure. For the pushforwards which depend on a
spin structure, suppose that one is also given.
If the fibers are compact,
let $\mathcal C_{\mathfrak X/U}\to U$ and
$\mathcal C^{\mathbb R}_{\mathfrak X/U}\to U$ denote the relative mapping
spaces whose fibers over $u\in U$ are $\mathcal C_{\mathfrak X_u}$ and
$\mathcal C^{\mathbb R}_{\mathfrak X_u}$; equivalently,
\[
\mathcal C_{\mathfrak X/U}
=\operatorname{Map}_U\bigl(\mathfrak X,U\times(BU\times\mathbb Z)\bigr),
\quad
\mathcal C^{\mathbb R}_{\mathfrak X/U}
=\operatorname{Map}_U\bigl(\mathfrak X,U\times(BO\times\mathbb Z)\bigr).
\]
The fiberwise pushforwards assemble into maps over $U$
\[
\pi_!^{KU,\mathbb C}: \mathcal C_{\mathfrak X/U}
\longrightarrow U\times\Omega^\infty\Sigma^{-d}KU,
\qquad
\pi_!^{KO}: \mathcal C^{\mathbb R}_{\mathfrak X/U}
\longrightarrow U\times\Omega^\infty\Sigma^{-d}KO.
\]
For non-compact fibers, let $\mathfrak X_U^+\to U$ be the fiberwise
one-point compactification, with infinity section
$\infty:U\to\mathfrak X_U^+$.  Then
\[
\mathcal C^{\rm cs}_{\mathfrak X/U}
=\operatorname{Map}_{U,*}
 \bigl(\mathfrak X_U^+,U\times(BU\times\mathbb Z)\bigr),
\quad
\mathcal C^{\mathbb R,\rm cs}_{\mathfrak X/U}
=\operatorname{Map}_{U,*}
 \bigl(\mathfrak X_U^+,U\times(BO\times\mathbb Z)\bigr),
\]
where the subscript $*$ means that the infinity section is mapped to the zero
section.  These are the relative spaces of classes whose supports are proper
over $U$ (equivalently, locally uniformly compact in the fibers), and the
compactly supported family indices give
\[
\pi_!^{KU,\mathbb C}: \mathcal C^{\rm cs}_{\mathfrak X/U}
\longrightarrow U\times\Omega^\infty\Sigma^{-d}KU,
\qquad
\pi_!^{KO}: \mathcal C^{\mathbb R,\rm cs}_{\mathfrak X/U}
\longrightarrow U\times\Omega^\infty\Sigma^{-d}KO.
\]
A spin structure on the relative tangent bundle gives
$\pi_!^{KU,\mathrm{spin}}$ in the same way.
In relative real dimension $8n$, we shall use Bott
periodicity to write their targets simply as $U\times KU$ and $U\times KO$.

We shall also use the natural quadratic operation of
Joyce--Tanaka--Upmeier \cite[Subsection 2.4.4]{JoyceTanakaUpmeier2020},
\[
\operatorname{Ad}^i:K^i(X)\longrightarrow KO^i(X).
\]
For a principal $U(m)$-bundle $P\to X$, it satisfies
$\operatorname{Ad}^0([P])=[\operatorname{Ad}(P)]$.  At the level of
representing spaces it is induced, up to homotopy, by a quadratic map
\[
\operatorname{Ad}:BU\times\mathbb Z\longrightarrow BO\times\mathbb Z,
\]
and hence gives fiberwise maps
\[
\operatorname{Ad}_{\mathfrak X/U}:
\mathcal C_{\mathfrak X/U}^{\rm cs}
\longrightarrow\mathcal C_{\mathfrak X/U}^{\mathbb R,\rm cs}.
\]
If $c:BO\times\mathbb Z\to BU\times\mathbb Z$ denotes complexification,
then
\[
c\circ\operatorname{Ad}(\alpha)\simeq\alpha^\vee\otimes\alpha.
\]

\subsection{Algebraic and topological moduli stacks}\label{stacks}
Let $X$ be any smooth proper complex variety of complex dimension $4n$.

We have already introduced the space
\[
\mathcal C_X=\operatorname{Map}(X^{\rm an},BU\times\mathbb Z).
\]
Its algebraic counterpart is the derived moduli stack $\mathcal M_X$ of
perfect complexes on $X$.  There is a natural comparison map
\[
\Gamma_X:\mathcal M_X^{\rm top}\longrightarrow\mathcal C_X,
\]
obtained by sending a perfect complex to its underlying topological
$K$-theory class; see \cite[Sections 2.2 and 2.3]{JoyceTanakaUpmeier2020}
and \cite[Sections 5.1--5.2]{3-folds}.

The pushforward in complex $K$-theory defined by the complex structure gives
a natural map
\[
\pi_!^{KU,\mathbb C}:\mathcal C_X\longrightarrow BU\times\mathbb Z.
\]
It has the algebraic analogue
\[
\pi_*^{\rm alg}:\mathcal M_X\longrightarrow\operatorname{Perf}(\mathbb C).
\]
Both maps are obtained by pushing forward the corresponding universal
perfect complex.

We record two variants of this construction.  First, suppose that $X$ is
smooth but may be non-compact.  We write $\mathcal M_X^{\rm cs}$ for the
derived moduli stack of compactly supported perfect complexes on $X$, and
\[
\mathcal C_X^{\rm cs}=\operatorname{Map}_*(X^+,BU\times\mathbb Z),
\]
where $X^+$ denotes the one-point compactification.  As in
\cite[Section 4.6]{Karpov}, there is a natural comparison map
\[
\Gamma_X:\bigl(\mathcal M_X^{\rm cs}\bigr)^{\rm top}
\longrightarrow\mathcal C_X^{\rm cs}.
\]
It can be constructed using any smooth compactification of $X$; independence
of the compactification follows from Hironaka's resolution of singularities.
All the constructions above extend canonically to this setting. 

(Two more constructions of the same map follow from the arguments in Section \ref{sec:two-pushforward-squares} (using motivic tools), and in Appendix \ref{sec:appendix-analytic-comparison}.)

Note that the pushforward in compactly supported topological $K$-theory is
intrinsic to $X$ and does not depend on a compactification or on any other
auxiliary choice.

Second, we shall need a relative version.  Let
$\pi:\mathfrak X\to U$ be a topologically locally trivial family of smooth
complex algebraic varieties, whose fibers may be non-compact.  As in
Subsection~\ref{sec:push}, one forms the relative $K$-theory space
$\mathcal C_{\mathfrak X/U}^{\rm cs}$ with fiber
$\mathcal C_{\mathfrak X_u}^{\rm cs}$ over $u\in U$.  Its algebraic
counterpart is the relative derived moduli stack
$\mathcal M_{\mathfrak X/U}^{\rm cs}$ of fiberwise compactly supported
perfect complexes.  The preceding comparison maps assemble into
\[
\Gamma_{\mathfrak X/U}:
\bigl(\mathcal M_{\mathfrak X/U}^{\rm cs}\bigr)^{\rm top}
\longrightarrow\mathcal C_{\mathfrak X/U}^{\rm cs}.
\]

\begin{remark}\label{H4rem}
\begin{enumerate}
\item The comparison map $\Gamma_X$ is canonical up to homotopy, but not in
general up to a unique homotopy; compare
\cite[Section 2]{JoyceTanakaUpmeier2020}.  This ambiguity is immaterial for
the constructions below, as in \cite[2.3.5]{JoyceTanakaUpmeier2020}.
\item The same construction applies to other representing spaces.  In
particular, for a smooth topologically locally trivial family
$\mathfrak X\to U$ one may form the relative space
$\mathcal H^4_{\rm cs}(\mathfrak X/U)$ whose fiber over $u\in U$ is
\[
\operatorname{Map}_*(\mathfrak X_u^+,K(\mathbb Z,4)).
\]
\end{enumerate}
\end{remark}

\subsection{Algebraic and topological orientations}
\label{sec:algebraic-topological-orientations}
\subsubsection{Comparing orientations}\label{compstatement}
Let $X$ be a smooth complex algebraic variety of complex dimension $4n$.
Write $\mathcal E$ for the universal perfect complex on
$\mathcal M_X\times X$, and write $\mathcal E^{\rm top}$ for its
topological counterpart on $\mathcal C_X\times X$.  Define the determinant
line bundles
\[
K_{\mathcal M_X}=\det\,\mathbf R\pi_{\mathcal M*}
 \mathbf R\mathcal Hom(\mathcal E,\mathcal E),
\qquad
K_{\mathcal C_X}=\det\,\pi_{\mathcal C!}^{KU,\mathbb C}
 \bigl((\mathcal E^{\rm top})^\vee\otimes\mathcal E^{\rm top}\bigr).
\]
This is the notation of Joyce--Upmeier; see
\cite[Sections 5.1--5.2]{3-folds} and
\cite[Section 3.5]{JU}.

Let $\pi:\mathfrak X\to U$ be a relatively Calabi--Yau family as in
Subsection~\ref{stacks}. Topological Serre duality (see \cite{Miller,Dyer}
and, for a categorified version, \cite[Appendix A]{Karpov}) gives a square
which commutes up to homotopy
\[
\begin{tikzcd}
\mathcal C_{\mathfrak X/U}^{\rm cs}
  \arrow[r,"(-)^\vee"] \arrow[d,"\pi_!^{KU,\mathbb C}"'] &
\mathcal C_{\mathfrak X/U}^{\rm cs}
  \arrow[d,"\pi_!^{KU,\mathbb C}"]\\
U\times KU \arrow[r,"(-)^\vee"'] & U\times KU.
\end{tikzcd}
\]
Taking $U=\operatorname{pt}$ and $\mathfrak X=X$, with $X$ Calabi--Yau,
Serre duality therefore supplies non-degenerate quadratic forms
\[
Q_{\mathcal M_X}:K_{\mathcal M_X}^{\otimes2}\xrightarrow{\cong}
\mathcal O_{\mathcal M_X},
\qquad
Q_{\mathcal C_X}:K_{\mathcal C_X}^{\otimes2}\xrightarrow{\cong}
\underline{\mathbb C}_{\mathcal C_X}.
\]

\begin{definition}
The algebraic and topological orientation bundles
$O^{\omega}_{\mathcal M_X}\to\mathcal M_X$ and
$O^{\not D}_{\mathcal C_X}\to\mathcal C_X$ are the principal
$\mathbb Z_2$-bundles whose fibers consist of square roots of
$Q_{\mathcal M_X}$ and $Q_{\mathcal C_X}$, respectively.  Equivalently, the
fiber over a point is the set of isomorphisms $\phi:K\xrightarrow{\cong}
\mathbb C$ for which
\[
\begin{tikzcd}
K^{\otimes2}\arrow[r,"Q"]\arrow[d,"\phi^{\otimes2}"']&\mathbb C
\arrow[d,"\mathrm{id}"]\\
\mathbb C^{\otimes2}\arrow[r,"m"']&\mathbb C
\end{tikzcd}
\]
commutes.
\end{definition}

\begin{remark}
All the material in this subsection has a natural relative version for
relatively Calabi--Yau, topologically locally trivial families, whose fibers
may be non-compact, as in Subsection~\ref{stacks}.  For example, the topological
determinant line is classified by the composite
\[
\mathcal C_{\mathfrak X/U}^{\rm cs}
\xrightarrow{\pi_!^{KU,\mathbb C}}U\times KU
\xrightarrow{\operatorname{pr}_2}KU\xrightarrow{\det}B\mathbb C^*.
\]
\end{remark}

\subsubsection{The projective comparison theorem}\label{proper-comparison}
Suppose that $X$ is projective and Calabi--Yau. Joyce and Upmeier prove the
following comparison;
see \cite[Theorem 5.4]{3-folds} and
\cite[Section 3.5]{JU}.

\begin{theorem}\label{origJU}
There is a canonical isomorphism of principal $\mathbb Z_2$-bundles
\[
I_X:\Gamma_X^*O^{\not D}_{\mathcal C_X}
\xrightarrow{\cong}O^{\omega}_{\mathcal M_X},
\]
compatible with direct sums.  It is induced by the canonically
homotopy-commutative diagram
\[
\begin{tikzcd}
\mathcal M_X^{\rm top}\arrow[r,"\Gamma_X"]
 \arrow[d,"\pi_*^{\rm alg}"']&
\mathcal C_X\arrow[d,"\pi_!^{KU,\mathbb C}"]\\
\operatorname{Perf}(\mathbb C)^{\rm top}
 \arrow[dr,"\det"']&KU\arrow[d,"\det"]\\
&B\mathbb C^* ,
\end{tikzcd}
\]
and this diagram is compatible with the Serre-duality quadratic forms.
\end{theorem}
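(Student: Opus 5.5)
The plan is to build $I_X$ from the diagram in the statement, in three steps: first construct the homotopy filling the square, then identify the determinant lines, and finally check that the identification intertwines the Serre-duality quadratic forms.

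\textbf{Step 1: the homotopy.} Both vertical maps push forward a universal family. On the algebraic side we take $\mathbf R\pi_*$ of $\mathcal E$. On the topological side we apply the Dirac-index pushforward to $\mathcal E^{\rm top}=(\operatorname{id}\times\Gamma_X)^*$ of the universal class. Because $X$ is projective, we may choose a Kähler metric and use Hodge theory. The Dolbeault operator $\bar\partial+\bar\partial^*$ twisted by a family of holomorphic bundles (or complexes, after replacing perfect complexes by bounded complexes of vector bundles, which is possible on a projective variety) is the $\operatorname{Spin}^c$ Dirac operator for the canonical $\operatorname{Spin}^c$ structure, up to a compact, or indeed zeroth-order, perturbation. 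Its family index is therefore the topological pushforward $\pi_!^{KU,\mathbb C}$ by the families Atiyah--Singer theorem. Its kernel and cokernel also compute the Dolbeault cohomology, which equals $\mathbf R\pi_*$ by GAGA. This yields a homotopy $\det\circ\pi_*^{\rm alg}\simeq\det\circ\pi_!^{KU,\mathbb C}\circ\Gamma_X$. It is canonical because the space of choices (metrics, resolutions, perturbations) is contractible.

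\textbf{Step 2: the determinant lines.} Applying Step 1 to $\mathcal E^\vee\otimes\mathcal E$ in place of $\mathcal E$, the homotopy gives an isomorphism $\Gamma_X^*K_{\mathcal C_X}\cong K_{\mathcal M_X}$. Compatibility with direct sums follows from the additivity of the determinant of family indices under $\mathcal E_1\oplus\mathcal E_2$. This uses that $\det$ is an $E_\infty$ map, together with the fact that the cross terms $\mathcal E_1^\vee\otimes\mathcal E_2\oplus\mathcal E_2^\vee\otimes\mathcal E_1$ are identified on both sides in the same way.

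\textbf{Step 3: the quadratic forms, the main obstacle.} To pass from the line-bundle isomorphism to an isomorphism of $\mathbb Z_2$-bundles, we need the diagram to intertwine $Q_{\mathcal M_X}$ and $Q_{\mathcal C_X}$. On the analytic side, Serre duality for the twisted Dolbeault complex is realised by the Hodge star combined with the trivialisation of $K_X$. This operator anticommutes appropriately with $\bar\partial+\bar\partial^*$ and gives the pairing of kernels and cokernels. On the topological side, the Serre-duality square of \S\ref{compstatement} is induced by the same symmetry of the Dirac operator, namely Clifford multiplication by the volume form twisted by the Calabi--Yau trivialisation.

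I would reduce the comparison to a pointwise check on a single fibre. There, both quadratic forms on $\det(\ker)\otimes\det(\operatorname{coker})^{-1}$ come from the same nondegenerate pairing on $\operatorname{Ext}^\bullet$, namely the pairing of $\operatorname{Ext}^i$ with $\operatorname{Ext}^{4n-i}$. The remaining work is to verify the sign conventions: the Koszul signs in $\det$ of a graded complex, and the symmetry of the middle-degree pairing. This sign bookkeeping is what Joyce--Upmeier carry out in \cite[Theorem 5.4]{3-folds}, and I would follow their argument.
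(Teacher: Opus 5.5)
Your proposal is correct as a sketch and follows the route the paper takes. The paper does not reprove this theorem: it cites Joyce--Upmeier's analytic comparison for projective $X$ (twisted Dolbeault operators on vector-bundle resolutions, Hodge theory, and their sign bookkeeping for the Serre-duality quadratic forms), and then, as in your Step~1, uses the families Atiyah--Singer theorem to identify their Dirac-index definition of $O^{\not D}_{\mathcal C_X}$ with the topological pushforward $\pi_!^{KU,\mathbb C}$.
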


\begin{remark}
\begin{enumerate}
 Joyce and Upmeier define $O^{\not D}_{\mathcal C_X}$ analytically, using
families of twisted Dirac operators.  However, the families Atiyah--Singer index theorem
identifies their construction with the topological $K$-theoretic construction
above.

\end{enumerate}
\end{remark}

\subsubsection{The relative projective comparison theorem}\label{relative-proper-comparison}
The Joyce--Upmeier comparison is continuous in the complex structure.  Thus the proof of
Theorem~\ref{origJU}, together with Ehresmann's theorem, gives the following
relative statement.

\begin{theorem}\label{compfamc}
Let $\pi:\mathfrak X\to U$ be a smooth proper family of complex varieties of
complex dimension $4n$, equipped with a relative Calabi--Yau structure.
Suppose that all fibers are projective and the total space is quasi-projective.
Then there is a canonical isomorphism
\[
I_{\mathfrak X/U}:
\Gamma_{\mathfrak X/U}^*O^{\not D}_{\mathcal C_{\mathfrak X/U}}
\xrightarrow{\cong}O^{\omega}_{\mathcal M_{\mathfrak X/U}}.
\]
It is induced by the homotopy-commutative diagram
\[
\begin{tikzcd}
\mathcal M_{\mathfrak X/U}^{\rm top}
 \arrow[r,"\Gamma_{\mathfrak X/U}"]
 \arrow[d,"\pi_*^{\rm alg}"']&
\mathcal C_{\mathfrak X/U}\arrow[d,"\pi_!^{KU,\mathbb C}"]\\
U\times\operatorname{Perf}(\mathbb C)^{\rm top}
 \arrow[d,"\operatorname{pr}_2"']&U\times KU
 \arrow[d,"\operatorname{pr}_2"]\\
\operatorname{Perf}(\mathbb C)^{\rm top}
 \arrow[dr,"\det"']&KU\arrow[d,"\det"]\\
&B\mathbb C^* .
\end{tikzcd}
\]
This diagram is compatible with the relative Serre duality.
\end{theorem}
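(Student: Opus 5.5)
The plan is to run the construction behind Theorem~\ref{origJU} fibrewise and show that every ingredient varies continuously over $U$. For each $u\in U$, Theorem~\ref{origJU} applied to the projective Calabi--Yau fibre $X_u=\mathfrak X_u$ gives an isomorphism $I_{X_u}:\Gamma_{X_u}^*O^{\not D}_{\mathcal C_{X_u}}\to O^\omega_{\mathcal M_{X_u}}$. Because both sides are principal $\mathbb Z_2$-bundles, a fibrewise isomorphism is determined by its values on points. It therefore suffices to prove two things: that the fibrewise isomorphisms glue to a continuous isomorphism over $\mathcal M^{\rm top}_{\mathfrak X/U}$, and that this glued isomorphism is induced by the displayed diagram.

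\textbf{Step 1: the diagram over $U$.} I will first build the homotopy-commutative square over $U$. Since $\mathfrak X$ is quasi-projective and $\pi$ is proper, the universal complex $\mathcal E$ on $\mathcal M_{\mathfrak X/U}\times_U\mathfrak X$ has a relative derived pushforward $\mathbf R\pi_*\mathbf R\mathcal Hom(\mathcal E,\mathcal E)$. Quasi-projectivity also supplies global resolutions by vector bundles of the form $\mathcal O(-k)^{\oplus m}$, which are needed for the next step.

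By Ehresmann's theorem, $\pi^{\rm an}$ is a locally trivial smooth fibre bundle. Hence $\mathcal C_{\mathfrak X/U}$ is locally a product $V\times\mathcal C_{X_{u_0}}$, and the relative pushforward $\pi^{KU,\mathbb C}_!$ of Subsection~\ref{sec:push} is defined.

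The homotopy between the two composites to $KU$ comes from the families index theorem. On the algebraic side, the Dolbeault complex of the resolved universal complex gives a family of elliptic operators parametrised by $\mathcal M^{\rm top}_{\mathfrak X/U}$. Its families index is $\pi^{\rm alg}_*$ composed with $\operatorname{Perf}(\mathbb C)^{\rm top}\to KU$. The families Atiyah--Singer theorem identifies this index with $\pi_!^{KU,\mathbb C}\circ\Gamma_{\mathfrak X/U}$, since the relative Spin$^c$ structure comes from the relative complex structure. Composing with $\det$ then gives the isomorphism of determinant lines.

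\textbf{Step 2: compatibility with relative Serre duality.} Next I will check that this determinant isomorphism intertwines the two quadratic forms. The relative trivialisation of $K_{\mathfrak X/U}$ gives relative Serre duality $\mathcal A^\vee\simeq\mathcal A[4n]$ algebraically. Topologically it gives the duality square of \S\ref{compstatement}.

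Fibrewise, the two forms agree because Theorem~\ref{origJU} says so. Both forms are continuous sections of $\operatorname{Hom}(K^{\otimes 2},\mathbb C)$ over the connected space of parameters, and they agree pointwise. So they agree globally, and square roots of $Q_{\mathcal M}$ correspond continuously to square roots of $Q_{\mathcal C}$. This yields $I_{\mathfrak X/U}$. Its restriction to each fibre is $I_{X_u}$, because the construction restricts along $\{u\}\hookrightarrow U$.

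\textbf{Main obstacle: canonicity.} The hard part is showing the homotopy in Step 1 is canonical enough that the induced map on $\mathbb Z_2$-torsors does not depend on choices. The choices are the resolution, the embedding $\mathfrak X\hookrightarrow U\times\mathbb P^M$, and the local trivialisations. I would argue that any two choices are joined by a path of choices, for example by passing to a common refinement of resolutions or a larger projective embedding. The resulting homotopies of maps to $B\mathbb C^*$ differ by a loop, which acts on the $\mathbb Z_2$-torsor through $\pi_1$. A continuous family of $\mathbb Z_2$-torsor isomorphisms over a path is constant, so the induced isomorphism is independent of choices.

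Concretely, this reduces to the corresponding independence statement in Joyce--Upmeier's proof of Theorem~\ref{origJU}, applied with an extra parameter space $U$. Their argument uses only homotopy-invariant operations, so it goes through unchanged with $U$ as an additional parameter.
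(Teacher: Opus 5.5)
Your proposal is correct and follows the paper's route. The paper proves Theorem~\ref{compfamc} in one line: it observes that the Joyce--Upmeier comparison of Theorem~\ref{origJU} is continuous in the complex structure and combines this with Ehresmann's theorem. Your Step~1 (families index over $U$), Step~2 (pointwise check of the Serre-duality forms, where connectedness is in fact not needed) and your reduction of canonicity to Joyce--Upmeier's independence-of-choices argument with $U$ as an extra parameter are a more detailed spelling-out of that one-line proof.
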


\subsection{Comparison without quasi-projectivity}
\label{sec:comparison-without-quasi-projectivity}

The comparison in Theorem \ref{compfamc} uses quasi-projectivity through
the results of Joyce and Upmeier.  We now prove the comparison for smooth
proper algebraic varieties without this assumption.  The main point is to
compare derived direct image with the pushforward in complex topological
$K$-theory.

In the next subsection we prove the corresponding statement for non-proper varieties.

However, this is not enough for the purposes of this note: below we are concerned with complex-analytic (not necessarily algebraic) varieties as well. 

Thus, the main theorems of this subsection, as well as the next one, are reproved in Appendix~\ref{sec:appendix-analytic-comparison} for complex-analytic varieties using different and independent methods. Despite that, we decided to retain the algebraic proofs as well: they are short, arguably elegant, and help to better understand the more intricate analytic proofs in the appendix.

\subsubsection{Blanc's topological $K$-theory}
\label{sec:blanc-recollection}

Let $\mathcal T$ be a $\mathbb C$-linear dg-category.  Blanc associates to it
the presheaf
\[
M_{\mathcal T}(A)=Nw\operatorname{Perf}
 \bigl(\mathcal T\otimes_{\mathbb C}^{\mathbf L}A\bigr)
\]
on affine schemes of finite type over $\mathbb C$, together with the 
$\Gamma$-presheaf $M_{\mathcal T}^{\bullet}$ induced by direct sums.  Here
$Nw$ denotes the nerve of the subcategory of equivalences.  The associated
connective spectrum is
$E_{\mathcal T}=B|M_{\mathcal T}^{\bullet}|_{\Gamma}$.  The map from objects
to Waldhausen $K$-theory constructed in
\cite[Subsection~4.3]{Blanc} induces a natural equivalence
\[
E_{\mathcal T}\simeq \widetilde K^{\rm st}(\mathcal T);
\]
see \cite[Proposition~4.20 and Theorem~4.21]{Blanc}.  

For a smooth complex variety $X$, put
$\mathcal T_X=\operatorname{Perf}_{\rm dg}(X)$.  The natural functor
$\mathcal T_X\otimes_{\mathbb C}^{\mathbf L}A\to
\operatorname{Perf}_{\rm dg}(X\times\operatorname{Spec}A)$ is a Morita
equivalence \cite[equation~(11)]{Blanc}.  Thus $M_{\mathcal T_X}$ identifies
naturally with the moduli stack $\mathcal M_X$ used above.  Segal's
equivalence and Blanc's construction give a natural map
\[
\eta_X:\mathcal M_X^{\rm top}\longrightarrow
\Omega^\infty K^{\rm top}(\operatorname{Perf}X).
\]
More precisely, one first maps to
$\Omega^\infty\widetilde K^{\rm st}(\operatorname{Perf}X)$, then uses the
natural map to nonconnective semi-topological $K$-theory and inverts the Bott
class.  Since $X$ is smooth, the connective and nonconnective theories agree
before Bott inversion; this follows from Blanc's restriction theorem and the
vanishing of negative $K$-groups for regular schemes; see
\cite[Theorem~3.18, Remark~4.4, and Theorem~4.6]{Blanc}.

Blanc's geometric comparison is a natural equivalence
\[
\rho_X:K^{\rm top}(\operatorname{Perf}X)
 \xrightarrow{\ \simeq\ }
F(\Sigma^\infty_+X^{\rm an},KU).
\]
See \cite[Proposition~4.32]{Blanc} and
\cite[Propositions~7.6--7.8]{Moulinos}.
Here $F(-,-)$ denotes the function spectrum.  Under
$\mathcal C_X\simeq\Omega^\infty
F(\Sigma^\infty_+X^{\rm an},KU)$, the comparison map of
Subsection~\ref{stacks} is
$\Gamma_X=\Omega^\infty\rho_X\circ\eta_X$.

\subsubsection{The two squares}
\label{sec:two-pushforward-squares}

\begin{proposition}\label{prop:pushforward-without-quasiprojectivity}
Let $f:Z\to W$ be a proper smoothable morphism between smooth proper complex
varieties.  Thus $f$ admits a factorization through a closed immersion followed
by a smooth morphism.  Derived direct image defines an exact functor
$\mathbf Rf_*:\operatorname{Perf}(Z)\to\operatorname{Perf}(W)$ and hence a
map $\mathcal M_Z\to\mathcal M_W$.  Then the diagram
\[
\begin{tikzcd}
\mathcal M_Z^{\rm top}
 \arrow[r,"(\mathbf Rf_*)^{\rm top}"]
 \arrow[d,"\Gamma_Z"']&
\mathcal M_W^{\rm top}
 \arrow[d,"\Gamma_W"]\\
\mathcal C_Z\arrow[r,"f_!^{KU,\mathbb C}"']&\mathcal C_W
\end{tikzcd}
\]
commutes up to a natural homotopy, compatibly with composition of proper
smooth morphisms.
\end{proposition}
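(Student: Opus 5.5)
The plan is to factor the diagram through Blanc's two maps, $\Gamma_X=\Omega^\infty\rho_X\circ\eta_X$, and prove commutativity separately for the $\eta$-square and for the $\rho$-square. This is the decomposition suggested by the title of \S\ref{sec:two-pushforward-squares}.

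\textbf{The $\eta$-square.} The exact functor $\mathbf Rf_*:\operatorname{Perf}(Z)\to\operatorname{Perf}(W)$ is a dg-functor $\mathcal T_Z\to\mathcal T_W$. Since $f$ is proper, it is compatible with base change to $Z\times\operatorname{Spec}A\to W\times\operatorname{Spec}A$ under the Morita equivalences $\mathcal T_X\otimes^{\mathbf L}_{\mathbb C}A\simeq\operatorname{Perf}_{\rm dg}(X\times\operatorname{Spec}A)$. It therefore induces a map of $\Gamma$-presheaves $M^\bullet_{\mathcal T_Z}\to M^\bullet_{\mathcal T_W}$, which is exactly $(\mathbf Rf_*)$ on moduli stacks. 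Blanc's constructions are functorial in dg-functors: the map from objects to Waldhausen $K$-theory, the passage to semi-topological $K$-theory, and Bott inversion. Hence $\eta_W\circ(\mathbf Rf_*)^{\rm top}\simeq K^{\rm top}(\mathbf Rf_*)\circ\eta_Z$, with the homotopy natural in $f$ and compatible with composition because $\mathbf R(gf)_*\simeq\mathbf Rg_*\mathbf Rf_*$ coherently.

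\textbf{The $\rho$-square.} Here one must show that under Blanc's equivalence $\rho$, the map $K^{\rm top}(\mathbf Rf_*)$ becomes the topological Gysin map $f_!^{KU,\mathbb C}$ on $F(\Sigma^\infty_+(-)^{\rm an},KU)$. I would use the given factorization $f=p\circ i$, with $i:Z\hookrightarrow P$ a closed immersion and $p:P\to W$ smooth. I would then treat the two cases separately, using the functoriality of both pushforwards.

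\emph{Closed immersion.} Deformation to the normal cone $Z\times\mathbb A^1\hookrightarrow D$ is an $\mathbb A^1$-family. Blanc's $K^{\rm top}$ is $\mathbb A^1$-invariant, so it reduces $i_*$ to the zero section of the normal bundle $N$. There $\mathbf Ri_*\mathcal O$ is resolved by the Koszul complex $\Lambda^\bullet N^\vee$, and its topological image is precisely the $K$-theoretic Thom class $u^{KU}_N$. This identifies $\rho\circ K^{\rm top}(i_*)$ with extension by zero composed with the Thom isomorphism, i.e. with $i_!^{KU,\mathbb C}$.

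\emph{Smooth morphism.} For smooth proper $p$, I would argue by a families index statement: $\mathbf Rp_*$ of a vector bundle is computed fibrewise by the Dolbeault complex. The families Atiyah--Singer theorem, in its $K$-theoretic form from \S\ref{sec:push}, identifies its class with $p_!^{KU,\mathbb C}$ for the complex structure on $T_p$. This step is only at the level of $K$-groups, and must be promoted to maps of spectra.

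The main obstacle is this promotion: turning agreement on $K^0$-groups into a natural homotopy of maps of spectra, coherent under composition. My approach would be to note that both sides are $KU$-module maps $F(\Sigma^\infty_+Z^{\rm an},KU)\to F(\Sigma^\infty_+W^{\rm an},KU)$; the left side is $KU$-linear because Blanc's $K^{\rm top}$ is a module over $K^{\rm top}(\mathbb C)=KU$. Both are also natural for base change along maps $T\to W$ of smooth varieties, via the projection formula. By Poincaré duality for $Z$, such module maps are determined by a class in $KU^*(Z\times W)$ dual to the graph. So it suffices to identify a single universal class: the image of $\mathcal O_{\Gamma_f}$, or equivalently of the Koszul class of the graph embedding, in $KU^0$. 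This is handled by the closed-immersion case applied to $\Gamma_f\subset Z\times W$, followed by projection $Z\times W\to W$, which is a trivial product projection where the index is the constant $\chi$-pairing. Compatibility with composition then follows because both composites are represented by the same convolution of graph classes.
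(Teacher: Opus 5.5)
\textbf{Where the proposal stands.} Your decomposition $\Gamma_X=\Omega^\infty\rho_X\circ\eta_X$ is the paper's, and so is your argument for the $\eta$-square (naturality of Blanc's object-to-$K$-theory map in exact functors, then realization, Segal's construction and Bott inversion). The gap is in the $\rho$-square, at exactly the step you flag yourself: the kernel-class argument does not promote the group-level index theorem to a natural homotopy of spectra.

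\emph{The reduction is circular at the decisive point.} To identify the kernel of $\rho_W\circ K^{\rm top}(\mathbf Rf_*)\circ\rho_Z^{-1}$ with $\rho_{Z\times W}[\mathcal O_{\Gamma_f}]$, you must already know that $K^{\rm top}(\mathbf R\operatorname{pr}_{W*})$ agrees with $\operatorname{pr}_{W!}^{KU,\mathbb C}$ as $KU$-module maps. In effect, you need the class in $KU_0(Z^{\rm an})$ that classifies the $KU$-linear map $K^{\rm top}(\mathbf R\Gamma(Z,-))$ to be the $\operatorname{Spin}^c$ fundamental class. The phrase ``the index is the constant $\chi$-pairing'' only records an effect on $\pi_0$, and that does not determine this class, for two reasons. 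First, Hirzebruch--Riemann--Roch, or the families index theorem applied to holomorphic bundles, only tests the class against classes of algebraic bundles. These in general span a proper sublattice of $KU^0(Z^{\rm an})$, e.g.\ for an abelian surface of Picard number one. Second, even agreement on all of $KU^*(Z^{\rm an})$ leaves the indeterminacy $\operatorname{Ext}(KU^1(Z^{\rm an}),\mathbb Z)\subset KU_0(Z^{\rm an})$, which is nonzero e.g.\ for an Enriques surface.

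\emph{Identified classes give only some homotopy.} Homotopies between two fixed $KU$-module maps $F(\Sigma^\infty_+Z^{\rm an},KU)\to F(\Sigma^\infty_+W^{\rm an},KU)$ form a torsor under $\pi_1$ of the mapping space. That group is $KU^{-1}(Z^{\rm an}\times W^{\rm an})$, which is nonzero as soon as $H^{\rm odd}(Z^{\rm an}\times W^{\rm an};\mathbb Q)\neq0$. So ``the same convolution of graph classes'' yields an equality of classes, not a natural homotopy compatible with composition. This matters: the identification of determinant lines and orientation torsors in \S\ref{sec:determinants-without-quasi-projectivity} depends on the homotopy itself, not merely on its existence.

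\textbf{What the paper does instead.} It never passes through $K$-groups. It first reduces, via Blanc's restriction theorem, to smooth affine tests, where $K$-theory agrees with $G$-theory compatibly with proper direct image. It then uses Jin's motivic morphism $\varphi_f:f_*\mathrm{GGL}_Z\to\mathrm{GGL}_W$; for smoothable $f$ this is identified with the $\mathrm{KGL}$ purity pushforward. Finally it applies Betti realization, which sends $\mathrm{KGL}$ to $KU$, commutes with proper direct image, and sends algebraic Thom classes to $KU$ Thom classes. This gives the $\rho$-square directly as a commutative square of spectra, and Jin's composition formula gives the compatibility with composition. To repair your route you would need a spectrum-level, choice-free mechanism of this kind for the smooth (or product-projection) step. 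A Levy-type deformation at the level of Waldhausen categories, as in Appendix~\ref{sec:appendix-analytic-comparison}, would also do. The families index theorem on $K^0$ together with $KU$-linearity is not enough.
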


With the factorization of $\Gamma$ recalled above, this is the outer
rectangle in
\[
\begin{tikzcd}[column sep=7em,row sep=3em]
\mathcal M_Z^{\rm top}
 \arrow[r,"(\mathbf Rf_*)^{\rm top}"]
 \arrow[d,"\eta_Z"']&
\mathcal M_W^{\rm top}
 \arrow[d,"\eta_W"]\\
\Omega^\infty K^{\rm top}(\operatorname{Perf}Z)
 \arrow[r,"\Omega^\infty K^{\rm top}(\mathbf Rf_*)"]
 \arrow[d,"\Omega^\infty\rho_Z"']&
\Omega^\infty K^{\rm top}(\operatorname{Perf}W)
 \arrow[d,"\Omega^\infty\rho_W"]\\
\mathcal C_Z\arrow[r,"f_!^{KU,\mathbb C}"']&\mathcal C_W.
\end{tikzcd}
\]
The upper square expresses the naturality of Blanc's map from objects to
$K$-theory.  The lower square expresses the compatibility of Betti
realization with proper pushforwards.  We prove these two statements in turn.

\subsubsection{Naturality of the map from objects to $K$-theory}
\label{sec:blanc-object-naturality}

We prove that the upper square in Subsection~\ref{sec:two-pushforward-squares}
commutes up to a natural homotopy. The essential observation is that an
exact functor preserves finite direct sums. We explain how this applies
to the constructions used to define $\eta$.

Write $F=\mathbf Rf_*:\mathcal T_Z\to\mathcal T_W$. For an affine test
scheme $\operatorname{Spec}A$ and $X=Z,W$, put
\[
\mathcal D_{X,A}=\operatorname{Perf}
 (\mathcal T_X\otimes_{\mathbb C}^{\mathbf L}A).
\]
Extension of scalars gives an exact functor
$F_{A,!}:\mathcal D_{Z,A}\to\mathcal D_{W,A}$.
Under the Morita equivalences of
Subsection~\ref{sec:blanc-recollection}, this is
$\mathbf R(f\times1_{\operatorname{Spec}A})_*$. Indeed, the perfect kernel
$\mathcal O_{\Gamma_f}$ representing $\mathbf Rf_*$ pulls back to the graph
kernel for $f\times1_{\operatorname{Spec}A}$.

Recall that $M_{\mathcal T_X}^{\bullet}$ records objects and their direct
sums. For fixed $A$, its value at $n_+=\{0,1,\ldots,n\}$ is naturally equivalent to
$Nw(\mathcal D_{X,A}^{\,n})$. Thus its objects may be described as tuples
$(E_1,\ldots,E_n)$, with the coherent choices of sums included in the
$\Gamma$-construction. On the other hand, Waldhausen's category
$S_n\mathcal D_{X,A}$ records filtrations by cofibrations
$0=Q_0\hookrightarrow Q_1\hookrightarrow\cdots\hookrightarrow Q_n$,
together with their quotients.

The map comparing these two descriptions is
\[
\lambda_{X,A,n}:M_{\mathcal T_X}^{\bullet}(A)(n_+)
 \longrightarrow NwS_n\mathcal D_{X,A}.
\]
Here $X$ specifies the variety, $A$ the test algebra, and $n$ the length
of the filtration. The map sends a tuple to the filtration by its partial
sums:
\[
(E_1,\ldots,E_n)\longmapsto
\bigl(0\hookrightarrow E_1\hookrightarrow E_1\oplus E_2
\hookrightarrow\cdots\hookrightarrow E_1\oplus\cdots\oplus E_n\bigr).
\]
Its $i$th successive quotient is $E_i$. Applying the same construction to
chains of equivalences defines the map on the nerves. The sum diagrams
ensure compatibility with the simplicial operators. As $A$ and $n$ vary,
these maps give Blanc's map $\lambda_{X,\bullet}$ from the simplicial
presheaf underlying $M_{\mathcal T_X}^{\bullet}$ to
$A\mapsto NwS_\bullet\mathcal D_{X,A}$.

The functor $F_{A,!}$ acts on tuples term by term, and on filtrations by
applying it to every term and quotient. Since it is exact, there are
canonical equivalences
\[
F_{A,!}(E_1\oplus\cdots\oplus E_i)
 \simeq F_{A,!}E_1\oplus\cdots\oplus F_{A,!}E_i.
\]
These identify the image of the filtration associated to
$(E_1,\ldots,E_n)$ with the filtration associated to
$(F_{A,!}E_1,\ldots,F_{A,!}E_n)$. Hence the square
\[
\begin{tikzcd}[column sep=4em]
M_{\mathcal T_Z}^{\bullet}(A)(n_+)
 \arrow[r,"\lambda_{Z,A,n}"] \arrow[d,"F_{A,!}"']&
NwS_n\mathcal D_{Z,A}\arrow[d,"F_{A,!}"]\\
M_{\mathcal T_W}^{\bullet}(A)(n_+)
 \arrow[r,"\lambda_{W,A,n}"']&NwS_n\mathcal D_{W,A}
\end{tikzcd}
\]
commutes up to a natural homotopy; the vertical arrows denote the induced
maps. The homotopy is compatible with changes of $A$ and with the
simplicial operators, by the naturality and associativity of direct sums.

Blanc also uses the map of special $\Gamma$-spaces
\[
\sigma_{X,A}:M_{\mathcal T_X}^{\bullet}(A)
 \longrightarrow K^\Gamma(\mathcal D_{X,A}),
\]
where $K^\Gamma(\mathcal D)$ is the $\Gamma$-space representing Waldhausen
$K$-theory. This is the standard map from objects to $K$-theory, with its
compatibility with direct sums; see \cite[equation~(1)]{Blanc}.
It is natural in exact functors. The operation
$\sigma$ becomes an equivalence after topological realization: for each
$n$, Blanc proves that $\lambda_{X,A,n}$, as $A$ varies, is an
$\mathbb A^1$-equivalence of presheaves, and then deduces the equivalence
for $\sigma$ in \cite[Proposition~4.17 and Theorem~4.21]{Blanc}.

Applying topological realization to the naturality square for $\sigma$,
and then Segal's construction, gives the naturality of the map
$\mathcal M_X^{\rm top}\to
\Omega^\infty\widetilde K^{\rm st}(\operatorname{Perf}X)$.
The passage to nonconnective semi-topological $K$-theory and Bott inversion
are also functorial. We obtain the required square for $\eta$, with its
natural homotopy. All these constructions respect composition of exact
functors, giving the asserted compatibility with compositions of proper
morphisms.

\subsubsection{Compatibility with proper pushforwards}
\label{sec:betti-proper-pushforward}

We now prove that the lower square in the second diagram of
Subsection~\ref{sec:two-pushforward-squares} commutes already at the level
of spectra:
\[
\begin{tikzcd}
K^{\rm top}(\operatorname{Perf}Z)
 \arrow[r,"K^{\rm top}(\mathbf Rf_*)"]
 \arrow[d,"\rho_Z"',"\simeq"]&
K^{\rm top}(\operatorname{Perf}W)
 \arrow[d,"\rho_W","\simeq"']\\
F(\Sigma^\infty_+Z^{\rm an},KU)
 \arrow[r,"f_!^{KU,\mathbb C}"']&
F(\Sigma^\infty_+W^{\rm an},KU).
\end{tikzcd}
\]

By Blanc's restriction theorem, the realization may be computed on smooth
affine tests $U$.  Before realization the upper map is
\[
K\bigl(\operatorname{Perf}(Z\times U)\bigr)
\xrightarrow{\ K(\mathbf R(f\times1_U)_*)\ }
K\bigl(\operatorname{Perf}(W\times U)\bigr).
\]
The schemes $Z\times U$ and $W\times U$ are regular.  Algebraic $K$-theory
therefore agrees with $G$-theory, compatibly with proper direct image.

For every proper morphism $f$, Jin constructs a morphism
$\varphi_f:f_*\mathrm{GGL}_Z\to\mathrm{GGL}_W$ representing coherent direct
image.  Its effect on $G$-theory and its compatibility with composition are
proved in \cite[Lemma~3.1.3, Subsection~3.1.4, and
Lemma~3.3.4]{Jin}.  For a smoothable morphism, the adjoint morphism
$\mathrm{GGL}_Z\to f^!\mathrm{GGL}_W$ agrees with Jin's morphism $\xi_f$ by
\cite[Propositions~3.2.9 and~3.2.13]{Jin}.  On a regular scheme one has
$\mathrm{GGL}\simeq\mathrm{KGL}$.  Consequently, $\varphi_f$ is identified
with the $\mathrm{KGL}$ pushforward obtained from purity and the counit for
$f_*\dashv f^!$.  Equivalently, this follows from the compatibility of Jin's
identification of $G$-theory with Borel--Moore $\mathrm{KGL}$-theory with
proper direct image and Gysin morphisms
\cite[Corollary~3.3.7]{Jin}.

Betti realization sends $\mathrm{KGL}$ to $KU$, commutes with proper direct
image, and sends the algebraic Thom class
of a complex vector bundle to its $KU$ Thom class; see
\cite[Example~3.11 and Proposition~A.4]{BRTV}.  It therefore sends
$\varphi_f$ to $f_!^{KU,\mathbb C}$.

We now spell out the identification of the sources and targets after realization.
For $X=Z,W$, let $p_X:X\to\operatorname{Spec}\mathbb C$ and let
$\mathbf 1_X=\operatorname{Perf}(X)$ be the unit
$\operatorname{Perf}(X)$-linear category.  Proposition~7.7 of
\cite{Moulinos} identifies the relative topological $K$-theory sheaf
$K_X^{\rm top}(\mathbf 1_X)$ with
$\operatorname{Betti}_X(\mathrm{KGL}_X)$.  Proposition~7.8, applied to the
proper morphism $p_X$, and Proposition~7.6 then give
\[
K^{\rm top}(\operatorname{Perf}X)
\simeq
\Gamma\bigl(X^{\rm an},K_X^{\rm top}(\mathbf 1_X)\bigr)
\simeq
F(\Sigma^\infty_+X^{\rm an},KU).
\]
These are the equivalences $\rho_Z$ and $\rho_W$ in the preceding square.
Moreover,
$$\Gamma(W^{\rm an},f_*K_Z^{\rm top}(\mathbf 1_Z))
=\Gamma(Z^{\rm an},K_Z^{\rm top}(\mathbf 1_Z)).$$  Hence taking global
sections of the realization of $\varphi_f$ gives precisely the map from the
function spectrum of $Z^{\rm an}$ to that of $W^{\rm an}$ denoted by
$f_!^{KU,\mathbb C}$.  This proves the commutativity of the square.  Jin's composition
formula gives the asserted compatibility
with composition.  Together with
Subsection~\ref{sec:blanc-object-naturality}, this proves
Proposition~\ref{prop:pushforward-without-quasiprojectivity}.

\subsubsection{Determinant bundles}
\label{sec:determinants-without-quasi-projectivity}

Let $X$ be a smooth proper complex variety and apply
Proposition~\ref{prop:pushforward-without-quasiprojectivity} to the projection
$\pi_X:X\to\operatorname{pt}$.  Since
$\Gamma_{\operatorname{pt}}:\operatorname{Perf}(\mathbb C)^{\rm top}
\xrightarrow{\simeq}BU\times\mathbb Z$ is the map from a perfect complex to
its topological $K$-theory class, we obtain a natural homotopy
\[
\Gamma_{\operatorname{pt}}\circ\pi_*^{\rm alg}
\simeq
\pi_!^{KU,\mathbb C}\circ\Gamma_X.
\]
Under $\Gamma_{\operatorname{pt}}$, the topological realization of the determinant morphism
$\operatorname{Perf}(\mathbb C)\to B\mathbb G_m$ is the usual determinant
$BU\times\mathbb Z\to B\mathbb C^*$. 

Analytification respects derived duals and tensor products.  We may therefore
apply the preceding homotopy to
$\mathbf R\mathcal Hom(\mathcal E,\mathcal E)$, where $\mathcal E$ is the
universal perfect complex on $\mathcal M_X\times X$.  Applying the
determinant gives a natural isomorphism of complex line bundles
\[
\Gamma_X^*K_{\mathcal C_X}
\xrightarrow{\ \cong\ }
(K_{\mathcal M_X})^{\rm top}.
\]
Thus the determinant bundles defined in
Subsection~\ref{compstatement} agree for every smooth proper complex
variety, without any quasi-projectivity assumption.  The construction is
compatible with direct sums and with base change.  In particular, the same
argument applies to the determinant bundles of a smooth proper family.

\subsubsection{Compatibility with Serre duality}
\label{sec:serre-duality-without-quasi-projectivity}

We finally explain why the comparison of determinant bundles respects Serre
duality.

The main point is that the use of $K$-theory gives a parallel description of algebraic and topological dualities.

Let $d=\dim_{\mathbb C}X$, and write
$D_X(F)=\mathbf R\mathcal Hom(F,K_X[d])$ and
$D_{\mathbb C}(V)=\mathbf R\mathcal Hom_{\mathbb C}(V,\mathbb C)$.
The algebraic Serre isomorphism
$s_F^{\rm alg}:\mathbf Rp_*D_X(F)\xrightarrow{\simeq}
D_{\mathbb C}(\mathbf Rp_*F)$ is adjoint to the pairing
\[
\mathbf Rp_*F\otimes\mathbf Rp_*D_X(F)
\longrightarrow
\mathbf Rp_*\bigl(F\otimes D_X(F)\bigr)
\longrightarrow
\mathbf Rp_*K_X[d]
\xrightarrow{\operatorname{Tr}_p}\mathbb C.
\]
The three arrows are respectively the product supplied by the projection
formula, evaluation, and the counit of Grothendieck duality.  In the
terminology of \cite{CHN}, they form the Poincar\'e functor associated with
$\mathbf Rp_*$.

One readily describes the topological counterpart of this operation explicitly.  Let
$\tau$ denote dualization.  If $V$ is a complex vector bundle of rank $c$,
the standard $KU$-Thom class satisfies
\[
\tau u^{KU}(V)\simeq(-1)^c[\det V]u^{KU}(V),
\qquad
\tau u^{KU}(-T_X)\simeq
(-1)^d[K_X^{\rm an}]u^{KU}(-T_X).
\]
Indeed, the first identity follows directly from
$u^{KU}(V)=\lambda_{-1}(\pi^*V^\vee)$, and the second is its multiplicative
extension to virtual bundles, using
$\det(-T_X)=\det(T_X)^{-1}=K_X^{\rm an}$.  Put
$S_X(x)=(-1)^d[K_X^{\rm an}]\tau(x)$.  Dualizing the Pontryagin--Thom
construction of the trace now gives the homotopy
\[
s_X^{KU}:p_!^{KU,\mathbb C}S_X(x)
\simeq\tau p_!^{KU,\mathbb C}(x).
\]

(This is just a fancy way to rephrase the argument from \cite{Miller}; a full explanation of the spectra-level nature of this construction, and its precise origin in Atiyah-Bott-Shapiro comparison will appear in \cite{Karpov}.)

It is easy (using the description of dualities above) to see that
the construction underlying Subsection~\ref{sec:betti-proper-pushforward} may
be carried out while retaining the dualities, and that the resulting
homotopy carries a natural $2$-cell between the two Serre dualities.

More explicitly, let $H_X$ denote the homotopy comparing
$(\mathbf Rp_*)^{\rm top}$ with $p_!^{KU,\mathbb C}$ obtained in
Subsection~\ref{sec:betti-proper-pushforward}.  Analytification of duals,
tensor products, and shifts identifies $\Gamma_XD_X$ with
$S_X\Gamma_X$.  With this identification understood, the preceding argument
gives a homotopy 
\[
s_X^{KU}\circ(H_XD_X)
\simeq
(\tau H_X)\circ(s_X^{\rm alg})^{\rm top}.
\]
This is the precise compatibility with Serre duality needed below. An alternative (complex-analytic and ``transcendental'') proof is recorded in Appendix~\ref{sec:appendix-analytic-comparison}.

\subsection{The non-proper situation}

We extend here the results of the previous subsection to the case of non-proper algebraic varieties. The alternative complex-analytic proof is recorded in Appendix~\ref{sec:appendix-analytic-comparison}.

\subsubsection{The non-proper comparison theorem }
\label{compactly-supported-comparison}

\begin{proposition}\label{compnc2}
Let $X$ be a smooth quasi-projective variety over $\mathbb C$. Then the
diagram
\[
\begin{tikzcd}
	{(\mathcal M_X^{\rm cs})^{\rm top}} &&
	{\mathcal C_X^{\rm cs}} \\
	{\operatorname{Perf}(\mathbb C)^{\rm top}} && KU \\
	& {B\mathbb C^*}
	\arrow["{\Gamma_X}", from=1-1, to=1-3]
	\arrow["{\pi_*^{\rm alg}}"', from=1-1, to=2-1]
	\arrow["{\pi_!^{KU,\mathbb C}}", from=1-3, to=2-3]
	\arrow["{\det}"', from=2-1, to=3-2]
	\arrow["{\det}", from=2-3, to=3-2]
\end{tikzcd}
\]
canonically commutes.
\end{proposition}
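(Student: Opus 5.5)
We reduce to the proper case, Proposition~\ref{prop:pushforward-without-quasiprojectivity}, by choosing a smooth compactification. Since $X$ is quasi-projective, Nagata and Hironaka give an open embedding $j:X\hookrightarrow\bar X$ into a smooth projective variety with complement $D=\bar X\setminus X$ a divisor. A compactly supported perfect complex $E$ on $X$ extends by zero to a perfect complex $j_!E$ on $\bar X$ supported on the compact set $\operatorname{supp}E$. This extension is exact and compatible with base change, so it defines a map $j_!:\mathcal M_X^{\rm cs}\to\mathcal M_{\bar X}$ with $\pi^{\rm alg}_{X*}=\pi^{\rm alg}_{\bar X*}\circ j_!$.

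On the topological side, the collapse $\bar X^{\rm an}\to X^+$ induces extension by zero $j_!:\mathcal C_X^{\rm cs}\to\mathcal C_{\bar X}$. The pushforwards satisfy $\pi_{X!}^{KU,\mathbb C}=\pi_{\bar X!}^{KU,\mathbb C}\circ j_!$. To see this, take a proper embedding of $X$ into $\mathbb R^N$ as the restriction of an embedding of $\bar X$, or argue directly from the Thom-collapse definition: the tubular neighbourhood of $X$ is an open subset of that of $\bar X$, and extension by zero commutes with $e_!$.

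The diagram then decomposes into three pieces:
\begin{enumerate}
\item the square $\Gamma_{\bar X}\circ j_!\simeq j_!\circ\Gamma_X$;
\item the proper square from Proposition~\ref{prop:pushforward-without-quasiprojectivity} applied to $\pi_{\bar X}$;
\item the determinant triangle identifying the realization of $\det:\operatorname{Perf}(\mathbb C)\to B\mathbb G_m$ with $\det:KU\to B\mathbb C^*$, as in \S\ref{sec:determinants-without-quasi-projectivity}.
\end{enumerate}

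For (a), the map $\Gamma_X$ of \S\ref{stacks} is constructed from a smooth compactification. I would take that construction as the definition, using the same $\bar X$. Then (a) holds essentially by construction: $\Gamma_X$ is the lift of $\Gamma_{\bar X}\circ j_!$ through the fibre of the restriction $\mathcal C_{\bar X}\to\mathcal C_D$, where the nullhomotopy on $D$ comes from $j_!E|_D=0$. It then remains to check that the composite pushforward only sees the underlying compactly supported class. This follows because $\mathcal C_X^{\rm cs}$ is the homotopy fibre of $\mathcal C_{\bar X}\to\mathcal C_{D^{\rm an}}$, since $\bar X^{\rm an}/D^{\rm an}\simeq X^+$ by excision for the CW pair $(\bar X^{\rm an},D^{\rm an})$. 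Under this identification $\pi_{X!}$ is the restriction of $\pi_{\bar X!}$ to that fibre.

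The main obstacle is \emph{canonicity}: showing the resulting homotopy is independent of the compactification. Any two compactifications $\bar X_1,\bar X_2$ are dominated by a third $\bar X_3$, obtained by resolving the closure of the diagonal, with proper birational maps $g_i:\bar X_3\to\bar X_i$ that are isomorphisms over $X$. Since $j_{i!}=\mathbf Rg_{i*}\circ j_{3!}$ on compactly supported complexes, and $g_{i!}^{KU,\mathbb C}\circ j_{3!}=j_{i!}$ topologically, the compatibility of Proposition~\ref{prop:pushforward-without-quasiprojectivity} with composition ($\pi_{\bar X_i}\circ g_i=\pi_{\bar X_3}$) identifies the two homotopies. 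One point needs care: the morphisms $g_i$ are projective, hence smoothable, so that proposition applies to them. A further subtlety is that the homotopies are canonical only up to the ambiguity of Remark~\ref{H4rem}. As there, this is harmless after passing to $B\mathbb C^*$: the homotopy classes of $2$-cells are controlled by maps into $B\mathbb C^*$, and the set of compactifications forms a directed system. So the induced isomorphism of determinant line bundles is well defined, which is what ``canonically commutes'' requires.
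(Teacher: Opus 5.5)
Your argument is correct at the paper's level of rigour, but it takes a different route from the paper.

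\textbf{The paper's route.} The paper follows Baum--Fulton--MacPherson. It uses the locally closed embedding $\iota:X\to\mathbb P^n$ supplied by quasi-projectivity. It then asserts the BFM-type square comparing $\iota_*^{\rm alg}$ on compactly supported complexes with the topological Gysin map $\iota_!^{KU,\mathbb C}$; this is a genuine Riemann--Roch statement for embeddings. Independence of $\iota$ comes from deformation to the normal cone. Only after that does it apply Proposition~\ref{prop:pushforward-without-quasiprojectivity} to $\mathbb P^n\to\operatorname{pt}$.

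\textbf{Your route.} You use an \emph{open} embedding $j$ into a smooth compactification $\bar X$. The square for $j$ is then essentially tautological: it is extension by zero on both sides, and $\Gamma_X$ is defined through $\bar X$ anyway, as in \S\ref{stacks}. All non-formal input is concentrated in the proper case for $\bar X\to\operatorname{pt}$.

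\textbf{What each buys.} Your route has two advantages. No BFM or deformation-to-the-normal-cone argument has to be redone at the level of mapping stacks. Quasi-projectivity is never used, since Nagata plus Hironaka suffice and any morphism between smooth varieties is smoothable via its graph. So your argument covers every smooth separated $X$. The price is resolution of singularities and a domination argument between compactifications for canonicity. The paper's route works with any projective embedding and ties directly to the classical BFM theorem.

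\textbf{First small point.} A proper embedding of $X$ into $\mathbb R^N$ is never the restriction of an embedding of the compact $\bar X$. Your second justification, the compactly supported Thom collapse along the open inclusion of tubular neighbourhoods, is the one that works.

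\textbf{Second small point: canonicity.} The essential input is a locality property of the proper comparison over the open set where $g_i$ is an isomorphism. Concretely, the proper-case homotopy for $g_i$, restricted to objects supported in $X$, must be the tautological identification coming from $\mathbf Rg_{i*}\circ j_{3!}\cong j_{i!}$ and $g_{i!}^{KU,\mathbb C}\circ j_{3!}=j_{i!}$. Your remark that $2$-cells are ``controlled by maps into $B\mathbb C^*$'' is not by itself an argument. Two isomorphisms of the determinant lines can differ by a non-constant map to $\mathbb C^*$. Even after imposing compatibility with the Serre-duality forms, they can differ by a sign on components, which would change orientations. So the directed-system argument is the right one, but it rests on this locality statement.
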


\begin{proof}
\textbf{Step 0.} The proof is similar to the argument of \cite{BFM}. The main
statement of that paper implies that, if $X$ is a smooth projective variety, the diagram
\[
\begin{tikzcd}
	{K^{\rm alg}(X)} & {K^{\rm top}(X)} \\
	{K^{\rm alg}(\operatorname{pt})} &
	{K^{\rm top}(\operatorname{pt})}
	\arrow["{\Gamma_X}", from=1-1, to=1-2]
	\arrow["{\pi_{X*}^{\rm alg}}"', from=1-1, to=2-1]
	\arrow["{\pi_{X!}^{KU,\mathbb C}}", from=1-2, to=2-2]
	\arrow["{\Gamma_{\operatorname{pt}}}"', from=2-1, to=2-2]
\end{tikzcd}
\]
commutes.

Thus there are, as one can see, two main differences from our context:
\begin{enumerate}
    \item In their proof, the authors work not with $K$-theory but with
    so-called \textit{$K$-homology}, which, however, coincides with $K$-theory
    for smooth varieties. They also work at the level of $K$-groups, rather
    than their categorification, that is, rather than at the level of mapping
    stacks.
    \item They do not consider the compactly supported variant of the theory.
\end{enumerate}

However, inspecting the proof in \cite{BFM}, one sees that these differences
do not affect the argument, and the proof generalizes verbatim.

\textbf{Step 1.} For the convenience of the reader, we sketch the argument.
First, embed $X$ into $\mathbb P^n$ via an embedding
$\iota:X\to\mathbb P^n$, and prove that the diagram
\[
\begin{tikzcd}\label{BFMdiag}
	{(\mathcal M_X^{\rm cs})^{\rm top}} &
	{\mathcal M_{\mathbb P^n}^{\rm top}} \\
	{\mathcal C_X^{\rm cs}} & {\mathcal C_{\mathbb P^n}}
	\arrow["{\iota_*^{\rm alg}}", from=1-1, to=1-2]
	\arrow["{\Gamma_X}"', from=1-1, to=2-1]
	\arrow["{\Gamma_{\mathbb P^n}}", from=1-2, to=2-2]
		\arrow["{\iota_!^{KU,\mathbb C}}"', from=2-1, to=2-2]
\end{tikzcd}
\]
commutes, as in \cite[Sections 2, 4]{BFM}.

\textbf{Step 2.} Then observe the factorization of the morphism
$X\to\operatorname{pt}$ as
\[
X\xrightarrow{\iota}\mathbb P^n\longrightarrow\operatorname{pt},
\]
and, to deal with the second arrow, apply Proposition~\ref{prop:pushforward-without-quasiprojectivity} to the smooth
variety $\mathbb P^n$.

The fact that the whole procedure is independent of the choice of the
embedding $\iota$ is proved as in \cite[Sections 2, 4]{BFM}, using deformation
to the normal cone.
\end{proof}

\subsubsection{The comparison in non-proper families}
\label{nonproper-family-comparison}

It will be very important for us to use the version of the constructions of
Subsection~\ref{compstatement} in families, as in Section \ref{stacks}.


As in Section \ref{stacks}, we always restrict ourselves here to \textit{topologically
locally trivial families}.

The first thing to observe is that there is still a natural comparison
morphism
\[
\Gamma_{\mathfrak X/U}:
(\mathcal M_{\mathfrak X/U}^{\rm cs})^{\rm top}
\longrightarrow
\mathcal C_{\mathfrak X/U}^{\rm cs}.
\]
Our work above generalizes, furthermore, to families as follows.

\begin{theorem}
Let $\pi:\mathfrak X\to U$ be a smooth but non-proper algebraic family which
is known to be topologically locally trivial. Then there is a natural
commutative diagram
\[
\begin{tikzcd}\label{diagfam2}
	{(\mathcal M_{\mathfrak X/U}^{\rm cs})^{\rm top}} &&
	{\mathcal C_{\mathfrak X/U}^{\rm cs}} \\
	{U\times\operatorname{Perf}(\mathbb C)^{\rm top}} && {U\times KU} \\
	& {B\mathbb C^*.}
	\arrow["{\Gamma_{\mathfrak X/U}}", from=1-1, to=1-3]
	\arrow["{\pi_*^{\rm alg}}"', from=1-1, to=2-1]
	\arrow["{\pi_!^{KU,\mathbb C}}", from=1-3, to=2-3]
	\arrow["{\det\circ\operatorname{pr}_2}"', from=2-1, to=3-2]
	\arrow["{\det\circ\operatorname{pr}_2}", from=2-3, to=3-2]
\end{tikzcd}
\]
\end{theorem}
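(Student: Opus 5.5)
The plan is to deduce the family statement from Proposition~\ref{compnc2} by running its proof relative to $U$. The argument is local on $U$ followed by a gluing step. In order, the steps are: first construct the comparison $\Gamma_{\mathfrak X/U}$ and the pushforward homotopy locally on $U$; second, check that these local homotopies are canonical, so they glue; third, apply $\det\circ\operatorname{pr}_2$ to obtain the asserted commutative diagram.

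\textbf{Step 1: local trivializations.} Cover $U$ by opens $U_i$, Zariski-open where possible and otherwise analytic, over which the family is topologically trivial. Refining if necessary, arrange that $\mathfrak X|_{U_i}$ admits a relative locally closed embedding $\iota_i:\mathfrak X|_{U_i}\hookrightarrow U_i\times\mathbb P^n$. Over $U_i$ we then have the factorization
\[
\mathfrak X|_{U_i}\xrightarrow{\iota_i}U_i\times\mathbb P^n\xrightarrow{\operatorname{pr}}U_i .
\]

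\textbf{Step 2: the two squares.} For the first arrow, compactly supported objects push forward along $\iota_i$ to perfect complexes on $U_i\times\mathbb P^n$ whose support is proper over $U_i$. The relative version of Step~1 of Proposition~\ref{compnc2} then gives a homotopy $\Gamma\circ\iota_{i*}^{\rm alg}\simeq\iota_{i!}^{KU,\mathbb C}\circ\Gamma$. This is the BFM argument carried out with a base parameter. It works because deformation to the normal cone and the Thom classes are compatible with base change, and the Blanc, Moulinos and Jin constructions are natural in the test scheme. For the second arrow, $\operatorname{pr}:U_i\times\mathbb P^n\to U_i$ is smooth and proper. Here I apply Proposition~\ref{prop:pushforward-without-quasiprojectivity} with the parameter: the Blanc realization is computed on affine tests $\operatorname{Spec}A$, and replacing these by tests over $U_i$ gives a natural homotopy $\Gamma_{\operatorname{pt}}\circ\operatorname{pr}_*^{\rm alg}\simeq\operatorname{pr}_!^{KU,\mathbb C}\circ\Gamma$ of maps to $U_i\times KU$. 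The proposition also gives compatibility with composition. Composing the two homotopies therefore yields the diagram of the theorem over each $U_i$.

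\textbf{Step 3: gluing (the main obstacle).} The homotopies must agree on overlaps $U_i\cap U_j$ up to coherent higher homotopy. Here I use that the target is $B\mathbb C^*\simeq K(\mathbb Z,2)$, a $1$-type. After composing with $\det$, a homotopy between two maps into $B\mathbb C^*$ amounts to an isomorphism of line bundles, and two such homotopies differ by a map to $\mathbb C^*$, which is homotopically discrete. So it suffices to show that on $U_i\cap U_j$ the homotopies built from $\iota_i$ and from $\iota_j$ induce the same isomorphism of determinant lines. I would check this exactly as embedding-independence is proved in Proposition~\ref{compnc2}. Compare both to the diagonal embedding into $U\times\mathbb P^n\times\mathbb P^m$, and use deformation to the normal cone over the base. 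Since the determinant lines involved form a continuous family, it is enough to compare the two isomorphisms fibrewise, where the non-family result applies. The $1$-truncation thus reduces all higher coherences to this single cocycle check. This is why the theorem is stated after composing with $\det$ rather than at the level of $U\times KU$.
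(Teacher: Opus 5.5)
Your Steps 1--2 follow the same two-square pattern as the paper. The difficulty is that localizing on $U$ creates a gluing problem the paper never faces, and your Step 3, which you yourself flag as the main obstacle, does not solve it.

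\textbf{The gluing step fails.} $B\mathbb C^*\simeq K(\mathbb Z,2)$ is a $2$-type, not a $1$-type, and $\mathbb C^*\simeq S^1$ is not homotopically discrete. Write $Y_{ij}$ for the part of $(\mathcal M^{\rm cs}_{\mathfrak X/U})^{\rm top}$ over $U_i\cap U_j$. Homotopies between two maps $Y_{ij}\to B\mathbb C^*$ form, when nonempty, a torsor for $\operatorname{Map}(Y_{ij},\mathbb C^*)$. That space has $\pi_0\cong H^1(Y_{ij};\mathbb Z)$ and $\pi_1\cong H^0(Y_{ij};\mathbb Z)$. So gluing the local homotopies requires a choice of $2$-cells on double overlaps and a cocycle condition for them on triple overlaps. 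Agreement on double overlaps alone is not enough.

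Your fibrewise reduction also fails. The non-family result gives agreement on each fibre only up to homotopy. A difference map $Y_{ij}\to U_{ij}\to\mathbb C^*$ with nonzero winding along a loop in $U_{ij}$ is null on every fibre. It is still non-null on $Y_{ij}$, because $Y_{ij}\to U_{ij}$ has the zero-object section.

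\textbf{Step 1 is also problematic.} Over analytic opens $U_i$ you leave the algebraic setting in which Blanc's, Jin's and BFM's results apply. Moreover, no shrinking of $U$ produces $\mathfrak X|_{U_i}\hookrightarrow U_i\times\mathbb P^n$ if some fibre is not quasi-projective, and the theorem does not exclude that case.

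\textbf{The missing idea: no localization is needed.} The paper chooses a smooth compactification $\mathfrak J$ of the total space $\mathfrak X$. It then uses the graph map $x\mapsto(x,\pi(x))$, a locally closed embedding $\mathfrak X\hookrightarrow\mathfrak J\times U$ over $U$ that is fibrewise a locally closed embedding of smooth varieties.
\begin{itemize}
\item The BFM-type argument, as in Step~1 of the proof of Proposition~\ref{compnc2}, handles this embedding.
\item Proposition~\ref{prop:pushforward-without-quasiprojectivity}, applied in families, handles the smooth proper (not necessarily projective) projection $\mathfrak J\times U\to U$.
\end{itemize}
Composing the two homotopies and applying $\det\circ\operatorname{pr}_2$ gives the diagram globally, so there is no descent problem at all. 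When $\mathfrak X$ is quasi-projective, $\mathfrak X\subset\mathbb P^n$ already gives a global graph embedding into $U\times\mathbb P^n$, so even your own factorization need not be localized. With this replacement your Step~2 goes through unchanged and Step~3 can be dropped.
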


\begin{proof}
One argues similarly to what was done above.

First, note that any family $\mathfrak X\to U$ as in the theorem admits a
description as the composition of a locally closed embedding
$\mathfrak X\to\mathfrak Y$ into a family of smooth projective varieties over
the base and the projection $\mathfrak Y\to U$.

Indeed, choose a smooth compactification $\mathfrak J$ of the total space of
$\mathfrak X$, and embed $\mathfrak X$ into $\mathfrak J\times U$ via the
graph map.

Since $\mathfrak X\to\mathfrak Y$ is fiberwise a locally closed embedding of
smooth varieties, one proves that the algebraic and topological pushforwards
canonically coincide. This is analogous to diagram~\ref{BFMdiag}.

The step $\mathfrak Y\to U$ is, by now, clear: one uses Proposition \ref{prop:pushforward-without-quasiprojectivity} in families.
\end{proof}

It is now  clear that this procedure commutes with taking Serre duality as required.

\subsection{\texorpdfstring{$KO$}{KO}-theory}\label{KOSec}

\subsubsection{The $KO$-theoretic interpretation}
\label{KO-theoretic-interpretation}

For \textit{any} smooth algebraic family
$\pi:\mathfrak X\to U$, not necessarily proper, we thus have a comparison
between the algebraic and topological determinant bundles.

We will be interested only in the latter, and now give its interpretation via
the pushforward operation in $KO$-theory, which turns out to be very
convenient. This trick can be regarded as a topological reformulation of the
analytic argument in \cite[Subsection 3.5]{CaoGrossJoyce2020}.

\begin{lemma}
Let $\pi:\mathfrak X\to U$ be a relatively Calabi--Yau, not necessarily
proper, topologically locally trivial family of complex $4n$-folds over an
algebraic space $U$.

We use below the spin structure determined by the relative Calabi--Yau
structure.

Define the real determinant line bundle
$L^{\not D}_{\mathcal C_{\mathfrak X/U}^{\rm cs}}$ by the classifying map
\[
\mathcal C_{\mathfrak X/U}^{\rm cs}
\xrightarrow{\operatorname{Ad}_{\mathfrak X/U}}
\mathcal C_{\mathfrak X/U}^{\mathbb R,\rm cs}
\xrightarrow{\pi_!^{KO}}
U\times KO
\xrightarrow{\operatorname{pr}_2}
KO
\xrightarrow{\det_{\mathbb R}}
\mathbb RP^\infty.
\]
Here $\operatorname{Ad}_{\mathfrak X/U}$ is the Joyce--Upmeier adjoint-class
map. Denote by
\[
O\bigl(L^{\not D}_{\mathcal C_{\mathfrak X/U}^{\rm cs}}\bigr)
=
\bigl(L^{\not D}_{\mathcal C_{\mathfrak X/U}^{\rm cs}}\setminus0\bigr)
/\mathbb R_{>0}
\]
the corresponding $\mathbb Z_2$-torsor.

Then there is a canonical isomorphism
\[
O^{\not D}_{\mathcal C_{\mathfrak X/U}^{\rm cs}}
\cong
O\bigl(L^{\not D}_{\mathcal C_{\mathfrak X/U}^{\rm cs}}\bigr).
\]
\end{lemma}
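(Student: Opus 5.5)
The plan is to compare the two torsors at the level of classifying maps, using complexification $c$ and the relation $c\circ\operatorname{Ad}(\alpha)\simeq\alpha^\vee\otimes\alpha$ from Subsection~\ref{sec:push}.

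\textbf{Step 1: the complex determinant is the complexification of the real one.} Because the spin structure comes from the relative Calabi--Yau structure, $K_\pi$ is trivialized. By \eqref{complex-spin-pushforward} we then have $\pi_!^{KU,\mathbb C}=\pi_!^{KU,\mathrm{spin}}$. The identity $c(\pi_!^{KO}y)=\pi_!^{KU,\mathrm{spin}}(c\,y)$ holds fiberwise, and it should hold at the level of representing spaces over $U$, since the ABS Thom class complexifies to the spin $KU$ Thom class. Combining this with $c\circ\operatorname{Ad}\simeq(-)^\vee\otimes(-)$ gives a homotopy
\[
c\circ\pi_!^{KO}\circ\operatorname{Ad}_{\mathfrak X/U}\simeq\pi_!^{KU,\mathbb C}\bigl((\mathcal E^{\rm top})^\vee\otimes\mathcal E^{\rm top}\bigr).
\]
Composing with $\det$ and using $\det_{\mathbb C}\circ c\simeq(\det_{\mathbb R})\otimes\mathbb C$ on $KO$, we obtain a canonical isomorphism
\[
K_{\mathcal C^{\rm cs}_{\mathfrak X/U}}\cong L^{\not D}_{\mathcal C^{\rm cs}_{\mathfrak X/U}}\otimes_{\mathbb R}\mathbb C.
\]

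\textbf{Step 2: matching the quadratic forms.} A real line $L$ carries the canonical form $Q_L:(L\otimes\mathbb C)^{\otimes 2}\to\mathbb C$, the complexification of $\ell\otimes\ell'\mapsto\ell\ell'$ after a choice of basis. This form is independent of the basis up to the positive scalar $\lambda^2$, so it is well defined once we view it through the torsor of orientations. More precisely, a real $\phi:L\cong\mathbb R$ gives $\phi_{\mathbb C}$ with $\phi_{\mathbb C}^{\otimes2}=Q$ exactly when $Q$ is the form induced by the real structure. Under that condition, the map $\phi\mapsto\phi_{\mathbb C}$ is an isomorphism from $O(L)$ to the square roots of $Q$, and it is equivariant for the $\mathbb Z_2$-actions.

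The main obstacle is to show that the Serre-duality form $Q_{\mathcal C}$ corresponds, under Step 1, to the form coming from the real structure. The approach I would take first is the following. On $\alpha^\vee\otimes\alpha$, the duality $\tau$ coincides with the involution swapping the tensor factors, whose fixed points form the real structure $\operatorname{Ad}$. On the complexification of a real class, $\tau$ is complex conjugation, which is canonically the identity on $c(y)$.

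With the trivialization of $K_X$, the topological Serre duality square of Subsection~\ref{compstatement} reduces, for $d=8n$, to $s^{KU}:\pi_!S_X(x)\simeq\tau\pi_!(x)$ with $S_X=\tau$. Its real counterpart is the $KO$-pushforward, on which $\tau$ acts trivially. One then has to check that $s^{KU}$, restricted along $c$, is the complexification of the identity homotopy. I would verify this at the level of Thom classes: $\tau u^{KU}(V)=(-1)^c[\det V]u^{KU}(V)$, and for the complexification of the ABS class the sign and the $\det$ factor cancel because of the spin structure and the fact that the dimension is divisible by $8$. This also pins down the sign, which is the step most likely to need care.

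\textbf{Step 3: conclusion.} The two torsors are then identified canonically. The same argument works fiberwise over $U$ and is compatible with base change and direct sums. This gives the asserted isomorphism $O^{\not D}\cong O(L^{\not D})$.
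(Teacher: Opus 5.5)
Your overall route is the paper's: the Atiyah--Bott--Shapiro square, together with $\pi_!^{KU,\mathbb C}=\pi_!^{KU,\mathrm{spin}}$, gives $K_{\mathcal C}\cong L^{\not D}\otimes_{\mathbb R}\mathbb C$. You rightly make the step $c\circ\operatorname{Ad}\simeq(-)^\vee\otimes(-)$ explicit, which the paper leaves implicit. After that comes a comparison of Serre dualities and linear algebra on a line. The gap is in Step~2.

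A real line carries no canonical quadratic form. A form determined only up to a factor $\lambda^2>0$ records only a sign, and it has no torsor of square roots. Your argument is therefore forced into the stronger claim that $Q_{\mathcal C}$ is the complexification of a \emph{positive} real pairing on $L^{\not D}$ (your ``complexification of the identity homotopy''). Only under positivity do real $\phi$ with $\phi_{\mathbb C}^{\otimes 2}=Q$ exist.

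That positivity is convention-dependent and in general false. The two natural real forms of $\alpha^\vee\otimes\alpha=\operatorname{End}(E)$ are the skew-Hermitian endomorphisms (the convention $\operatorname{Ad}([P])=[\operatorname{Ad}(P)]$) and the Hermitian ones. The swap (trace) pairing $\operatorname{tr}(AB)$ is negative definite on the first and positive definite on the second. Negating the fibre form multiplies the induced form on the real determinant line by $(-1)^{\operatorname{ind}}$. So on components of odd real index positivity fails for at least one of these conventions, and such components occur: they are the odd part of the grading in $\mathbb Z_2//\mathbb Z_2$. Koszul signs of the type $(-1)^{n(n-1)/2}$ in the pushforward can do the same. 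Where the form is negative, your map $\phi\mapsto\phi_{\mathbb C}$ does not land in the square roots of $Q$.

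What is missing is that positivity is unnecessary. The paper uses a real $KO$ Serre-duality square, compatible with the complex one via the ABS square and the identity $f_!^{KU,\mathbb C}=f_!^{KU,\mathrm{spin}}(-\otimes K_f^{-1/2})$. This equips $L^{\not D}$ with a nondegenerate real pairing $\iota$ with $\iota_{\mathbb C}=Q_{\mathcal C}$, of either sign. It then uses the elementary fact that for any such $\iota$ on a real line $V$, the square roots $\kappa:V_{\mathbb C}\cong\mathbb C$ of $\iota_{\mathbb C}$ form a torsor canonically isomorphic to $V/\mathbb R_{>0}$. Send the ray of $v$ to the $\kappa$ with $\kappa(v)\in\mathbb R_{>0}$ if $\iota(v,v)>0$, and with $\kappa(v)\in i\mathbb R_{>0}$ if $\iota(v,v)<0$.

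So you should aim to prove only that complex Serre duality, restricted along $c$, is the complexification of a real duality, i.e.\ that $Q_{\mathcal C}$ is real on $L^{\not D}$. Your Thom-class check is a reasonable way to get that reality, but it should not be required to fix the sign. With that replacement your argument goes through along the paper's lines.
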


\begin{proof}
\textbf{Step 1.} It is enough to prove this in the case of a trivial family
$\pi$.

Then one has the following commutative diagram:
\[
\begin{tikzcd}\label{ConFl}
	{\mathcal C_{\mathfrak X/U}^{\mathbb R,\rm cs}} &&
	{\mathcal C_{\mathfrak X/U}^{\rm cs}} \\
	\\
	{U\times KO} && {U\times KU}
	\arrow["c", from=1-1, to=1-3]
	\arrow["{\pi_!^{KO}}"', from=1-1, to=3-1]
	\arrow["{\pi_!^{KU,\mathrm{spin}}}", from=1-3, to=3-3]
	\arrow["{\operatorname{id}_U\times c}"', from=3-1, to=3-3]
\end{tikzcd}
\]
This is the so-called ``Atiyah--Bott--Shapiro square.'' It commutes because
the restriction of the $\operatorname{Spin}^c$ orientation of $KU$ along
$\operatorname{Spin}\to\operatorname{Spin}^c$ is the complexification of the
spin orientation of $KO$.\footnote{In the rest of the present paper, we use
the word \textit{orientation} in a different sense.} See \cite{CF}, going back
to \cite{ABS}. For a modern treatment, see \cite{HK}. Since the relative
Calabi--Yau structure trivializes the relative canonical bundle,
\eqref{complex-spin-pushforward} identifies the right vertical map with
$\pi_!^{KU,\mathbb C}$.

The statement therefore follows from Steps 2 and 3 below.

\textbf{Step 2.} As we already mentioned, \textit{topological Serre duality} (see
\cite{Miller}, \cite{Dyer}, and, for a categorification,
\cite[Appendix A]{Karpov}) asserts that the following diagram commutes:
\[
\begin{tikzcd}
	{\mathcal C_{\mathfrak X/U}^{\rm cs}} &
	{\mathcal C_{\mathfrak X/U}^{\rm cs}} \\
	{U\times KU} & {U\times KU}
	\arrow["{(-)^\vee}", from=1-1, to=1-2]
	\arrow["{\pi_!^{KU,\mathbb C}}"', from=1-1, to=2-1]
	\arrow["{\pi_!^{KU,\mathbb C}}", from=1-2, to=2-2]
	\arrow["{(-)^\vee}"', from=2-1, to=2-2]
\end{tikzcd}
\]
where both the upper and lower horizontal arrows are induced by the relative
duality map.

It has a natural real counterpart, which is proved analogously:
\[
\begin{tikzcd}
	{\mathcal C_{\mathfrak X/U}^{\mathbb R,\rm cs}} &
	{\mathcal C_{\mathfrak X/U}^{\mathbb R,\rm cs}} \\
	{U\times KO} & {U\times KO}
	\arrow["{(-)^\vee}", from=1-1, to=1-2]
	\arrow["{\pi_!^{KO}}"', from=1-1, to=2-1]
	\arrow["{\pi_!^{KO}}", from=1-2, to=2-2]
	\arrow["{(-)^\vee}"', from=2-1, to=2-2]
\end{tikzcd}
\]

The Atiyah--Bott--Shapiro square above and
\eqref{complex-spin-pushforward} prove that the complex and real duality
diagrams are compatible. The lemma therefore follows from the following
elementary linear-algebraic statement.

\textbf{Step 3.} Let $V$ be a real one-dimensional vector space, and let
$V_{\mathbb C}$ be its complexification. Suppose that $V$ is equipped with a
non-degenerate pairing $\iota:V\otimes V\to\mathbb R$.

Denote by $\omega(\iota)$ the set of isomorphisms
$\kappa:V_{\mathbb C}\xrightarrow{\sim}\mathbb C$ for which the diagram
\[
\begin{tikzcd}
	{V_{\mathbb C}\otimes V_{\mathbb C}} & {\mathbb C} \\
	{\mathbb C\otimes\mathbb C} & {\mathbb C}
	\arrow["{\iota_{\mathbb C}}", from=1-1, to=1-2]
	\arrow["{\kappa\otimes\kappa}"', from=1-1, to=2-1]
	\arrow["{\operatorname{Id}}", from=1-2, to=2-2]
	\arrow["m"', from=2-1, to=2-2]
\end{tikzcd}
\]
commutes.

Then there is a natural equivalence of $\mathbb Z_2$-torsors
\[
V/\mathbb R_{>0}\cong\omega(\iota).
\]
\end{proof}

\section{Bordism categories and canonical orientations}
\label{sec:bordism-orientations}
\subsection{Bordism invariance}
\label{sec:cobordism-invariance}

\subsubsection{The open version}\label{open-cobordism}

Following the programme outlined in Subsection~\ref{compstatement}, and using
the results of Subsections~\ref{sec:push} and
\ref{KO-theoretic-interpretation}, we will be concerned with the following
situation.

Suppose that $X$ is a smooth, possibly non-compact, spin $8$-manifold.
Write
\[
\mathcal C_X^{\rm cs}
=\operatorname{Map}_*(X^+,BU\times\mathbb Z),
\qquad
\mathcal C_X^{\mathbb R,\rm cs}
=\operatorname{Map}_*(X^+,BO\times\mathbb Z).
\]
We shall write $O^{KO}_{\mathcal C_X^{\mathbb R,\rm cs}}$ for the principal
$\mathbb Z_2$-bundle classified by
\[
\mathcal C_X^{\mathbb R,\rm cs}
\xrightarrow{\pi_{X!}^{KO}}KO
\xrightarrow{\det_{\mathbb R}}\mathbb RP^\infty.
\]
Put $O^{\not D}_{\mathcal C_X^{\rm cs}}
=\operatorname{Ad}_X^*O^{KO}_{\mathcal C_X^{\mathbb R,\rm cs}}$.
If $X$ is a complex Calabi--Yau $4$-fold and its spin structure is determined
by the Calabi--Yau structure, the lemma of
Subsection~\ref{KO-theoretic-interpretation} identifies this bundle with
the topological orientation bundle of Subsection~\ref{compstatement}.
The goal of this subsection is to prove the following statement.

\begin{proposition}\label{closedcobordism}
Consider two closed smooth spin $8$-folds $X_1$ and $X_2$.

\begin{enumerate}
\item[\rm(a)] Let $\phi_i\in\mathcal C_{X_i}$ for $i=1,2$. Suppose that there
exists a spin bordism $W$ from $X_1$ to $X_2$, together with an element
$\phi\in\mathcal C_W$, which restricts to $\phi_1$ and $\phi_2$ on the
components of
\[
\partial W=-X_1\sqcup X_2,
\]
respectively. Then these data give rise to a canonical identification of
$\mathbb Z_2$-torsors
\[
\iota_{[W,\phi]}:
O^{\not D}_{\mathcal C_{X_1}}|_{\phi_1}
\xrightarrow{\sim}
O^{\not D}_{\mathcal C_{X_2}}|_{\phi_2}.
\]

\item[\rm(b)] Let
$\psi_i\in\mathcal C_{X_i}^{\mathbb R}$ for $i=1,2$. Suppose that there exists
a spin bordism $W$ from $X_1$ to $X_2$, together with an element
$\psi\in\mathcal C_W^{\mathbb R}$, which restricts to $\psi_1$ and $\psi_2$
on the components of $\partial W=-X_1\sqcup X_2$, respectively. Then these
data give rise to a canonical identification of $\mathbb Z_2$-torsors
\[
\iota^{KO}_{[W,\psi]}:
O^{KO}_{\mathcal C_{X_1}^{\mathbb R}}|_{\psi_1}
\xrightarrow{\sim}
O^{KO}_{\mathcal C_{X_2}^{\mathbb R}}|_{\psi_2}.
\]
\end{enumerate}
\end{proposition}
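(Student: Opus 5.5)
Part (a) reduces to part (b): by definition $O^{\not D}_{\mathcal C_{X_i}}=\operatorname{Ad}_{X_i}^*O^{KO}_{\mathcal C^{\mathbb R}_{X_i}}$. The map $\operatorname{Ad}$ is induced by a map of representing spaces $BU\times\mathbb Z\to BO\times\mathbb Z$, so it commutes with restriction to the boundary. Hence $\psi=\operatorname{Ad}_W(\phi)$ restricts to $\psi_i=\operatorname{Ad}_{X_i}(\phi_i)$, and we set $\iota_{[W,\phi]}=\iota^{KO}_{[W,\operatorname{Ad}_W\phi]}$. So the real content is (b). Here the plan is to show that the two real $K$-theory classes $\pi_{X_1!}^{KO}(\psi_1)$ and $\pi_{X_2!}^{KO}(\psi_2)$ in $KO^0(\operatorname{pt})\simeq KO^{-8}(\operatorname{pt})$ are canonically identified by a path in the space $\Omega^\infty\Sigma^{-8}KO$. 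Applying $\det_{\mathbb R}$ then gives the isomorphism of $\mathbb Z_2$-torsors.

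The key input is the relative pushforward for a spin manifold with boundary, together with a Stokes-type formula. Choose a proper embedding of $W$ into $\mathbb R^N\times[0,1]$ that sends $X_1$ to $\mathbb R^N\times\{0\}$ and $X_2$ to $\mathbb R^N\times\{1\}$, and is a product near the boundary. The Atiyah--Bott--Shapiro Thom class of the normal bundle of $W$ restricts to the Thom classes of the normal bundles of $X_1$ and $X_2$. This uses the boundary convention for spin structures, with the outward normal and the orientation reversal on $X_1$.

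We then run the Pontryagin--Thom construction of Subsection~\ref{sec:push} for $W$: Thom isomorphism, then extension by zero into the tube $\mathbb R^N\times[0,1]$. Because the collapse is done at the level of the representing space $\operatorname{Map}(W,BO\times\mathbb Z)$, it produces a map from $\mathcal C_W^{\mathbb R}$ to the space of paths in $\Omega^\infty\Sigma^{N-8}KO$ of compactly supported classes on $\mathbb R^N$. The endpoints of such a path are the pushforwards of the restrictions to $X_1$ and to $X_2$. After Bott periodicity, this gives a path from $\pi_{X_1!}^{KO}(\psi_1)$ to $\pi_{X_2!}^{KO}(\psi_2)$. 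Composing with $\det_{\mathbb R}:KO\to\mathbb RP^\infty$ and taking parallel transport of the universal $\mathbb Z_2$-bundle along this path defines $\iota^{KO}_{[W,\psi]}$.

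Canonicity is the main obstacle. The path depends on the embedding, the tubular neighbourhood and the stabilization. All of these choices range over contractible spaces, or over spaces whose $\pi_1$-action is trivial after stabilization, so any two choices are joined by a homotopy of paths rel endpoints. Parallel transport in a $\mathbb Z_2$-bundle depends only on the homotopy class of the path rel endpoints, so the transport is independent of the choices.

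The delicate point is the sign convention on the incoming boundary $-X_1$. Reversing the spin structure on $X_1$ could change $\pi_{X_1!}^{KO}$ by a sign, that is, by the dualization, which acts on $\det_{\mathbb R}$. I expect to handle this with the real Serre-duality square from Subsection~\ref{KO-theoretic-interpretation}, or by treating $W$ as a cylinder-with-collar, so that only the orientation of the tube coordinate $[0,1]$ enters. A consistency check is that for the cylinder $W=X\times[0,1]$ with a constant family, the construction returns the constant path and hence the identity.
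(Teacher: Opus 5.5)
Your proposal is correct and follows the paper's proof essentially step by step. You reduce (a) to (b) by pulling back along $\operatorname{Ad}$, embed $W$ neatly into $[0,1]\times\mathbb R^N$ in product form near the boundary, read the Pontryagin--Thom collapse combined with the $KO$ Thom class and $\psi$ as a path in $KO$ from $\pi^{KO}_{X_1!}(\psi_1)$ to $\pi^{KO}_{X_2!}(\psi_2)$, apply $\det_{\mathbb R}$, and obtain canonicity from uniqueness of the embedding data up to homotopy after stabilization.

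The sign issue you flag at $-X_1$ is already settled by your own embedding, so no appeal to Serre duality is needed; the paper leaves this point implicit. Placing $X_1$ at $t=0$ and $X_2$ at $t=1$ means the common tube direction $\partial_t$ induces the original spin structures $X_1$ and $X_2$ on the two normal-bundle restrictions.
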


Before proving the proposition, let us recall that a spin bordism from
$X_1$ to $X_2$ is a spin $9$-manifold $W$ with boundary identified
with $-X_1\sqcup X_2$, and with product spin structure near the boundary.

\begin{proof}
It is enough to prove part~\textup{(b)}. Indeed, part~\textup{(a)} then follows
by pulling back along the map $\operatorname{Ad}$. We use the ideas of
Carvalho's argument; see \cite{Carvalho}.

\textbf{Step 1.} First, choose small collars $C_1$ and $C_2$ of the boundary
components in $W$. For $N$ sufficiently large, there
exists a neat embedding
\[
e:W\longrightarrow [0,1]\times\mathbb R^N
\]
which is also of product form on the collars. This follows from the relative
Whitney embedding theorem.

\textbf{Step 2.} As in the construction of
Subsection~\ref{sec:push}, one obtains a natural Pontryagin--Thom collapse map
\[
c:[0,1]_+\wedge S^N\longrightarrow\operatorname{Th}(\nu_e),
\]
whose restrictions to the boundary are the Pontryagin--Thom collapse maps
\[
c_0:\{0\}_+\wedge S^N\longrightarrow
\operatorname{Th}(\nu_e|_{X_1}),
\qquad
c_1:\{1\}_+\wedge S^N\longrightarrow
\operatorname{Th}(\nu_e|_{X_2}).
\]

Combining $c$ with the $KO$-theory Thom class and the class $\psi$, one may
interpret it as a path in $KO$ between
$\pi_{X_1!}^{KO}(\psi_1)$ and $\pi_{X_2!}^{KO}(\psi_2)$. Applying the real
determinant therefore gives the desired isomorphism.

\textbf{Step 3.} It remains to show that the isomorphism constructed in
Step~2 is canonical. For this, it is enough to show that the data used in
Step~1 (the collars, the tubular neighbourhood, and the neat embedding with
product structure on the collars) are unique up to homotopy after
stabilization. This is standard; see, for example, \cite[p.~22]{GMTW}.
\end{proof}

\subsubsection{The closed version}\label{closed-relative-cobordism}

We begin with the definition of relative cobordism. There are various
equivalent definitions, the best known being the Conner--Floyd formalism
\cite{CF} and the formalism of Genauer \cite{Genauer}, introduced earlier by
Laures \cite{Laures}. For remarks on their equivalence, see
\cite[p.~6]{Laures}.

We summarize the situation as follows.

\begin{definition}
Let $(Y,A)$ be a topological pair. A \textit{relative $n$-bordism over
$(Y,A)$} consists of the following data:
\begin{enumerate}
\item Two smooth $n$-manifolds, possibly with boundary, say
$(M_0,\partial M_0)$ and $(M_1,\partial M_1)$, equipped with maps
$f_i:M_i\to Y$ satisfying $f_i(\partial M_i)\subseteq A$ for $i=0,1$.
\item A smooth $(n+1)$-manifold $W$ with corners, together with a decomposition
of its codimension-one boundary faces
\[
\partial W\cong -M_0\cup M_1\cup Q,
\]
where $Q$ is the relative boundary face which intersects with each of $M_i$ by its boundary, and a map $f_W:W\to Y$ such that
$f_W|_{M_i}=f_i$ for $i=0,1$ and $f_W(Q)\subseteq A$.
\end{enumerate}
This definition generalizes in the evident way to relative spin bordisms and
other tangential structures.
\end{definition}

\begin{definition}
For a compact manifold $X$, possibly with non-empty boundary, write
\[
\mathcal C_X^{\rm cs}
=\operatorname{Map}_*(X/\partial X,BU\times\mathbb Z),
\qquad
\mathcal C_X^{\mathbb R,\rm cs}
=\operatorname{Map}_*(X/\partial X,BO\times\mathbb Z).
\]
\end{definition}

If $X$ is equipped with a spin structure, an embedding of $X$ into
$\mathbb R^N$ gives the natural map
\[
\mathcal C_X^{\rm cs}
\xrightarrow{\operatorname{Ad}_X}
\mathcal C_X^{\mathbb R,\rm cs}
\xrightarrow{\pi_{X!}^{KO}}KO.
\]
It has all the expected properties. In particular,
$\mathcal C_X^{\rm cs}$ carries the natural principal $\mathbb Z_2$-bundle
$O^{\not D}_{\mathcal C_X^{\rm cs}}$, obtained by taking the real determinant
and pulling the resulting torsor back along $\operatorname{Ad}_X$.

Arguing as in Proposition~\ref{closedcobordism}, but now using the Whitney
embedding theorem for manifolds with corners, one obtains the following
statement. For background on embeddings of manifolds with corners into
$\mathbb R^n\times[0,1]^2$, and for the analogue of \cite{GMTW} in this
context, see \cite{Genauer}.

\begin{proposition}\label{relative-cobordism-invariance}
Consider two compact smooth spin $8$-manifolds $X_1$ and $X_2$, possibly
with boundary, and classes $\phi_i\in\mathcal C_{X_i}^{\rm cs}$.

Suppose that there is a relative spin bordism $(W,\phi)$ from
$(X_1,\phi_1)$ to $(X_2,\phi_2)$ over
$(BU\times\mathbb Z,0)$. Then these data give rise to a canonical
identification of $\mathbb Z_2$-torsors
\[
\iota_{[W,\phi]}:
O^{\not D}_{\mathcal C_{X_1}^{\rm cs}}|_{\phi_1}
\xrightarrow{\sim}
O^{\not D}_{\mathcal C_{X_2}^{\rm cs}}|_{\phi_2}.
\]
\end{proposition}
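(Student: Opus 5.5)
We reduce, exactly as in Proposition~\ref{closedcobordism}, to the real statement: since $O^{\not D}_{\mathcal C_X^{\rm cs}}=\operatorname{Ad}_X^*O^{KO}_{\mathcal C_X^{\mathbb R,\rm cs}}$, and $\operatorname{Ad}$ is induced by a map of representing spaces $BU\times\mathbb Z\to BO\times\mathbb Z$ sending the base point $0$ to $0$, a relative bordism $(W,\phi)$ over $(BU\times\mathbb Z,0)$ gives a relative bordism $(W,\operatorname{Ad}\circ\phi)$ over $(BO\times\mathbb Z,0)$. It therefore suffices to construct, from a relative spin bordism $(W,\psi)$ with $\psi|_Q\equiv 0$, a canonical path in $KO$ from $\pi^{KO}_{X_1!}(\psi_1)$ to $\pi^{KO}_{X_2!}(\psi_2)$, and then apply $\det_{\mathbb R}$.

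\textbf{Step 1 (collapsing the relative face).} Since $\psi$ is trivial on $Q$ and the $X_i$ meet $Q$ exactly along $\partial X_i$, $\psi$ factors through $W/Q$. Its restrictions to the ends are the given pointed maps $X_i/\partial X_i\to BO\times\mathbb Z$. Thus $\psi$ is a compactly supported class on $W\setminus Q$, which is a noncompact $9$-manifold with boundary $-X_1^\circ\sqcup X_2^\circ$.

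\textbf{Step 2 (neat embedding with corners).} Following \cite{Genauer}, choose collars of the faces $X_1,X_2,Q$ compatible at the corners. Then choose a neat embedding
\[
e:W\hookrightarrow [0,1]\times[0,\infty)\times\mathbb R^{N},
\]
with $X_i$ mapping to $\{i-1\}\times[0,\infty)\times\mathbb R^N$, with $Q$ mapping to $[0,1]\times\{0\}\times\mathbb R^N$, and of product form on all collars. The Pontryagin--Thom collapse then gives a map
\[
[0,1]_+\wedge S^{N+1}\to \operatorname{Th}(\nu_e)/\operatorname{Th}(\nu_e|_Q),
\]
whose ends are the collapse maps defining the compactly supported pushforwards $\pi^{KO}_{X_i!}$ for the interiors of $X_i$. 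Here we use the identification $\mathcal C^{\mathbb R,\rm cs}_{X}\simeq\operatorname{Map}_*(X/\partial X,BO\times\mathbb Z)$ from Subsection~\ref{sec:push}. Composing with the ABS Thom class of $\nu_e$, determined by the spin structure on $W$, and with $\psi$, and using that $\psi$ kills the $Q$-part, yields a path in $\Omega^{\infty}\Sigma^{-8}KO\simeq KO$ between the two pushforwards. Applying $\det_{\mathbb R}$ gives $\iota_{[W,\phi]}$.

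\textbf{Step 3 (canonicity).} We must show that the resulting torsor isomorphism is independent of the collars, tubular neighbourhoods, and embedding. The space of such data becomes highly connected after stabilizing in $N$; this is the corner analogue of \cite[p.~22]{GMTW}, established in \cite{Genauer}. Any two choices are therefore joined by a path, which gives a homotopy of paths in $KO$ rel endpoints. Since the $\mathbb Z_2$-torsor isomorphism only depends on the path up to endpoint-preserving homotopy, it is independent of the choices.

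We expect the main obstacle to be Step~3, together with the corner bookkeeping in Step~2. One must verify that the relative Pontryagin--Thom construction restricts on the ends to exactly the compactly supported pushforward of Subsection~\ref{sec:push}, compatibly with the quotient by $Q$. We would handle this by first straightening the corners via the collars, reducing to a product situation near $\partial X_i\times[0,1]$. There the collapse map is visibly constant in the $[0,1]$-direction and trivial on $Q$.
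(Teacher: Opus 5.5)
Your proposal is correct and follows the paper's route. The paper's proof is only the remark ``argue as in Proposition~\ref{closedcobordism}, using Whitney embeddings of manifolds with corners (Genauer).'' Your steps match that: reduce to the $KO$ statement via the pointed map $\operatorname{Ad}$, take a neat product-collared embedding of the corner manifold $W$, and apply the relative Pontryagin--Thom collapse $[0,1]_+\wedge S^{N+1}\to\operatorname{Th}(\nu_e)/\operatorname{Th}(\nu_e|_Q)$ with the ABS Thom class and $\psi$ to get a path in $KO$. Then apply $\det_{\mathbb R}$, and obtain canonicity from the stabilized uniqueness of the embedding data. You spell out the corner bookkeeping that the paper leaves implicit.
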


\subsection{Bordism categories and orientation functors}
\label{sec:bordism-categories-orientation-functors}

\subsubsection{Picard groupoids}\label{picardgen}

The main formal framework needed to work with so-called \textit{bordism
categories} is the language of Picard groupoids. We refer to
\cite[Appendix A]{JU} for a full exposition.

\begin{definition}
A \emph{Picard groupoid} is a symmetric monoidal category
\[
(\mathcal G,\otimes,1,\alpha,\sigma)
\]
in which all morphisms are invertible and every object is invertible up to
isomorphism.
\end{definition}

Thus the set
\[
\pi_0(\mathcal G)
=\{\text{isomorphism classes of objects of }\mathcal G\}
\]
is an abelian group, with addition induced by $\otimes$, and
\[
\pi_1(\mathcal G)=\operatorname{Aut}_{\mathcal G}(1)
\]
is an abelian group by the Eckmann--Hilton argument.  For any
object $x\in\mathcal G$, tensoring with $x$ identifies
$\pi_1(\mathcal G)$ with $\operatorname{Aut}_{\mathcal G}(x)$; in this way,
automorphisms of arbitrary objects may be regarded as elements of
$\pi_1(\mathcal G)$. A morphism of Picard groupoids means a symmetric
monoidal functor, and monoidal natural isomorphisms are the natural
$2$-morphisms between such functors.

One may represent the symmetry constraint
of the tensor product in $\mathcal G$ by a biadditive map
\[
\sigma:\pi_0(\mathcal G)\times\pi_0(\mathcal G)
\longrightarrow\pi_1(\mathcal G),
\]
with $\sigma(x,y)=-\sigma(y,x)$. Its diagonal
\[
q_{\mathcal G}(x)=\sigma(x,x),\qquad x\in\pi_0(\mathcal G),
\]
is a \emph{linear quadratic form}; equivalently, it is an additive map
\[
q_{\mathcal G}:\pi_0(\mathcal G)/2\pi_0(\mathcal G)
\longrightarrow\pi_1(\mathcal G).
\]
The triple
\[
\bigl(\pi_0(\mathcal G),\pi_1(\mathcal G),q_{\mathcal G}\bigr)
\]
classifies Picard groupoids up to symmetric monoidal equivalence. Moreover,
if $\mathcal G$ and $\mathcal H$ have symmetry invariants
$q_{\mathcal G}$ and $q_{\mathcal H}$, then group homomorphisms
\[
f_0:\pi_0(\mathcal G)\longrightarrow\pi_0(\mathcal H),
\qquad
f_1:\pi_1(\mathcal G)\longrightarrow\pi_1(\mathcal H)
\]
are induced by a morphism $F:\mathcal G\to\mathcal H$ precisely when
\[
q_{\mathcal H}\circ f_0=f_1\circ q_{\mathcal G}.
\]

For abelian groups $A$ and $B$, and a chosen linear quadratic form
$q:A/2A\to B$, we write $A//B$ for the corresponding Picard groupoid of
$A$-graded $B$-torsors. Concretely, an object is a pair $(a,S)$, where
$a\in A$ and $S$ is a $B$-torsor; morphisms are $B$-equivariant bijections
between torsors of the same degree; and
\[
(a,S)\otimes(a',S')=(a+a',S\otimes_BS').
\]
The symmetry is determined, up to equivalence, by $q$. The cases used most
often below are
\[
0//\mathbb Z_2=\mathbb Z_2\text{-tor},
\qquad
\mathbb Z_2//\mathbb Z_2=s\text{-}\mathbb Z_2\text{-tor},
\]
the Picard groupoids of $\mathbb Z_2$-torsors and super
$\mathbb Z_2$-torsors, respectively. 

\subsubsection{Bordism categories}\label{bordcatextr}

We now have all the prerequisites needed to adapt the argument of Joyce and
Upmeier to our compactly supported setting.  Our notation follows
\cite[Definition 4.1]{JU}.

Put $\mathbb K=BU\times\mathbb Z$, pointed by the zero class
$0\in\mathbb K$.

\begin{definition}\label{relative-bordism-category}
The \emph{relative spin bordism category over $(\mathbb K,0)$}, denoted by
$
\operatorname{Bord}^{\operatorname{Spin}}_8(\mathbb K,0),
$
is the Picard groupoid defined as follows. 

\begin{enumerate}
\item Its objects are pairs $[X,f]$, where $X$ is a compact collared smooth spin
$8$-manifold with boundary (possibly empty), and $f$ is a continuous map of
pairs
\[
f:(X,\partial X)\longrightarrow(\mathbb K,0).
\]

\item The morphism set
\[
\operatorname{Mor}\bigl([X,f],[Y,g]\bigr)
\]
consists of equivalence classes $[W,k]$ of relative spin bordisms from
$[X,f]$ to $[Y,g]$ over $(\mathbb K,0)$, in the sense of
Subsection~\ref{closed-relative-cobordism}, with $W$ compact and with
corners of codimension at most two. The incoming and outgoing faces are
disjoint and meet the relative face in their boundaries. Their
parametrisations, compatible collars, and product spin identifications
are part of the data. Components without incoming or outgoing faces are
allowed.

\item   Let $(W_0,k_0)$ and $(W_1,k_1)$ be two
representatives of relative spin bordisms from $[X,f]$ to $[Y,g]$.  Thus, for
$i=0,1$, the boundary of $W_i$ is decomposed as
\[
\partial W_i\cong -X\cup Y\cup Q_i,
\qquad k_i(Q_i)\subseteq\{0\},
\]
where $Q_i$ is the relative boundary face, and the restrictions of $k_i$ to
$X$ and $Y$ equal $f$ and $g$, respectively.

We declare
\[
(W_0,k_0)\sim(W_1,k_1)
\]
if there exist a compact spin $10$-manifold $T$ with corners of
codimension at most three, a
continuous map $K:T\to\mathbb K$, and a union $Q_T$ of boundary faces of $T$
such that
\[
\partial T\cong
(-W_0)\cup W_1\cup(X\times I)\cup(-Y\times I)\cup Q_T,
\]
where $I=[0,1]$. The $W_i$-faces are disjoint, as are the two side
faces, and $W_i$ meets them in $X\times\{i\}$ and $Y\times\{i\}$.

The remaining intersections are
\[
Q_T\cap W_i=Q_i,
\qquad
Q_T\cap(X\times I)=\partial X\times I,
\qquad
Q_T\cap(Y\times I)=\partial Y\times I.
\]
We require
\[
K(Q_T)\subseteq\{0\},
\qquad
K|_{W_i}=k_i\quad(i=0,1),
\]
and require
\[
K|_{X\times I}=f\circ\operatorname{pr}_X,
\qquad
K|_{Y\times I}=g\circ\operatorname{pr}_Y.
\]

The notation $[W,k]$ means the equivalence class of $(W,k)$ under this
relation.

\item If
\[
[W,k]:[X,f]\longrightarrow[Y,g],
\qquad
[V,l]:[Y,g]\longrightarrow[Z,h],
\]
their composite is represented by
\[
W\cup_{Y\times\{0\}}(Y\times I)
 \cup_{Y\times\{1\}}V,
\]
with map equal to $k$, $g\circ\operatorname{pr}_Y$, and $l$ on the
three pieces, respectively. The identity of $[X,f]$ is
represented by
\[
[X\times I,f\circ\operatorname{pr}_X].
\]

\item The monoidal structure is given by disjoint union.  On objects,
\[
[X,f]\otimes[X',f']=[X\sqcup X',f\sqcup f'].
\]
On morphisms,
\[
[W,k]\otimes[W',k']=[W\sqcup W',k\sqcup k'].
\]
The relative boundary face is $Q_W\sqcup Q_{W'}$, whose image lies in
$\{0\}$.

\item The monoidal identity is
\[
\mathbf 1=[\varnothing, \varnothing].
\]

\item The dual of $[X,f]$ is
\[
[X,f]^*=[-X,f],
\]
where $-X$ denotes $X$ with the opposite spin structure.  The evaluation
morphism
\[
[-X,f]\otimes[X,f]\longrightarrow\mathbf 1
\]
is represented by the cylinder
\[
[X\times I,f\circ\operatorname{pr}_X],
\]
whose relative boundary face $\partial X\times I$ is mapped to $0$.

\item The symmetry isomorphism
\[
\sigma_{[X,f],[Y,g]}:
[X,f]\otimes[Y,g]\longrightarrow[Y,g]\otimes[X,f]
\]
is represented by
\[
\bigl[(X\sqcup Y)\times I,
      (f\sqcup g)\circ\operatorname{pr}_{X\sqcup Y}\bigr],
\]
where the incoming boundary is identified with $X\sqcup Y$ and the outgoing
boundary with $Y\sqcup X$ by transposing the two components.  The relative
boundary face $(\partial X\sqcup\partial Y)\times I$ is mapped to $0$.
\end{enumerate}
\end{definition}

\begin{remark}
\begin{enumerate}
\item The same definition gives a Picard groupoid
\(
\operatorname{Bord}^{\operatorname{Spin}}_8(V,A)
\)
for any topological pair $(V,A)$.
\item Although we restrict here to $8$-manifolds, the same constructions are
meaningful for $7$-manifolds, as in the original work of Joyce and Upmeier
\cite{JU}.
\end{enumerate}
\end{remark}

To illustrate the usefulness of relative bordism, we record the following
excision observation.

\medskip
\noindent
\textbf{Excision trick.}
Let $[X,f]$ be an object of
$\operatorname{Bord}^{\operatorname{Spin}}_8(\mathbb K,0)$, and let
$Y\subseteq X$ be a compact codimension-zero submanifold such that
\[
f|_{X\setminus\operatorname{int}(Y)}=0.
\]
Then Definition~\ref{relative-bordism-category} gives an isomorphism
\[
[X,f]\xrightarrow{\sim}[Y,f|_Y]
\]
represented by the cylinder $X\times I$, regarded as a relative bordism whose
additional boundary face is mapped to $0$.

\subsubsection{Orientation functors}\label{orfun}

Proposition~\ref{relative-cobordism-invariance} and compatibility with
disjoint unions give a symmetric monoidal functor
\[
\mathrm O_8^{\operatorname{Spin},{\rm c}}:
\operatorname{Bord}^{\operatorname{Spin}}_8(\mathbb K,0)
\longrightarrow
\mathbb Z_2//\mathbb Z_2
=s\text{-}\mathbb Z_2\text{-tor}.
\]
This is the compactly supported analogue of the orientation functors of
Joyce--Upmeier; compare \cite[Section 4]{JU}.

\begin{definition}\label{global-orientation-functor}
For an object $[X,f]$, define
\[
\mathrm O_8^{\operatorname{Spin},{\rm c}}[X,f]
=
\left(
\operatorname{ind}_X(f)\bmod2,
O^{\not D}_{\mathcal C_X^{\rm cs}}|_f
\right),
\]
where
\[
\operatorname{ind}_X(f)
=\pi_{X!}^{KO}\bigl(\operatorname{Ad}_X(f)\bigr)
\in KO^0(\operatorname{pt})\cong\mathbb Z
\]
uses the notation of Subsection~\ref{sec:push}.  For a morphism
$[W,k]:[X,f]\to[Y,g]$, put
\[
\mathrm O_8^{\operatorname{Spin},{\rm c}}[W,k]=\iota_{[W,k]}:
O^{\not D}_{\mathcal C_X^{\rm cs}}|_f
\xrightarrow{\sim}
O^{\not D}_{\mathcal C_Y^{\rm cs}}|_g.
\]
Proposition~\ref{relative-cobordism-invariance} shows that this is
well-defined and functorial.
\end{definition}

We shall also need the fixed-manifold bordism categories of
\cite[Definition 4.7]{JU}.

\begin{definition}\label{fixed-bordism-category}
Let $X$ be a compact collared spin $8$-manifold with boundary.  Define
\(
\operatorname{Bord}_X(\mathbb K,0)
\)
as follows.

The objects are maps of pairs
\[
f:(X,\partial X)\longrightarrow(\mathbb K,0).
\]
For two objects $f_0,f_1$, a morphism
\[
[H]:f_0\longrightarrow f_1
\]
is an equivalence class of maps
\[
H:X\times I\longrightarrow\mathbb K
\]
satisfying
\[
H|_{X\times\{0\}}=f_0,
\qquad
H|_{X\times\{1\}}=f_1,
\qquad
H(\partial X\times I)=\{0\}.
\]
Two such homotopies $H,H':f_0\to f_1$ are equivalent if there is a map
\[
K:X\times I\times I\longrightarrow\mathbb K
\]
such that
\[
K(x,t,0)=H(x,t),
\qquad
K(x,t,1)=H'(x,t),
\]
\[
K(x,0,s)=f_0(x),
\qquad
K(x,1,s)=f_1(x),
\]
and
\[
K(a,t,s)=0
\quad\text{for all }a\in\partial X, s,t\in I.
\]

Composition is given by concatenation.  The identity of $f$ is represented by
$f\circ\operatorname{pr}_X$, and the inverse of $[H]$ by
$H^{-1}(x,t)=H(x,1-t)$.  Thus $\operatorname{Bord}_X(\mathbb K,0)$ is a
groupoid.
\end{definition}

\begin{remark}\label{Picgroup}
The same definition gives categories
$\operatorname{Bord}_X(V,A)$ for a topological pair $(V,A)$.  Choices that
will be important below include $(BU(m),0)$ and $(K(\mathbb Z,4),0)$.
\end{remark}

The construction has a compactly supported version for open manifolds.

\begin{definition}\label{openBordX}
Let $X$ be a smooth, possibly non-compact, spin $8$-manifold.  Define
\(
\operatorname{Bord}_X^{\rm cs}(\mathbb K)
\)
as follows.  Its objects are compactly supported maps $f:X\to\mathbb K$.
For two objects $f_0,f_1$, a morphism $[H]:f_0\to f_1$ is an equivalence
class of compactly supported maps
\[
H:X\times I\longrightarrow\mathbb K
\]
such that $H|_{X\times\{i\}}=f_i$ for $i=0,1$.  Here compact support means
that there is a compact set $K\subset X$ such that
$H|(X\setminus K)\times I=0$.

Two such homotopies $H,H'$ are equivalent if there exists a compactly
supported map
\[
\Lambda:X\times I\times I\longrightarrow\mathbb K
\]
satisfying
\[
\Lambda(x,t,0)=H(x,t),
\qquad
\Lambda(x,t,1)=H'(x,t),
\]
\[
\Lambda(x,0,s)=f_0(x),
\qquad
\Lambda(x,1,s)=f_1(x).
\]
Composition, identities, and inverses are defined as above.  Thus
$\operatorname{Bord}_X^{\rm cs}(\mathbb K)$ is a groupoid.
\end{definition}

For compact $X$, there is a tautological functor
\[
\Pi_X^{\operatorname{Spin}}:
\operatorname{Bord}_X(\mathbb K,0)
\longrightarrow
\operatorname{Bord}^{\operatorname{Spin}}_8(\mathbb K,0),
\]
defined by
\[
\Pi_X^{\operatorname{Spin}}(f)=[X,f],
\qquad
\Pi_X^{\operatorname{Spin}}[H]=[X\times I,H].
\]
There is an analogous functor for open manifolds.

\begin{proposition}\label{open-Pi-functor}
Let $X$ be as in Definition~\ref{openBordX}.  There is a natural functor
\[
\Pi_X^{\operatorname{Spin},{\rm cs}}:
\operatorname{Bord}_X^{\rm cs}(\mathbb K)
\longrightarrow
\operatorname{Bord}^{\operatorname{Spin}}_8(\mathbb K,0).
\]
\end{proposition}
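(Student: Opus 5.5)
The plan is to define $\Pi_X^{\operatorname{Spin},{\rm cs}}$ by cutting $X$ down to a compact codimension-zero submanifold containing the support of the object, and then to use the excision trick of Subsection~\ref{bordcatextr} to show that the result does not depend on this choice.

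\textbf{Step 1: objects.} Given a compactly supported $f:X\to\mathbb K$, choose a compact collared codimension-zero submanifold $Y\subseteq X$ with $\operatorname{supp}f\subseteq\operatorname{int}(Y)$. Such $Y$ exists: take a sublevel set $\{\rho\le c\}$ of a proper smooth function $\rho$ at a regular value $c$. Then $f|_Y:(Y,\partial Y)\to(\mathbb K,0)$, and we set provisionally $\Pi(f)=[Y,f|_Y]$, with the spin structure restricted from $X$. For two such choices $Y\subseteq Y'$, the excision trick gives a canonical isomorphism $[Y',f|_{Y'}]\xrightarrow{\sim}[Y,f|_Y]$, represented by the cylinder $Y'\times I$ with relative face mapped to $0$. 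For arbitrary $Y_1,Y_2$, compare both with a common larger $Y'$. The key check is that these isomorphisms compose coherently for $Y\subseteq Y'\subseteq Y''$: the glued cylinders should be equivalent, in the sense of Definition~\ref{relative-bordism-category}(c), to the direct one, via the $10$-manifold $Y''\times I\times I$ with corners. To obtain a strict functor rather than a system of objects indexed by $Y$, we either fix, once and for all, an exhaustion $Y_1\subset Y_2\subset\cdots$ and take the smallest $Y_n$ containing the support, or we work with the (contractible) directed system of choices and pick representatives.

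\textbf{Step 2: morphisms.} A compactly supported homotopy $H:X\times I\to\mathbb K$ from $f_0$ to $f_1$ has support inside $K\times I$ for some compact $K$. Choose $Y$ from the exhaustion large enough that $\operatorname{int}(Y)$ contains $K$ and the supports of $f_0$ and $f_1$. Set
\[
\Pi[H]=[Y\times I,H|_{Y\times I}]\circ(\text{excision isos}),
\]
where the relative face $\partial Y\times I$ maps to $0$. An equivalence $\Lambda$ between homotopies restricts to $Y\times I\times I$, which is precisely a $10$-manifold $T$ as in Definition~\ref{relative-bordism-category}(c), with $Q_T=\partial Y\times I\times I$. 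So $\Pi$ is well defined on equivalence classes. Enlarging $Y$ changes $\Pi[H]$ only by conjugation with excision isomorphisms, again by the corner-manifold argument of Step 1. Composition is preserved because the concatenation of homotopies restricts to the composite cylinder $W\cup(Y\times I)\cup V$ from Definition~\ref{relative-bordism-category}(d), after reparametrising collars. Identities are preserved because $f\circ\operatorname{pr}_X$ restricts to the identity cylinder.

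\textbf{Main obstacle.} The only delicate point is the coherence of the excision isomorphisms, i.e.\ making the choice of $Y$ genuinely irrelevant, which is what makes the functor natural. To handle it, I would show that for nested $Y\subseteq Y'$ the triangle formed by the excision isomorphism and the images of a morphism commutes in the bordism category. The witness is the bordism $Y'\times I\times I$, viewed as a manifold with corners of codimension at most three, with the restriction of $H$ on it; the region $(Y'\setminus\operatorname{int}Y)\times I\times I$ is mapped to $0$ and absorbed into $Q_T$. Smoothing the corners along $\partial Y\times I$ is standard (cf.\ \cite{Genauer}). Naturality with respect to open embeddings then follows by the same argument.
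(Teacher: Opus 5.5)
Your proposal is correct and takes essentially the same route as the paper: restrict the object (or the homotopy) to a compact codimension-zero submanifold containing its support, and use the excision trick to show the result does not depend on that choice. The additional bookkeeping you supply spells out what the paper only asserts, namely that independence follows from the excision trick and that compatibility with composition is immediate. This bookkeeping consists of the fixed exhaustion that makes the functor strict, and the corner-manifold witnesses for three things: coherence of the excision isomorphisms, well-definedness on equivalence classes, and compatibility with composition.
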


\begin{proof}
We first define $\Pi_X^{\operatorname{Spin},{\rm cs}}$ on objects.  Let
$f:X\to\mathbb K$ be
compactly supported.  Choose a compact codimension-zero submanifold
$X'\subset X$ with boundary such that
\[
f|_{X\setminus\operatorname{int}(X')}=0,
\]
and set
\[
\Pi_X^{\operatorname{Spin},{\rm cs}}(f)=[X',f|_{X'}].
\]
On morphisms, use a compact codimension-zero submanifold containing the
support of the homotopy.  Independence of these choices follows from the
excision trick in Subsection~\ref{bordcatextr}; compatibility with
composition is immediate.
\end{proof}

Following \cite[Section 4]{JU}, let
\[
\Pi_{\mathbb Z_2\text{-tor}}:
s\text{-}\mathbb Z_2\text{-tor}
\longrightarrow\mathbb Z_2\text{-tor}
\]
be the functor which forgets the $\mathbb Z_2$-grading.  For $X$ compact with
boundary, set
\[
F_X
=\Pi_{\mathbb Z_2\text{-tor}}
 \circ\mathrm O_8^{\operatorname{Spin},{\rm c}}
 \circ\Pi_X^{\operatorname{Spin}}.
\]
For $X$ open, use instead
\[
F_X^{\rm cs}
=\Pi_{\mathbb Z_2\text{-tor}}
 \circ\mathrm O_8^{\operatorname{Spin},{\rm c}}
 \circ\Pi_X^{\operatorname{Spin},{\rm cs}}.
\]
These factorizations may be displayed as
\[
\begin{tikzcd}[column sep=4.2em]
\operatorname{Bord}_X^{\dagger}(\mathbb K)
 \arrow[r,"\Pi_X^{\dagger}"]
 \arrow[rrr,bend right=18,"F_X^{\dagger}"']&
\operatorname{Bord}^{\operatorname{Spin}}_8(\mathbb K,0)
 \arrow[r,"\mathrm O_8^{\operatorname{Spin},{\rm c}}"]&
s\text{-}\mathbb Z_2\text{-tor}
 \arrow[r,"\Pi_{\mathbb Z_2\text{-tor}}"]&
\mathbb Z_2\text{-tor},
\end{tikzcd}
\]
where we use the convention
\[
\operatorname{Bord}_X^\dagger(\mathbb K)
=
\begin{cases}
\operatorname{Bord}_X(\mathbb K,0),&X\text{ compact with boundary},\\
\operatorname{Bord}_X^{\rm cs}(\mathbb K),&X\text{ open},
\end{cases}
\]
and
\[
\Pi_X^\dagger=
\begin{cases}
\Pi_X^{\operatorname{Spin}},&X\text{ compact with boundary},\\
\Pi_X^{\operatorname{Spin},{\rm cs}},&X\text{ open},
\end{cases}
\]
and similarly for $F_X^\dagger$.

\begin{definition}\label{orientation-fixed-X}
Let $X$ be a spin $8$-manifold, either compact with boundary or open.  Write
$F_X^\dagger$ for the corresponding functor above, and let
\[
\mathbf 1_X:
\operatorname{Bord}_X^\dagger(\mathbb K)
\longrightarrow\mathbb Z_2\text{-tor}
\]
be the constant functor with value the trivial $\mathbb Z_2$-torsor
$\mathbb Z_2$.  An \emph{orientation for $X$} is a natural
isomorphism
\[
\omega_X:F_X^\dagger\Longrightarrow\mathbf 1_X.
\]
\end{definition}

\begin{remark}
The natural isomorphism $\omega_X$ should be regarded as a
bordism-coherent trivialization of the topological orientation bundle.  In
particular, evaluating $\omega_X$ on objects gives a trivialization of
$O^{\not D}_{\mathcal C_X^{\rm cs}}$.  This is the compactly supported
analogue of the discussion in \cite[Section 4]{JU}.
\end{remark}


\subsection{Functors between Picard groupoids}\label{secE8Z4}

In Subsection~\ref{picardgen}, we discussed the general theorem that
symmetric monoidal functors between Picard groupoids are classified by
algebraic data involving $\pi_0$ and $\pi_1$.  In this subsection, we give
examples of this construction for the relative bordism categories introduced
in Subsection~\ref{bordcatextr}.  We first calculate $\pi_0$ and $\pi_1$ in
the examples of interest to us.

\subsubsection{$\pi_0$ and $\pi_1$ in examples}

\begin{theorem}\label{homgrgr}
The following isomorphisms hold:
\begin{enumerate}
\item For $m\geq5$,
\[
\pi_0\!\left(
  \operatorname{Bord}^{\operatorname{Spin}}_8(BSU(m),*)
\right)
\cong
\widetilde\Omega^{\operatorname{Spin}}_8(BSU(m))
\cong\mathbb Z^3,
\]
and
\[
\pi_1\!\left(
  \operatorname{Bord}^{\operatorname{Spin}}_8(BSU(m),*)
\right)
\cong
\widetilde\Omega^{\operatorname{Spin}}_9(BSU(m))
\cong\mathbb Z_2.
\]

\item
\[
\pi_0\!\left(
  \operatorname{Bord}^{\operatorname{Spin}}_8(K(\mathbb Z,4),0)
\right)
\cong\mathbb Z^2,
\qquad
\pi_1\!\left(
  \operatorname{Bord}^{\operatorname{Spin}}_8(K(\mathbb Z,4),0)
\right)
\cong\mathbb Z_2.
\]

\item
\[
\pi_0\!\left(
  \operatorname{Bord}^{\operatorname{Spin}}_8(BE_8,*)
\right)
\cong\mathbb Z^2,
\qquad
\pi_1\!\left(
  \operatorname{Bord}^{\operatorname{Spin}}_8(BE_8,*)
\right)
\cong\mathbb Z_2.
\]
\end{enumerate}
\end{theorem}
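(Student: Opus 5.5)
The plan is to reduce each item to a computation of reduced spin bordism groups and then import the known values. The first step is to identify the homotopy groups of the relative bordism Picard groupoid. For a pointed space $(V,*)$, I would show that
\[
\pi_0\bigl(\operatorname{Bord}^{\operatorname{Spin}}_8(V,*)\bigr)\cong\Omega^{\operatorname{Spin}}_8(V,*)\cong\widetilde\Omega^{\operatorname{Spin}}_8(V)
\]
and
\[
\pi_1\bigl(\operatorname{Bord}^{\operatorname{Spin}}_8(V,*)\bigr)\cong\widetilde\Omega^{\operatorname{Spin}}_9(V).
\]
For $\pi_0$, objects are spin manifolds with boundary mapping the boundary to the basepoint, and isomorphism is relative bordism. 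These are precisely the Conner--Floyd relative bordism classes of the pair $(V,*)$. Since $*$ is a point, the long exact sequence of the pair splits off $\Omega^{\operatorname{Spin}}_*(\mathrm{pt})$ and gives the reduced group.

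For $\pi_1$, an automorphism of $\mathbf 1=[\varnothing,\varnothing]$ is a compact spin $9$-manifold $W$ whose whole boundary is a relative face mapped to $*$. Such $W$ are taken up to the $10$-dimensional relations of Definition~\ref{relative-bordism-category}. This is again $\Omega_9^{\operatorname{Spin}}(V,*)=\widetilde\Omega^{\operatorname{Spin}}_9(V)$, exactly as in \cite[Appendix A, Section 4]{JU} for the closed case. I would also check that the Definition~\ref{relative-bordism-category} relation coincides with the Genauer/Laures relative bordism relation, using the collars and the straightening of corners.

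The second step is the actual computations, via the Atiyah--Hirzebruch spectral sequence (or Adams spectral sequence at the prime $2$) for reduced spin bordism in degrees $\le 9$. For (a), recall that $BSU(m)$ for $m\ge5$ is $3$-connected with $\widetilde H^*(BSU;\mathbb Z)$ generated by $c_2,c_3,c_4$ in this range. The $E_2$-page then contributes rationally only $H_8$ (rank $2$, from $c_2^2$ and $c_4$) and $H_4\otimes\Omega_4^{\operatorname{Spin}}$ (rank $1$), giving $\mathbb Z^3$. The torsion and the value $\mathbb Z_2$ in degree $9$ are computed in \cite[Theorem 2.29 / Section 5]{JU}, and I would simply quote that computation. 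For the stable range one uses that $BSU(m)\to BSU$ is $2m$-connected, so $m\ge5$ suffices.

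For (b), $K(\mathbb Z,4)$ has $\widetilde H_8$ of rank $1$ (from $\iota^2$) and $H_4\otimes\Omega_4^{\operatorname{Spin}}$ of rank $1$, giving $\mathbb Z^2$. Degree $9$ receives the $\mathbb Z_2$, which I would again extract from the spectral sequence or quote from \cite{JU}. For (c), I would use the classical fact that $BE_8\to K(\mathbb Z,4)$ is $15$-connected, hence induces isomorphisms on spin bordism in degrees $\le 14$. Therefore (c) follows from (b).

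The main obstacle is the torsion in degree $9$: the differentials and extensions in the Atiyah--Hirzebruch spectral sequence for $\widetilde\Omega^{\operatorname{Spin}}_9(K(\mathbb Z,4))$. Several $\mathbb Z_2$ candidates appear, namely from $H_5\otimes\Omega_4$ (zero), $H_8\otimes\Omega_1$, and $H_7\otimes\Omega_2$. Determining the survivors requires knowing $d_2$, which is dual to $\operatorname{Sq}^2$ (composed with reduction) on $H^*(K(\mathbb Z,4);\mathbb Z_2)$. My plan is to rely on the computation already in \cite{JU} rather than redo it, checking only that the reduced/relative formalism matches theirs.
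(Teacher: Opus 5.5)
Your proposal matches the paper's proof. Both identify $\pi_0$ and $\pi_1$ of the relative bordism groupoid with $\widetilde\Omega^{\operatorname{Spin}}_8$ and $\widetilde\Omega^{\operatorname{Spin}}_9$ by running the Joyce--Upmeier argument (their Proposition~7.2) in the relative setting, and both then import the Joyce--Upmeier tables for the actual groups. The one variation, deducing (c) from (b) via the high connectivity of $BE_8\to K(\mathbb Z,4)$ (in fact $16$-connected) rather than quoting the $BE_8$ table directly, is harmless.
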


\begin{proof}
Joyce and Upmeier prove that
\[
\pi_i\!\left(\operatorname{Bord}^{\operatorname{Spin}}_8(T)\right)
\cong
\Omega^{\operatorname{Spin}}_{8+i}(T),
\qquad i=0,1;
\]
see \cite[Proposition~7.2, equations~(7.2)--(7.3)]{JU}.  The same argument for
the relative category of Definition~\ref{relative-bordism-category} gives
\[
\pi_i\!\left(
  \operatorname{Bord}^{\operatorname{Spin}}_8(T,*)
\right)
\cong
\Omega^{\operatorname{Spin}}_{8+i}(T,*)
\cong
\widetilde\Omega^{\operatorname{Spin}}_{8+i}(T),
\qquad i=0,1,
\]
for any pointed space $(T,*)$.

The stated values for $BSU(m)$ and $BE_8$ are given in
\cite[Corollary~3.4(a), Table~3.4]{JU}; the restriction $m\geq5$ ensures that
the stable calculation applies in both degrees $8$ and $9$.  The values for
$K(\mathbb Z,4)$ are given in \cite[Theorem~3.1(a), Table~3.1]{JU}.
\end{proof}

\begin{remark}\label{generators}
The argument of Joyce and Upmeier not only calculates the relevant bordism
groups, but also gives distinguished generators.  To distinguish the three
cases, we add superscripts, which are suppressed in \cite{JU} when the
ambient space is clear:
\begin{enumerate}
\item $\frac{\zeta_1^{SU}}2,\zeta_2^{SU},\zeta_3^{SU}$ generate $\pi_0$ in
the $SU(m)$ case, and $\alpha_1\zeta_2^{SU}$ generates $\pi_1$;
\item $\zeta_2^{E_8},\zeta_3^{E_8}$ generate $\pi_0$ in the $E_8$ case, and
$\alpha_1\zeta_2^{E_8}$ generates $\pi_1$;
\item $\zeta_2^K,\zeta_3^K$ generate $\pi_0$ in the $K(\mathbb Z,4)$ case,
and $\alpha_1\zeta_2^K$ generates $\pi_1$.
\end{enumerate}
Their geometric definitions are too intricate to reproduce here; see
\cite[Theorem~3.1, Corollary~3.4 and Tables~3.1, 3.4]{JU}.
\end{remark}

\subsubsection{The functors $\mathrm H_8^{\mathbb Z,{\rm c}}$,
$\mathrm N_{8,\mathbb Z_2}^{\operatorname{Spin},E_8,{\rm c}}$, and
$F_{BE_8}^{K(\mathbb Z,4),{\rm c}}$}\label{threefun}

Before proving our main theorems, we introduce three functors following
Joyce and Upmeier.  The first two are the compactly supported analogues of
the cohomology and normalized gauge-theoretic orientation functors in
\cite[Definitions~9.10 and~9.14]{JU}:
\[
\mathrm H_8^{\mathbb Z,{\rm c}}:
\operatorname{Bord}^{\operatorname{Spin}}_8(K(\mathbb Z,4),0)
\longrightarrow\mathbb Z_2//\mathbb Z_2,
\]
and
\[
\mathrm N_{8,\mathbb Z_2}^{\operatorname{Spin},E_8,{\rm c}}:
\operatorname{Bord}^{\operatorname{Spin}}_8(BE_8,*)
\longrightarrow\mathbb Z_2//\mathbb Z_2.
\]

They are determined, up to monoidal natural isomorphism, by the generators in
Remark~\ref{generators}.  More precisely,
\[
\begin{aligned}
\pi_0\!\left(\mathrm H_8^{\mathbb Z,{\rm c}}\right):
&\ \widetilde\Omega^{\operatorname{Spin}}_8(K(\mathbb Z,4))
   \longrightarrow\mathbb Z_2,
&\zeta_2^K&\longmapsto1,
&\zeta_3^K&\longmapsto0,\\
\pi_1\!\left(\mathrm H_8^{\mathbb Z,{\rm c}}\right):
&\ \widetilde\Omega^{\operatorname{Spin}}_9(K(\mathbb Z,4))
   \longrightarrow\mathbb Z_2,
&\alpha_1\zeta_2^K&\longmapsto1,
\end{aligned}
\]
whereas
\[
\begin{aligned}
\pi_0\!\left(
 \mathrm N_{8,\mathbb Z_2}^{\operatorname{Spin},E_8,{\rm c}}
\right):
&\ \widetilde\Omega^{\operatorname{Spin}}_8(BE_8)
   \longrightarrow\mathbb Z_2,
&\zeta_2^{E_8}&\longmapsto1,
&\zeta_3^{E_8}&\longmapsto0,\\
\pi_1\!\left(
 \mathrm N_{8,\mathbb Z_2}^{\operatorname{Spin},E_8,{\rm c}}
\right):
&\ \widetilde\Omega^{\operatorname{Spin}}_9(BE_8)
   \longrightarrow\mathbb Z_2,
&\alpha_1\zeta_2^{E_8}&\longmapsto1.
\end{aligned}
\]
This is precisely the Picard-groupoid construction of
Subsection~\ref{picardgen}; compare \cite[equation~(9.6) and
Theorem~11.1]{JU}.

The adjoint representation gives a map
$\operatorname{Ad}_{E_8}:BE_8\to BO$, so a map $f:X\to BE_8$ gives the
composite $\operatorname{Ad}_{E_8}\circ f:X\to BO$.  Define
\[
\widetilde{\mathrm N}_{8,\mathbb Z_2}^{\operatorname{Spin},E_8,{\rm c}}
[X,f]
=
\left(
 \pi_{X!}^{KO}\!\left(\operatorname{Ad}_{E_8}\circ f\right)\bmod2,
 O^{KO}_{\mathcal C_X^{\mathbb R,{\rm cs}}}
 \big|_{\operatorname{Ad}_{E_8}\circ f}
\right).
\]
On morphisms, use the canonical isomorphisms supplied by the real analogue of
Proposition~\ref{relative-cobordism-invariance}, proved by the same argument;
compare Definition~\ref{global-orientation-functor}.

\begin{proposition}\label{E8-explicit-orientation-functor}
There is a monoidal natural isomorphism
\[
\widetilde{\mathrm N}_{8,\mathbb Z_2}^{\operatorname{Spin},E_8,{\rm c}}
\cong
\mathrm N_{8,\mathbb Z_2}^{\operatorname{Spin},E_8,{\rm c}}.
\]
\end{proposition}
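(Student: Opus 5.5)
The plan is to use the classification of symmetric monoidal functors between Picard groupoids recalled in Subsection~\ref{picardgen}. Both $\widetilde{\mathrm N}_{8,\mathbb Z_2}^{\operatorname{Spin},E_8,{\rm c}}$ and $\mathrm N_{8,\mathbb Z_2}^{\operatorname{Spin},E_8,{\rm c}}$ are symmetric monoidal functors
\[
\operatorname{Bord}^{\operatorname{Spin}}_8(BE_8,*)\longrightarrow\mathbb Z_2//\mathbb Z_2 .
\]
For the tilde functor, monoidality comes from additivity of $\pi_{X!}^{KO}$ and of $\det_{\mathbb R}$ under disjoint unions, and functoriality comes from the real analogue of Proposition~\ref{relative-cobordism-invariance}.

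\textbf{Reduction to invariants.} Two symmetric monoidal functors with the same induced maps on $\pi_0$ and $\pi_1$ differ by a class in $\operatorname{Ext}^1_{\mathbb Z}(\pi_0,\pi_1)$. By Theorem~\ref{homgrgr}(c), $\pi_0\cong\mathbb Z^2$ is free, so this obstruction vanishes and the two functors are then monoidally naturally isomorphic. It therefore suffices to compute $\pi_0$ and $\pi_1$ of $\widetilde{\mathrm N}$ on the generators of Remark~\ref{generators}. We must show that $\zeta_2^{E_8}\mapsto1$, $\zeta_3^{E_8}\mapsto0$ and $\alpha_1\zeta_2^{E_8}\mapsto1$. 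Compatibility with the symmetry forms $q$ is then automatic, since both functors are genuine symmetric monoidal functors.

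\textbf{Reduction to the closed case.} The generators $\zeta_2^{E_8}$, $\zeta_3^{E_8}$, $\alpha_1\zeta_2^{E_8}$ of reduced spin bordism are represented by closed spin $8$- and $9$-manifolds with maps to $BE_8$. In the relative category these are objects and morphisms with empty relative boundary face. I will check that the isomorphism
\[
\pi_i\bigl(\operatorname{Bord}_8^{\operatorname{Spin}}(BE_8,*)\bigr)\cong\widetilde\Omega^{\operatorname{Spin}}_{8+i}(BE_8)
\]
is compatible with the forgetful inclusion of the closed bordism category of \cite{JU}, so that it is enough to evaluate $\widetilde{\mathrm N}$ on closed manifolds.

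On closed manifolds, $\pi_{X!}^{KO}(\operatorname{Ad}_{E_8}\circ f)$ is the $KO$-index of the real Dirac operator twisted by the adjoint bundle. Likewise, the isomorphisms $\iota^{KO}_{[W,\psi]}$ are identified, via the families index theorem and Carvalho's argument, with the analytic bordism isomorphisms of Joyce--Upmeier. So $\widetilde{\mathrm N}$ restricts to the unnormalized $E_8$ gauge-theoretic orientation functor of \cite{JU}. There is one normalization point. Joyce--Upmeier normalize by the orientation of the trivial $E_8$-bundle, and I will show that this correction is trivial. The trivial bundle contributes $248$ copies of the untwisted Dirac determinant line, and $248$ is even. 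Hence this contribution has canonically trivial parity and a canonical orientation, so normalized and unnormalized functors agree on reduced classes. The values on the generators are then those of \cite[Theorem~11.1]{JU}.

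\textbf{Expected obstacle.} The main difficulty is the identification in the closed case. One must show that the topological bordism isomorphisms built from Pontryagin--Thom collapses agree, as isomorphisms of torsors, with the analytically defined ones of \cite{JU}; sign conventions for super torsors matter here. I would first verify it on the single morphism $\alpha_1\zeta_2^{E_8}$, which is a mapping torus. There, both constructions reduce to the $KO^{-1}$-index of a family over $S^1$, namely the spectral flow mod $2$. The families Atiyah--Singer theorem in $KO$ identifies the two.
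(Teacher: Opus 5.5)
Your proposal is correct and is essentially the paper's argument. The paper also transports the index computation of \cite[Theorem~11.1]{JU} to the relative categories to obtain the $\pi_0$- and $\pi_1$-maps on the generators of Remark~\ref{generators}, and then invokes the Picard-groupoid classification. Your $\operatorname{Ext}^1(\mathbb Z^2,\mathbb Z_2)=0$ remark and your reduction to closed representatives make explicit the uniqueness step and the ``mutatis mutandis'' that the paper leaves implicit.

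The analytic-versus-Pontryagin--Thom comparison you flag as the main obstacle can in fact be bypassed. The $\pi_0$-values are parities of $\pi_{X!}^{KO}$, which Atiyah--Singer computes directly. The $\pi_1$-value is then forced by $f_1\circ q_{\mathcal G}=q_{\mathcal H}\circ f_0$, because $\alpha_1\zeta_2^{E_8}=q_{\mathcal G}(\zeta_2^{E_8})$ and $q_{\mathcal H}(1)=1$ in $\mathbb Z_2//\mathbb Z_2$.
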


\begin{proof}
The proof is the compactly supported version of the calculation in
\cite[Theorem~11.1, Tables~11.1--11.2]{JU}.  That calculation is topological
and uses the Atiyah--Singer Index Theorem. It applies
\textit{mutatis mutandis} to our relative bordism categories and gives the maps on $\pi_0$
and $\pi_1$ displayed above.  The classification theorem of
Subsection~\ref{picardgen} then gives the required monoidal natural
isomorphism.
\end{proof}

Finally, the canonical characteristic class $a_1\in H^4(BE_8;\mathbb Z)$
is classified by a $16$-connected map
\[
a_1:BE_8\longrightarrow K(\mathbb Z,4);
\]
see \cite[Theorem~3.3(d), equations~(3.29)--(3.30)]{JU}.  It induces the
transfer functor
\[
F_{BE_8}^{K(\mathbb Z,4),{\rm c}}:
\operatorname{Bord}^{\operatorname{Spin}}_8(BE_8,*)
\longrightarrow
\operatorname{Bord}^{\operatorname{Spin}}_8(K(\mathbb Z,4),0).
\]
There is an analogous functor
$F_{BE_8}^{K(\mathbb Z,4)}$ between the ordinary Joyce--Upmeier bordism
categories.

Under the generators in Remark~\ref{generators}, the induced maps are
\[
\pi_0\!\left(F_{BE_8}^{K(\mathbb Z,4),{\rm c}}\right)
(\zeta_i^{E_8})=\zeta_i^K\quad(i=2,3),
\qquad
\pi_1\!\left(F_{BE_8}^{K(\mathbb Z,4),{\rm c}}\right)
(\alpha_1\zeta_2^{E_8})=\alpha_1\zeta_2^K.
\]
This follows from \cite[Theorem~3.3(d), equation~(3.19), and
Corollary~3.4(b)]{JU}.  Consequently, there is a monoidal natural
isomorphism
\[
\lambda_8^{\rm c}:
\mathrm N_{8,\mathbb Z_2}^{\operatorname{Spin},E_8,{\rm c}}
\Longrightarrow
\mathrm H_8^{\mathbb Z,{\rm c}}
\circ F_{BE_8}^{K(\mathbb Z,4),{\rm c}},
\]
the compactly supported analogue of \cite[Theorem~11.10(a),
equation~(11.14)]{JU}.  Equivalently, the following diagram is
$2$-commutative:
\begin{equation}\label{E8Z4}
\begin{tikzcd}[column sep=7.2em,row sep=3.2em]
\operatorname{Bord}^{\operatorname{Spin}}_8(BE_8,*)
  \arrow[r,"{F_{BE_8}^{K(\mathbb Z,4),{\rm c}}}"]
  \arrow[d,"{\mathrm N_{8,\mathbb Z_2}^{\operatorname{Spin},E_8,{\rm c}}}"']
  \arrow[dr,phantom,"\Downarrow\,\lambda_8^{\rm c}" description]
&
\operatorname{Bord}^{\operatorname{Spin}}_8(K(\mathbb Z,4),0)
  \arrow[d,"{\mathrm H_8^{\mathbb Z,{\rm c}}}"]
\\
\mathbb Z_2//\mathbb Z_2
  \arrow[r,"\operatorname{id}"']
& \mathbb Z_2//\mathbb Z_2.
\end{tikzcd}
\end{equation}

\begin{remark}\label{orientation-conventions}
The monoidal natural isomorphism \(\lambda_8^{\rm c}\) in \eqref{E8Z4}
is not canonical.  In the compact theory, Joyce and Upmeier show that the
choices compatible with the functors to and from
\(\operatorname{Bord}^{\operatorname{Spin}}_8(*)\) form a torsor for
\(\mathbb Z_2^2\); a choice is what they call an orientation convention
\cite[Theorem~11.10(a) and Remark~12.16(c)]{JU}.  We impose the same
compatibility on \(\lambda_8^{\rm c}\) and fix one such isomorphism.  The
functor \(\mathrm N_{8,\mathbb Z_2}^{\operatorname{Spin},E_8,{\rm c}}\) is
identified with \(\widetilde{\mathrm N}_{8,\mathbb Z_2}^{
\operatorname{Spin},E_8,{\rm c}}\) using
Proposition~\ref{E8-explicit-orientation-functor}; changing this
identification is included in the choice of \(\lambda_8^{\rm c}\).  The
isomorphisms \(\lambda_{8,m}^{\operatorname{SU},{\rm c}}\) and
\(\lambda_{8,m}^{\operatorname{U},{\rm c}}\) below always mean those obtained
from this choice in Lemma~\ref{rescompc} and
Proposition~\ref{BU-BSU-transition}.

We also use this opportunity to fix some complex structures used in Lemma~\ref{comptype}.  Let
\(\pi_v:\operatorname{Spin}(16)\to\operatorname{SO}(16)\) and
\(\pi_s:\operatorname{Spin}(16)\to
\operatorname{HSpin}(16)\) be the quotient maps, and put
\(\widetilde{\operatorname{U}}(8)=
\pi_v^{-1}(\operatorname{U}(8))\) and
\(U=\pi_s(\widetilde{\operatorname{U}}(8))\).  The homomorphism
\((A,\zeta)\mapsto\zeta A\) from
\(\operatorname{SU}(8)\times\operatorname{U}(1)\) to
\(\operatorname{U}(8)\) lifts to \(\widetilde{\operatorname{U}}(8)\).
Indeed, under the determinant isomorphism
\(\pi_1(\operatorname{U}(8))\cong\mathbb Z\), its image is
\(8\mathbb Z\), which maps to zero in
\(\pi_1(\operatorname{SO}(16))\cong\mathbb Z_2\).
Its composite with \(\pi_s\) is a finite covering onto \(U\).  We take
\(\iota_8:\operatorname{SU}(8)\times\operatorname{U}(1)\to E_8\) to be
this covering followed by the inclusions
\(U\subset\operatorname{HSpin}(16)\subset E_8\); compare
\cite[Chapters~6--7]{AdamsExceptional}. This is a representative
of the morphism of complex type of \cite[Theorem~11.8]{JU} used in
Lemma~\ref{rescompc} to construct
\(\lambda_{8,8}^{\operatorname{SU},{\rm c}}\) from \(\lambda_8^{\rm c}\).

We choose the complex structure on \(\mathfrak e_8/\mathfrak u(8)\) whose
restriction to \(\operatorname{SU}(8)\) is
\(W\oplus\Lambda^3W\oplus(\Lambda^2W)^\vee\), where \(W\) is the standard
representation.  For
\(\jmath_m(A)=\operatorname{diag}(A,\det(A)^{-1})\), we regard the quotient
representation as the underlying real representation of
\(W_m\otimes\det(W_m)\), where \(W_m\) is the standard representation of
\(\operatorname{U}(m)\).  These complex structures are used throughout. 

It is easy to see that the  choice of \(\lambda_8^{\rm c}\) does not affect the 
calculations in the present paper. 
\end{remark}

\subsection{Factorizations of orientation functors}
\label{sec:factorizations-orientation-functors}

For a compact Lie group $G$ with Lie algebra $\mathfrak g$, write
\[
\mathrm N_{8,\mathbb Z_2}^{\operatorname{Spin},G,{\rm c}}:
\operatorname{Bord}^{\operatorname{Spin}}_8(BG,*)
\longrightarrow
\mathbb Z_2//\mathbb Z_2
\]
for the compactly supported normalized orientation functor associated to the
reduced adjoint virtual bundle
\(
\operatorname{Ad}(\mathfrak g)-\underline{\mathfrak g}.
\)
This is the relative analogue of the normalized orientation functor of
Joyce--Upmeier; compare \cite[Definition~9.10]{JU}.

We start with the following elementary definition and lemma.

\begin{definition}\label{complex-type-morphism}
Let $G_1,G_2$ be compact Lie groups, and let
\(
\iota:G_1\longrightarrow G_2
\)
be a morphism of Lie groups. Write $\mathfrak g_1,\mathfrak g_2$ for their
Lie algebras and
\(
d\iota:\mathfrak g_1\longrightarrow\mathfrak g_2
\)
for the induced Lie algebra morphism. We say that $\iota$ is a
\emph{morphism of complex type} if $d\iota$ is injective and the quotient
\(
\mathfrak m_\iota:=\mathfrak g_2/d\iota(\mathfrak g_1)
\)
is of complex type as a real $G_1$-representation. Equivalently, there is an
endomorphism
\(
J:\mathfrak m_\iota\longrightarrow\mathfrak m_\iota,
\ J^2=-1,
\)
which commutes with the $G_1$-action induced by the adjoint action of $G_2$
through $\iota$. Thus $\mathfrak m_\iota$ is the underlying real
representation of a complex $G_1$-representation. This is the terminology of
\cite[Definition~11.5]{JU}.
\end{definition}

\begin{lemma}\label{comptype}
Let $\iota:G_1\to G_2$ be a morphism of complex type, and let
\[
F_\iota^{\rm c}:
\operatorname{Bord}^{\operatorname{Spin}}_8(BG_1,*)
\longrightarrow
\operatorname{Bord}^{\operatorname{Spin}}_8(BG_2,*)
\]
be the transfer functor induced by $B\iota$. There is a canonical monoidal
natural isomorphism
\[
\epsilon_\iota^{\rm c}:
\mathrm N_{8,\mathbb Z_2}^{\operatorname{Spin},G_1,{\rm c}}
\Longrightarrow
\mathrm N_{8,\mathbb Z_2}^{\operatorname{Spin},G_2,{\rm c}}
\circ F_\iota^{\rm c}.
\]
Equivalently, the following diagram is $2$-commutative:
\[
\begin{tikzcd}[column sep=6.7em,row sep=3em]
\operatorname{Bord}^{\operatorname{Spin}}_8(BG_1,*)
  \arrow[r,"{F_\iota^{\rm c}}"]
  \arrow[d,"{\mathrm N_{8,\mathbb Z_2}^{\operatorname{Spin},G_1,{\rm c}}}"']
  \arrow[dr,phantom,"\Downarrow\,\epsilon_\iota^{\rm c}" description]
&
\operatorname{Bord}^{\operatorname{Spin}}_8(BG_2,*)
  \arrow[d,"{\mathrm N_{8,\mathbb Z_2}^{\operatorname{Spin},G_2,{\rm c}}}"]
\\
\mathbb Z_2//\mathbb Z_2
  \arrow[r,"\operatorname{id}"']
&
\mathbb Z_2//\mathbb Z_2.
\end{tikzcd}
\]
\end{lemma}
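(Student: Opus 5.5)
The plan is to construct $\epsilon_\iota^{\rm c}$ object by object, directly at the level of $KO$-theory, and then check naturality and monoidality. Fix an object $[X,f]$ of $\operatorname{Bord}^{\operatorname{Spin}}_8(BG_1,*)$, where $f:(X,\partial X)\to(BG_1,*)$. The reduced adjoint bundle of $G_2$ pulled back along $B\iota\circ f$ splits $G_1$-equivariantly as
\[
\operatorname{Ad}(\mathfrak g_2)-\underline{\mathfrak g_2}
\cong
\bigl(\operatorname{Ad}(\mathfrak g_1)-\underline{\mathfrak g_1}\bigr)
\oplus
\bigl(\mathfrak m_\iota-\underline{\mathfrak m_\iota}\bigr).
\]
The splitting exists because $G_1$ is compact, so one can choose an invariant complement; the choices form a contractible space. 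This gives a canonical homotopy in $\mathcal C_X^{\mathbb R,\rm cs}$. By additivity of $\pi_{X!}^{KO}$ and multiplicativity of $\det_{\mathbb R}$, the orientation torsor for $G_2$ becomes the tensor product of the torsor for $G_1$ with the torsor of the virtual class $\mathfrak m_\iota-\underline{\mathfrak m_\iota}$. The same holds for the $\mathbb Z_2$-gradings, which add.

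The key step is to trivialize this second factor canonically, using the complex structure $J$ on $\mathfrak m_\iota$. Write $V$ for the complex vector bundle associated to $\mathfrak m_\iota$; it is trivialized near $\partial X$, so $V-\underline{V}$ is a compactly supported class in $K^0_{\rm c}$. Its real underlying class is the realification $r(V-\underline V)$. For realification we have $\pi_{X!}^{KO}\circ r=r\circ\pi_{X!}^{KU,\mathrm{spin}}$ (the realification counterpart of the Atiyah--Bott--Shapiro square in Subsection~\ref{KO-theoretic-interpretation}). Hence the index is the realification of a complex virtual vector space, so its real determinant is the real line underlying a complex line. A complex line carries a canonical orientation, and its index is even, so the grading of this factor vanishes. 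Applying $\det_{\mathbb R}\circ r\simeq$ (orientation of $\det_{\mathbb C}$) on the representing spaces $KU\to KO\to\mathbb RP^\infty$ gives a canonical null-homotopy of the composite $KU\xrightarrow{r}KO\xrightarrow{\det}\mathbb RP^\infty$. This null-homotopy supplies the trivialization of the second factor.

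It remains to check naturality with respect to relative bordisms $[W,k]$. Every ingredient (the splitting, the realification square, and the null-homotopy) is a homotopy coherence on the representing spaces, and these are compatible with the Pontryagin--Thom construction of Proposition~\ref{relative-cobordism-invariance} along $W$. The relative face $Q$ maps to the basepoint, where $V$ is trivialized, so the trivialization extends across it. Monoidality follows from additivity under disjoint union. Equivalently, one could verify the statement on $\pi_0,\pi_1$ via Subsection~\ref{picardgen}, but the explicit construction is what makes $\epsilon_\iota^{\rm c}$ canonical rather than merely existent.

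I expect the main obstacle to be the sign and coherence bookkeeping. The Picard groupoid $\mathbb Z_2//\mathbb Z_2$ has a nontrivial symmetry, and reordering the two summands could introduce a Koszul sign. I would handle this by first checking that the grading of the $\mathfrak m_\iota$-summand is $0$, which kills the symmetry sign. After that, as in \cite[Theorem~11.8]{JU}, I would verify that the construction is independent of the choice of complement and of $J$ within its homotopy class. The compactly supported setting adds only the requirement that all homotopies be constant at the basepoint.
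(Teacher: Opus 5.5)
Your proposal is correct and follows the same route as the paper's proof. The paper likewise splits $(B\iota)^*\bigl(\operatorname{Ad}(\mathfrak g_2)-\underline{\mathfrak g_2}\bigr)$ into the $G_1$-summand and the complex-type summand $\mathfrak m_\iota-\underline{\mathfrak m_\iota}$, and orients the latter canonically via the Atiyah--Bott--Shapiro compatibility of $\pi_!^{KO}$ with complex $K$-theory, as in \cite[Proposition~11.6(a),(c)]{JU}. One small wording slip: the real determinant of a realified complex virtual space is not ``the real line underlying a complex line''; it is a real line carrying the canonical complex orientation, and that is all your argument actually uses.
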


\begin{proof}
Over $BG_1$ there is a natural decomposition
\[
(B\iota)^*\bigl(
 \operatorname{Ad}(\mathfrak g_2)-\underline{\mathfrak g_2}
\bigr)
\cong
\operatorname{Ad}(\mathfrak g_1)-\underline{\mathfrak g_1}
\oplus
\bigl(
 \mathfrak m_\iota-\underline{\mathfrak m_\iota}
\bigr).
\]
The second summand is the underlying real bundle of a virtual complex bundle.
The rest follows as in \cite[Proposition~11.6(a),(c)]{JU} using the Atiyah--Bott--Shapiro square of Section \ref{KO-theoretic-interpretation}.
\end{proof}

\begin{lemma}\label{product-and-abelian}
The normalized orientation functors have the following properties.

\begin{enumerate}
\item Let $G_1,G_2$ be compact Lie groups, let
$\pi_i:G_1\times G_2\to G_i$ be the projections, and let
$F_{\pi_i}^{\rm c}$ be the induced transfer functors. There is a canonical
monoidal natural isomorphism $\zeta_{G_1,G_2}^{\rm c}$ making the following
diagram $2$-commutative:
\[
\begin{tikzcd}[column sep=8em,row sep=3em]
\operatorname{Bord}^{\operatorname{Spin}}_8(B(G_1\times G_2),*)
  \arrow[r,"{\mathrm N_{8,\mathbb Z_2}^{\operatorname{Spin},G_1\times G_2,{\rm c}}}"]
  \arrow[d,"{(F_{\pi_1}^{\rm c},F_{\pi_2}^{\rm c})}"']
  \arrow[dr,phantom,"\Downarrow\,\zeta_{G_1,G_2}^{\rm c}" description]
&
\mathbb Z_2//\mathbb Z_2
\\
\begin{gathered}
\operatorname{Bord}^{\operatorname{Spin}}_8(BG_1,*)\times\\
\operatorname{Bord}^{\operatorname{Spin}}_8(BG_2,*)
\end{gathered}
  \arrow[r,"{\mathrm N_{8,\mathbb Z_2}^{\operatorname{Spin},G_1,{\rm c}}
  \times
  \mathrm N_{8,\mathbb Z_2}^{\operatorname{Spin},G_2,{\rm c}}}"']
&
(\mathbb Z_2//\mathbb Z_2)\times(\mathbb Z_2//\mathbb Z_2)
  \arrow[u,"\otimes"'].
\end{tikzcd}
\]

\item If $G$ is abelian, there is a canonical monoidal natural isomorphism
\[
\mathrm N_{8,\mathbb Z_2}^{\operatorname{Spin},G,{\rm c}}
\cong\mathbf 1,
\]
where $\mathbf 1$ is the constant functor with value the monoidal identity of
$\mathbb Z_2//\mathbb Z_2$.
\end{enumerate}
\end{lemma}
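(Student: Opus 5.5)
Both parts will be proved by working directly with the explicit description of the normalized functors: on an object $[X,f]$ the value is the $KO$-index of $f^*(\operatorname{Ad}(\mathfrak g)-\underline{\mathfrak g})$ mod $2$ together with the determinant torsor of $\pi_{X!}^{KO}$ of this class, and on morphisms it is the bordism identification of Proposition~\ref{relative-cobordism-invariance} in its real form. This parallels the argument of \cite[Proposition~11.6]{JU}, with pushforwards in $KO$ replacing Dirac indices.

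For part (a), the first step is to note that the adjoint representation of $G_1\times G_2$ is $\mathfrak g_1\oplus\mathfrak g_2$, with $G_i$ acting only on the $i$th summand. This gives a canonical splitting of the reduced adjoint virtual bundle over $B(G_1\times G_2)$:
\[
\operatorname{Ad}(\mathfrak g_1\oplus\mathfrak g_2)-\underline{\mathfrak g_1\oplus\mathfrak g_2}
\cong
B\pi_1^*\bigl(\operatorname{Ad}(\mathfrak g_1)-\underline{\mathfrak g_1}\bigr)
\oplus
B\pi_2^*\bigl(\operatorname{Ad}(\mathfrak g_2)-\underline{\mathfrak g_2}\bigr).
\]
Next I would use additivity of $\pi_{X!}^{KO}$, which holds at the level of representing spaces since the pushforward is a map of infinite loop spaces. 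Combined with the canonical isomorphism $\det_{\mathbb R}(V\oplus W)\cong\det_{\mathbb R}V\otimes\det_{\mathbb R}W$, this gives on each object an isomorphism of graded torsors from the $G_1\times G_2$ value to the tensor product of the $G_1$ and $G_2$ values. The indices add, so the grading is respected. The determinant of a sum involves the Koszul sign convention, and this is exactly why the target must be the super Picard groupoid $\mathbb Z_2//\mathbb Z_2$ rather than $\mathbb Z_2$-tor.

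Naturality in morphisms $[W,k]$ is then checked on the Pontryagin--Thom construction from the proof of Proposition~\ref{closedcobordism}. For a fixed embedding of $W$ with collars, the path in $KO$ produced from $k$ is the sum of the paths produced from $B\pi_1\circ k$ and $B\pi_2\circ k$. Monoidality holds because both sides are built by disjoint union, and $\pi_!^{KO}$ of a disjoint union is the sum of the pushforwards.

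For part (b), if $G$ is abelian then the adjoint action is trivial. Hence $\operatorname{Ad}(\mathfrak g)\cong\underline{\mathfrak g}$ canonically, by a trivialization pulled back from $BG$, and the reduced class is canonically the zero class. Canonical here means independent of the map $f$. Because this trivialization is functorial in $f$ and in bordisms $k$, the pushforward torsor is canonically identified with that of $0$, namely $\det_{\mathbb R}$ of the zero class, with index $0$. The bordism identifications for the zero class are the identity, since the paths in $KO$ they produce are constant. Together these give the monoidal isomorphism with $\mathbf 1$.

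The main obstacle is the sign and coherence bookkeeping in (a): we need the determinant-of-sum isomorphism to be compatible with the symmetry constraints, that is, with the quadratic form $q$ on $\mathbb Z_2//\mathbb Z_2$. I would handle this through the classification of Subsection~\ref{picardgen}. There, a monoidal natural isomorphism between two functors whose underlying maps on $\pi_0,\pi_1$ agree exists once it is defined objectwise compatibly with symmetries. Compatibility with symmetries I would verify directly from the super sign convention, following \cite[Proposition~11.6(b)]{JU}.
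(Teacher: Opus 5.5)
Your proposal is correct and follows essentially the same route as the paper. For (a) the paper uses the canonical splitting of the reduced adjoint virtual bundle together with multiplicativity of determinant torsors, for (b) the triviality of the adjoint action, and it asserts compatibility with relative bordisms as in \cite[Proposition~11.6(b)--(d)]{JU}; your Pontryagin--Thom and super-sign checks supply the details it leaves implicit.

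Drop the appeal to the Picard-groupoid classification: on its own it would only give a non-canonical isomorphism, since such isomorphisms form a torsor under $\operatorname{Hom}(\pi_0,\pi_1)$. Canonicity comes entirely from your explicit determinant-of-sum construction and the direct Koszul-sign verification.
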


\begin{proof}
The first statement follows from the canonical direct sum decomposition of
the reduced adjoint virtual bundle and the compatibility of determinant
torsors with direct sums. If $G$ is abelian, its adjoint action on
$\mathfrak g$ is trivial, so the reduced adjoint virtual bundle is canonically
zero. These identifications are compatible with relative bordisms, compare
\cite[Proposition~11.6(b)--(d)]{JU}.
\end{proof}

The functors of Subsection~\ref{threefun} and the preceding two lemmas give
the following factorization.

\begin{lemma}\label{rescompc}
For every $m\geq2$, there is a monoidal natural isomorphism
\[
\lambda_{8,m}^{\operatorname{SU},{\rm c}}:
\mathrm N_{8,\mathbb Z_2}^{\operatorname{Spin},\operatorname{SU}(m),{\rm c}}
\Longrightarrow
\mathrm H_8^{\mathbb Z,{\rm c}}
\circ F_{B\operatorname{SU}(m)}^{K(\mathbb Z,4),{\rm c}},
\]
where
\[
F_{B\operatorname{SU}(m)}^{K(\mathbb Z,4),{\rm c}}:
\operatorname{Bord}^{\operatorname{Spin}}_8(B\operatorname{SU}(m),*)
\longrightarrow
\operatorname{Bord}^{\operatorname{Spin}}_8(K(\mathbb Z,4),0)
\]
is induced by $c_2:B\operatorname{SU}(m)\to K(\mathbb Z,4)$. Equivalently,
the following diagram is $2$-commutative:
\[
\begin{tikzcd}[column sep=7.8em,row sep=3em]
\operatorname{Bord}^{\operatorname{Spin}}_8(B\operatorname{SU}(m),*)
  \arrow[r,"{F_{B\operatorname{SU}(m)}^{K(\mathbb Z,4),{\rm c}}}"]
  \arrow[d,"{\mathrm N_{8,\mathbb Z_2}^{\operatorname{Spin},\operatorname{SU}(m),{\rm c}}}"']
  \arrow[dr,phantom,"\Downarrow\,\lambda_{8,m}^{\operatorname{SU},{\rm c}}" description]
&
\operatorname{Bord}^{\operatorname{Spin}}_8(K(\mathbb Z,4),0)
  \arrow[d,"{\mathrm H_8^{\mathbb Z,{\rm c}}}"]
\\
\mathbb Z_2//\mathbb Z_2
  \arrow[r,"\operatorname{id}"']
&
\mathbb Z_2//\mathbb Z_2.
\end{tikzcd}
\]
\end{lemma}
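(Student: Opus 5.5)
The plan is to follow the compact argument of \cite[Theorem~11.10(b)]{JU}, replacing each ingredient by its compactly supported analogue from Subsections~\ref{threefun} and~\ref{sec:factorizations-orientation-functors}. We first treat $m=8$, then pass to general $m\geq2$ by stabilisation and destabilisation along complex-type morphisms.

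\textbf{Step 1 (the case $m=8$).} Consider the morphism of complex type
\[
\iota_8:\operatorname{SU}(8)\times\operatorname{U}(1)\to E_8
\]
fixed in Remark~\ref{orientation-conventions}. By Lemma~\ref{comptype}, $\epsilon_{\iota_8}^{\rm c}$ identifies $\mathrm N^{\operatorname{Spin},\operatorname{SU}(8)\times\operatorname{U}(1),{\rm c}}_{8,\mathbb Z_2}$ with $\mathrm N^{\operatorname{Spin},E_8,{\rm c}}_{8,\mathbb Z_2}\circ F_{\iota_8}^{\rm c}$. Lemma~\ref{product-and-abelian}(a),(b) identifies the left side with $\mathrm N^{\operatorname{SU}(8)}\circ F_{\pi_1}^{\rm c}$, since the $\operatorname{U}(1)$ factor contributes the unit. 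We then compose with $\lambda_8^{\rm c}$ from \eqref{E8Z4}. This gives an isomorphism
\[
\mathrm N^{\operatorname{SU}(8)}\circ F_{\pi_1}^{\rm c}\cong \mathrm H_8^{\mathbb Z,{\rm c}}\circ F^{K(\mathbb Z,4),{\rm c}}_{BE_8}\circ F^{\rm c}_{\iota_8}.
\]
To pass to $\operatorname{SU}(8)$ alone, restrict along the section $\operatorname{SU}(8)\to\operatorname{SU}(8)\times\operatorname{U}(1)$, $A\mapsto(A,1)$. The composite with $\pi_1$ is the identity. The composite with $\iota_8$ is a homomorphism $\operatorname{SU}(8)\to E_8$, and we must check that it pulls $a_1$ back to $c_2$. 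This is a characteristic class computation in $H^4(B\operatorname{SU}(8);\mathbb Z)\cong\mathbb Z$. It reduces to computing the Dynkin index of $\operatorname{SU}(8)\subset\operatorname{Spin}(16)\to\operatorname{HSpin}(16)\subset E_8$, which is $1$ because the standard embedding $\operatorname{SU}(8)\subset\operatorname{SO}(16)$ has index one and $a_1$ restricts to the generator on $\operatorname{Spin}(16)$. The resulting homotopy $a_1\circ B\iota_8|\simeq c_2$ induces a natural isomorphism of transfer functors into $\operatorname{Bord}_8^{\operatorname{Spin}}(K(\mathbb Z,4),0)$. Composing everything gives $\lambda^{\operatorname{SU},{\rm c}}_{8,8}$.

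\textbf{Step 2 ($m<8$ and $m>8$).} For $m<8$, use the block inclusion $\operatorname{SU}(m)\to\operatorname{SU}(8)$. It is not of complex type in general, since the quotient contains a trivial real summand. So instead use the chain of maps $\jmath_k:\operatorname{SU}(k)\to\operatorname{SU}(k+1)$, or rather their $\operatorname{U}$-versions with the complex structure $W_k\otimes\det W_k$ fixed in Remark~\ref{orientation-conventions}. Each step is of complex type up to an abelian factor, which is killed by Lemma~\ref{product-and-abelian}(b). Since $c_2$ pulls back to $c_2$ under these inclusions, Lemma~\ref{comptype} iterated and composed with Step~1 gives $\lambda^{\operatorname{SU},{\rm c}}_{8,m}$.

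For $m>8$, argue via $\pi_0,\pi_1$ instead. By Theorem~\ref{homgrgr}(a), both functors are determined by their values on $\zeta_1^{SU}/2,\zeta_2^{SU},\zeta_3^{SU}$ and on $\alpha_1\zeta_2^{SU}$. These generators are stable, so they come from $\operatorname{SU}(8)$ (or from $\operatorname{SU}(5)$). Naturality under the stabilisation $\operatorname{SU}(8)\to\operatorname{SU}(m)$, applied to both functors, gives agreement on generators. The classification of Subsection~\ref{picardgen} then yields the isomorphism. The quadratic-form compatibility is automatic, since both functors land in $\mathbb Z_2//\mathbb Z_2$ and the maps on $\pi_0$ and $\pi_1$ agree.

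\textbf{Main obstacle.} The delicate point is Step~1: verifying the restriction $\iota_8^*a_1=c_2$ with the correct sign, and verifying that the complex structures fixed in Remark~\ref{orientation-conventions} make $\iota_8$ genuinely of complex type. To settle the latter, I would use the decomposition of $\mathfrak e_8$ under $\mathfrak{su}(8)\oplus\mathfrak u(1)$ as $W\oplus\Lambda^3W\oplus(\Lambda^2W)^\vee$ plus conjugates, and check that $J$ commutes with the full $\operatorname{SU}(8)\times\operatorname{U}(1)$-action, not just with the $\operatorname{SU}(8)$-action. That is the step I would do first.
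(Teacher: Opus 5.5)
Your proposal is correct and follows the paper's proof essentially step by step. For $m=8$ you use the complex-type morphism $\iota_8$, Lemmas~\ref{comptype} and~\ref{product-and-abelian}, $\lambda_8^{\rm c}$, and the identification of $a_1\circ B\iota_8|_{\operatorname{SU}(8)}$ with $c_2$; for $m<8$ you descend along the complex-type maps $\operatorname{SU}(k)\times\operatorname{U}(1)\to\operatorname{SU}(k+1)$. For $m>8$ the paper simply transports the $m=8$ factorization along the stabilization equivalence of bordism categories, whereas your $\pi_0/\pi_1$ comparison also works but relies on $\pi_0\cong\mathbb Z^3$ being free, so that agreement on $\pi_0$ and $\pi_1$ forces a monoidal isomorphism (the quadratic-form condition is beside the point there), and it produces an isomorphism not tied to the chosen $\lambda_8^{\rm c}$.
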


\begin{proof}
We follow the proof of \cite[Theorem~11.11(a)(i),(b)(i)]{JU}.

\textit{Step 1.}
Consider the $2$-commutative diagram~\eqref{E8Z4}. Joyce and Upmeier construct
a morphism of complex type
\[
\iota_8:\operatorname{SU}(8)\times\operatorname{U}(1)
\longrightarrow E_8;
\]
see \cite[Theorem~11.8]{JU}. Lemmas~\ref{comptype} and
\ref{product-and-abelian}, together with diagram~\eqref{E8Z4}, therefore give
the required factorization for $\operatorname{SU}(8)$. With the normalizations
of Joyce and Upmeier, the composite
\[
B\operatorname{SU}(8)
\longrightarrow BE_8
\xrightarrow{a_1}K(\mathbb Z,4)
\]
represents $c_2$; compare \cite[Remark~8.2(a), Theorem~11.11(b)(i), and its
proof]{JU}.

\textit{Step 2.}
For $2\leq m<8$, use the complex-type morphisms
\[
\operatorname{SU}(m)\times\operatorname{U}(1)
\longrightarrow\operatorname{SU}(m+1)
\]
of \cite[Theorem~11.8]{JU}, together with
Lemmas~\ref{comptype} and~\ref{product-and-abelian}. The pullback of $c_2$
under the standard inclusion
$B\operatorname{SU}(m)\to B\operatorname{SU}(m+1)$ is again $c_2$, so the
factorization descends from $m+1$ to $m$.

For $m>8$, the stabilization maps
$B\operatorname{SU}(m)\to B\operatorname{SU}(m+1)$ induce equivalences on the
spin bordism categories in dimensions $8$ and $9$. Since $c_2$ is stable, the
factorization for $m=8$ therefore extends to every $m>8$. See
\cite[Theorem~3.3(b), Theorem~8.3(b), and the proof of
Theorem~11.11(b)(i)]{JU}.
\end{proof}

We now explain explicitly the passage from $B\operatorname{SU}$ to
$B\operatorname{U}$.

\begin{proposition}\label{BU-BSU-transition}
For $m\geq1$, define
\[
\jmath_m:\operatorname{U}(m)\longrightarrow\operatorname{SU}(m+1),
\qquad
A\longmapsto
\begin{pmatrix}
A&0\\
0&\det(A)^{-1}
\end{pmatrix}.
\]
This is a morphism of complex type. On classifying spaces, $B\jmath_m$ sends a
rank-$m$ complex vector bundle $E$ to
\(
E\oplus\det(E)^\vee.
\)
Moreover, the diagram
\[
\begin{tikzcd}[column sep=6em,row sep=3em]
B\operatorname{U}(m)
  \arrow[r,"{B\jmath_m}"]
  \arrow[dr,"{\kappa_m=c_2-c_1^2}"']
&
B\operatorname{SU}(m+1)
  \arrow[d,"c_2"]
\\
&K(\mathbb Z,4)
\end{tikzcd}
\]
commutes up to homotopy. Consequently, there is a monoidal natural
isomorphism
\[
\lambda_{8,m}^{\operatorname{U},{\rm c}}:
\mathrm N_{8,\mathbb Z_2}^{\operatorname{Spin},\operatorname{U}(m),{\rm c}}
\Longrightarrow
\mathrm H_8^{\mathbb Z,{\rm c}}
\circ F_{B\operatorname{U}(m)}^{K(\mathbb Z,4),{\rm c}},
\]
where the transfer functor on the right is induced by
$\kappa_m=c_2-c_1^2$.
\end{proposition}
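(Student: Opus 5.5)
We prove the three assertions in turn and then combine them with Lemmas~\ref{comptype}, \ref{product-and-abelian} and~\ref{rescompc}.

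\textbf{Complex type.} The map $\jmath_m$ is an injective homomorphism, so $d\jmath_m$ is injective. It remains to compute the quotient $\mathfrak{su}(m+1)/d\jmath_m(\mathfrak u(m))$ as a real $\operatorname{U}(m)$-representation. Write $\mathbb C^{m+1}=W_m\oplus\det(W_m)^{-1}$ under $\jmath_m$. Then
\[
\mathfrak{sl}(m+1)=\mathfrak{sl}(W_m)\oplus\mathbb C\oplus\operatorname{Hom}(\det W_m^{-1},W_m)\oplus\operatorname{Hom}(W_m,\det W_m^{-1}).
\]
The compact real form $\mathfrak{su}(m+1)$ is the sum of the anti-Hermitian diagonal blocks, which together make up $d\jmath_m(\mathfrak u(m))$, and the off-diagonal part $\{(v,-v^*)\}$. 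The off-diagonal part is identified, $\operatorname{U}(m)$-equivariantly, with the underlying real representation of $W_m\otimes\det(W_m)$. This is a complex representation, so $\jmath_m$ is of complex type, with exactly the complex structure fixed in Remark~\ref{orientation-conventions}.

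\textbf{The classifying-space identity.} The assertion about $B\jmath_m$ follows directly from the block form. For the characteristic classes, the Whitney formula gives
\[
c(E\oplus\det E^\vee)=c(E)(1-c_1(E)).
\]
Its degree-four part is $c_2(E)-c_1(E)^2$, so $c_2\circ B\jmath_m=\kappa_m$ holds in $H^4(B\operatorname{U}(m);\mathbb Z)$. Maps to $K(\mathbb Z,4)$ are classified up to homotopy by $H^4$, so the triangle commutes up to homotopy. We fix a homotopy once and for all. Any two such homotopies differ by a class in $H^3(B\operatorname{U}(m);\mathbb Z)=0$, so the choice is immaterial up to homotopy.

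\textbf{Constructing $\lambda_{8,m}^{\operatorname{U},{\rm c}}$.} Apply Lemma~\ref{comptype} to $\jmath_m$. This gives
\[
\epsilon^{\rm c}_{\jmath_m}:\mathrm N^{\operatorname{Spin},\operatorname{U}(m),{\rm c}}_{8,\mathbb Z_2}\Rightarrow\mathrm N^{\operatorname{Spin},\operatorname{SU}(m+1),{\rm c}}_{8,\mathbb Z_2}\circ F^{\rm c}_{\jmath_m}.
\]
Compose this with $\lambda^{\operatorname{SU},{\rm c}}_{8,m+1}F^{\rm c}_{\jmath_m}$ from Lemma~\ref{rescompc}, which applies since $m+1\ge2$. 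The result is a monoidal natural isomorphism to $\mathrm H_8^{\mathbb Z,{\rm c}}\circ F^{K(\mathbb Z,4),{\rm c}}_{B\operatorname{SU}(m+1)}\circ F^{\rm c}_{\jmath_m}$.

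Transfer functors are strictly functorial in maps of pointed spaces. A pointed homotopy $c_2\circ B\jmath_m\simeq\kappa_m$ therefore induces a monoidal natural isomorphism of transfer functors: a bordism $[X,f]$ is sent to the cylinder $X\times I$ carrying the homotopy, which is a morphism in $\operatorname{Bord}^{\operatorname{Spin}}_8(K(\mathbb Z,4),0)$. This identifies
\[
F^{K(\mathbb Z,4),{\rm c}}_{B\operatorname{SU}(m+1)}\circ F^{\rm c}_{\jmath_m}\cong F^{K(\mathbb Z,4),{\rm c}}_{B\operatorname{U}(m)},
\]
the latter induced by $\kappa_m$. Composing all these isomorphisms gives $\lambda^{\operatorname{U},{\rm c}}_{8,m}$.

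\textbf{Main obstacle.} The delicate point is making the homotopy pointed, so that it respects relative boundaries mapped to $0$. Both $c_2\circ B\jmath_m$ and $\kappa_m$ can be chosen as pointed maps, and $B\operatorname{U}(m)$ is simply connected, so a free homotopy can be replaced by a pointed one. One must then also check that the cylinder construction is monoidal and natural with respect to relative bordisms. This holds because the homotopy is constant in the spatial direction on $Q$-faces, where everything is mapped to the basepoint.
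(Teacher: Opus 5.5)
Your proposal is correct and follows the paper's route. The paper cites complex type of $\jmath_m$ from \cite[Theorem~11.8]{JU}, which you instead verify by identifying the off-diagonal part of $\mathfrak{su}(m+1)$ with $W_m\otimes\det(W_m)$; both then use the Whitney formula for $c_2(E\oplus\det(E)^\vee)$ and compose $\epsilon^{\rm c}_{\jmath_m}$ from Lemma~\ref{comptype} with $\lambda^{\operatorname{SU},{\rm c}}_{8,m+1}$ from Lemma~\ref{rescompc}, and you additionally spell out the natural isomorphism of transfer functors induced by a pointed homotopy $c_2\circ B\jmath_m\simeq\kappa_m$, which the paper leaves implicit.

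One small correction: a free homotopy can be replaced by a pointed one here because the target $K(\mathbb Z,4)$ is simply connected (equivalently, pointed classes are classified by $\widetilde H^4(B\operatorname{U}(m);\mathbb Z)$), not because the source $B\operatorname{U}(m)$ is simply connected, which by itself would not suffice.
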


\begin{proof}
The fact that $\jmath_m$ is of complex type follows from
\cite[Theorem~11.8]{JU}. Hence Lemma~\ref{comptype} compares the normalized
$\operatorname{U}(m)$-orientation functor with the pullback of the normalized
$\operatorname{SU}(m+1)$-orientation functor. For a rank $m$ bundle $E$, the
Whitney formula gives
\[
c_1\bigl(E\oplus\det(E)^\vee\bigr)=0,
\qquad
c_2\bigl(E\oplus\det(E)^\vee\bigr)
=c_2(E)-c_1(E)^2.
\]
The result now follows by composing the natural isomorphism of
Lemma~\ref{comptype} with $\lambda_{8,m+1}^{\operatorname{SU},{\rm c}}$ from
Lemma~\ref{rescompc}. This is the $\operatorname{U}(m)$ step in the proof of
\cite[Theorem~11.11(b)(i)]{JU}.
\end{proof}

Thus we obtain the following consequence for stable complex $K$-theory.

\begin{corollary}\label{cantriv}
Let $X$ be a complex $4$-fold equipped with a spin structure, which is either
compact, possibly with boundary, or open. Put
\[
X^\dagger=
\begin{cases}
X/\partial X,&X\text{ compact},\\
X^+,&X\text{ open},
\end{cases}
\qquad
\mathcal C_{X,0}^\dagger=
\operatorname{Map}_*(X^\dagger,BU),
\]
where, if $\partial X=\varnothing$, a disjoint basepoint is understood, and
$BU$ is the virtual-rank-zero component of $BU\times\mathbb Z$.

Assume that the ungraded cohomology orientation functor
\[
\Pi_{\mathbb Z_2\text{-tor}}
\circ\mathrm H_8^{\mathbb Z,{\rm c}}
\circ\Pi_X^\dagger:
\operatorname{Bord}_X^\dagger(K(\mathbb Z,4))
\longrightarrow\mathbb Z_2\text{-tor}
\]
admits a trivialization. Here $\operatorname{Bord}_X^\dagger$ and
$\Pi_X^\dagger$ use the compact/open convention of
Subsection~\ref{orfun}. Then the restriction of $F_X^\dagger$ to the
virtual-rank-zero mapping space $\mathcal C_{X,0}^\dagger$ is orientable.
Equivalently, the normalized determinant $\mathbb Z_2$-bundle over
$\operatorname{Map}_*(X^\dagger,BU)$ is trivial.
\end{corollary}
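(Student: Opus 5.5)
The plan is to reduce orientability on $\mathcal C_{X,0}^\dagger$ to the finite-rank statement of Proposition~\ref{BU-BSU-transition}, using compactness of $X^\dagger$ and a colimit argument.

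First we replace $BU$ by $B\operatorname{U}(m)$. Since $X^\dagger$ is a finite CW complex of dimension $8$, the stabilization maps $\operatorname{Map}_*(X^\dagger,B\operatorname{U}(m))\to\operatorname{Map}_*(X^\dagger,BU)$ become highly connected as $m\to\infty$. For $m\ge5$ they are at least $2$-connected, so they induce isomorphisms on $\pi_0$ and $\pi_1$ componentwise. A principal $\mathbb Z_2$-bundle is trivial exactly when its monodromy on every loop vanishes, and every loop in $\mathcal C_{X,0}^\dagger$ lifts to a loop of maps $X^\dagger\to B\operatorname{U}(m)$ for a fixed large $m$. Such a loop is a pointed map $S^1_+\wedge X^\dagger\to B\operatorname{U}(m)$, i.e.\ a morphism in $\operatorname{Bord}_X^\dagger(B\operatorname{U}(m),*)$ from an object to itself. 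Triviality of the bundle is therefore equivalent to triviality of $F_X^\dagger$ on automorphisms in $\operatorname{Bord}_X^\dagger(B\operatorname{U}(m),*)$.

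Second, we identify $F_X^\dagger$ on rank-$m$ classes with the ungraded normalized functor. For a rank-$m$ bundle $E$ with virtual class $E-\underline{\mathbb C}^m$, the class $\operatorname{Ad}(E)$ equals the reduced adjoint bundle $\operatorname{Ad}(\mathfrak u(m))-\underline{\mathfrak u(m)}$ plus a trivial summand. The trivial summand is compactly supported zero, hence contributes canonically trivially, compatibly with relative bordisms. This gives a natural isomorphism
\[
F_X^\dagger|_{B\operatorname{U}(m)}\cong\Pi_{\mathbb Z_2\text{-tor}}\circ\mathrm N_{8,\mathbb Z_2}^{\operatorname{Spin},\operatorname{U}(m),{\rm c}}\circ\Pi_X^\dagger .
\]
Here $\Pi_X^\dagger$ is now the $B\operatorname{U}(m)$-version of Proposition~\ref{open-Pi-functor}, which is built by the same excision trick.

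Third, we apply Proposition~\ref{BU-BSU-transition}. The natural isomorphism $\lambda_{8,m}^{\operatorname{U},{\rm c}}$ identifies this functor with $\Pi_{\mathbb Z_2\text{-tor}}\circ\mathrm H_8^{\mathbb Z,{\rm c}}\circ F^{K(\mathbb Z,4),{\rm c}}_{B\operatorname{U}(m)}\circ\Pi_X^\dagger$. The transfer functor commutes with $\Pi_X^\dagger$, because both act by postcomposition: the first with $\kappa_m$, the second by restriction to a compact codimension-zero submanifold. The composite therefore factors through $\operatorname{Bord}_X^\dagger(K(\mathbb Z,4))$ via $f\mapsto\kappa_m\circ f$. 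By hypothesis the ungraded cohomology functor admits a trivialization there, and pulling it back trivializes $F_X^\dagger$ on all morphisms, in particular on loops. By the first step, the $\mathbb Z_2$-bundle over $\operatorname{Map}_*(X^\dagger,BU)$ is then trivial.

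The main obstacle is the first step. We must make precise that automorphism groups in the fixed-manifold bordism groupoid compute $\pi_1$ of the mapping space, and that the trivializations obtained for different $m$ are compatible under stabilization $\operatorname{U}(m)\to\operatorname{U}(m+1)$. For this we would check that $\lambda_{8,m}^{\operatorname{U},{\rm c}}$ is compatible with the standard inclusions, using Lemma~\ref{comptype} together with Lemma~\ref{product-and-abelian}(b). Since only the existence of a trivialization is claimed, it suffices to observe that monodromy vanishes on every loop. Compatibility across different $m$ is then needed only for the well-definedness of that monodromy, which holds automatically because the bundle is defined on the colimit.
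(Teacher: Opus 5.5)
Your architecture is the same as the paper's. You pull back along the stabilization maps $j_m$ and identify the result with the normalized $\operatorname{U}(m)$ functor. You then use $\lambda_{8,m}^{\operatorname{U},{\rm c}}$ from Proposition~\ref{BU-BSU-transition} to factor through $\kappa_m$ and the assumed trivialization. Finally, every loop in $\operatorname{Map}_*(X^\dagger,BU)$ already lies in some $\operatorname{Map}_*(X^\dagger,B\operatorname{U}(m))$, so the monodromy vanishes. The paper gets this last point from compactness of $X^\dagger\times\Delta^1$ in the CW topology of $BU=\operatorname{colim}_mB\operatorname{U}(m)$, not from connectivity of stabilization. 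That argument does not need $X^\dagger$ to be a finite $8$-dimensional CW complex, which can fail for an arbitrary open $X$. In the cases of interest your connectivity argument is fine.

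The step that fails as written is your second one. $F_X^\dagger$ is evaluated on $\operatorname{Ad}_X(z)$ for the rank-zero class $z=[E]-\underline{\mathbb C}^m$, not on the adjoint bundle of $E$, and $\operatorname{Ad}$ is quadratic, not additive. Polarization gives
\[
\operatorname{Ad}(z)\simeq\bigl(\operatorname{Ad}(P_E)-\underline{\mathfrak u(m)}\bigr)\oplus r(-m\,z^\vee),
\]
where $P_E$ is the frame bundle of $E$. You can check this after complexifying: $(E^\vee-m)(E-m)=(E^\vee E-m^2)-m\bigl((E-m)+(E^\vee-m)\bigr)$. The cross term is compactly supported but in general nonzero; for $X=\mathbb C^4$, $X^+=S^8$ and $z=kx$ it equals $-2mk\,b$. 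So it does not contribute trivially for the reason you give.

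What rescues the step is that the cross term is the realification of the compactly supported complex class $-m\,z^\vee$. Its $KO$-pushforward is the realification of a complex pushforward, using the ABS compatibility $\pi^{KO}_!\circ r\simeq r\circ\pi^{KU,\mathrm{spin}}_!$. Hence its real orientation torsor is canonically trivialized by the complex orientation, continuously in families and compatibly with the relative bordism morphisms. It therefore has no monodromy along loops, and the natural isomorphism
\[
F_X^\dagger|_{B\operatorname{U}(m)}\cong\Pi_{\mathbb Z_2\text{-tor}}\circ\mathrm N_{8,\mathbb Z_2}^{\operatorname{Spin},\operatorname{U}(m),{\rm c}}\circ\Pi_X^\dagger
\]
does hold once this is corrected. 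The paper writes out exactly this decomposition for $m=8$ in Lemma~\ref{local-curve-theta}, $\operatorname{Ad}(z)\simeq A_U(z)\oplus r(-8z^\vee)$. In the proof of the corollary it is left tacit in the phrase ``normalized orientation functor''. With this repair your argument is complete.
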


\begin{proof}
Let
\[
j_m:
\operatorname{Map}_*(X^\dagger,B\operatorname{U}(m))
\longrightarrow
\operatorname{Map}_*(X^\dagger,BU)
\]
be induced by stabilization. Proposition~\ref{BU-BSU-transition} identifies
the restriction of the normalized orientation functor along $j_m$ with the
pullback of the cohomology orientation functor along
\[
\kappa_m=c_2-c_1^2:
B\operatorname{U}(m)\longrightarrow K(\mathbb Z,4).
\]
The assumed trivialization therefore shows that the normalized determinant
$\mathbb Z_2$-bundle is trivial after pullback along every $j_m$.

It remains only to pass to the stable mapping space. The space $X^\dagger$ is
compact, and $BU=\operatorname*{colim}_mB\operatorname{U}(m)$ has its usual
CW topology. Hence every singular $1$-simplex in
$\operatorname{Map}_*(X^\dagger,BU)$, equivalently every map
\[
X^\dagger\times\Delta^1\longrightarrow BU,
\]
has image in $B\operatorname{U}(m)$ for some $m$. It follows that
\[
H_1\bigl(\operatorname{Map}_*(X^\dagger,BU);\mathbb Z_2\bigr)
=
\operatorname*{colim}_m
H_1\bigl(
 \operatorname{Map}_*(X^\dagger,B\operatorname{U}(m));\mathbb Z_2
\bigr).
\]
The first Stiefel--Whitney class of the normalized determinant bundle vanishes
on every term on the right, and therefore vanishes on the stable mapping
space. This is precisely orientability in the sense of
Definition~\ref{orientation-fixed-X}.
\end{proof}

Corollary~\ref{cantriv} pairs well with the following relative version of the
Joyce--Upmeier obstruction. For a CW-pair $(Y,A)$, we use the relative
Steenrod square
\[
\operatorname{Sq}_{(Y,A)}^i:
H^k(Y,A;\mathbb Z_2)
\longrightarrow
H^{k+i}(Y,A;\mathbb Z_2),
\]
defined, under the natural identification
$H^*(Y,A;\mathbb Z_2)\cong\widetilde H^*(Y/A;\mathbb Z_2)$, by the ordinary
Steenrod square on reduced cohomology; see \cite{SteenrodEpstein1962}. We
write $\operatorname{Sq}_\partial^i$ when $(Y,A)=(X,\partial X)$. For an open
manifold $X$, the compactly supported operation $\operatorname{Sq}_c^i$ is
defined in the same way using
$H_c^*(X;\mathbb Z_2)\cong\widetilde H^*(X^+;\mathbb Z_2)$.

\begin{proposition}\label{Steenrodprop}
Let $X$ be a compact spin $8$-manifold with boundary. Suppose that there is no
class $\alpha\in H^3(X,\partial X;\mathbb Z)$ such that
\[
\left\langle
 \bar\alpha\smile\operatorname{Sq}_\partial^2(\bar\alpha),
 [X,\partial X]
\right\rangle
=1
\quad\text{in }\mathbb Z_2,
\]
where $\bar\alpha$ is the mod-$2$ reduction of $\alpha$. Then one may choose a
natural isomorphism $\eta_X^\partial$ making the following diagram
$2$-commutative:
\[
\begin{tikzcd}[column sep=7em,row sep=3em]
\operatorname{Bord}_X(K(\mathbb Z,4),0)
  \arrow[r,"{\mathbf 1_X}"]
  \arrow[d,"{\Pi_X^{\operatorname{Spin}}}"']
  \arrow[dr,phantom,"\Downarrow\,\eta_X^\partial" description]
&
0//\mathbb Z_2
\\
\operatorname{Bord}^{\operatorname{Spin}}_8(K(\mathbb Z,4),0)
  \arrow[r,"{\mathrm H_8^{\mathbb Z,{\rm c}}}"']
&
\mathbb Z_2//\mathbb Z_2
  \arrow[u,"{\Pi_{\mathbb Z_2\text{-tor}}}"'].
\end{tikzcd}
\]
\end{proposition}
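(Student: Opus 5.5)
This is the relative analogue of Joyce--Upmeier's result for the cohomology orientation functor on a fixed closed spin $8$-manifold; compare \cite[Theorem~4.8 and Section~12]{JU}. We follow their strategy. First note that $\operatorname{Bord}_X(K(\mathbb Z,4),0)$ is a groupoid whose objects are classes $f\in H^4(X,\partial X;\mathbb Z)$. Its automorphism groups are
\[
\pi_1\bigl(\operatorname{Map}_*(X/\partial X,K(\mathbb Z,4)),f\bigr)\cong H^3(X,\partial X;\mathbb Z).
\]
A natural isomorphism $\eta_X^\partial$ therefore exists if and only if, for every object $f$, the composite $\Pi_{\mathbb Z_2\text{-tor}}\circ\mathrm H_8^{\mathbb Z,{\rm c}}\circ\Pi_X^{\operatorname{Spin}}$ acts trivially on $\operatorname{Aut}(f)$. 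Given this, we choose a trivialization on one object per component and transport it along morphisms. The task thus reduces to computing a homomorphism
\[
\theta_f:H^3(X,\partial X;\mathbb Z)\to\mathbb Z_2
\]
and showing that it vanishes under the hypothesis.

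\textbf{Step 1 (mapping tori).} An automorphism of $f$ given by $\alpha\in H^3(X,\partial X;\mathbb Z)$ is sent by $\Pi_X^{\operatorname{Spin}}$ to the cylinder $[X\times I,H_\alpha]$. Its image in $\pi_1(\operatorname{Bord}^{\operatorname{Spin}}_8(K(\mathbb Z,4),0))\cong\widetilde\Omega^{\operatorname{Spin}}_9(K(\mathbb Z,4))$ is represented by the closed-up mapping torus $X\times S^1$, with the class $f\boxtimes1+\alpha\times d\theta$. Near $\partial X\times S^1$ this class is relatively zero, so it defines an element of $\Omega^{\operatorname{Spin}}_9(K(\mathbb Z,4),0)$. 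The translation of $\pi_1$ of an object into the automorphism group uses the Picard structure of Subsection~\ref{picardgen}. We use the bounding spin structure on $S^1$, as Joyce and Upmeier do; the nontrivial one only contributes via $q$, which vanishes on automorphisms of an object.

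\textbf{Step 2 (evaluate via the generator).} By Theorem~\ref{homgrgr}(b), $\widetilde\Omega^{\operatorname{Spin}}_9(K(\mathbb Z,4))\cong\mathbb Z_2$, generated by $\alpha_1\zeta_2^K$, and $\mathrm H_8^{\mathbb Z,{\rm c}}$ sends the generator to $1$. We therefore need a bordism invariant that detects this $\mathbb Z_2$. Following \cite[Section~12]{JU}, we take the characteristic number
\[
(Y^9,\beta)\mapsto\int_Y\bar\beta\smile\operatorname{Sq}^2\bar\beta\smile\ldots
\]
More precisely, we take the relative Stiefel--Whitney/Steenrod number that Joyce--Upmeier show equals $1$ on $\alpha_1\zeta_2^K$. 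We then evaluate it on the mapping torus. Expanding $\overline{f+\alpha\,d\theta}$ and using the Cartan formula together with $d\theta^2=0$, the number reduces to
\[
\bigl\langle\bar\alpha\smile\operatorname{Sq}^2_\partial\bar\alpha,[X,\partial X]\bigr\rangle
\]
plus terms linear in $\bar\alpha$. Those linear terms vanish on spin manifolds by Wu's formula: $v_2=w_2=0$ and $\operatorname{Sq}^1$ contributions cancel. The computation is purely cohomological. The relative versions of the Cartan and Wu formulas hold for $(X,\partial X)$ via $X/\partial X$ and Lefschetz duality, so we expect it to transfer verbatim from the closed case.

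\textbf{Step 3 (conclude).} Steps 1 and 2 give $\theta_f(\alpha)=\langle\bar\alpha\smile\operatorname{Sq}^2_\partial\bar\alpha,[X,\partial X]\rangle$. The hypothesis makes this zero for all $\alpha$, so every automorphism acts trivially and the trivializations can be chosen coherently, producing $\eta_X^\partial$.

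The main obstacle is Step 2. We must identify the $\mathbb Z_2$-valued invariant on relative $9$-dimensional bordism with the quadratic Steenrod expression, carefully in the relative setting with corners. We would first try to reduce to the closed case by doubling: glue $X$ to $-X$ along $\partial X$, extending by zero. The resulting closed class $\alpha\cup 0$ on $DX$ has the same characteristic number, and \cite[Theorem~12.?]{JU} can then be applied directly.
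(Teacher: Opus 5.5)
Your reduction matches the paper's. The paper just says that the Joyce--Upmeier proof of \cite[Theorem~10.7(a)]{JU} runs unchanged with $H^{4-i}(X,\partial X;\mathbb Z)$ in place of absolute cohomology. Concretely, one shows that $\Pi_{\mathbb Z_2\text{-tor}}\circ\mathrm H_8^{\mathbb Z,{\rm c}}\circ\Pi_X^{\operatorname{Spin}}$ is trivial on $\operatorname{Aut}(f)\cong H^3(X,\partial X;\mathbb Z)$ by locating the relative mapping torus in $\widetilde\Omega_9^{\operatorname{Spin}}(K(\mathbb Z,4))\cong\mathbb Z_2$.

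The genuine problem is Step~2: the invariant you propose does not exist. The generator $\alpha_1\zeta_2^K$ is represented by $Z\times S^1$, with $S^1$ carrying its non-bounding spin structure and the class pulled back from $Z$. As an unoriented manifold this bounds $Z\times D^2$, so the generator is zero in $\mathfrak N_9(K(\mathbb Z,4))$. Every number $\langle P(w(Y),\bar\beta),[Y]\rangle$ built from Stiefel--Whitney classes, $\bar\beta$ and Steenrod squares is an unoriented bordism invariant, so all of them vanish on $\alpha_1\zeta_2^K$. (Your candidate $\bar\beta\smile\operatorname{Sq}^2\bar\beta$ also has degree $10$.)

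The expression $\langle\bar\alpha\smile\operatorname{Sq}^2\bar\alpha,[X]\rangle$ is not a characteristic number of the $9$-manifold. It is Joyce--Upmeier's computation of the mapping-torus class itself, cited in the paper as \cite[Theorem~3.2(c), equation~(3.22)]{JU}. Your ``quadratic plus linear terms'' bookkeeping is also off. For integral $\alpha$ on a spin manifold, $\alpha\mapsto\langle\bar\alpha\smile\operatorname{Sq}^2_\partial\bar\alpha,[X,\partial X]\rangle$ is already additive. By the Cartan formula and $v_2=0$, its polarization is $\langle\operatorname{Sq}^1\bar\alpha\smile\operatorname{Sq}^1\bar\alpha',[X,\partial X]\rangle=0$.

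The repair is your doubling fallback, but carried out on bordism classes rather than on numbers. The isomorphism $\Omega_9^{\operatorname{Spin}}(K(\mathbb Z,4),0)\cong\widetilde\Omega_9^{\operatorname{Spin}}(K(\mathbb Z,4))$ is realized by doubling along the relative face and extending by the constant map. So the relative mapping torus of $(X,f,\alpha)$ is the closed mapping torus of $(DX,f\cup0,\alpha\cup0)$. The closed Joyce--Upmeier formula then gives $\langle\overline{\alpha\cup0}\smile\operatorname{Sq}^2\overline{\alpha\cup0},[DX]\rangle=\langle\bar\alpha\smile\operatorname{Sq}^2_\partial\bar\alpha,[X,\partial X]\rangle$, by naturality of $\operatorname{Sq}^2$ under the degree-one collapse $DX\to X/\partial X$. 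This is the same doubling the paper uses in the proof of Proposition~\ref{nonorientable-example}. With this replacing your Step~2, the argument is complete.

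One small correction to Step~1. Switching to the non-bounding circle changes the mapping-torus class by the constant $\alpha_1\cdot[X,f]$, which need not vanish. The bounding circle is the right choice because $\mathrm{id}_f$ must be sent to $0$.
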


\begin{proof}
The fundamental groupoid of
$\operatorname{Map}_*(X/\partial X,K(\mathbb Z,4))$ has homotopy groups
\[
\pi_i\operatorname{Map}_*(X/\partial X,K(\mathbb Z,4))
\cong H^{4-i}(X,\partial X;\mathbb Z),
\qquad i=0,1.
\]
With this replacement of absolute by relative cohomology, the proof of
\cite[Theorem~10.7(a) and Subsection~19.2]{JU} applies unchanged. Its
obstruction is exactly
$\langle\bar\alpha\smile\operatorname{Sq}_\partial^2(\bar\alpha),
[X,\partial X]\rangle$. The relative bordism identifications needed in that
proof are supplied by Definition~\ref{fixed-bordism-category} and
Proposition~\ref{relative-cobordism-invariance}.
\end{proof}

\begin{corollary}\label{compactly-supported-Steenrod-trivialization}
Let $X$ be a smooth open spin $8$-manifold. Suppose that there is no class
$\alpha\in H_c^3(X;\mathbb Z)$ such that
\[
\int_X
\bar\alpha\smile\operatorname{Sq}_c^2(\bar\alpha)
=1
\quad\text{in }\mathbb Z_2.
\]
Then one may choose a natural isomorphism $\eta_X^{\rm cs}$ making the
following diagram $2$-commutative:
\[
\begin{tikzcd}[column sep=7em,row sep=3em]
\operatorname{Bord}_X^{\rm cs}(K(\mathbb Z,4))
  \arrow[r,"{\mathbf 1_X}"]
  \arrow[d,"{\Pi_X^{\operatorname{Spin},{\rm cs}}}"']
  \arrow[dr,phantom,"\Downarrow\,\eta_X^{\rm cs}" description]
&
0//\mathbb Z_2
\\
\operatorname{Bord}^{\operatorname{Spin}}_8(K(\mathbb Z,4),0)
  \arrow[r,"{\mathrm H_8^{\mathbb Z,{\rm c}}}"']
&
\mathbb Z_2//\mathbb Z_2
  \arrow[u,"{\Pi_{\mathbb Z_2\text{-tor}}}"'].
\end{tikzcd}
\]
\end{corollary}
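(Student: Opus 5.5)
The plan is to reduce Corollary~\ref{compactly-supported-Steenrod-trivialization} to Proposition~\ref{Steenrodprop} by exhausting $X$ by compact codimension-zero submanifolds. The gluing between pieces will use the excision trick of Subsection~\ref{bordcatextr} together with the construction of $\Pi_X^{\operatorname{Spin},{\rm cs}}$ in Proposition~\ref{open-Pi-functor}.

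First, choose an exhaustion $X_1\subset X_2\subset\cdots$ of $X$ by compact collared codimension-zero submanifolds with $X_k\subset\operatorname{int}X_{k+1}$. Every object and every morphism of $\operatorname{Bord}_X^{\rm cs}(K(\mathbb Z,4))$ is supported in some $X_k$. Hence this groupoid is the filtered union of the images of extension by zero,
\[
e_k:\operatorname{Bord}_{X_k}(K(\mathbb Z,4),0)\to\operatorname{Bord}_X^{\rm cs}(K(\mathbb Z,4)),
\]
and by the construction in Proposition~\ref{open-Pi-functor} we have $\Pi_X^{\operatorname{Spin},{\rm cs}}\circ e_k\cong\Pi_{X_k}^{\operatorname{Spin}}$.

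Next, check that the hypothesis on $X$ descends to each $X_k$. The pair $(X_k,\partial X_k)$ gives a collapse $X^+\to X_k/\partial X_k$. This identifies $H^*(X_k,\partial X_k)$ with compactly supported cohomology of $\operatorname{int}X_k$, and pushes it into $H_c^*(X)$ by extension by zero. That map is compatible with $\operatorname{Sq}^2$, since it is induced by a map of spaces, and with cup products and fundamental classes, since $[X,\partial]$ restricts to $[X_k,\partial X_k]$. So a class $\alpha\in H^3(X_k,\partial X_k;\mathbb Z)$ with nonzero obstruction would map to a class in $H_c^3(X;\mathbb Z)$ with nonzero obstruction, contradicting the assumption. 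Proposition~\ref{Steenrodprop} therefore gives natural isomorphisms $\eta_{X_k}^\partial$.

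The main obstacle is making these choices compatible under restriction along $X_k\subset X_{k+1}$. The set of possible $\eta_{X_k}^\partial$ is a torsor under $H^0$ of the groupoid with $\mathbb Z_2$ coefficients, that is, under functions $\pi_0\operatorname{Map}_*(X_k/\partial X_k,K(\mathbb Z,4))=H^4(X_k,\partial X_k)\to\mathbb Z_2$ satisfying the relevant compatibility. Any two choices differ by such a function. On each connected component of the groupoid, fix one choice of trivialization at a basepoint object; naturality then determines it everywhere. Concretely, I will define $\eta_X^{\rm cs}$ component by component on $\operatorname{Bord}_X^{\rm cs}(K(\mathbb Z,4))$. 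For each component, pick a representative object $f$ supported in some $X_k$ and a trivialization of $\Pi\circ\mathrm H_8^{\mathbb Z,{\rm c}}(\Pi_X^{\operatorname{Spin},{\rm cs}}f)$, and transport it along morphisms. Well-definedness means the transport around any loop, i.e.\ any automorphism $[H]$ of $f$, is trivial. Such a loop is supported in some $X_{k'}\supseteq X_k$, so its monodromy equals the monodromy for $\operatorname{Bord}_{X_{k'}}(K(\mathbb Z,4),0)$. That monodromy vanishes because $\eta^\partial_{X_{k'}}$ exists, and the excision isomorphisms are natural. This loop-by-loop argument sidesteps the $\varprojlim^1$ issue, since only existence of some natural isomorphism is claimed, not canonicity.
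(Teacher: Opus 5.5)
Your proof is correct, but it takes a different route from the paper's.

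The paper does not exhaust $X$. It notes that $\Pi_X^{\operatorname{Spin},{\rm cs}}$ exists by Proposition~\ref{open-Pi-functor} and that $\pi_i\operatorname{Map}_*(X^+,K(\mathbb Z,4))\cong H_c^{4-i}(X;\mathbb Z)$ for $i=0,1$. It then reruns the Joyce--Upmeier obstruction argument behind Proposition~\ref{Steenrodprop} directly on the open manifold, with $\operatorname{Sq}_c^2$ and compactly supported integration in place of $\operatorname{Sq}_\partial^2$ and $[X,\partial X]$.

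You instead treat Proposition~\ref{Steenrodprop} as a black box, in two steps:
\begin{enumerate}
\item The collapse $X^+\to X_k/\partial X_k$ pushes any would-be obstruction class on $(X_k,\partial X_k)$ into $H_c^3(X;\mathbb Z)$. This uses naturality of $\operatorname{Sq}^2$, cup products and fundamental classes.
\item A functor from a groupoid to $\mathbb Z_2$-torsors is isomorphic to the constant functor as soon as every automorphism acts trivially. Every automorphism in $\operatorname{Bord}_X^{\rm cs}(K(\mathbb Z,4))$ comes from some $\operatorname{Bord}_{X_{k'}}(K(\mathbb Z,4),0)$, and the excision trick gives $\Pi_X^{\operatorname{Spin},{\rm cs}}\circ e_{k'}\cong\Pi_{X_{k'}}^{\operatorname{Spin}}$, so the existence of $\eta^\partial_{X_{k'}}$ kills all monodromy.
\end{enumerate}

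Comparing the two routes:
\begin{itemize}
\item Your reduction is purely formal. It spares you from checking that the Joyce--Upmeier computation really goes through verbatim with compact supports.
\item The paper's version computes the monodromy on $X$ itself, which matches the if-and-only-if form of Proposition~\ref{relative-flag-existence-and-classification}. Your exhaustion would carry the converse over from the compact case as well, since $H_c^3(X;\mathbb Z)=\varinjlim_k H^3(X_k,\partial X_k;\mathbb Z)$.
\end{itemize}

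Your discussion of the torsor of choices of $\eta^\partial_{X_k}$ and their compatibility under restriction can be dropped. The final monodromy argument only needs that some $\eta^\partial_{X_{k'}}$ exists for each $k'$, so no compatible system of choices, and no inverse-limit issue, ever arises.
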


\begin{proof}
By Proposition~\ref{open-Pi-functor}, the compactly supported mapping
groupoid maps naturally to the relative bordism category. Moreover,
\[
\pi_i\operatorname{Map}_*(X^+,K(\mathbb Z,4))
\cong H_c^{4-i}(X;\mathbb Z),
\qquad i=0,1.
\]
The proof of Proposition~\ref{Steenrodprop} therefore applies verbatim, with
$\operatorname{Sq}_\partial^2$ replaced by $\operatorname{Sq}_c^2$ and the
relative fundamental class replaced by compactly supported integration.
\end{proof}

\subsection{Canonical orientations and flag structures}
\label{sec:canonical-orientations-and-flag-structures}

\subsubsection{Flag structures and obstructions}
\label{sec:relative-flag-structures}

For a compact spin $8$-manifold $X$ with
boundary,
\[
\pi_i\operatorname{Bord}_X(K(\mathbb Z,4),0)
\cong H^{4-i}(X,\partial X;\mathbb Z),
\qquad i=0,1,
\]
whereas, for an open spin $8$-manifold $X$,
\[
\pi_i\operatorname{Bord}_X^{\rm cs}(K(\mathbb Z,4))
\cong H_c^{4-i}(X;\mathbb Z),
\qquad i=0,1.
\]
Compare the absolute version in \cite[Remark~6.9(a)]{JU}. Thus the obstruction
theory of Joyce--Upmeier's flag structures becomes, in the present setting,
an obstruction theory in relative or compactly supported cohomology.

\begin{definition}\label{relative-flag-structure}
Let $X$ be a compact spin $8$-manifold with boundary. A \emph{relative flag
structure} on $(X,\partial X)$ is a natural isomorphism $F$ in the following
$2$-commutative diagram:
\begin{equation}\label{relative-flag-structure-diagram}
\begin{tikzcd}[column sep=7em,row sep=3em]
\operatorname{Bord}_X(K(\mathbb Z,4),0)
  \arrow[r,"{\mathbf 1_X}"]
  \arrow[d,"{\Pi_X^{\operatorname{Spin}}}"']
  \arrow[dr,phantom,"\Downarrow\,F" description]
&
0//\mathbb Z_2
\\
\operatorname{Bord}^{\operatorname{Spin}}_8(K(\mathbb Z,4),0)
  \arrow[r,"{\mathrm H_8^{\mathbb Z,{\rm c}}}"']
&
\mathbb Z_2//\mathbb Z_2
  \arrow[u,"{\Pi_{\mathbb Z_2\text{-tor}}}"'].
\end{tikzcd}
\end{equation}
Equivalently,
\[
F:\mathbf 1_X
\Longrightarrow
\Pi_{\mathbb Z_2\text{-tor}}
\circ\mathrm H_8^{\mathbb Z,{\rm c}}
\circ\Pi_X^{\operatorname{Spin}}.
\]

For an open spin $8$-manifold $X$, a \emph{compactly supported flag structure}
is defined by the identical diagram with
$\operatorname{Bord}_X(K(\mathbb Z,4),0)$ and
$\Pi_X^{\operatorname{Spin}}$ replaced by
$\operatorname{Bord}_X^{\rm cs}(K(\mathbb Z,4))$ and
$\Pi_X^{\operatorname{Spin},{\rm cs}}$, respectively. These are the relative
and compactly supported analogues of \cite[Definition~10.6]{JU}.
\end{definition}

We next record two refinements of Definition~\ref{relative-flag-structure}.
First, let
\[
\operatorname{inc}_X:
\operatorname{Bord}_X(*)
\longrightarrow
\operatorname{Bord}_X(K(\mathbb Z,4),0)
\]
and
\[
\operatorname{inc}_8:
\operatorname{Bord}^{\operatorname{Spin}}_8(*)
\longrightarrow
\operatorname{Bord}^{\operatorname{Spin}}_8(K(\mathbb Z,4),0)
\]
denote the inclusions of the zero class. Since
$\mathrm H_8^{\mathbb Z,{\rm c}}\circ\operatorname{inc}_8$ is canonically
trivial, its ungraded restriction has a canonical trivialization, which we
denote by $\xi_8^{\rm c}$. We say that $F$ is \emph{natural at zero} if its
restriction along $\operatorname{inc}_X$ is the pullback of
$\xi_8^{\rm c}$; equivalently, the composite comparison in
\begin{equation}\label{relative-flag-natural-zero-diagram}
\begin{tikzcd}[column sep=5.4em,row sep=3em]
\operatorname{Bord}_X(*)
  \arrow[r,"{\operatorname{inc}_X}"]
  \arrow[d,"{\Pi_X^{\operatorname{Spin}}}"']
&
\operatorname{Bord}_X(K(\mathbb Z,4),0)
  \arrow[r,"{\mathbf 1_X}"]
  \arrow[d,"{\Pi_X^{\operatorname{Spin}}}"]
  \arrow[dr,phantom,"\Downarrow\,F" description]
&
0//\mathbb Z_2
\\
\operatorname{Bord}^{\operatorname{Spin}}_8(*)
  \arrow[r,"{\operatorname{inc}_8}"']
  \arrow[rr,bend right=18,"{\mathbf 1}"']
&
\operatorname{Bord}^{\operatorname{Spin}}_8(K(\mathbb Z,4),0)
  \arrow[r,"{\mathrm H_8^{\mathbb Z,{\rm c}}}"']
&
\mathbb Z_2//\mathbb Z_2
  \arrow[u,"{\Pi_{\mathbb Z_2\text{-tor}}}"']
\end{tikzcd}
\end{equation}
is the identity natural isomorphism $\mathbf 1\Rightarrow\mathbf 1$. For open
$X$, the same definition uses the compactly supported fixed-manifold category
and $\Pi_X^{\operatorname{Spin},{\rm cs}}$. In concrete terms, naturality at
zero fixes the value of the flag structure on
$0\in H^4(X,\partial X;\mathbb Z)$, or on $0\in H_c^4(X;\mathbb Z)$ in the
open case. This is the relative analogue of
\cite[Remark~10.5(d) and Definition~10.6]{JU}.

Second, let
\[
\rho:K(\mathbb Z,4)\longrightarrow K(\mathbb Z_2,4)
\]
be reduction modulo $2$, and let $F_\rho^{\rm c}$ be the induced transfer
functor. Write $\mathrm H_8^{\mathbb Z_2,{\rm c}}$ for the corresponding
mod-$2$ cohomology orientation functor. Joyce and Upmeier construct a monoidal
natural isomorphism
\[
\eta_8^{\rm c}:
\mathrm H_8^{\mathbb Z,{\rm c}}
\Longrightarrow
\mathrm H_8^{\mathbb Z_2,{\rm c}}\circ F_\rho^{\rm c};
\]
compare \cite[equation~(9.7)]{JU}. We say that $F$ \emph{factors via
$\mathbb Z_2$} if it is obtained from a flag structure for
$K(\mathbb Z_2,4)$ using $F_\rho^{\rm c}$ and $\eta_8^{\rm c}$, as in
\cite[equation~(10.11)]{JU}.

Proposition~\ref{Steenrodprop} and
Corollary~\ref{compactly-supported-Steenrod-trivialization} may now be refined
as follows.

\begin{proposition}\label{relative-flag-existence-and-classification}
Let $X$ be a compact spin $8$-manifold with boundary.

\begin{enumerate}
\item[\rm(a)] The manifold $X$ admits a relative flag structure if and only if
there is no class $\alpha\in H^3(X,\partial X;\mathbb Z)$ such that
\begin{equation}\label{relative-flag-obstruction}
\left\langle
 \bar\alpha\smile\operatorname{Sq}_\partial^2(\bar\alpha),
 [X,\partial X]
\right\rangle
=1
\quad\text{in }\mathbb Z_2,
\end{equation}
where $\bar\alpha$ is the mod-$2$ reduction of $\alpha$. If this condition
holds, relative flag structures form a torsor for
\[
\operatorname{Map}
\bigl(H^4(X,\partial X;\mathbb Z),\mathbb Z_2\bigr),
\]
and relative flag structures natural at zero form a torsor for
\[
\operatorname{Map}
\bigl(H^4(X,\partial X;\mathbb Z)\setminus\{0\},\mathbb Z_2\bigr).
\]

\item[\rm(b)] Relative flag structures factoring via $\mathbb Z_2$ exist if
and only if there is no class $a\in H^3(X,\partial X;\mathbb Z_2)$ such that
\begin{equation}\label{relative-flag-obstruction-mod2}
\left\langle
 a\smile\operatorname{Sq}_\partial^2(a),
 [X,\partial X]
\right\rangle
=1.
\end{equation}
Put
\[
I_X^\partial=
\operatorname{Im}\bigl(
 H^4(X,\partial X;\mathbb Z)
 \longrightarrow H^4(X,\partial X;\mathbb Z_2)
\bigr).
\]
If condition~\eqref{relative-flag-obstruction-mod2} holds, relative flag
structures factoring via $\mathbb Z_2$ form a torsor for
$\operatorname{Map}(I_X^\partial,\mathbb Z_2)$, and those which are also
natural at zero form a torsor for
\[
\operatorname{Map}(I_X^\partial\setminus\{0\},\mathbb Z_2).
\]
\end{enumerate}

Here $\operatorname{Map}$ denotes the set of all functions, not only group
homomorphisms. For an open spin $8$-manifold, the identical statements hold
after replacing $H^*(X,\partial X;-)$ by $H_c^*(X;-)$,
$\operatorname{Sq}_\partial^2$ by $\operatorname{Sq}_c^2$, and evaluation on
$[X,\partial X]$ by compactly supported integration. In part~\textup{(b)}, one
then uses
\[
I_X^{\rm c}=
\operatorname{Im}\bigl(
 H_c^4(X;\mathbb Z)
 \longrightarrow H_c^4(X;\mathbb Z_2)
\bigr).
\]
\end{proposition}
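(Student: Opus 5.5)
The plan is to reduce everything to the obstruction theory of a functor out of the groupoid $\operatorname{Bord}_X(K(\mathbb Z,4),0)$, whose homotopy invariants were recorded at the start of Subsection~\ref{sec:relative-flag-structures}: $\pi_0\cong H^4(X,\partial X;\mathbb Z)$ and $\pi_1\cong H^3(X,\partial X;\mathbb Z)$ at every object. A natural isomorphism $F:\mathbf 1_X\Rightarrow G$, where $G=\Pi_{\mathbb Z_2\text{-tor}}\circ\mathrm H_8^{\mathbb Z,{\rm c}}\circ\Pi_X^{\operatorname{Spin}}$, exists if and only if $G$ acts trivially on all automorphism groups. Concretely, the composite
\[
\theta_f:H^3(X,\partial X;\mathbb Z)\cong\operatorname{Aut}(f)\longrightarrow\mathbb Z_2
\]
must vanish for every object $f$. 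When it vanishes, $F$ is determined by an independent choice of trivialization on one object in each isomorphism class. Trivializations of a single $\mathbb Z_2$-torsor form a $\mathbb Z_2$-torsor, so the choices form a torsor for $\operatorname{Map}(\pi_0,\mathbb Z_2)=\operatorname{Map}(H^4(X,\partial X;\mathbb Z),\mathbb Z_2)$. These are arbitrary functions, since no monoidal compatibility is imposed on $F$.

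The key step is computing $\theta_f$. For an automorphism of $f$ given by a loop with class $\alpha\in H^3(X,\partial X;\mathbb Z)$, the functor $\Pi_X^{\operatorname{Spin}}$ sends it to the mapping torus $[X\times S^1,\,\cdot\,]$. In the relative bordism category this is the class of $X\times S^1$ carrying the class $f+\alpha\smile d\theta$, viewed as an element of $\pi_1\cong\widetilde\Omega_9^{\operatorname{Spin}}(K(\mathbb Z,4))$ after subtracting the identity. I would then follow \cite[Theorem~10.7(a) and \S19.2]{JU} exactly, as in the proof of Proposition~\ref{Steenrodprop}, to evaluate $\pi_1(\mathrm H_8^{\mathbb Z,{\rm c}})$ on this class.

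The value of $\pi_1(\mathrm H_8^{\mathbb Z,{\rm c}})$ is detected by the generator $\alpha_1\zeta_2^K$. Expressing the mapping-torus class in terms of this generator gives a sum of two pieces. The first is a linear term in $\alpha$, coming from the interaction with $f$ through the symmetry form $q$. The second is the quadratic term $\langle\bar\alpha\smile\operatorname{Sq}^2_\partial\bar\alpha,[X,\partial X]\rangle$.

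The main obstacle is showing that the dependence on $f$ and the linear piece actually drop out, so that $\theta_f(\alpha)$ equals the displayed pairing. This is the point where the relative-cohomology version of the Joyce--Upmeier calculation must be checked. I would verify it by noting that every ingredient of their computation is a characteristic-number evaluation on $X\times S^1$, and each such evaluation extends to the pair $(X,\partial X)$ by relative Poincaré--Lefschetz duality. Granting this, the condition in (a) is necessary and sufficient.

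Naturality at zero fixes $F$ on the component $0$ via $\xi_8^{\rm c}$, which leaves the torsor $\operatorname{Map}(H^4\setminus\{0\},\mathbb Z_2)$.

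For part (b), I would run the same argument for $\mathrm H_8^{\mathbb Z_2,{\rm c}}$ on $\operatorname{Bord}_X(K(\mathbb Z_2,4),0)$, whose invariants are $\pi_0=H^4(X,\partial X;\mathbb Z_2)$ and $\pi_1=H^3(X,\partial X;\mathbb Z_2)$. This produces obstruction \eqref{relative-flag-obstruction-mod2}. A flag structure factoring via $\mathbb Z_2$ is one pulled back along $F_\rho^{\rm c}$ using $\eta_8^{\rm c}$. Only its values on components hit by $\rho$ matter, and two mod-$2$ flag structures give the same pullback exactly when they agree on $I_X^\partial$. This yields the torsors $\operatorname{Map}(I_X^\partial,\mathbb Z_2)$ and $\operatorname{Map}(I_X^\partial\setminus\{0\},\mathbb Z_2)$.

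The open case follows by the same argument. Replace the groupoid by $\operatorname{Bord}_X^{\rm cs}$ and use $\pi_i\cong H_c^{4-i}(X)$ together with Proposition~\ref{open-Pi-functor}, as in Corollary~\ref{compactly-supported-Steenrod-trivialization}.
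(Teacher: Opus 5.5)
Your proposal is correct and follows essentially the same route as the paper. Existence, and in your version also necessity, comes from transporting the obstruction computation of \cite[Theorem~10.7 and \S19.2]{JU} to relative or compactly supported cohomology. The torsor statements come from the fact that two trivializations of the same functor on $\operatorname{Bord}_X(K(\mathbb Z,4),0)$ differ by an arbitrary function on $\pi_0\cong H^4(X,\partial X;\mathbb Z)$: naturality at zero fixes the zero component, and factoring via $\mathbb Z_2$ cuts this down to functions on $I_X^\partial$. The only difference is that you spell out the groupoid obstruction theory (trivial monodromy on $\operatorname{Aut}(f)\cong H^3(X,\partial X;\mathbb Z)$), which the paper leaves implicit in its citation of Joyce--Upmeier.
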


\begin{proof}
The existence statements are Proposition~\ref{Steenrodprop} and
Corollary~\ref{compactly-supported-Steenrod-trivialization}. Once one flag
structure is fixed, every other one (with a prescribed value at $0$) differs from it by an automorphism of the
constant functor, hence by an arbitrary function
\[
\pi_0\operatorname{Bord}_X(K(\mathbb Z,4),0)
\longrightarrow\mathbb Z_2.
\]
The first paragraph of this subsection identifies this set of components
with $H^4(X,\partial X;\mathbb Z)$, or with $H_c^4(X;\mathbb Z)$ in the open
case. Naturality at zero fixes the value on the zero component. If the flag
structure factors via $\mathbb Z_2$, only the image of reduction modulo $2$
occurs, giving $I_X^\partial$ or $I_X^{\rm c}$. This is precisely the relative
or compactly supported form of
\cite[Remark~10.5(c)--(d), Definition~10.6, and Theorem~10.7]{JU}.
\end{proof}

\subsubsection{Canonical orientations at the zero class}
\label{sec:canonical-orientations-zero-class}

We now explain how a relative or compactly supported flag structure produces
orientations in the compactly supported $K$-theoretic orientation problem,
and when the answer is independent of the chosen flag structure. Let $E\to X$
be a rank-$m$ complex vector bundle trivialized on $\partial X$, or trivialized
outside a compact subset in the open case. Put
\[
\kappa(E):=c_2(E)-c_1(E)^2.
\]
In the compact-with-boundary case, $\kappa(E)$ is a relative class in
$H^4(X,\partial X;\mathbb Z)$, and in the open case it is a compactly
supported class in $H_c^4(X;\mathbb Z)$. Equivalently,
\[
\kappa(E)=c_2\bigl(E\oplus\det(E)^\vee\bigr),
\]
as in Proposition~\ref{BU-BSU-transition}.

In the following statement, $\dagger$ denotes the relative convention for a
compact manifold with boundary and the compactly supported convention for an
open manifold, as in Corollary~\ref{cantriv}.

\begin{proposition}\label{canonical-orientation-at-zero-class}
Assume that $X$ admits a relative or compactly supported flag structure $F$.
Pulling $F$ back along the sequence
\begin{equation}\label{relative-flag-BU-factorization}
\operatorname{Bord}_X^\dagger(B\operatorname{U}(m))
\xrightarrow{\ B\jmath_m\ }
\operatorname{Bord}_X^\dagger(B\operatorname{SU}(m+1))
\xrightarrow{\ c_2\ }
\operatorname{Bord}_X^\dagger(K(\mathbb Z,4))
\end{equation}
gives an orientation of the restriction of $F_X^\dagger$ to the component of
$E$, or equivalently a cobordism-coherent trivialization of the normalized
determinant $\mathbb Z_2$-torsor on that component. The cohomology class on
which this orientation depends is precisely $\kappa(E)$.

If $F'$ and $F''$ are two such flag structures natural at zero and their ratio
is the function
\[
\epsilon=F''/F':
H^4(X,\partial X;\mathbb Z)\setminus\{0\}
\longrightarrow\mathbb Z_2,
\]
or the analogous function on $H_c^4(X;\mathbb Z)\setminus\{0\}$ in the open
case, then the two induced orientations at $E$ differ by
\begin{equation}\label{relative-flag-orientation-ratio}
(-1)^{\epsilon(\kappa(E))},
\end{equation}
where $\epsilon(0)=0$ by convention, since both structures are natural at
zero. Hence, if
\begin{equation}\label{relative-canonical-integral-condition}
\kappa(E)=c_2(E)-c_1(E)^2=0
\end{equation}
in $H^4(X,\partial X;\mathbb Z)$, or in $H_c^4(X;\mathbb Z)$ in the open case,
the orientation of the component of $E$ is canonical: it is independent of
the choice of a flag structure natural at zero.

If, in addition, condition~\eqref{relative-flag-obstruction-mod2}, or its
compactly supported analogue, holds and we restrict to flag structures
factoring via $\mathbb Z_2$, then it is enough to assume
\begin{equation}\label{relative-canonical-mod2-condition}
\kappa(E)=c_2(E)-c_1(E)^2=0
\end{equation}
in $H^4(X,\partial X;\mathbb Z_2)$, or in $H_c^4(X;\mathbb Z_2)$ in the open
case.
\end{proposition}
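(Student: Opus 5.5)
The orientation on the component of $E$ is built by composing natural isomorphisms already available, and the dependence on $F$ is then read off from how a flag structure acts on components of $\operatorname{Bord}_X^\dagger(K(\mathbb Z,4))$.

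\textbf{Step 1: construct the orientation.} Fix $m$ so that $E$ has rank $m$, and let $\operatorname{Bord}_X^\dagger(B\operatorname{U}(m))$ be the fixed-manifold category (compact or open convention). Along the stabilization $j_m$ of Corollary~\ref{cantriv}, the restriction of $F_X^\dagger$ is identified with
\[
\Pi_{\mathbb Z_2\text{-tor}}\circ\mathrm N_{8,\mathbb Z_2}^{\operatorname{Spin},\operatorname{U}(m),{\rm c}}\circ\Pi_X^\dagger .
\]
Here one uses that the reduced adjoint bundle differs from $\operatorname{Ad}(E)$ by a trivial summand. That trivial summand contributes a canonically trivial torsor, compatibly with relative bordisms, by Lemma~\ref{product-and-abelian}.

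Whiskering $\lambda_{8,m}^{\operatorname{U},{\rm c}}$ of Proposition~\ref{BU-BSU-transition} by $\Pi_X^\dagger$ then identifies this functor with
\[
\Pi_{\mathbb Z_2\text{-tor}}\circ\mathrm H_8^{\mathbb Z,{\rm c}}\circ\Pi_X^\dagger\circ(\kappa_m)_* ,
\]
where $(\kappa_m)_*=(c_2)_*\circ(B\jmath_m)_*$ is the sequence \eqref{relative-flag-BU-factorization}. This uses the homotopy $c_2\circ B\jmath_m\simeq\kappa_m$ and the naturality of $\Pi_X^\dagger$ with respect to transfer along maps of targets.

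Composing with $F^{-1}$ pulled back along $(\kappa_m)_*$ gives a natural isomorphism to $\mathbf 1_X$ on $\operatorname{Bord}_X^\dagger(B\operatorname{U}(m))$. Evaluating it on objects gives the asserted bordism-coherent trivialization on the component of $E$. To pass to the stable space $BU$, use the colimit argument of Corollary~\ref{cantriv}: $X^\dagger$ is compact, so paths and homotopies factor through some finite $B\operatorname{U}(m)$. Compatibility under $m\mapsto m+1$ follows because $\kappa$ is stable.

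\textbf{Step 2: dependence on $F$.} By construction, the only input depending on $F$ is $F$ evaluated at the object $\kappa_m\circ E$. Since $F$ is a natural transformation out of a constant functor on a groupoid, its component at an object depends only on the connected component of that object. By the first paragraph of \S\ref{sec:relative-flag-structures}, that component is the class $\kappa(E)\in H^4(X,\partial X;\mathbb Z)$, or its analogue in $H_c^4(X;\mathbb Z)$.

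If $F''=F'\cdot\epsilon$ with $\epsilon$ a function on $\pi_0$, as in the proof of Proposition~\ref{relative-flag-existence-and-classification}, the two orientations differ by the sign $(-1)^{\epsilon(\kappa(E))}$. Naturality at zero forces $\epsilon(0)=0$: on the zero component both $F'$ and $F''$ restrict to $\xi_8^{\rm c}$, via diagram \eqref{relative-flag-natural-zero-diagram}. Hence $\kappa(E)=0$ gives independence of the flag structure.

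For flag structures factoring via $\mathbb Z_2$, the torsor description in Proposition~\ref{relative-flag-existence-and-classification}(b) says $\epsilon$ factors through reduction mod $2$ to $I_X^\dagger$. So only the mod-$2$ class of $\kappa(E)$ matters, and its vanishing suffices.

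\textbf{Main obstacle.} The delicate point is Step~2's claim that the component of $F$ relevant to $E$ is exactly the component at $\kappa(E)$, with no further dependence. This requires that the identifications in Step~1 ($\lambda^{\operatorname{U},{\rm c}}_{8,m}$, the homotopy $c_2\circ B\jmath_m\simeq\kappa_m$, and stabilization) be independent of $F$. They are independent: they are fixed once and for all by Remark~\ref{orientation-conventions}. Any change in those conventions multiplies all orientations simultaneously and does not affect the ratio $F''/F'$, so the ratio formula is insensitive to these choices.
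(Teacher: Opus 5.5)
Your argument is essentially the paper's: Proposition~\ref{BU-BSU-transition} identifies the normalized $\operatorname{U}(m)$ functor with $\mathrm H_8^{\mathbb Z,{\rm c}}$ pulled back along $\kappa_m=c_2-c_1^2$, and the orientation comes from evaluating $F$ at $\kappa(E)$; the ratio formula, canonicity when $\kappa(E)=0$, and the mod-$2$ refinement all follow by comparing flag structures componentwise on $\pi_0$. One correction to your Step~1: in the compactly supported setting $F_X^\dagger$ is built from $\operatorname{Ad}([E]-m)$, which differs from the reduced adjoint class $\operatorname{Ad}(P)-\underline{\mathfrak u(m)}$ by the complex class $r(-m([E]-m)^\vee)$ rather than by a trivial summand (compare $h_z$ in Lemma~\ref{local-curve-theta}), so that identification uses the complex orientation of this summand, not Lemma~\ref{product-and-abelian}; since it does not depend on $F$, the rest of your proof goes through unchanged.
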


\begin{proof}
By Proposition~\ref{BU-BSU-transition}, the sequence
\eqref{relative-flag-BU-factorization} sends $E$ to the class
$\kappa(E)=c_2(E)-c_1(E)^2$ and identifies the normalized
$\operatorname{U}(m)$-orientation functor with the pullback of
$\mathrm H_8^{\mathbb Z,{\rm c}}$. Evaluating $F$ at $\kappa(E)$ therefore
gives the asserted orientation.

If $F''/F'=\epsilon$, evaluating the two flag structures at $\kappa(E)$ gives
the sign in \eqref{relative-flag-orientation-ratio}. When $\kappa(E)=0$,
naturality at zero forces the two values to agree. If the structures factor
via $\mathbb Z_2$, the same argument depends only on the mod-$2$ reduction of
$\kappa(E)$. This is the relative or compactly supported version of
\cite[Proposition~12.10(b) and its proof]{JU}.
\end{proof}

\section{Orientations for local Calabi--Yau \texorpdfstring{$4$}{4}-folds}
\label{sec:local-orientations}
\subsection{An example: local \texorpdfstring{$4$}{4}-folds}\label{sec:local-fourfolds}

\subsubsection{Shi's orientations}\label{sec:shi-orientations}

The present machinery turns out to be of some help in studying
orientations on local Calabi--Yau $4$-folds

\begin{proposition}\label{canonical-Shi-orientation}
Let $Y$ be a smooth complex algebraic $3$-fold, and put
\[
X=\operatorname{Tot}_Y(K_Y),
\qquad
p:X\longrightarrow Y.
\]
Then the determinant line $K_{\mathcal M_X^{\rm cs}}$ has a canonical
trivialization compatible with the Serre-duality quadratic form
$Q_{\mathcal M_X^{\rm cs}}$. In particular, the orientation bundle
$O^\omega_{\mathcal M_X^{\rm cs}}$ has a canonical section.

There is a topological analogue. Put
\[
\mathcal C_{X,0}^{\rm cs}
=\operatorname{Map}_*(X^+,BU),
\]
as in Corollary~\ref{cantriv}. Then the topological determinant line
$K_{\mathcal C_{X,0}^{\rm cs}}$ has a canonical trivialization compatible
with $Q_{\mathcal C_{X,0}^{\rm cs}}$. Thus the topological orientation bundle
$O^{\not D}_{\mathcal C_{X,0}^{\rm cs}}$ also has a canonical section.
\end{proposition}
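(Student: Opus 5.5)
The plan is to realise $\mathcal A$ as an ``$\mathcal V$ plus its dual'' complex, by pushing forward along $p$ and using Grothendieck duality for the affine morphism $p$. Put $\mathcal G=\mathbf R(\operatorname{id}\times p)_*\mathcal E$ on $\mathcal M_X^{\rm cs}\times Y$. Since $\mathcal E$ has compact support, hence proper support over $\mathcal M_X^{\rm cs}\times Y$ after projection, $\mathcal G$ is perfect with proper support, and $\mathcal V=\mathbf R\pi_*\mathbf R\mathcal Hom(\mathcal G,\mathcal G)$ is perfect on $\mathcal M_X^{\rm cs}$. Write $q=\operatorname{id}\times p$. The adjunction $q^*\dashv q_*$ gives
\[
\mathbf R\mathcal Hom_X(\mathcal E,\mathcal E)\leftarrow \ldots,\qquad
\mathbf R\operatorname{Hom}_Y(q_*\mathcal E,q_*\mathcal E)\cong\mathbf R\operatorname{Hom}_X(q^*q_*\mathcal E,\mathcal E).
\]
For the total space of a line bundle $L=K_Y$, there is the standard Koszul exact triangle
\[
q^*q_*\mathcal E\otimes p^*L\to q^*q_*\mathcal E\to\mathcal E,
\]
where the first map is tautological section minus the module action. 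Applying $\mathbf R\operatorname{Hom}_X(-,\mathcal E)$ and adjunction gives a triangle
\[
\mathcal A\to\mathcal V\to\mathbf R\pi_*\mathbf R\mathcal Hom(\mathcal G,\mathcal G\otimes K_Y^{-1})
\]
(up to rotation and shift). By Serre duality on $Y$ with the dualising sheaf $K_Y[3]$, the third term is $\mathcal V^\vee[-3]$. Taking determinants therefore yields
\[
K_{\mathcal M}=\det\mathcal A\cong\det\mathcal V\otimes(\det\mathcal V)^{\vee}\cong\mathcal O,
\]
with the last isomorphism given by evaluation. Everything here is functorial and compatible with base change, so it globalises over the stack.

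The second step is to check that this trivialisation $o$ satisfies $o^{\otimes2}=Q_{\mathcal M}$. The Serre-duality form on $\mathcal A$ (dualising sheaf $K_X[4]\cong\mathcal O_X[4]$) should be identified, under the triangle, with the \emph{hyperbolic} form pairing $\mathcal V$ with $\mathcal V^\vee[-3]$. The reason is that Grothendieck duality for $X$ factors as relative duality for $p$, which has relative dualising sheaf $p^*K_Y^{-1}$ so that $K_X\cong p^*(K_Y\otimes K_Y^{-1})$, followed by Serre duality on $Y$. The Koszul triangle is self-dual under this composite up to a shift. For a hyperbolic form on $W\oplus W^\vee$, the induced form on $\det W\otimes\det W^\vee$ is the square of the evaluation trivialisation, up to a universal sign $(-1)^{\text{something in }\operatorname{rk}}$. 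I would fix that sign by a convention, for example by multiplying by a constant $\sqrt{\pm1}$ depending only on the rank of $\mathcal V$, which is locally constant. I expect this compatibility check, including keeping track of the Koszul signs and the shift $[-3]$ (odd, so the symmetric form becomes hyperbolic rather than the square of something else), to be the main obstacle. I would do it first on a point $E$, then argue by naturality.

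The topological analogue runs the same way in $KU$-theory. One takes $p_!^{KU,\mathbb C}$, which is defined because $p$ is proper on supports and the zero section gives a Thom/Koszul description. Then:
\begin{enumerate}
\item The Koszul triangle is replaced by the Thom isomorphism relation $q^*q_!(x)\cdot\lambda_{-1}(p^*K_Y^{\vee})$, i.e.\ the $KU$ Thom class of $K_Y$ noted in \S\ref{sec:serre-duality-without-quasi-projectivity}.
\item The class $\pi_!^{KU,\mathbb C}(\mathcal E^\vee\otimes\mathcal E)$ is written as $\mathcal V_{\rm top}-S_Y(\mathcal V_{\rm top})$ in $KU$.
\item Topological Serre duality on $Y$ (the square in Subsection~\ref{compstatement}) identifies the second term with the dual.
\end{enumerate}
This yields $K_{\mathcal C}\cong\det\mathcal V_{\rm top}\otimes\det\mathcal V_{\rm top}^\vee$, which is canonically trivial, and the same hyperbolicity argument gives compatibility with $Q_{\mathcal C}$. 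Restricting to the rank-zero component $\mathcal C^{\rm cs}_{X,0}$ is harmless; it just matches Corollary~\ref{cantriv}.
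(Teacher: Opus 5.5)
Your route is the paper's own: push $\mathcal E$ forward along $p$, use the Koszul triangle on $\operatorname{Tot}_Y(K_Y)$, adjunction and Serre duality on $Y$ to get $\mathcal A\to\mathcal V\to\mathcal V^\vee[-3]$, and trivialise $\det\mathcal V\otimes(\det\mathcal V)^\vee$. In topology you likewise use the Thom class of $K_Y$ together with topological Serre duality.

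\textbf{The algebraic step fails as written, because your Koszul triangle has the wrong twist.} The tautological section $s\in H^0(X,p^*K_Y)$ is a map $p^*K_Y^\vee\to\mathcal O_X$. The module action comes from the fibrewise-linear functions $K_Y^\vee\subset p_*\mathcal O_X$. So both maps have source $q^*(q_*\mathcal E\otimes K_Y^\vee)$, and the correct triangle is
\[
q^*\bigl(q_*\mathcal E\otimes K_Y^\vee\bigr)\longrightarrow q^*q_*\mathcal E\longrightarrow\mathcal E .
\]
There is no map ``tautological section minus module action'' out of $q^*q_*\mathcal E\otimes p^*K_Y$.

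With your twist, the third term is $\mathbf R\pi_*\mathbf R\mathcal Hom(\mathcal G,\mathcal G\otimes K_Y^{-1})$. Serre duality on $Y$, with dualising complex $K_Y[3]$, identifies its dual with $\mathbf R\pi_*\mathbf R\mathcal Hom(\mathcal G,\mathcal G\otimes K_Y^{2})[3]$, which is unrelated to $\mathcal V$. So your claimed identification with $\mathcal V^\vee[-3]$, and with it the cancellation of determinants, is false.

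With the correct twist, adjunction gives
\[
\mathbf R\pi_*\mathbf R\mathcal Hom(\mathcal G\otimes K_Y^\vee,\mathcal G)\simeq\mathbf R\pi_*\mathbf R\mathcal Hom(\mathcal G,\mathcal G\otimes K_Y)\simeq\mathcal V^\vee[-3],
\]
which is exactly the paper's self-dual triangle. Note that your topological step already uses $\lambda_{-1}(p^*K_Y^\vee)$, i.e.\ the correct twist, so the two halves of your proposal are currently inconsistent with each other.

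\textbf{Keep the sign explicit in the topological step.} The identity $(u^{KU}_{K_Y})^\vee u^{KU}_{K_Y}=u^{KU}_{K_Y}(1-K_Y)$ and the projection formula give
\[
\pi_{X!}^{KU,\mathbb C}(\alpha^\vee\otimes\alpha)\simeq\mathcal V^{\rm top}-\pi_{Y!}^{KU,\mathbb C}(K_Y\beta^\vee\beta).
\]
Because $\dim_{\mathbb C}Y=3$ is odd, topological Serre duality gives $\pi_{Y!}^{KU,\mathbb C}(K_Y\beta^\vee\beta)\simeq-(\mathcal V^{\rm top})^\vee$. This sign is what produces $\mathcal V^{\rm top}\oplus(\mathcal V^{\rm top})^\vee$. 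Had it gone the other way, you would get $(\det\mathcal V^{\rm top})^{\otimes2}$, which is not canonically trivial.

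Your expression ``$\mathcal V_{\rm top}-S_Y(\mathcal V_{\rm top})$'' applies $S_Y$ to a class already pushed to the base. Replace it by the computation above, with $S_Y$ applied to $\beta^\vee\beta$ on $Y$.

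\textbf{On compatibility with $Q$.} Your extra care about a universal sign in the hyperbolic form goes beyond the paper, which simply asserts that the self-dual triangle is compatible with $Q$. If the discrepancy is, as you expect, a Koszul sign depending only on locally constant ranks, then renormalising by a locally constant fourth root of unity is harmless.
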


\begin{proof}
Let $\mathcal E$ be the universal perfect complex on
$\mathcal M_X^{\rm cs}\times X$, put
$\bar p=\operatorname{id}_{\mathcal M_X^{\rm cs}}\times p$, and set
\[
\mathcal G=\mathbf R\bar p_*\mathcal E,
\qquad
\mathcal V=
\mathbf R\pi_{\mathcal M*}
\mathbf R\mathcal Hom(\mathcal G,\mathcal G).
\]
The $p_*\mathcal O_X$-module structure on $\mathcal G$, together with the
tautological section of $p^*K_Y$, gives a canonical triangle
\[
\bar p^*(\mathcal G\otimes K_Y^\vee)
\longrightarrow
\bar p^*\mathcal G
\longrightarrow
\mathcal E
\longrightarrow
\bar p^*(\mathcal G\otimes K_Y^\vee)[1].
\]
Applying $\mathbf R\pi_{\mathcal M*}\mathbf R\mathcal Hom(-,\mathcal E)$,
and using adjunction and Serre duality on the $3$-fold $Y$, gives a self-dual
triangle
\[
\mathbf R\pi_{\mathcal M*}
\mathbf R\mathcal Hom(\mathcal E,\mathcal E)
\longrightarrow
\mathcal V
\longrightarrow
\mathcal V^\vee[-3].
\]
Consequently,
\[
K_{\mathcal M_X^{\rm cs}}
\cong
\det\mathcal V\otimes(\det\mathcal V)^\vee
\cong
\mathcal O_{\mathcal M_X^{\rm cs}}.
\]
Since the
triangle is self-dual, it is compatible with $Q_{\mathcal M_X^{\rm cs}}$ and
hence defines the asserted canonical orientation.

For the topological statement, let $\alpha$ be the universal compactly
supported $K$-theory class on $\mathcal C_{X,0}^{\rm cs}\times X$. The Thom
isomorphism and Bott periodicity, in the notation of
Subsection~\ref{sec:push}, allow us to write
\(
\alpha=\operatorname{Th}^{KU}_{K_Y}(\beta)
\)
for a universal class $\beta$ on $\mathcal C_{X,0}^{\rm cs}\times Y$; when
$Y$ is non-compact, compactly supported $K$-theory is understood. Put
\[
\mathcal V^{\rm top}
=\pi_{Y!}^{KU,\mathbb C}(\beta^\vee\otimes\beta).
\]
If $u^{KU}_{K_Y}$ is the Thom class, then
\[
(u^{KU}_{K_Y})^\vee u^{KU}_{K_Y}
=u^{KU}_{K_Y}(1-K_Y).
\]
The projection formula and topological Serre duality from
Subsection~\ref{sec:serre-duality-without-quasi-projectivity} therefore give
\[
\pi_{X!}^{KU,\mathbb C}(\alpha^\vee\otimes\alpha)
\simeq
\mathcal V^{\rm top}\oplus(\mathcal V^{\rm top})^\vee.
\]
Taking determinants yields the canonical trivialization
\[
K_{\mathcal C_{X,0}^{\rm cs}}
\cong
\det\mathcal V^{\rm top}
\otimes(\det\mathcal V^{\rm top})^\vee
\cong
\underline{\mathbb C}_{\mathcal C_{X,0}^{\rm cs}},
\]
which is compatible with $Q_{\mathcal C_{X,0}^{\rm cs}}$.
For the spin structure determined by the Calabi--Yau structure,
\eqref{complex-spin-pushforward} gives
$\pi_{X!}^{KU,\mathbb C}=\pi_{X!}^{KU,\mathrm{spin}}$.
\end{proof}

For a smooth projective complex surface $S$, the same calculation in complex
dimension $3$ gives, for $Z=\operatorname{Tot}_S(K_S)$, a canonical square
root of $K_{\mathcal M_Z^{\rm cs}}$. This is the geometric orientation data
outlined by Shi for $S=\mathbb P^2$ in \cite[Subsection~2.2]{Shi}, and
attributed there to Toda. Joyce--Upmeier discuss the construction for general
$S$ in \cite[Example~4.11 and Remark~4.12]{3-folds}. Following this analogy,
we shall call the canonical orientations of
Proposition~\ref{canonical-Shi-orientation} \emph{Shi's orientations}.

We would like to study whether these orientations coincide with the
orientations of Subsection~\ref{sec:canonical-orientations-and-flag-structures}
(when the latter exist). It seems that the right context for doing this is
the following functor of Picard groupoids.

\subsubsection{Complex $6$-bordism}\label{sec:complex-six-bordism}

We now introduce a functor from complex bordisms with $K$-theory classes to
the relative spin bordism category considered above. Let
\(
\operatorname{Bord}^{\operatorname{U}}_6(\mathbb K)
\)
denote the Picard groupoid whose objects are pairs $[N,f]$, where $N$ is a
closed stably complex $6$-manifold and $f:N\to\mathbb K=BU\times\mathbb Z$,
and whose morphisms are stably complex $7$-bordisms equipped with extensions
of the corresponding $K$-theory classes.

For every stably complex manifold $N$, write
$T_N^{\mathrm{st},\mathbb C}$ for its stable complex tangent bundle, and put
\[
K_N
:=
\det_{\mathbb C}\bigl(T_N^{\mathrm{st},\mathbb C}\bigr)^\vee.
\]
Thus $K_N$ is the stable complex canonical line of $N$.

Choose a Hermitian metric on $K_N$, and denote by
\[
\pi_N:D(K_N)\longrightarrow N,
\qquad
S(K_N)=\partial D(K_N)
\]
its unit disk and sphere bundles. Let $s_N$ be the tautological section of
$\pi_N^*K_N$. The complex
\[
u^{KU}_{K_N}
:=
\left[
\pi_N^*K_N^\vee
\xrightarrow{\ s_N^\vee\ }
\underline{\mathbb C}
\right],
\qquad
s_N^\vee(\lambda)=\lambda(s_N),
\]
is acyclic on $S(K_N)$, and therefore defines the complex $K$-theory Thom
class
\begin{equation}\label{canonical-bundle-Thom-class}
u^{KU}_{K_N}
\in K^0\bigl(D(K_N),S(K_N)\bigr).
\end{equation}
For $\alpha\in K^0(N)$, put
\begin{equation}\label{canonical-bundle-Thom-map}
\operatorname{Th}^{KU}_{K_N}(\alpha)
:=
\pi_N^*\alpha\cdot u^{KU}_{K_N}
\in K^0\bigl(D(K_N),S(K_N)\bigr).
\end{equation}
This relative class has virtual rank zero, and hence is represented by a map
of pairs
\[
\operatorname{Th}^{KU}_{K_N}(\alpha):
\bigl(D(K_N),S(K_N)\bigr)
\longrightarrow(BU,*),
\]
where $BU\subset BU\times\mathbb Z$ is the component of virtual rank zero.

\begin{definition}\label{canonicalThomfunctor}
Define the ``\emph{Thom functor}''
\begin{equation}\label{canonical-bundle-Thom-functor}
\mathcal T_K:
\operatorname{Bord}^{\operatorname{U}}_6(\mathbb K)
\longrightarrow
\operatorname{Bord}^{\operatorname{Spin}}_8(BU,*).
\end{equation}
For an object $[N,f]$, write $[f]\in K^0(N)$ for the class represented by
$f$, and set
\begin{equation}\label{canonical-bundle-Thom-on-objects}
\mathcal T_K[N,f]
:=
\left[
D(K_N),
\operatorname{Th}^{KU}_{K_N}([f])
\right].
\end{equation}

Now let
\[
[W,h]:[N_0,f_0]\longrightarrow[N_1,f_1]
\]
be a morphism. The product form of the stable complex structure near the
boundary gives canonical identifications
\begin{equation}\label{canonical-bundle-boundary-identification}
K_W|_{N_i}\cong K_{N_i},
\qquad i=0,1.
\end{equation}
Define
\begin{equation}\label{canonical-bundle-Thom-on-morphisms}
\mathcal T_K[W,h]
:=
\left[
D(K_W),
\operatorname{Th}^{KU}_{K_W}([h])
\right].
\end{equation}
Here $S(K_W)$ is the relative boundary face and is mapped to the basepoint of
$BU$.
\end{definition}

\begin{proposition}\label{canonical-bundle-Thom-functoriality}
The construction of Definition~\ref{canonicalThomfunctor} gives a symmetric
monoidal functor. It is independent, up to a canonical monoidal natural
isomorphism, of any choices.
\end{proposition}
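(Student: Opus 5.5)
We check, in order: that $\mathcal T_K$ lands in the right category, that it is well defined on equivalence classes of morphisms, that it is functorial, that it is symmetric monoidal, and that it is independent of choices.

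\textbf{Objects.} For a closed stably complex $6$-manifold $N$, the total space $D(K_N)$ is a compact $8$-manifold with boundary $S(K_N)$. Its tangent bundle is stably $\pi_N^*(T_N^{\mathrm{st},\mathbb C}\oplus K_N)$. This bundle has $c_1=0$, and its determinant is canonically trivial, since $\det T_N^{\mathrm{st},\mathbb C}\otimes K_N\cong\underline{\mathbb C}$. A stable $\operatorname{SU}$-structure determines a spin structure canonically, so $D(K_N)$ is a spin $8$-manifold. By construction the class $\operatorname{Th}^{KU}_{K_N}([f])$ lies in $K^0(D(K_N),S(K_N))$ and has rank zero. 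So $\mathcal T_K[N,f]$ is an object of $\operatorname{Bord}^{\operatorname{Spin}}_8(BU,*)$.

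\textbf{Morphisms.} Let $[W,h]$ be a $7$-bordism. Then $D(K_W)$ is a $9$-manifold with corners. Its boundary decomposes as
\[
D(K_{N_0})\cup D(K_{N_1})\cup S(K_W),
\]
using the canonical identifications \eqref{canonical-bundle-boundary-identification}. The same $\operatorname{SU}$-argument gives it a spin structure, of product form near the boundary. The class $\operatorname{Th}^{KU}_{K_W}([h])$ vanishes on $S(K_W)$, because the Koszul complex defining $u^{KU}_{K_W}$ is acyclic there. Its restriction to $D(K_{N_i})$ is $\operatorname{Th}^{KU}_{K_{N_i}}([f_i])$, because Thom classes are natural under pullback. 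So $\mathcal T_K[W,h]$ is a relative spin bordism in the sense of Definition~\ref{relative-bordism-category}.

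\textbf{Well-definedness on equivalence classes.} Suppose an $8$-dimensional stably complex bordism $T$ with an extension of the $K$-theory class witnesses that two $7$-bordisms are equivalent. Then $D(K_T)$, with the Thom class, gives a $10$-dimensional relative spin bordism witnessing equivalence in the target. Its face $S(K_T)$ is mapped to the basepoint, and it plays the role of $Q_T$.

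\textbf{Functoriality.} The identity cylinder $N\times I$ satisfies $D(K_{N\times I})=D(K_N)\times I$, compatibly with the Thom class. For composition, gluing $W\cup V$ along $N$ gives $D(K_{W\cup V})=D(K_W)\cup_{D(K_N)}D(K_V)$. The collar conventions match after inserting the cylinder $D(K_N)\times I$. The Thom classes glue because they are defined by the same natural formula on each piece.

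\textbf{Monoidal structure.} Disjoint union commutes with $D(K_{-})$ and with Thom classes, on the nose. The symmetry cylinders and evaluation cylinders are sent to the corresponding cylinders. Hence the monoidal and symmetry constraints are preserved strictly, or at least by canonical identifications.

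\textbf{Independence of choices.} There are three kinds of choices.
\begin{enumerate}
\item The Hermitian metric on $K_N$. Metrics form a contractible space. A path of metrics gives a diffeomorphism of disc bundles that is isotopic to the identity. The mapping cylinder of this diffeomorphism is a bordism equivalent to the identity, so the two resulting functors are related by a monoidal natural isomorphism. Contractibility of the space of metrics makes this isomorphism canonical.
\item The representative of the stable complex structure.
\item The choice of $\operatorname{SU}$-to-spin identification.
\end{enumerate}
The last two choices are handled the same way, via contractible spaces of stabilizations.

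The main obstacle is the spin structure on $D(K_W)$ in the presence of corners. It must be compatible with the boundary identifications, and it must be canonical, not just exist. I would first check this by hand: the chosen trivialization of $\det(T^{\mathrm{st}}\oplus K)$ should restrict on each face to the one used on $D(K_{N_i})$ and on $S(K_W)$. Only after that check does the metric-independence argument go through at the level of equivalence classes of relative bordisms.
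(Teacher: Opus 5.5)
Your proposal is correct and follows the same route as the paper's proof. The canonical trivialization $\det(T_N^{\mathrm{st},\mathbb C})\otimes K_N\cong\underline{\mathbb C}$ gives $D(K_N)$ and $D(K_W)$ canonical stable $\operatorname{SU}$-structures, hence spin structures, compatible with the boundary identifications. Acyclicity of the Koszul Thom complex on $S(K_W)$ makes $\mathcal T_K[W,h]$ a relative spin bordism over $(BU,*)$. Naturality of Thom classes under restriction and gluing, together with multiplicativity under disjoint union, then gives the symmetric monoidal functor.

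You spell out more of what the paper leaves as ``clear'': the equivalence relation on morphisms, identities and composition, and independence of the metric. The corner compatibility you flag as the main obstacle is stated in one line in the paper. It follows from naturality of the evaluation trivialization under $K_W|_{N_i}\cong K_{N_i}$.
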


\begin{proof}
There is a natural stable isomorphism
\[
T^{\mathrm{st},\mathbb C}D(K_N)
\cong
\pi_N^*\bigl(T_N^{\mathrm{st},\mathbb C}\oplus K_N\bigr),
\]
and
\[
\det_{\mathbb C}
\bigl(T_N^{\mathrm{st},\mathbb C}\oplus K_N\bigr)
\cong
\det_{\mathbb C}\bigl(T_N^{\mathrm{st},\mathbb C}\bigr)
\otimes K_N
\cong
\underline{\mathbb C}.
\]
Thus $D(K_N)$ carries a canonical stable $SU$-structure, and hence a
canonical spin structure. The same statement for a bordism $W$ is compatible
with the boundary identifications
\eqref{canonical-bundle-boundary-identification}. Moreover, the Thom complex
is acyclic on $S(K_W)$, so \eqref{canonical-bundle-Thom-on-morphisms} is a
relative spin bordism over $(BU,*)$.

For this spin structure, the two pushforwards in complex $K$-theory agree by
\eqref{complex-spin-pushforward}.

The Thom classes are natural under restriction and gluing, and multiplicative
under disjoint union. Hence \eqref{canonical-bundle-Thom-on-objects} and
\eqref{canonical-bundle-Thom-on-morphisms} define a symmetric monoidal
functor. 

The rest is clear.
\end{proof}

\subsubsection{Canonical orientations for local $4$-folds}
\label{sec:canonical-orientations-local-fourfolds}

We now explicitly describe canonical orientations for all local $4$-folds. Let $[N,f]$ be an object of
$\operatorname{Bord}^{\operatorname{U}}_6(\mathbb K)$, and put
\[
Z_N=D(K_N),
\qquad
\partial Z_N=S(K_N),
\qquad
\xi_f=\operatorname{Th}^{KU}_{K_N}([f]).
\]
Our usual orientation functor is
\[
\widetilde F_K
:=
\mathrm O_8^{\operatorname{Spin},{\rm c}}
\circ\mathcal T_K:
\operatorname{Bord}^{\operatorname{U}}_6(\mathbb K)
\longrightarrow\mathbb Z_2//\mathbb Z_2.
\]

Shi's
orientation and the Joyce--Upmeier orientation are two monoidal natural
trivializations of the same functor $\widetilde F_K$, that is, two
$2$-morphisms
\begin{equation}\label{local-fourfold-two-orientations}
\begin{tikzcd}[column sep=8em]
\widetilde F_K
  \arrow[r,Rightarrow,bend left=12,"{\omega_{\rm Shi}}"]
  \arrow[r,Rightarrow,bend right=12,"{\omega_{\rm JU}}"']
&
\mathbf 1_6^s
\end{tikzcd}
\end{equation}

Here $\omega_{\rm Shi}$ is induced by the
trivialization of Proposition~\ref{canonical-Shi-orientation}, while
$\omega_{\rm JU}$ is the trivialization obtained from a Joyce--Upmeier flag
structure as in Subsection~\ref{sec:canonical-orientations-and-flag-structures}. Note that, a priori, they have different domains: Shi's orientation is defined \textit{always}, while Joyce--Upmeier's orientation requires, e.g., the $c_1^2 - c_2 = 0$-condition.

\begin{theorem}\label{local-fourfold-orientation-comparison}
First of all,
\begin{equation}\label{local-fourfold-topological-conditions}
\left\langle
 \bar\alpha\smile\operatorname{Sq}_\partial^2(\bar\alpha),
 [Z_N,\partial Z_N]
\right\rangle=0
\quad\text{for every }\alpha\in H^3(Z_N,\partial Z_N;\mathbb Z).
\end{equation}

Suppose also that for a $K$-theory class $\xi_f$ on $(Z_N, \partial Z_N)$
\begin{equation}
\\[3pt]
\kappa(\xi_f)
=c_2(\xi_f)-c_1(\xi_f)^2
=0
\quad\text{in }H^4(Z_N,\partial Z_N;\mathbb Z).
\end{equation}
Then, on the component represented by $\xi_f$, the two $2$-morphisms
coincide:
\[
\omega_{\rm Shi}|_{\xi_f}=\omega_{\rm JU}|_{\xi_f}.
\]
These equalities are compatible with complex bordisms and disjoint unions
whenever all the components involved satisfy
\eqref{local-fourfold-topological-conditions}.
\end{theorem}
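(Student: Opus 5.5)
The plan has three steps: prove \eqref{local-fourfold-topological-conditions} by a Thom-isomorphism calculation, turn the comparison into a question about a single $\mathbb Z_2$-valued homomorphism on a bordism group, and compute that homomorphism on generators.

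\textbf{Step 1: the Steenrod condition.} I would prove \eqref{local-fourfold-topological-conditions} directly. By the Thom isomorphism for the oriented rank-$2$ bundle $K_N$, every $\alpha\in H^3(Z_N,\partial Z_N;\mathbb Z)$ has the form $\alpha=u\cdot\pi_N^*\beta$. Here $u$ is the cohomological Thom class and $\beta\in H^1(N;\mathbb Z)$. Reducing mod $2$ and applying the Cartan formula gives
\[
\operatorname{Sq}^2_\partial(\bar u\bar\beta)=\operatorname{Sq}^2(\bar u)\bar\beta+\operatorname{Sq}^1(\bar u)\operatorname{Sq}^1(\bar\beta)+\bar u\operatorname{Sq}^2(\bar\beta).
\]
The last term vanishes for degree reasons. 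The middle term vanishes because $\operatorname{Sq}^1\bar u=\bar u\,w_1(K_N)=0$. The first term is $\bar u\,\bar c_1(K_N)\bar\beta$. Hence
\[
\bar\alpha\smile\operatorname{Sq}^2_\partial\bar\alpha=\bar u^2\,\bar\beta^2\,\bar c_1(K_N)=\bar u\,\bar c_1(K_N)^2\,\bar\beta^2 .
\]
Since $\beta$ is integral, $\bar\beta^2=\operatorname{Sq}^1\bar\beta$ is the mod-$2$ reduction of the integral Bockstein of $\beta$, which is zero. So the pairing with $[Z_N,\partial Z_N]$ vanishes. In particular, Proposition~\ref{relative-flag-existence-and-classification} provides relative flag structures natural at zero, so $\omega_{\rm JU}$ is defined on every component with $\kappa(\xi_f)=0$.

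\textbf{Step 2: reduction to an additive invariant.} Let $\operatorname{Bord}^{\rm U}_6(\mathbb K)^{\kappa=0}$ be the sub-Picard groupoid on objects with $\kappa(\xi_f)=0$. This condition is additive under disjoint union, and I would check it is preserved by morphisms in the sense needed, since $\kappa$ of a bordism restricts to $\kappa$ on the ends. By Proposition~\ref{canonical-orientation-at-zero-class}, $\omega_{\rm JU}$ is independent of the flag structure on these components. Then $\omega_{\rm Shi}$ and $\omega_{\rm JU}$ are two monoidal trivializations of the same functor $\widetilde F_K$ restricted to this subcategory. Their ratio is therefore a monoidal automorphism of the trivial functor, that is, a group homomorphism
\[
\delta:\pi_0\bigl(\operatorname{Bord}^{\rm U}_6(\mathbb K)^{\kappa=0}\bigr)\subset\Omega^{\rm U}_6(BU\times\mathbb Z)\longrightarrow\mathbb Z_2 .
\]
The compatibility with complex bordisms and disjoint unions asserted in the theorem is then automatic. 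It remains to show $\delta=0$, and it suffices to check this on generators. $\Omega^{\rm U}_*(BU)$ is free over $\Omega^{\rm U}_*$ on classes of the form $[\mathbb P^k,\text{line bundles}]$, and for the rank-$0$ part only the reduced classes matter. The relevant generators are of two kinds.

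\emph{Classes with $f$ constant (zero class).} Here $\xi_f=0$. Flag structures are natural at zero, and the Shi trivialization of $\mathcal V^{\rm top}\oplus(\mathcal V^{\rm top})^\vee$ with $\mathcal V^{\rm top}=0$ is the identity, so both sides agree.

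\emph{Classes supported on curves and spheres.} By the projection formula I would reduce these to local curves, i.e.\ to $N$ bordant to $\mathbb P^1\times(\text{stuff})$ or to $S^6$ with a generator of $\widetilde K^0(S^6)$. The latter gives the Thom space $S^6\times\mathbb C$.

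\textbf{Step 3: the local computation (main obstacle).} The hard step is evaluating $\delta$ on the generator over $S^6\times\mathbb C$ and on the local-curve generators. Shi's orientation is defined by a splitting into $\mathcal V\oplus\mathcal V^\vee$, whereas $\omega_{\rm JU}$ is defined through the chain of Lemma~\ref{rescompc} and diagram~\eqref{E8Z4}:
\[
\operatorname{U}(m)\to\operatorname{SU}(m+1)\to\cdots\to\operatorname{SU}(8)\times\operatorname{U}(1)\xrightarrow{\iota_8}E_8 .
\]
The first thing I would try is to make the complex-type decompositions of Lemma~\ref{comptype} explicit. This uses the fixed complex structure $W\oplus\Lambda^3W\oplus(\Lambda^2W)^\vee$ on $\mathfrak e_8/\mathfrak u(8)$ and the factorization through $\operatorname{HSpin}(16)$ from Remark~\ref{orientation-conventions}. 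The goal is to show that, after the Thom construction, each complex summand pushes forward to an $\mathcal V'\oplus\mathcal V'^\vee$ splitting, whose trivialization is the Shi one. The remaining sign is then an index mod $2$ of a specific Dirac-type operator on $S^6$. I expect it to vanish because $\kappa=0$ forces the relevant $E_8$-bundle to be trivializable on the $7$-skeleton.
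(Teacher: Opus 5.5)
\textbf{Where the proposal stands.} Your Step~1 is correct and is the paper's argument: write $\alpha=u\smile\pi_N^*\beta$ and use $\bar\beta^2=\operatorname{Sq}^1\bar\beta=0$. Your Step~2 is also the paper's reduction: the ratio of the two trivializations is a homomorphism from a complex $6$-bordism group to $\mathbb Z_2$; the paper uses $\widetilde\Omega^{\rm U}_6(B\operatorname{SU})$. Everything after that, however, is a plan rather than an argument, and the two pieces left open are where the theorem's content lies.

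\textbf{The generator reduction is missing.} It is not carried out, and over $BU\times\mathbb Z$ it misses cases. Your first case treats only $f\equiv 0$. But $[N,1]$, for any $N$, satisfies $\kappa=0$ (since $\binom12=0$) and has Thom image $u^{KU}_{K_N}\neq 0$. Also, ``by the projection formula I would reduce these to local curves'' does not say which classes you need or why they live near curves. The paper proceeds as follows.
\begin{enumerate}
\item It passes from $BU$ to $B\operatorname{SU}$ along $E\mapsto E\oplus\det(E)^\vee$. $\omega_{\rm JU}$ is pulled back along this map by construction (Proposition~\ref{BU-BSU-transition}), and one checks the same for $\omega_{\rm Shi}$.
\item It uses $\widetilde\Omega^{\rm U}_6(B\operatorname{SU})\cong\mathbb Z^2$, with basis $(L-1)^2$ and $(L-1)(M-1)$ on $\mathbb P^2\times\mathbb P^1$; both are represented near curves.
\item It moves local curves by excision into the model $X_0=S^6\times\mathbb C$, with $X_0^+= S^8\vee S^2$. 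This reduces everything to the components $z=kx$ of $\operatorname{Map}_*(S^8,B\operatorname{SU})$.
\end{enumerate}

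\textbf{Step~3 is the main gap.} It is not done, and your heuristic cannot be the mechanism. On $S^8$ every $E_8$-bundle is trivial, with a unique trivialization up to homotopy, since $\pi_7(E_8)=\pi_8(E_8)=0$; and $c_2$ vanishes there automatically. So ``$\kappa=0$ forces trivializability'' carries no information. What must be computed is a sign comparing that trivialization with complex orientations.

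The missing idea is the doubling in Lemma~\ref{local-curve-representations}. From the branching $\mathfrak e_8|_{\operatorname{SU}(8)}\cong\mathfrak u(8)\oplus(\Lambda^2W)_{\mathbb R}\oplus(W\oplus\Lambda^3W)_{\mathbb R}$ and the Newton identities one gets $M_z=-8z$, $A_{2u}=r(8z)$ and $S_{2u}=r(-8z)$. Moreover, the class $\alpha_U(z)\in\pi_8(E_8/U)$ defining $\omega_{\rm JU}$ maps to $2\alpha_K\in\pi_8(E_8/\operatorname{HSpin}(16))$. Realize $2\alpha_K$ by two copies of $\alpha_K$ in disjoint balls. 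Symmetric monoidality then gives value $+1$ on the complex orientations, since the unknown sign of a single copy enters squared and the ordering and complexification signs are even.

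The remaining ambiguity is an element of $\pi_8\operatorname{Spin}(16)\cong\mathbb Z_2$ comparing the chosen lift with the Atiyah--Bott--Shapiro homotopy realizing $r(c(kb))=2kb$. It vanishes because the adjoint and half-spin representations act by $\pm8$ on $\pi_7(\operatorname{SO})$, hence trivially on $\pi_8(\operatorname{SO})$ (Lemma~\ref{local-curve-ABS-comparison}).

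On the Shi side, $\beta^\vee=-\beta$ and $\beta^2=0$ for $\beta\in\widetilde K^0(S^6)$, so $\mathcal V^{\rm top}=0$. Shi's orientation is then the one induced by the Koszul homotopy $(u^{KU}_{\underline{\mathbb C}})^\vee u^{KU}_{\underline{\mathbb C}}\simeq 0$, and the conjugation sign is $(-1)^{8k}=1$. Both orientations are therefore sent to $+1$ by $\Phi_{\alpha_U(z)}\circ\theta_z$. Without this computation, or a genuine substitute, the proposal does not establish $\omega_{\rm Shi}|_{\xi_f}=\omega_{\rm JU}|_{\xi_f}$.
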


The first line of \eqref{local-fourfold-topological-conditions} is precisely
the obstruction of
Proposition~\ref{relative-flag-existence-and-classification}\textup{(a)}, and
the second is the condition of
Proposition~\ref{canonical-orientation-at-zero-class}. For disk
bundles the first condition is, indeed, automatic as the theorem asserts. Indeed, if $u\in
H^2(Z_N,\partial Z_N;\mathbb Z)$ is the Thom class, then every
$\alpha\in H^3(Z_N,\partial Z_N;\mathbb Z)$ is of the form
$\alpha=u\smile b$ for $b\in H^1(N;\mathbb Z)$, and
$\operatorname{Sq}^1(\bar b)=\bar b^2=0$.

There is also a useful expression for the second condition on $N$. If
$r=\operatorname{rk}[f]$, the splitting principle and the Thom
self-intersection formula give
\[
\bigl(c_1^2-c_2\bigr)(\xi_f)
=u\smile
\left(c_1([f])+\binom r2c_1(K_N)\right).
\]
Thus the second line of
\eqref{local-fourfold-topological-conditions} is equivalent to
\[
c_1([f])+\binom r2c_1(K_N)=0
\quad\text{in }H^2(N;\mathbb Z).
\]

We first prove the conjecture for local curves (using the evident equivalence between $(Z_N, \partial Z_N)$ and $\operatorname{Tot}(K_N)$-theories without further comments).

\begin{proposition}\label{local-curve-orientation-comparison}
Let $C$ be a compact Riemann surface, and let $E\to C$ be a rank-$3$ complex
vector bundle such that
\[
c_1(TC)+c_1(E)=0
\quad\text{in }H^2(C;\mathbb Z).
\]
Then Shi's orientation and the Joyce--Upmeier orientation on
$X = \operatorname{Tot}_C(E)$ canonically coincide.
\end{proposition}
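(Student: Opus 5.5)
We plan to reduce the comparison to a computation on a single model and then propagate it by bordism. First, on $X=\operatorname{Tot}_C(E)$ both orientations are defined everywhere. Shi's orientation is defined by Proposition~\ref{canonical-Shi-orientation}, once we note that $X$ is locally of the form $\operatorname{Tot}(K_N)$ with $N=\operatorname{Tot}(E')$ for a rank-two sub-bundle; more invariantly, we use the trivialization $\pi_{X!}\simeq\mathcal V\oplus\mathcal V^\vee$ coming from a splitting $E\simeq E'\oplus\det(E')^\vee\otimes K_C$ up to homotopy. The Joyce--Upmeier orientation requires two inputs. The Steenrod condition holds because $H^3_c(X;\mathbb Z)\cong H^{-3}(C)=0$ by the Thom isomorphism for the rank-$6$ real bundle. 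The $\kappa$-condition also holds, since $H^4_c(X;\mathbb Z)\cong H^{-2}(C)=0$. Hence, by Proposition~\ref{relative-flag-existence-and-classification}, the flag structures natural at zero form a torsor for $\operatorname{Map}(H^4_c\setminus\{0\},\mathbb Z_2)=\operatorname{Map}(\varnothing,\mathbb Z_2)$, which is a single point. By Proposition~\ref{canonical-orientation-at-zero-class}, $\omega_{\rm JU}$ is therefore canonical on every component.

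Second, since both orientations are trivializations of the same $\mathbb Z_2$-bundle on a space, they differ by a locally constant function $\delta:\pi_0(\mathcal C^{\rm cs}_X)\to\mathbb Z_2$. Here $\pi_0\cong K^0_c(X)\cong K^0(C)\cong\mathbb Z^2$ via Thom, generated by rank and degree. Both orientations are compatible with direct sums: Shi's because $\mathcal V$ is additive, with cross terms $\mathcal V_{12}\oplus\mathcal V_{21}\oplus$ duals that are paired off canonically; Joyce--Upmeier's by monoidality of the natural isomorphism in Proposition~\ref{BU-BSU-transition}. Consequently $\delta$ is a homomorphism up to the symmetry sign $q$, and one must check that the two conventions produce the same cross-term signs. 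This reduces us to verifying $\delta=0$ on generators. The zero class is trivially fine. For the remaining generators (Thom classes of $\mathcal O_C$ and of $\mathcal O_p$), we use bordism invariance (Proposition~\ref{relative-cobordism-invariance}, together with the Thom functor of Proposition~\ref{canonical-bundle-Thom-functoriality}, which transports Shi's orientation compatibly because the Shi splitting is natural under stably complex bordism). This lets us move $(C,E,\beta)$ within $\Omega^U_2\times K$-data to a standard model. The degree and rank invariants, together with $g(C)$, can be bordized away, since $\Omega^U_2(BU\times\mathbb Z)$ relative to the complex orientation is generated by the class of $\mathbb{CP}^1$ and a point-class bundle. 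Via excision, the point-supported class reduces to a ball in $\mathbb C^4$ and, after compactification to $S^8$ and then to $S^6\times\mathbb C$, to the model calculation on $S^6\times\mathbb C$ flagged in the introduction.

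The main obstacle is this model calculation on $S^6\times\mathbb C$ (equivalently $D(K_{S^6})$ with $N=S^6$ carrying its almost complex structure), together with the $\mathbb{CP}^1$-generator. There one must compare the sign of the Shi splitting $\mathcal V\oplus\mathcal V^\vee$ with the sign produced by $\lambda_8^{\rm c}$, which passes through $\iota_8:\operatorname{SU}(8)\times\operatorname{U}(1)\to E_8$ and the complex structure on $\mathfrak e_8/\mathfrak u(8)$ fixed in Remark~\ref{orientation-conventions}. I would attempt this by restricting the $E_8$ adjoint decomposition to $\operatorname{HSpin}(16)$, where $\mathfrak e_8=\mathfrak{so}(16)\oplus\Delta^+_{16}$. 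Then, for the generator, I would compute directly that the $KO$-index of the $\Delta^+$ summand equals that of a complex bundle of the form $V\oplus V^\vee$, which is exactly Shi's form. This would show that the two orientations agree on the generator, and hence, by the above, everywhere.
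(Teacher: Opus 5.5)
Your first step is fine: $H^3_c(X;\mathbb Z)=H^4_c(X;\mathbb Z)=0$, so the Joyce--Upmeier orientation is canonical on every component. The argument breaks at the reduction to generators.

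\textbf{Additivity of $\delta$.} You need $\delta:K^0_c(X)\to\mathbb Z_2$ to be additive, but you leave exactly the decisive point open (``one must check that the two conventions produce the same cross-term signs''). Your justification on the Joyce--Upmeier side is also wrong. The monoidal structure for which $\lambda^{\operatorname{U},{\rm c}}_{8,m}$ in Proposition~\ref{BU-BSU-transition} is monoidal is disjoint union of manifolds, not direct sum of $K$-classes on a fixed $X$.
\begin{itemize}
\item In the direct-sum isomorphism of torsors, the cross term $\pi_{X!}(\alpha_1^\vee\alpha_2+\alpha_2^\vee\alpha_1)$ is oriented by its complex structure.
\item Shi's recipe instead orients it through $\mathcal V_{12}\oplus\mathcal V_{12}^\vee\oplus\mathcal V_{21}\oplus\mathcal V_{21}^\vee$.
\end{itemize}
Such comparisons can produce index-dependent signs; compare the conjugation sign in Lemma~\ref{local-curve-theta} and Remark~\ref{local-curve-complex-orientations}. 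So a priori $\delta$ is only quadratic, and its values on two generators do not determine it. This is why the paper never argues through generators: \S\ref{S6C2} treats every component $z=kx$, not only $k=1$.

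\textbf{The model computation.} Matching the $KO$-index of the $\Delta^+$-summand with that of some $V\oplus V^\vee$ cannot decide which point of the torsor each orientation picks. The paper builds an explicit isomorphism $\Phi_{\alpha_U(z)}\circ\theta_z:O_{\operatorname{Ad}}(z)\to\{\pm1\}$ from the $E_8$-nullhomotopy $\alpha_U(z)\in\pi_8(E_8/U)$. It then shows that both $o_{\rm JU}(z)$ and $o_{\rm Shi}(z)$ map to $+1$. This needs two further inputs:
\begin{itemize}
\item Lemma~\ref{local-curve-ABS-comparison}: the adjoint and half-spin representations multiply $\pi_7(\operatorname{SO})$ by $\pm8$, hence act trivially on $\pi_8(\operatorname{SO})\cong\mathbb Z_2$.
\item The Koszul comparison of Lemma~\ref{local-curve-Koszul-comparison}.
\end{itemize}

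\textbf{The zero-section generator.} $\operatorname{Th}(\mathcal O_C)$ is not supported in a ball, so your excision to $\mathbb C^4$ does not reach it. ``Bordizing away'' rank, degree and genus is not available as stated. The Thom functor of Proposition~\ref{canonical-bundle-Thom-functoriality} lives on closed stably complex $6$-manifolds, not on triples $(C,E,\beta)$. You also give no argument that Shi's orientation is natural along the bordisms you would need.

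\textbf{The missing idea.} No bordism is required: $X$ itself embeds as an open spin submanifold of $S^6\times\mathbb C$.
\begin{itemize}
\item Embed $C\subset S^6$. Over a surface, oriented rank-$6$ real bundles are classified by $w_2$.
\item Both $E_{\mathbb R}$ and $\nu_{C/S^6}\oplus\underline{\mathbb C}$ have $w_2=0$; for $E$ this uses your hypothesis $c_1(E)=-c_1(TC)$.
\item The isomorphism can be chosen to match spin structures.
\item Taking $E\simeq E'\oplus L$ with $c_1(L)=0$, the embedding can be made compatible with the structure $X=\operatorname{Tot}_N(K_N)$, $N=\operatorname{Tot}_C(E')\subset S^6$, that defines Shi's orientation.
\end{itemize}
Excision then carries every component of the compactly supported mapping space of $X$, including $\operatorname{Th}(\mathcal O_C)$, to some component $kx$ on $S^6\times\mathbb C$. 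Combine this with the reduction to $\operatorname{Map}_*(X^+,B\operatorname{SU})$ along $E\mapsto E\oplus\det(E)^\vee$ and the all-$k$ comparison of Proposition~\ref{local-curve-BSU-comparison}. That is the paper's route, and it bypasses both the additivity question and the choice of generators.
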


\begin{proof}
First of all, note that Shi's orientation $\omega_{Shi}$ can be restricted to $\omega_{Shi}^r$ on $$\operatorname{Maps}_{cs}(X, BSU \times \mathbb Z)$$ via the map $$f: \operatorname{Maps}_{cs}(X, BSU \times \mathbb Z) \to \operatorname{Maps}_{cs}(X, BU \times \mathbb Z)$$ induced by $E \mapsto E \oplus \Lambda^{top} E^*$.

Since Shi's orientation has an explicit, purely algebraic definition via topological Serre duality and Thom classes, a (not entirely pleasant but explicit) calculation shows that $f^*\omega_{Shi}^r = \omega_{Shi}$.

Thus, it is enough to prove that $\omega_{Shi}^r$ coincides with the  Joyce-Upmeier's orientation defined earlier on $\operatorname{Maps}_{cs}(X, BSU \times \mathbb Z)$ (before the pull-back to $\operatorname{Maps}_{cs}(X, BU \times \mathbb Z)$). 

For this, note that the space $\operatorname{Tot}_C(E)$ admits a spin-embedding into $\mathcal S^6 \times \mathbb C^2$. Thus, the statement follows from the results of Section \ref{S6C2} by excision.
\end{proof}

We can now prove the rest of the theorem above.

\begin{proof}
From our discussion of the relation between $6$ and $8$-dimensional bordism categories, it should by now be clear that the ratio of Shi's and Joyce-Upmeier's orientations on the local $CY4$s gives a morphism 

$$\tilde{\Omega}_6^U(BSU) \to \mathbb Z_2$$

where $\Omega_6^U(BSU)$ is the $6$-th almost complex bordism group of $BSU$, and that it is enough to prove that this morphism vanishes.

An elementary computation with Atiyah-Hirzebruch spectral sequence gives

$$\tilde{\Omega}_6^U(BSU)= \{(a, b) \ | \ a-b \in 2\mathbb Z\} \simeq \mathbb Z^2.$$

An explicit basis can be provided as follows.

Consider $N = \mathbb P^2  \times \mathbb P^1$, $L = \operatorname{pr}_1^*\mathcal O_{\mathbb P^2}(1)$, $M = \operatorname{pr}_2^*\mathcal O_{\mathbb P^1}(1)$.

It is easy to see that the classes $(L-1)^2$ and $(L-1)(M-1)$ form the basis of $\tilde{\Omega}_6^U(BSU)$. Since both of these classes have representatives supported on local curves, theorem now follows from the proposition.
\end{proof}

\subsection{The \texorpdfstring{$BSU$}{BSU}-comparison on \texorpdfstring{$S^6 \times \mathbb C$}{S6 x C}}\label{S6C2}
\subsubsection{}
Consider $X_0 = S^6 \times \mathbb C$; we will almost immediately pass from it to $S^8:$ indeed,

$$X_0^+ = \operatorname{Th}_{S^6}(\underline{\mathbb C}) = S^2 \wedge S^6_+ = S^8 \vee S^2,$$

and the second factor  disappears after the passage to $BSU$-mapping spaces.

Let \(b\in\widetilde{KO}^0(S^8)\) be the Atiyah--Bott--Shapiro class of real
Dirac index \(+1\), and put \(x=c(b)\), where \(c\) and \(r\) denote
complexification and realification.  Thus \(r(x)=2b\).

 For $z=kx$, choose a
complex vector bundle $V_z\to S^8$ of
rank $8$, with structure group $\operatorname{SU}(8)$ and a trivialization
at the basepoint, such that $[V_z]-8=z$.  The complex Thom isomorphism
identifies $z$ with the corresponding component of
$\operatorname{Map}_*(X_0^+,B\operatorname{SU})$.

If $v$ is a virtual real bundle on $S^8$, write
\[
O(v)=O^{KO}_{\mathcal C_{S^8}^{\mathbb R}}|_v
\]
for its orientation torsor.  We regard $O(v)$ as a super
$\mathbb Z_2$-torsor, with degree given by its real Dirac index modulo $2$.
If $q$ is a virtual complex bundle on $S^8$, we abbreviate $O(r(q))$ to
$O(q)$.  Whenever $q$ is a virtual complex bundle in this subsection, we
write $o_{\mathbb C}(q)$ for its complex orientation in the corresponding
real orientation torsor.  Let
$\operatorname{Ad}(z)$ be the reduced real class obtained from the quadratic
map $\operatorname{Ad}$ of
\cite[Subsection~2.4.4]{JoyceTanakaUpmeier2020}, and put
$O_{\operatorname{Ad}}(z)=O(\operatorname{Ad}(z))$.  Under the complex Thom
isomorphism, this is the fibre of
$O^{\not D}_{\mathcal C_{X_0,0}^{\rm cs}}$ at the point corresponding to
$z$.  The evaluations of $\omega_{\rm Shi}$ and $\omega_{\rm JU}$ give
isomorphisms from $O_{\operatorname{Ad}}(z)$ to $\{\pm1\}$.  Define
$o_{\rm Shi}(z)$ and $o_{\rm JU}(z)$ to be the unique elements sent to
$+1$.

Let $K=\operatorname{HSpin}(16)$, and let $U\subset K$ be the subgroup fixed
in Remark~\ref{orientation-conventions}.  Extending the structure group of
$V_z$ along
$\iota_8|_{\operatorname{SU}(8)\times\{1\}}:\operatorname{SU}(8)\to U$
gives a principal $U$ bundle
$P_z\to S^8$.  Put
\[
A_U(z)=\operatorname{Ad}(P_z)-\underline{\mathfrak u(8)},
\qquad O_U(z)=O(A_U(z)).
\]
Lemmas of Section~\ref{comptype} identify $O_U(z)$ with
the torsor assigned to $V_z$ by the normalized orientation functor.

Put $\mathfrak m=\mathfrak e_8/\mathfrak u(8)$, with the complex structure
fixed in Remark~\ref{orientation-conventions}. Let
$M_z\in\widetilde K^0(S^8)$ be the reduced complex class obtained from
$P_z$, and put $O_M(z)=O(M_z)$.

Choose a clutching map $f_z:S^7\to U$ for $P_z$.  Since
$\pi_7(E_8)=\pi_8(E_8)=0$ by \cite[Theorem~3.3(d)]{JU}, a nullhomotopy of
its composite with $U\to E_8$ determines a unique class
\[
\alpha_U(z)\in\pi_8(E_8/U),\qquad
\partial\alpha_U(z)=[f_z].
\]
Equivalently, $\alpha_U(z)$ consists of $P_z$ together with a trivialization,
up to homotopy, of $P_z^{E_8}=P_z\times_U E_8$.  Put
$A_{E_8}(P_z^{E_8})=\operatorname{Ad}(P_z^{E_8})-
\underline{\mathfrak e_8}$.  The decomposition
$\mathfrak e_8=\mathfrak u(8)\oplus\mathfrak m$ and the morphism represented
by $\alpha_U(z)$ give isomorphisms of orientation torsors
\begin{equation}\label{local-curve-Phi-definition}
\Phi_{\alpha_U(z)}:
O_U(z)\otimes O_M(z)
\xrightarrow{\ \mu_z\ }O(A_{E_8}(P_z^{E_8}))
\xrightarrow{\ \varphi_{\alpha_U(z)}\ }\{\pm1\}.
\end{equation}
Here $\mu_z$ is induced by the decomposition of representations, and
$\varphi_{\alpha_U(z)}$ is induced by the cylinder morphism in the
bordism category arising from the homotopy; compare \cite[Definition~4.7 and Theorem~9.9]{JU}.
Here and below, $\{\pm1\}$ denotes the trivial $\mathbb Z_2$-torsor.

\begin{lemma}\label{local-curve-representations}
Let $z=kx$ and $u=kb$.  Then
\begin{equation}\label{local-curve-representation-classes}
M_z=-8z,\qquad
A_{2u}=r(8z),\qquad S_{2u}=r(-8z),\qquad
p_*\alpha_U(z)=2\alpha_K.
\end{equation}
Here $A_v$ and $S_v$ are the reduced real classes obtained from the adjoint
and half spin representations of $K$, respectively,
$p:E_8/U\to E_8/K$, and $\alpha_K\in\pi_8(E_8/K)$ has boundary $u$.

\end{lemma}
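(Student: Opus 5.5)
The plan is to reduce all four identities to $\lambda$-operations on the reduced ring $\widetilde K^0(S^8)$ and $\widetilde{KO}^0(S^8)$. In both rings all products of reduced classes vanish, and the Adams operations act by $\psi^j=j^4$. Hence every representation-theoretic construction applied to $V_z=8+z$ is linear in $z$, and the only task is to compute integer coefficients. I will use two standard consequences. First, from Newton's identity with $z^2=0$,
\[
\lambda^j(z)=(-1)^{j-1}\psi^j(z)/j=(-1)^{j-1}j^3z .
\]
Second, from $\lambda_t(8+z)=(1+t)^8\lambda_t(z)$ one obtains an explicit formula for the reduced class of $\lambda^k(V_z)$. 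Finally, complex conjugation acts on $\widetilde K^0(S^8)$ by $(-1)^4=+1$, so $z^\vee=z$.

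For $M_z$, I restrict $\mathfrak m$ to $\operatorname{SU}(8)$ using the complex structure fixed in Remark~\ref{orientation-conventions}, namely $\mathfrak m=W\oplus\Lambda^3W\oplus(\Lambda^2W)^\vee$. The reduced class $M_z$ is then the sum of the reduced classes of $V_z$, $\lambda^3V_z$ and $(\lambda^2V_z)^\vee=\lambda^2V_z$. By the formula above these are $z$, $(28-8\cdot 8+27)z$ and $(8-8)z$, where the last two come from $\binom{8}{2}\lambda^1-\binom81\lambda^2+\lambda^3$ and $\binom81\lambda^1+\lambda^2$ type expansions. I will check that the coefficients add up to $-8$, which gives $M_z=-8z$.

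For $A_{2u}$ and $S_{2u}$, note that $r(z)=r(c(u))=2u$. The $K=\operatorname{HSpin}(16)$-bundle is obtained from $P_z$ via $U\subset K$. Restricted to $\widetilde{\operatorname U}(8)$, the adjoint representation $\Lambda^2_{\mathbb R}(W_{\mathbb R})$ is $r\bigl(\mathfrak u(8)\text{-part}\bigr)\oplus r(\Lambda^2W)$, where the $\mathfrak u(8)$-part has complexification $W\otimes W^\vee$. The half-spin representation restricts, up to a twist by a power of $\det^{1/2}$, to $\Lambda^{\mathrm{even}}W$ or $\Lambda^{\mathrm{odd}}W$, depending on which half-spin representation defines $\operatorname{HSpin}(16)$. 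On $\operatorname{SU}(8)$ this twist is trivial. The reduced class of $W\otimes W^\vee$ is $8z+8z^\vee=16z$, and its real form contributes $r(8z)$; the reduced class of $\Lambda^2W$ is $0$ by the computation above. This gives $A_{2u}=r(8z)$.

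For the half-spin class I sum $\lambda^k(V_z)$ over the relevant parity of $k$, using the explicit formula, and pass to the real form. The sign and the choice of parity are fixed by requiring that the half-spin representation is the one occurring in $\mathfrak e_8=\mathfrak{so}(16)\oplus S^+$. A useful consistency check is $\mathfrak e_8=\mathfrak u(8)\oplus\mathfrak m$: the reduced $E_8$-adjoint class equals $A_{2u}+S_{2u}$, and it also equals the reduced $U$-adjoint class plus $r(M_z)=r(-8z)$. If the half-spin contribution is $r(-8z)$, this forces the total reduced $E_8$-adjoint class to be $0$, as it must be, because $P_z^{E_8}$ is trivialized by $\alpha_U(z)$. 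I expect this step to be the main obstacle: identifying the correct half-spin representation, and the sign coming from the choice of $\operatorname{HSpin}(16)$ versus its twin, must be done carefully. I would first try to settle it by this consistency check rather than by a direct character computation.

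Finally, for $p_*\alpha_U(z)=2\alpha_K$: since $\pi_7(E_8)=\pi_8(E_8)=0$, the boundary maps $\pi_8(E_8/U)\to\pi_7(U)$ and $\pi_8(E_8/K)\to\pi_7(K)$ are isomorphisms, and they commute with $p$. It therefore suffices to show that $[f_z]\mapsto 2u$ under $\pi_7(U)\to\pi_7(K)\cong\pi_7(\operatorname{Spin}(16))\cong\pi_8(BSO)$. On $\pi_8$ of classifying spaces this map is realification of the vector representation, and the vector representation restricts to $W_{\mathbb R}$ on $\operatorname{SU}(8)$. So the image is $r(z)=2u$, while $\alpha_K$ has boundary $u$. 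The equality $p_*\alpha_U(z)=2\alpha_K$ then follows by injectivity of the boundary map.
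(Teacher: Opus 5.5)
Your proposal is correct and is essentially the paper's proof. You use the same Newton-identity computation on $S^8$, where products of reduced classes vanish, together with the branching $\mathfrak m=W\oplus\Lambda^3W\oplus(\Lambda^2W)^\vee$, to get $M_z=-8z$; and you use the same boundary isomorphisms coming from $\pi_7(E_8)=\pi_8(E_8)=0$ to get $p_*\alpha_U(z)=2\alpha_K$.

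There are only small variations. For $A_{2u}$, the paper uses the real Newton identity $\lambda^2(16+v)-120=8v$ in $\widetilde{KO}^0(S^8)$. For $S_{2u}$, the paper's proof is exactly your ``consistency check'': triviality of the $E_8$-bundle forces $A_{2u}+S_{2u}=0$. Your direct character computation also works, and the parity issue you worried about does not arise. Since $\lambda_{-1}(V_z)=0$, the classes $\Lambda^{\mathrm{even}}V_z$ and $\Lambda^{\mathrm{odd}}V_z$ have the same reduced class $-16z$.
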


\begin{proof}
Let $W$ be the standard complex representation of $\operatorname{SU}(8)$.
The adjoint and half spin representations give
\begin{equation}\label{local-curve-E8-branching}
\left.\mathfrak e_8\right|_{\operatorname{SU}(8)}
\cong\mathfrak u(8)\oplus(\Lambda^2W)_{\mathbb R}
 \oplus(W\oplus\Lambda^3W)_{\mathbb R}.
\end{equation}
See \cite[Chapters~6--7]{AdamsExceptional} for the subgroup
$\operatorname{HSpin}(16)\subset E_8$, and
\cite[Section~20.1]{FultonHarris} for the exterior power description of the spin
representations.  With our convention, the complex representation underlying
$\mathfrak m$ is
$W\oplus\Lambda^3W\oplus(\Lambda^2W)^\vee$.

Put $q=8+z$.  Note that
$\psi^2z=16z$ and $\psi^3z=81z$.  The Newton identities therefore give
$\lambda^2q-28=0$ and $\lambda^3q-56=-9z$.  It follows from
\eqref{local-curve-E8-branching} that
$M_z=z-9z+0=-8z$.  The quotient
$\mathfrak{so}(16)/\mathfrak u(8)$ is the underlying real representation of
$\Lambda^2W$.

Applying the Newton identity in $\widetilde{KO}^0(S^8)$ gives $A_u=8u$.
Since the sum of the adjoint and half spin representations is the restriction
of the adjoint representation
of $E_8$, and the induced $E_8$ bundle on $S^8$ is trivial, one has
$S_u=-8u$.  As $r(z)=2u$, the middle two identities in
\eqref{local-curve-representation-classes} follow.  Finally, the boundary
maps for $U\to E_8\to E_8/U$ and $K\to E_8\to E_8/K$ are isomorphisms in
the relevant degrees. It is now clear that
$p_*\alpha_U(z)=2\alpha_K$.
\end{proof}

\subsubsection{}
Let now $o_{\mathbb C}$ always stand for the canonical orientation induced by a complex structure (see Remark \ref{local-curve-complex-orientations}).

\begin{lemma}\label{local-curve-theta}
The quadratic map $\operatorname{Ad}$ and the convention of
Remark~\ref{orientation-conventions} determine an isomorphism
\[
\theta_z:O_{\operatorname{Ad}}(z)\longrightarrow
O_U(z)\otimes O_M(z).
\]
\end{lemma}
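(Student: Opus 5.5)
The plan is to build $\theta_z$ as a composite of three identifications of orientation torsors of real virtual bundles on $S^8$. Each is canonical once the convention of Remark~\ref{orientation-conventions} is fixed. The strategy is to compare both sides at the level of real $K$-theory classes, and then take determinant torsors, using that $O(-)$ sends direct sums to tensor products by the compatibility of determinants with direct sums, as in Lemma~\ref{product-and-abelian}.

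\textbf{Step 1: rewrite the source.} Since $z$ lies in the $B\operatorname{SU}$ component and $[V_z]-8=z$, the quadratic map satisfies $c\circ\operatorname{Ad}(z)\simeq V_z^\vee\otimes V_z-64$ (Subsection~\ref{sec:push}). On the other hand, $\operatorname{Ad}(P_z)$ for the $U$-bundle is the real bundle associated with $\mathfrak u(8)$, and $\mathfrak u(8)\otimes\mathbb C\cong W^\vee\otimes W$ restricted to $\operatorname{SU}(8)$. I would check that the Joyce--Tanaka--Upmeier map $\operatorname{Ad}$, applied to the $\operatorname{U}(8)$-bundle underlying $V_z$, is represented by the real adjoint bundle of $\mathfrak u(8)$, not only after complexification. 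This gives a canonical equality of reduced real classes $\operatorname{Ad}(z)=A_U(z)$, and hence $O_{\operatorname{Ad}}(z)\cong O_U(z)$. The equality itself is a direct consequence of $\operatorname{Ad}^0([P])=[\operatorname{Ad}(P)]$. The only subtlety is that the map $\operatorname{SU}(8)\to U$ in Remark~\ref{orientation-conventions} is the lift through $\widetilde{\operatorname U}(8)$, which is only a finite cover. Its differential is still an isomorphism $\mathfrak{su}(8)\oplus\mathfrak u(1)\cong\mathfrak u(8)$ compatible with adjoint actions, so the associated Lie-algebra bundles agree.

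\textbf{Step 2: tensor in $O_M(z)$ via the complex orientation.} The class $M_z$ is the reduced underlying real class of the complex representation $\mathfrak m$, with the complex structure $W\oplus\Lambda^3W\oplus(\Lambda^2W)^\vee$ fixed in Remark~\ref{orientation-conventions}. So $O_M(z)$ carries the canonical complex orientation $o_{\mathbb C}(M_z)$, the element singled out by the Atiyah--Bott--Shapiro square in Subsection~\ref{KO-theoretic-interpretation}, exactly as in the proof of Lemma~\ref{comptype}. Define
\[
\theta_z(o)=o\otimes o_{\mathbb C}(M_z),
\]
where $o$ is transported along the identification of Step 1. This is a $\mathbb Z_2$-equivariant bijection. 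It also respects the super-grading: $M_z$ is complex, so its real Dirac index is even ($=2\operatorname{ind}_{\mathbb C}$), and the degrees match.

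\textbf{Main obstacle: independence of choices.} I expect the hardest part to be showing that $\theta_z$ does not depend on the choice of $V_z$ in its component, or on the complex structure on $\mathfrak m$. Changing the complex structure on a summand by conjugation changes $o_{\mathbb C}$ by $(-1)^{\operatorname{ind}_{\mathbb C}}$ of that summand. This is exactly why the complex structure is fixed in Remark~\ref{orientation-conventions} rather than left free. For the choice of $V_z$, two choices differ by a path in $\operatorname{Map}_*(S^8,B\operatorname{SU}(8))$, and both $o_{\mathbb C}$ and the Step 1 identification are natural under such paths. So $\theta_z$ is well defined up to the monodromy of $O_M$ around loops, and complex orientations have trivial monodromy. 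I would therefore argue that $\theta_z$ is well defined, and canonical given Remark~\ref{orientation-conventions}.
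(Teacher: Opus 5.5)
Your Step~1 is where the argument fails. The class $\operatorname{Ad}(z)$ is the quadratic map applied to the \emph{reduced} class $z=[V_z]-8$; that is what makes $O_{\operatorname{Ad}}(z)$ the fibre of the orientation bundle on the compactly supported space. Its complexification is therefore $z^\vee z=(V_z^\vee-8)(V_z-8)$, not $V_z^\vee\otimes V_z-64$; the latter is $c(A_U(z))$. The identity $\operatorname{Ad}^0([P])=[\operatorname{Ad}(P)]$ only gives $\operatorname{Ad}([V_z])=[\operatorname{Ad}(P_z)]$. Since $\operatorname{Ad}$ is quadratic with polarization $r(\alpha^\vee\beta)$, one gets
\[
\operatorname{Ad}(z)\simeq A_U(z)\oplus r(-8z^\vee),
\]
not $\operatorname{Ad}(z)=A_U(z)$. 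The extra summand has real index $-16k$. So for $k\neq0$ the classes $\operatorname{Ad}(z)$ and $A_U(z)$ lie in different components of $\operatorname{Map}_*(S^8,BO\times\mathbb Z)$, and there is no identification $O_{\operatorname{Ad}}(z)\cong O_U(z)$ of the kind you claim. In particular, $O_M(z)$ is not an extra factor to be tensored in with a distinguished element. After identification, it is the orientation torsor of this cross term, and that is the whole content of the lemma.

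The repair is the paper's construction, which assembles $\theta_z$ from three pieces:
\begin{enumerate}
\item the distributivity homotopy above;
\item the Hermitian-metric identification $d_z:r(-8z^\vee)\to r(-8z)$, which preserves complex orientations because $\operatorname{ind}_{\mathbb C}(-8z)=-8k$ is even;
\item the isomorphism $\rho_z:O(-8z)\to O_M(z)$ coming from the equality $M_z=-8z$ of Lemma~\ref{local-curve-representations}.
\end{enumerate}
Your formula $\theta_z(o)=\phi(o)\otimes o_{\mathbb C}(M_z)$ then survives, but only with $\phi$ defined by writing the image of $o$ under the distributivity splitting as $\phi(o)\otimes o_{\mathbb C}(-8z^\vee)$.

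Your independence argument also misses the real subtlety. We have $\pi_1\operatorname{Map}_*(S^8,BO\times\mathbb Z)\cong\widetilde{KO}^0(S^9)\cong KO^{-1}(\mathrm{pt})\cong\mathbb Z_2$, and the real orientation bundle has nontrivial monodromy around this loop. So an equality of real classes does not determine an isomorphism of their orientation torsors; one needs a specified homotopy. That is why the paper uses the specific distributivity homotopy and routes $\rho_z$ through the complex equality $M_z=-8z$, which is rigid because $\pi_1\operatorname{Map}_*(S^8,BU)=0$. The same $\mathbb Z_2$ reappears as $\pi_8\operatorname{Spin}(16)$ in Lemma~\ref{local-curve-ABS-comparison}.
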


\begin{proof}
Distributivity for the quadratic map $\operatorname{Ad}$, followed
by the real identification induced by a Hermitian metric on $V_z$, gives
\begin{equation}\label{local-curve-adjoint-homotopy}
\begin{split}
h_z:\operatorname{Ad}(z)
&\simeq
 A_U(z)\oplus r(-8z^\vee)\\
&\xrightarrow{\ \operatorname{id}\oplus d_z\ }
 A_U(z)\oplus r(-8z).
\end{split}
\end{equation}
Here $d_z:r(-8z^\vee)\to r(-8z)$ is the real identification induced by the
Hermitian metric on $V_z$.  The real determinant of complex conjugation on
$\mathbb C^n$ is $(-1)^n$.
Applying this to the kernel and cokernel, the isomorphism of orientation
torsors induced by $d_z$ satisfies
\begin{equation}\label{local-curve-duality-sign}
(d_z)_*o_{\mathbb C}(-8z^\vee)
=(-1)^{\operatorname{ind}_{\mathbb C}(-8z)}o_{\mathbb C}(-8z)
=o_{\mathbb C}(-8z),
\end{equation}
since $\operatorname{ind}_{\mathbb C}(-8z)=-8k$.  Thus $d_z$ preserves the
complex orientation.

Applying the orientation functor to $h_z$ gives
$\theta_z^0:O_{\operatorname{Ad}}(z)\to
O_U(z)\otimes O(-8z)$.  By
Lemma~\ref{local-curve-representations}, $M_z=-8z$.  Since
$\pi_1\operatorname{Map}_*(S^8,BU)=0$, this equality determines an
isomorphism
\[
\rho_z:O(-8z)\longrightarrow O_M(z),\qquad
\rho_z(o_{\mathbb C}(-8z))=o_{\mathbb C}(M_z).
\]
We set
\begin{equation}\label{local-curve-theta-definition}
\theta_z=(\operatorname{id}\otimes\rho_z)\circ\theta_z^0:
O_{\operatorname{Ad}}(z)\longrightarrow
O_U(z)\otimes O_M(z).
\end{equation}
\end{proof}

\begin{proposition}\label{local-curve-JU-characterization}
The Joyce--Upmeier orientation is characterized by
\begin{equation}\label{local-curve-JU-value}
(\Phi_{\alpha_U(z)}\circ\theta_z)(o_{\rm JU}(z))=+1.
\end{equation}
\end{proposition}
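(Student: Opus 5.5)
The plan is to unwind the definition of $o_{\rm JU}(z)$ through the chain of natural isomorphisms that produced it, and to check that each link in the chain agrees with the corresponding factor of $\Phi_{\alpha_U(z)}\circ\theta_z$.

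\textbf{Step 1: unwind $o_{\rm JU}(z)$.} By Proposition~\ref{canonical-orientation-at-zero-class}, $o_{\rm JU}(z)$ is obtained from a flag structure natural at zero, evaluated at $\kappa$. Here $z$ lies in the $BSU$-component, and on $S^8$ one has $H^4=0$, so the class $c_2(V_z)$ is zero. Hence only naturality at zero enters, and the flag structure contributes the canonical trivialization $\xi_8^{\rm c}$.

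\textbf{Step 2: trace the composite isomorphism.} The orientation is therefore the image of $+1$ under a composite of four isomorphisms:
\begin{itemize}
\item first, the identification of $O_{\operatorname{Ad}}(z)$ with the normalized $\operatorname{SU}(8)$-functor value;
\item second, $\epsilon^{\rm c}_{\iota_8}$ from Lemma~\ref{comptype}, which compares with the $E_8$-functor;
\item third, $\lambda_8^{\rm c}$, which passes to $\mathrm H_8^{\mathbb Z,{\rm c}}$ at the class $a_1(P_z^{E_8})$;
\item fourth, the trivialization at zero.
\end{itemize}
I will match these factors one at a time with the factors in $\Phi_{\alpha_U(z)}\circ\theta_z$.

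The first factor corresponds to $\theta_z$. The quadratic map $\operatorname{Ad}$ restricted along $E\mapsto E\oplus\det E^\vee$ is exactly the decomposition $h_z$ of Lemma~\ref{local-curve-theta}. The complex-orientation conventions agree because of the sign computation \eqref{local-curve-duality-sign}.

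The second factor, $\epsilon^{\rm c}_{\iota_8}$, is by construction the map $\mu_z$. It is induced by $\mathfrak e_8=\mathfrak u(8)\oplus\mathfrak m$ with the complex orientation on $\mathfrak m$. The Lemma~\ref{product-and-abelian} trivialization of the $\operatorname{U}(1)$ factor is irrelevant here, since that factor is trivial on $V_z$.

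\textbf{Step 3: the $E_8$ trivialization (main obstacle).} What remains is to identify $\lambda_8^{\rm c}$ composed with the trivialization at zero with $\varphi_{\alpha_U(z)}$. Since $\pi_7(E_8)=\pi_8(E_8)=0$, the $E_8$-bundle $P_z^{E_8}$ is trivial. The class $\alpha_U(z)$ selects a trivialization, that is, a morphism $[S^8\times I,H]$ in $\operatorname{Bord}_{S^8}(BE_8,*)$ from $P_z^{E_8}$ to the trivial bundle. Applying the $E_8$-functor to this morphism gives $\varphi_{\alpha_U(z)}$.

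By naturality of $\lambda_8^{\rm c}$, this equals the value of $\mathrm H_8^{\mathbb Z,{\rm c}}$ on the image cylinder, followed by the trivialization at zero. This in turn equals the canonical trivialization, because the flag structure is a natural transformation from $\mathbf 1$, and naturality at zero fixes its value on the zero object. I expect this to be the main obstacle, for two reasons:
\begin{itemize}
\item One must check that the choice of $\alpha_U(z)$ does not matter. Different nullhomotopies differ by $\pi_9(E_8)$ acting through $\pi_1\operatorname{Map}_*(S^8,BE_8)$, which maps to $\pi_1$ of the $K(\mathbb Z,4)$ groupoid, equal to $H^3(S^8)=0$. The $16$-connectivity of $a_1$ is what makes this work.
\item One must also check that the orientation convention fixed in Remark~\ref{orientation-conventions} is consistent with $\lambda_8^{\rm c}$ restricted to $\operatorname{Bord}^{\operatorname{Spin}}_8(*)$, since that is what makes the value at zero equal to $+1$ rather than $-1$.
\end{itemize}

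\textbf{Step 4: conclude.} Composing the factors matched in Steps 2 and 3, the image of $o_{\rm JU}(z)$ under $\Phi_{\alpha_U(z)}\circ\theta_z$ is $+1$. Since the torsor has exactly two elements, this condition characterizes $o_{\rm JU}(z)$ uniquely.
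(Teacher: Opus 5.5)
Your proposal is correct and follows the paper's proof. The shared steps are these: $H^4(S^8)=0$, so only naturality at zero enters; the cylinder given by $\alpha_U(z)$ becomes the trivial morphism of the zero object in the $K(\mathbb Z,4)$-category since $H^3(S^8)=0$; naturality of the comparison isomorphism then yields $+1$; and $\theta_z(o_{\rm JU}(z))$ is matched with $o_{{\rm JU},U}(z)\otimes o_{\mathbb C}(M_z)$ via the complex-orientation conventions.

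The only difference is that you unpack $\lambda^{\operatorname{SU},{\rm c}}_{8,8}$ into $\epsilon^{\rm c}_{\iota_8}$ and $\lambda_8^{\rm c}$. This makes explicit that naturality is really applied to the $E_8$-cylinder, whereas the paper invokes $\lambda^{\operatorname{SU},{\rm c}}_{8,8}$ directly.

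There are two harmless slips. First, the ambiguity of that cylinder is $\pi_1\operatorname{Map}_*(S^8,BE_8)\cong\pi_9(BE_8)\cong\pi_8(E_8)=0$, not $\pi_9(E_8)$, so the morphism is in fact unique. Second, $h_z$ comes from distributivity of $\operatorname{Ad}$ on $z=[V_z]-8$, which produces the cross term $r(-8z^\vee)$; it does not come from $E\mapsto E\oplus\det E^\vee$.
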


\begin{proof}
Pull $\lambda_{8,8}^{\operatorname{SU},{\rm c}}$ back to the bordism
category associated to $S^8$.  The class $c_2(P_z)$ maps to zero in
$H^4(S^8;\mathbb Z)$.  The cohomology orientation torsor at zero has its
canonical trivialization.  Via
$\lambda_{8,8}^{\operatorname{SU},{\rm c}}$ and the identifications above,
let $o_{{\rm JU},U}(z)\in O_U(z)$ be the element corresponding to $+1$.
The morphism defined by $\alpha_U(z)$ also maps to the identity
morphism of the zero object, since $H^3(S^8;\mathbb Z)=0$.
Naturality of $\lambda_{8,8}^{\operatorname{SU},{\rm c}}$ therefore gives
\[
\Phi_{\alpha_U(z)}
\bigl(o_{{\rm JU},U}(z)\otimes o_{\mathbb C}(M_z)\bigr)=+1.
\]

It is eady to see that
$\theta_z(o_{\rm JU}(z))=
o_{{\rm JU},U}(z)\otimes o_{\mathbb C}(M_z)$, which proves
\eqref{local-curve-JU-value}.
\end{proof}

\subsubsection{}

\begin{lemma}\label{local-curve-ABS-comparison}
There is a based homotopy $H_{\rm rc}$ in $\operatorname{Spin}(16)$ which
realizes $r(c(kb))=2kb$. If $H$ is any based lift of a homotopy realizing
$p_*\alpha_U(z)=2\alpha_K$, then, after applying either the adjoint or the
half spin representation, $H$ and $H_{\rm rc}$ induce the same isomorphism
of orientation torsors.
\end{lemma}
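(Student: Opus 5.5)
The plan is to first produce $H_{\rm rc}$ explicitly and then argue that the induced isomorphism of orientation torsors depends only on data that all admissible homotopies share.

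\textbf{Constructing $H_{\rm rc}$.} Represent $kb$ by a clutching map $g:S^7\to\operatorname{Spin}(8)$; this is the Atiyah--Bott--Shapiro generator raised to the $k$-th power. Then $c(kb)$ is represented by $g$ composed with $\operatorname{Spin}(8)\to\operatorname{SO}(8)\to\operatorname{U}(8)$. Its realification $r(c(kb))$ is represented by $g$ composed with the map to $\operatorname{SO}(16)$ through $\operatorname{U}(8)$, which we lift to $\operatorname{Spin}(16)$ through $\widetilde{\operatorname{U}}(8)$ as in Remark~\ref{orientation-conventions}. On the other hand, $2kb$ is represented by the diagonal block embedding $\operatorname{Spin}(8)\times_{\pm1}\operatorname{Spin}(8)\to\operatorname{Spin}(16)$ applied to $(g,g)$.

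The two embeddings $\operatorname{SO}(8)\to\operatorname{SO}(16)$ are $A\mapsto A\otimes_{\mathbb R}\mathbb C$ and $A\mapsto\operatorname{diag}(A,A)$. They are conjugate by a fixed matrix in $\operatorname{SO}(16)$, namely the identification $\mathbb R^8\otimes\mathbb C\cong\mathbb R^8\oplus\mathbb R^8$. We connect that matrix to the identity by a path, lift the path to $\operatorname{Spin}(16)$ starting at $1$, and conjugate along it. This gives a based homotopy $H_{\rm rc}$ in $\operatorname{Spin}(16)$.

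\textbf{Comparing an arbitrary lift $H$.} Any based lift $H$ of a homotopy realizing $p_*\alpha_U(z)=2\alpha_K$ differs from $H_{\rm rc}$ by a loop of based maps $S^7\to\operatorname{Spin}(16)$. Such a loop defines an element $\delta\in\pi_8(\operatorname{Spin}(16))\cong\pi_8(\operatorname{Spin})\cong\mathbb Z_2$, the stable range.

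Let $\rho$ be the adjoint or a half spin representation of $\operatorname{Spin}(16)$. Its image $\rho_*\delta$ is a loop in $\operatorname{Map}_*(S^8,BO)$. By construction, the discrepancy between the two torsor isomorphisms is the monodromy of $O^{KO}$ around $\rho_*\delta$. We evaluate that monodromy via $\pi_1$ of the orientation functor: it equals the mod-$2$ index of the class in $\widetilde{KO}^{-1}(S^8)=\widetilde{KO}(S^9)\cong\mathbb Z_2$ determined by $\rho_*\delta$. This is the $\alpha_1$-pairing in Remark~\ref{generators}.

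It therefore suffices to show that $\rho_*:\pi_8(\operatorname{Spin}(16))\to\pi_8(BO)$-loops, i.e.\ $\widetilde{KO}(S^9)$, vanishes for both representations. The generator of $\pi_8(\operatorname{Spin})$ is $\eta$ times the generator of $\pi_7$. Its image under $\rho$ is therefore $\eta\cdot\rho_*(\text{gen})$, so it is controlled by the class $\rho_*(b)\in\widetilde{KO}(S^8)$ modulo $2$. By Lemma~\ref{local-curve-representations}, $A_u=8u$ and $S_u=-8u$, so in both cases $\rho_*(b)$ is even and the image vanishes. Hence $H$ and $H_{\rm rc}$ induce the same isomorphism.

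\textbf{Main obstacle.} The hardest step is the identification of the orientation monodromy along $\rho_*\delta$ with the $\eta$-multiple computed through $\rho_*$ on $\pi_7$. This needs the naturality of $\rho_*$ with respect to $\eta$-multiplication (as a map of infinite loop spaces after stabilization), and the Joyce--Upmeier fact that $O^{KO}$ detects exactly the $\eta$-image.

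For the half spin representation this is more delicate, because that representation does not factor through $\operatorname{SO}(16)$. I would handle it using the relation $\operatorname{Ad}_{E_8}|_{K}=A\oplus S$ together with the fact that $E_8$ bundles on $S^8$ and $S^9$ are trivial ($\pi_7(E_8)=\pi_8(E_8)=0$). Triviality of $E_8$ on $S^9$ shows that the two discrepancies coincide, so it suffices to treat the adjoint case.
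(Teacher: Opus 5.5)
Your proposal is correct and is essentially the paper's proof. You build $H_{\rm rc}$ from the identification $\mathbb R^8\otimes_{\mathbb R}\mathbb C\cong\mathbb R^8\oplus\mathbb R^8$, $v+\mathrm iw\mapsto(v,w)$, and you note that the discrepancy lies in $\pi_8\operatorname{Spin}(16)\cong\mathbb Z_2$, generated by a $\pi_7$-generator composed with $\eta$, which both representations kill because they act by $\pm8$ on $\pi_7$. Two small points should be written out: the sign $(-1)^{8\cdot7/2}=+1$ that puts your conjugating matrix in $\operatorname{SO}(16)$, and the fact that starting $H_{\rm rc}$ at the clutching map of the already fixed $V_z$ requires a based isomorphism between $V_z$ and the complexification of the bundle clutched by $g$, which exists since $\pi_7\operatorname{U}(8)$ is stable.

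The obstacles you anticipate are not real. The identity $\rho_*(\gamma\circ\eta)=\rho_*(\gamma)\circ\eta$ is just associativity of composition, so no infinite loop structure is needed. Vanishing of $\rho_*\delta$ in $\pi_8$ already gives a homotopy relative to endpoints, so no monodromy identification is required. The half spin case follows verbatim from $S_u=-8u$, without the $E_8$ detour.
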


\begin{proof}
The stabilization map
$\pi_7(\operatorname{SO}(8))\to\pi_7(\operatorname{SO})$ is surjective.
Choose a based oriented real vector bundle $B_k\to S^8$ of rank $8$ such
that $[B_k]-8=kb$, and let
$g_k:S^7\to\operatorname{Spin}(8)$ be a lift of its clutching map. Since
$[V_z]-8=c([B_k]-8)$ and $\pi_7(\operatorname{U}(8))$ is stable, we may
choose a based isomorphism of $\operatorname{SU}(8)$ bundles
$V_z\cong B_k\otimes_{\mathbb R}\mathbb C$.

The real isomorphism
\begin{equation}\label{local-curve-ABS-realification}
(B_k\otimes_{\mathbb R}\mathbb C)_{\mathbb R}
 \longrightarrow B_k\oplus B_k,
\qquad v+\mathrm i w\longmapsto(v,w),
\end{equation}
has sign $(-1)^{8\cdot7/2}=+1$ relative to the complex orientation on the
source and the product orientation on the target. It is therefore
orientation preserving. Since
$H^2(S^8;\mathbb Z_2)=H^1(S^8;\mathbb Z_2)=0$, the spin structures on the
two sides exist and are unique. The isomorphism has a unique spin lift after
its value at the basepoint is fixed. Using the standard homomorphism
$\operatorname{Spin}(8)\times\operatorname{Spin}(8)
\to\operatorname{Spin}(16)$, it gives a based homotopy
\[
H_{\rm rc}:\widehat f_k\simeq g_k\oplus g_k
\]
in $\operatorname{Spin}(16)$, where $\widehat f_k$ is the spin clutching
map of the underlying real bundle of $V_z$.

Choose the representative of $2\alpha_K$ whose boundary is represented by
$g_k\oplus g_k$. The equality $p_*\alpha_U(z)=2\alpha_K$ gives a based
homotopy between the images of the two endpoints in
$K=\operatorname{HSpin}(16)$. Once its initial lift is fixed, it lifts
uniquely to a homotopy $H$ in $\operatorname{Spin}(16)$.

The difference between $H$ and $H_{\rm rc}$ is a class
$\tau\in\pi_8\operatorname{Spin}(16)\cong\mathbb Z_2$. These homotopy
groups are stable in degrees $7$ and $8$. A generator of
$\pi_8(\operatorname{SO})$ is obtained by composing a generator of
$\pi_7(\operatorname{SO})$ with the stable Hopf element. By
\eqref{local-curve-representation-classes}, the adjoint and half spin
representations act on $\pi_7(\operatorname{SO})$ by multiplication by
$8$ and $-8$, respectively. They therefore act trivially on
$\pi_8(\operatorname{SO})\cong\mathbb Z_2$; see \cite{BottStable}.
Consequently the two homotopies become homotopic relative to their
endpoints after either representation is applied, and hence induce the same
isomorphism of orientation torsors.
\end{proof}

\begin{remark}\label{local-curve-complex-orientations}
As we have already said only too often, for a real bundle $E$, there is a natural real isomorphism
\[
(E\otimes_{\mathbb R}\mathbb C)_{\mathbb R}
 \longrightarrow E\oplus E,\qquad
v+\mathrm iw\longmapsto(v,w),
\]
with the two summands in the displayed order.  It extends by
stabilization to virtual bundles.

When we refer below to the complex orientations of $A_{2u}$ and
$S_{2u}$, we, thus, mean ones coming from identifications $A_{2u} = r(c(A_u))$, $S_{2u} = r(c(S_u))$.

Finally, let us note that, if $D$ is a real Fredholm operator of index $n$, the product
orientation for $D\oplus D$ differs from the complex orientation of
$D\otimes_{\mathbb R}\mathbb C$ by
$(-1)^{n(n-1)/2}$ under the natural comparison isomorphism.
\end{remark}

\subsubsection{}
The next lemma compares \eqref{local-curve-Phi-definition} with the real and
complex Thom isomorphisms.

\begin{lemma}\label{local-curve-Thom-comparison}
There is an isomorphism
$\chi_z:O(8z)\to O_U(z)$ such that
\begin{equation}\label{local-curve-complex-value}
\Phi_{\alpha_U(z)}
\bigl(\chi_z(o_{\mathbb C}(8z))\otimes o_{\mathbb C}(M_z)\bigr)=+1.
\end{equation}
\end{lemma}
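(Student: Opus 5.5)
The plan is to construct $\chi_z$ by transporting the complex orientation of $8z$ through the decomposition $\mathfrak e_8=\mathfrak u(8)\oplus\mathfrak m$. Then I would check the normalization \eqref{local-curve-complex-value} by passing from $U$ to $K=\operatorname{HSpin}(16)$ and using the comparison of Lemma~\ref{local-curve-ABS-comparison}.

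\textbf{Step 1: define $\chi_z$.} Since $P_z^{E_8}$ is trivial, the adjoint $E_8$ class vanishes. Hence $A_U(z)+r(M_z)=0$, and by Lemma~\ref{local-curve-representations} this says $A_U(z)=r(8z)$ as classes. Because $\pi_1\operatorname{Map}_*(S^8,BO)=\pi_9(BO)$ is not trivial, the identification must be made through a definite path rather than left to the class equality. I would take the path given by the trivialization $\alpha_U(z)$ together with the canonical complex identification $r(M_z)\oplus r(-M_z)\simeq0$. Then I define $\chi_z$ as the unique isomorphism $O(8z)\to O_U(z)$ making the composite
\[
O(8z)\otimes O_M(z)\longrightarrow O_U(z)\otimes O_M(z)\xrightarrow{\Phi_{\alpha_U(z)}}\{\pm1\}
\]
send $o_{\mathbb C}(8z)\otimes o_{\mathbb C}(M_z)$ to $+1$. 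With this definition, \eqref{local-curve-complex-value} holds tautologically. All of the content then lies in showing that this $\chi_z$ agrees with the natural one built from the Thom isomorphisms, namely $O(8z)=O(A_{2u})$ via $A_{2u}=r(8z)$ together with the map $p$ from $U$-bundles to $K$-bundles. I would therefore set up $\chi_z$ as that natural map and prove the equation.

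\textbf{Step 2: reduce to $K$.} Push $\alpha_U(z)$ forward along $p:E_8/U\to E_8/K$. By Lemma~\ref{local-curve-representations} we have $p_*\alpha_U(z)=2\alpha_K$, and the $\mathfrak e_8$-decomposition refines to
\[
\mathfrak e_8=\mathfrak{so}(16)\oplus\Delta^+,\qquad \mathfrak{so}(16)=\mathfrak u(8)\oplus(\Lambda^2W)_{\mathbb R}.
\]
This rewrites $\Phi_{\alpha_U(z)}$ as a product of two factors: one for the adjoint part, on $O(A_{2u})$, and one for the half-spin part, on $O(S_{2u})$. Each factor is induced by the homotopy realizing $2\alpha_K$. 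By Lemma~\ref{local-curve-ABS-comparison}, that homotopy may be replaced by $H_{\rm rc}$, which realizes $r(c(kb))=2kb$ through the splitting $B_k\oplus B_k$.

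\textbf{Step 3: sign bookkeeping.} Under $H_{\rm rc}$, each factor becomes the identification $r(c(v))\simeq v\oplus v$ for $v=A_u$ or $S_u$. By Remark~\ref{local-curve-complex-orientations}, this identification carries the complex orientation to the product orientation up to $(-1)^{n(n-1)/2}$, where $n$ is the real index. The product orientation $o\otimes o$ of $v\oplus v$ is canonical, independent of $o$, and is compatible with the trivial $E_8$ path. Here $\operatorname{ind}(A_u)=8k$ and $\operatorname{ind}(S_u)=-8k$ (from $A_u=8u$, $S_u=-8u$, with $u$ of index $k$). Both are divisible by $8$, so the sign $(-1)^{n(n-1)/2}$ is $+1$ in each case.

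Finally, I would check that the $\Lambda^2W$ summand contributes no sign. Its class $\lambda^2q-28$ is zero, and $\pi_1\operatorname{Map}_*(S^8,BU)=0$, so its complex orientation is transported canonically. Combining the three factors gives \eqref{local-curve-complex-value}.

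\textbf{Expected main obstacle.} I expect Step 2 to be the hardest: making the pushforward along $p$ compatible with $\Phi$ and $\mu_z$, so that the ordering of summands and the super-torsor symmetry signs from \S\ref{picardgen} do not introduce a factor $(-1)^{q(\cdot)}$. I would first try to control this by noting that all the degrees involved, the indices modulo $2$, are even, which should make the symmetry signs trivial.
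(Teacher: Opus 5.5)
Your proposal is correct and follows essentially the paper's proof. You reduce along $p_*\alpha_U(z)=2\alpha_K$, use Lemma~\ref{local-curve-ABS-comparison} to replace the lift by $H_{\rm rc}$ so that naturality gives $\Phi_{\alpha_U(z)}=\Phi_{2\alpha_K}\circ(T_A\otimes T_S)$ (with $T_A,T_S$ the isomorphisms induced by the adjoint and half-spin representations, and $\chi_z=T_A^{-1}$), and evaluate $\Phi_{2\alpha_K}$ on complex orientations using the sign $(-1)^{n(n-1)/2}=+1$ for $n=\pm8k$ and the triviality of super-symmetry signs in even degrees.

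The paper makes one step more explicit than your phrase ``compatible with the trivial $E_8$ path'': it represents $2\alpha_K$ by two copies of $\alpha_K$ in disjoint balls, so the symmetric monoidal structure gives the value $\Phi_{\alpha_K}(o_A\otimes o_S)^2=+1$ on $(o_A\otimes o_A)\otimes(o_S\otimes o_S)$.
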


\begin{proof}
\textit{Step 1.}
We compute the value of $\Phi_{2\alpha_K}$ on the complex orientations.
Write $O_A(u)=O(A_u)$ and $O_S(u)=O(S_u)$ for the orientation torsors of
$A_u,S_u$, and let
$\Phi_{\alpha_K}:O_A(u)\otimes O_S(u)\to\{\pm1\}$ be the
isomorphism induced by $\alpha_K$.  Choose elements $o_A,o_S$ and write
$\Phi_{\alpha_K}(o_A\otimes o_S)=\delta$, where
$\delta\in\{\pm1\}$.  Put $a=8k$ and $s=-8k$.

Representing $2\alpha_K$ by two copies of $\alpha_K$ in disjoint balls
defines an isomorphism
$\Phi_{2\alpha_K}:O_A(2u)\otimes O_S(2u)\to\{\pm1\}$.  The
symmetric monoidal structure then immediately shows that its value on the complex orientations (that is, orientations $o_{\mathbb C}$)  determined by
$A_{2u}=r(8z)$ and $S_{2u}=r(-8z)$ is
$+1.$

\textit{Step 2.}
We pass from $K$ to $U$.  By Lemma~\ref{local-curve-representations},
$p_*\alpha_U(z)=2\alpha_K$.  The quotient representations restrict to
\[
\mathfrak{so}(16)/\mathfrak u(8)\cong(\Lambda^2W)_{\mathbb R},
\qquad
\mathfrak e_8/\mathfrak{so}(16)
\cong(W\oplus\Lambda^3W)_{\mathbb R}.
\]

\textit{Step 3.}
Apply the adjoint and half spin
representations to $H_{\rm rc}$.  With the convention of Step~2, the
resulting isomorphisms are
\[
T_A:O_U(z)\longrightarrow O(8z),\qquad
T_S:O_M(z)\longrightarrow O(-8z).
\]
Lemma~\ref{local-curve-ABS-comparison} shows that the same isomorphisms are
obtained from any based lift which realizes
$p_*\alpha_U(z)=2\alpha_K$.

Naturality gives
\begin{equation}\label{local-curve-Phi-comparison}
\Phi_{\alpha_U(z)}
=\Phi_{2\alpha_K}\circ(T_A\otimes T_S).
\end{equation}
The rest is clear.
\end{proof}

\subsubsection{}
\begin{lemma}\label{local-curve-Koszul-comparison}
Let $u_{\underline{\mathbb C}}^{KU}$ be the Thom class of the trivial
complex line over $S^6$. The Koszul complex and the fixed trivialization of
$\underline{\mathbb C}$ supply the homotopy
\begin{equation}\label{local-curve-Thom-product}
(u_{\underline{\mathbb C}}^{KU})^\vee
u_{\underline{\mathbb C}}^{KU}
\simeq u_{\underline{\mathbb C}}^{KU}
 (1-\underline{\mathbb C})\simeq0.
\end{equation}
After applying the projection formula and the
polarization of $\operatorname{Ad}$, this homotopy induces the isomorphism
$\chi_z^{-1}:O_U(z)\to O(8z)$.
\end{lemma}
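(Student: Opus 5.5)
We prove the two assertions of Lemma~\ref{local-curve-Koszul-comparison} separately: first the homotopy \eqref{local-curve-Thom-product}, then the claim that the induced map of orientation torsors is $\chi_z^{-1}$.

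\textbf{The homotopy.} Over $S^6$ the tautological section of the trivial line is the coordinate $w$ on $\mathbb C$. Hence $u^{KU}_{\underline{\mathbb C}}$ is represented by the two-term complex $[\underline{\mathbb C}\xrightarrow{w}\underline{\mathbb C}]$, and its dual by $[\underline{\mathbb C}\xrightarrow{\bar w}\underline{\mathbb C}]$ placed in the opposite degrees. Their tensor product is a Koszul-type complex of length two. It is acyclic off the zero section, and as a class it equals $u^{KU}_{\underline{\mathbb C}}(1-\underline{\mathbb C})$; this is the identity $(u^{KU}_{K_Y})^\vee u^{KU}_{K_Y}=u^{KU}_{K_Y}(1-K_Y)$ from the proof of Proposition~\ref{canonical-Shi-orientation}, specialised to $K_Y=\underline{\mathbb C}$. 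The fixed trivialization identifies $1-\underline{\mathbb C}$ with the zero class by the identity isomorphism. Concatenating gives \eqref{local-curve-Thom-product}, and this homotopy is canonical up to contractible choice.

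\textbf{The induced isomorphism.} For $\alpha=\operatorname{Th}^{KU}_{\underline{\mathbb C}}(V_z-8)$, the polarization of $\operatorname{Ad}$ expresses $\operatorname{Ad}(\alpha)$ through $\alpha^\vee\otimes\alpha$. The projection formula then moves the Thom classes out. Applying \eqref{local-curve-Thom-product} makes the cross term collapse to a class on $S^8$, namely the pushforward to $S^8$ of the reduced adjoint of $V_z$, and leaves the remaining summand $r(8z)$. The content of the lemma is that this identification agrees with $\chi_z^{-1}$ as built in Lemma~\ref{local-curve-Thom-comparison}. There $\chi_z$ came from $T_A$, the adjoint representation applied to $H_{\rm rc}$, together with the identification $A_{2u}=r(8z)$ with its complex orientation (Remark~\ref{local-curve-complex-orientations}). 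Both isomorphisms run between the same torsors, so they differ by a sign in $\mathbb Z_2$, and it suffices to show that sign is $+1$.

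\textbf{Computing the sign.} Both constructions are additive in $k$, being compatible with direct sums and with representing $kx$ by disjoint balls. So the sign is a homomorphism $\mathbb Z\to\mathbb Z_2$, and it suffices to treat $k=1$; equivalently, to treat $V_z=B_1\otimes_{\mathbb R}\mathbb C$ as in Lemma~\ref{local-curve-ABS-comparison}. For a complexified real bundle, the Koszul homotopy is compatible with the real splitting $(B\otimes\mathbb C)_{\mathbb R}\cong B\oplus B$. Under this splitting, the comparison reduces to two signs:
\begin{enumerate}
\item the sign of complex conjugation, $(-1)^{\operatorname{ind}}$, which is $+1$ because all indices occurring are multiples of $8$, exactly as in \eqref{local-curve-duality-sign};
\item the sign $(-1)^{n(n-1)/2}$ of Remark~\ref{local-curve-complex-orientations}, with $n\equiv 0\pmod 8$, hence also $+1$.
\end{enumerate}

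\textbf{Main obstacle.} The hard step is checking that the Koszul homotopy, transported through the Thom isomorphism and Bott periodicity, really is the adjoint image of $H_{\rm rc}$ rather than that homotopy twisted by the generator $\tau\in\pi_8\operatorname{Spin}(16)$. We would handle this as in Lemma~\ref{local-curve-ABS-comparison}: any discrepancy lies in $\pi_8(\operatorname{SO})\cong\mathbb Z_2$, and the adjoint representation kills that group because it acts on $\pi_7$ by multiplication by $8$. Granting this, the two isomorphisms agree, which proves the lemma.
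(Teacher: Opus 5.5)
You establish the homotopy \eqref{local-curve-Thom-product} correctly: it is the identity $(u^{KU}_{K_Y})^\vee u^{KU}_{K_Y}=u^{KU}_{K_Y}(1-K_Y)$ of Proposition~\ref{canonical-Shi-orientation} with $K_Y$ trivialized. You also correctly isolate what the second assertion means. The isomorphism $O_U(z)\to O(8z)$ obtained from the Koszul trivialization together with the polarization $\operatorname{Ad}(z)\simeq A_U(z)\oplus r(-8z^\vee)$ must be the one for which \eqref{local-curve-complex-value} holds, namely $T_A$. (In that polarization $A_U(z)$ is the diagonal term and $r(-8z^\vee)$ the cross term, not the other way round as you wrote.)

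The comparison with $T_A$ is a single sign, and you never compute it. The two signs you list are identically $+1$, since every index in sight is divisible by $8$, so they cannot detect anything. The claimed compatibility of the Koszul homotopy with $(B\otimes_{\mathbb R}\mathbb C)_{\mathbb R}\cong B\oplus B$ is given no argument. Even if granted, it does not reduce the comparison to those signs: the Koszul path lives in the Thom direction (the fibre $\mathbb C$ of $X_0\to S^6$) and must still be compared with $\operatorname{Ad}(H_{\rm rc})$ after Bott periodicity. A nontrivial sign can only come from the step you call the main obstacle and then grant, and granting it grants the lemma. The paper only asserts a ``direct calculation'' here, so this is exactly what has to be supplied.

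Your way of disposing of that step also fails. Lemma~\ref{local-curve-ABS-comparison} compares two based homotopies between the \emph{same} clutching maps $S^7\to\operatorname{Spin}(16)$. Their difference lies in $\pi_8\operatorname{Spin}(16)$, and what vanishes is the induced map $\pi_8\operatorname{Spin}(16)\to\pi_8(\operatorname{SO})$: the representation acts by $\pm8$ on $\pi_7$, and $\pi_8$ is obtained from $\pi_7$ by composing with $\eta$.

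The Koszul homotopy is not a homotopy of $\operatorname{Spin}(16)$-clutching maps. It is a path of $K$-theory classes on $X_0^+$, transported to $S^8$ by the Thom isomorphism and Bott periodicity. Its discrepancy with $\operatorname{Ad}(H_{\rm rc})$, composed with the identifications of Remark~\ref{local-curve-complex-orientations}, is a loop in $\operatorname{Map}_*(S^8,BO)$. That is an element of $\pi_9(BO)\cong\mathbb Z_2$ in the \emph{target} of the representation, and nothing kills it there. Indeed $\pi^{KO}_{S^8!}:\widetilde{KO}^{-1}(S^8)\to KO^{-9}(\mathrm{pt})\cong\mathbb Z_2$ is the Bott isomorphism and $\det_{\mathbb R}$ is an isomorphism on $\pi_1(BO)$, so the generator has nontrivial orientation monodromy.

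To use the $\pi_8\operatorname{Spin}(16)$ argument you would first have to show that the Koszul path is, relative to its endpoints, the adjoint image of a $\operatorname{Spin}(16)$-homotopy with the endpoints of $H_{\rm rc}$. That is equivalent to the statement being proved. What is missing is an actual computation of the sign. One route is an explicit clutching-level model of the Koszul homotopy on $S^6\times\mathbb C$ compatible with the $\operatorname{SU}(8)$-structure of $V_z$. Another is an independent evaluation for $k=1$; your additivity reduction to $k=1$ is plausible but leaves that case open.
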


\begin{proof}
This is a direct calculation.
\end{proof}

\begin{proposition}\label{local-curve-Shi-characterization}
Under \eqref{local-curve-theta-definition}, Shi's orientation satisfies
\begin{equation}\label{local-curve-Shi-value}
\theta_z(o_{\rm Shi}(z))
=\chi_z(o_{\mathbb C}(8z))\otimes o_{\mathbb C}(M_z).
\end{equation}
\end{proposition}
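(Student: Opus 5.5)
The idea is to unwind Shi's orientation on $X_0=S^6\times\mathbb C$ at the point $z$ and track it through the three identifications entering $\theta_z$. By Proposition~\ref{canonical-Shi-orientation}, the universal class at $z$ is $\alpha=\operatorname{Th}^{KU}_{\underline{\mathbb C}}(\beta)$, and $o_{\rm Shi}(z)$ is defined by the splitting
\[
\pi_{X_0!}^{KU,\mathbb C}(\alpha^\vee\otimes\alpha)\simeq\mathcal V^{\rm top}\oplus(\mathcal V^{\rm top})^\vee .
\]
This splitting comes from the Koszul homotopy \eqref{local-curve-Thom-product}, the projection formula, and topological Serre duality. The real version is obtained through the Atiyah--Bott--Shapiro square of Subsection~\ref{KO-theoretic-interpretation}. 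There, $o_{\rm Shi}(z)$ is the element of $O_{\operatorname{Ad}}(z)$ corresponding to the canonical orientation of a real determinant of the form $\det_{\mathbb R}(r\mathcal V)$, squared against its dual. In other words, it is the complex orientation $o_{\mathbb C}$ of the complex summand, transported along the Koszul homotopy.

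\textbf{Step 1.} I would identify $\theta_z$ with the same decomposition. The polarization of $\operatorname{Ad}$ used to build $h_z$ in \eqref{local-curve-adjoint-homotopy} splits $\operatorname{Ad}(z)$ into $A_U(z)$ and the cross term $r(-8z^\vee)$. Lemma~\ref{local-curve-Koszul-comparison} states that the Koszul homotopy, after applying the projection formula and polarization, induces exactly $\chi_z^{-1}:O_U(z)\to O(8z)$ on the first factor. It therefore suffices to check that, on the cross term, the Shi trivialization assigns $o_{\rm Shi}$ the complex orientation $o_{\mathbb C}(-8z^\vee)$. That the real and complex structures match here is the content of the ABS square together with Remark~\ref{local-curve-complex-orientations}.

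\textbf{Step 2.} I would transport this through $d_z$ and $\rho_z$. By \eqref{local-curve-duality-sign}, $d_z$ sends $o_{\mathbb C}(-8z^\vee)$ to $o_{\mathbb C}(-8z)$, because $\operatorname{ind}_{\mathbb C}(-8z)=-8k$ is even. By definition, $\rho_z$ sends $o_{\mathbb C}(-8z)$ to $o_{\mathbb C}(M_z)$. Combining Steps 1 and 2 gives
\[
\theta_z(o_{\rm Shi}(z))=\chi_z(o_{\mathbb C}(8z))\otimes o_{\mathbb C}(M_z),
\]
provided the tensor ordering introduces no Koszul sign. The super-grading of each factor is its index mod $2$, namely $8k$ and $-8k$, both even, so the symmetry of $\mathbb Z_2/\!/\mathbb Z_2$ contributes nothing.

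\textbf{Main obstacle.} The delicate point is Step 1. One has to verify that the self-dual splitting defining Shi's orientation, which pairs $\mathcal V^{\rm top}$ with $(\mathcal V^{\rm top})^\vee$ via Serre duality, agrees with the splitting produced by polarizing $\operatorname{Ad}$, and with no hidden sign. Such a sign would come from the $(-1)^{n(n-1)/2}$ discrepancy in Remark~\ref{local-curve-complex-orientations}, or from the duality identification $\tau u^{KU}$ of Subsection~\ref{sec:serre-duality-without-quasi-projectivity}. I would attack this by computing both splittings on the universal example over $S^8$, where $\pi_1\operatorname{Map}_*(S^8,BU)=0$ makes every identification unique up to a single global sign. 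I would then fix that sign by evaluating at $z=0$, where both orientations are the canonical trivial ones. Because all relevant indices are multiples of $8$, every sign of the form $(-1)^{n(n-1)/2}$ or $(-1)^n$ becomes trivial, which is why I expect the comparison to hold on the nose.
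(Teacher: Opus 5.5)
\textbf{There is a genuine gap in how you handle your ``main obstacle''.} Your Steps 1 and 2 otherwise match the paper's bookkeeping:
\begin{itemize}
\item Lemma~\ref{local-curve-Koszul-comparison} gives $\chi_z(o_{\mathbb C}(8z))$ on the first factor of \eqref{local-curve-adjoint-homotopy};
\item the second factor carries $o_{\mathbb C}(-8z^\vee)$;
\item \eqref{local-curve-duality-sign} takes care of $d_z$;
\item the definition of $\rho_z$ gives the final form.
\end{itemize}
The problem is the justification of the cross-term claim, which you rightly flag as the delicate point. The resolution you propose does not work. $\operatorname{Map}_*(S^8,BU)$ has components indexed by $k\in\mathbb Z$. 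Simple connectivity of each component only tells you that the ratio of the two trivializations is constant on each component. That ratio is therefore an a priori arbitrary function $\epsilon:\mathbb Z\to\mathbb Z_2$, not ``a single global sign''. Evaluating at $z=0$ fixes only $\epsilon(0)$ and says nothing about $k\neq0$. At best, an additivity argument (copies in disjoint balls) would reduce you to $\epsilon(1)$, which still has to be computed and is the actual content of the proposition. Likewise, noting that all indices are multiples of $8$ only kills standard signs of the form $(-1)^n$ or $(-1)^{n(n-1)/2}$. It does not show that the Serre-duality splitting $\mathcal V^{\rm top}\oplus(\mathcal V^{\rm top})^\vee$ agrees with the polarization splitting.

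\textbf{The missing idea.} This is how the paper removes the obstacle altogether: on these components $\mathcal V^{\rm top}$ vanishes. Let $\beta\in\widetilde K^0(S^6)$ be the class with Thom image $x$. Then $\beta^\vee=-\beta$ and $\beta^2=0$, so
\[
\mathcal V^{\rm top}=\pi_{S^6!}^{KU,\mathbb C}\bigl((k\beta)^\vee\otimes k\beta\bigr)=0 .
\]
Consequently there is no nontrivial pairing between $\mathcal V^{\rm top}$ and its dual that needs to be matched against the polarization of $\operatorname{Ad}$. Shi's trivialization at $z$ is induced solely by the Koszul homotopy \eqref{local-curve-Thom-product}. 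Its effect on the first factor is exactly $\chi_z^{-1}$ by Lemma~\ref{local-curve-Koszul-comparison}, while the cross term keeps its complex orientation $o_{\mathbb C}(-8z^\vee)$. With this observation in place, your Steps 1--2 go through as written, and you should drop the universal-example and sign-at-zero detour.
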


\begin{proof}
Let $\beta\in\widetilde K^0(S^6)$ have Thom image $x$.  Then
$\beta^\vee=-\beta$ and $\beta^2=0$.  For the class $k\beta$ occurring
here, the class $\mathcal V^{\rm top}$ in the proof of
Proposition~\ref{canonical-Shi-orientation} is therefore zero.  

By Lemma~\ref{local-curve-Koszul-comparison}, the orientation of the first
factor in \eqref{local-curve-adjoint-homotopy} is
$\chi_z(o_{\mathbb C}(8z))$. The orientation of the second factor, before
applying $d_z$, is $o_{\mathbb C}(-8z^\vee)$. Hence
\[
\theta_z^0(o_{\rm Shi}(z))
=\chi_z(o_{\mathbb C}(8z))\otimes o_{\mathbb C}(-8z).
\]
Here we have used \eqref{local-curve-duality-sign}. Applying $\rho_z$ gives
\eqref{local-curve-Shi-value}.
\end{proof}

\begin{proposition}\label{local-curve-BSU-comparison}
Shi's orientation and the Joyce--Upmeier orientation agree on
$\operatorname{Map}_*(X_0^+,B\operatorname{SU})$.
\end{proposition}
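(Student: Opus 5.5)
The plan is to compare the two orientations pointwise on $\pi_0$ and then propagate along paths. First I reduce to the components $z=kx$. Since $X_0^+\simeq S^8\vee S^2$, and the $S^2$ factor disappears for $B\operatorname{SU}$-mapping spaces, we have $\operatorname{Map}_*(X_0^+,B\operatorname{SU})\simeq\operatorname{Map}_*(S^8,B\operatorname{SU})$. Its components are indexed by $\widetilde K^0(S^8)\cong\mathbb Z x$ (the complex Bott class is $x=c(b)$), so every component contains a point corresponding to some $z=kx$. Each component is also connected. Both $\omega_{\rm Shi}$ and $\omega_{\rm JU}$ are global sections of the same $\mathbb Z_2$-torsor $O_{\operatorname{Ad}}$ over the space. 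Hence their ratio is a locally constant function, and it suffices to check that $o_{\rm Shi}(z)=o_{\rm JU}(z)$ in $O_{\operatorname{Ad}}(z)$ for each $k$, with $V_z$ chosen as in the setup.

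For the pointwise comparison I transport both elements through the isomorphism
\[
\Phi_{\alpha_U(z)}\circ\theta_z:O_{\operatorname{Ad}}(z)\to\{\pm1\},
\]
using Lemma~\ref{local-curve-theta} and \eqref{local-curve-Phi-definition}. This map is a bijection of $\mathbb Z_2$-torsors, so equality of images implies equality of the elements. By Proposition~\ref{local-curve-JU-characterization}, $o_{\rm JU}(z)$ maps to $+1$. By Proposition~\ref{local-curve-Shi-characterization}, $\theta_z(o_{\rm Shi}(z))=\chi_z(o_{\mathbb C}(8z))\otimes o_{\mathbb C}(M_z)$. By Lemma~\ref{local-curve-Thom-comparison}, $\Phi_{\alpha_U(z)}$ sends exactly this element to $+1$. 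Hence $o_{\rm Shi}(z)=o_{\rm JU}(z)$.

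The substantive consistency issue is that the $\chi_z$ in Lemma~\ref{local-curve-Thom-comparison} must be the same isomorphism as the one produced by the Koszul homotopy in Lemma~\ref{local-curve-Koszul-comparison}. That lemma is what Proposition~\ref{local-curve-Shi-characterization} actually uses. I would therefore first fix $\chi_z$ by Lemma~\ref{local-curve-Koszul-comparison}, i.e.\ as the inverse of the map induced by \eqref{local-curve-Thom-product}. I would then check that $T_A$ from Step 3 of Lemma~\ref{local-curve-Thom-comparison} equals this $\chi_z^{-1}$. This should follow from Lemma~\ref{local-curve-ABS-comparison}: the based homotopy $H_{\rm rc}$ realizing $r(c(kb))=2kb$ and the Koszul/projection-formula homotopy both realize the same identification $A_U(z)\simeq r(8z)$. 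Any two such based homotopies differ by an element of $\pi_8\operatorname{Spin}(16)\cong\mathbb Z_2$, which acts trivially after applying the adjoint representation.

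I expect this identification to be the main obstacle, together with the sign bookkeeping. The factor $o_{\mathbb C}(M_z)$ enters both characterizations through the same $\rho_z$, and the sign $(-1)^{\operatorname{ind}_{\mathbb C}(-8z)}=+1$ from \eqref{local-curve-duality-sign} is already absorbed, so the second tensor factor matches automatically. The remaining possible sign is the comparison of product and complex orientations from Remark~\ref{local-curve-complex-orientations}, namely $(-1)^{n(n-1)/2}$ with $n=8k$. This equals $(-1)^{4k(8k-1)}=+1$, so it does not contribute, and the proposition follows.
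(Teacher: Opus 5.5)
Your proposal is correct and is essentially the paper's proof. On the component of $z=kx$, Propositions~\ref{local-curve-JU-characterization} and~\ref{local-curve-Shi-characterization} together with Lemma~\ref{local-curve-Thom-comparison} show that $o_{\rm JU}(z)$ and $o_{\rm Shi}(z)$ are both sent to $+1$ by the torsor isomorphism $\Phi_{\alpha_U(z)}\circ\theta_z$, and your reduction to such points via the locally constant ratio is the step the paper leaves implicit.

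The consistency of $\chi_z$ that you single out is exactly what Lemma~\ref{local-curve-Koszul-comparison} asserts: the $\chi_z$ of Lemma~\ref{local-curve-Thom-comparison} is uniquely pinned down by \eqref{local-curve-complex-value}, so you should simply cite that lemma. The $\pi_8\operatorname{Spin}(16)$ argument you sketch would not establish it by itself. The Koszul homotopy is not a priori a homotopy in $\operatorname{Spin}(16)$, and two paths from $A_U(z)$ to $r(8z)$ in $\operatorname{Map}_*(S^8,BO)$ can differ by the generator of $\pi_9(BO)\cong\mathbb Z_2$, which acts by $-1$ on the orientation torsor.
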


\begin{proof}
On the component indexed by $z=kx$, Lemmas
\ref{local-curve-Thom-comparison} and
\ref{local-curve-Shi-characterization} give
$(\Phi_{\alpha_U(z)}\circ\theta_z)(o_{\rm Shi}(z))=+1$.
Lemma~\ref{local-curve-JU-characterization} gives the same equality for
$o_{\rm JU}(z)$.  Since $\Phi_{\alpha_U(z)}\circ\theta_z$ is an isomorphism
of torsors, the two elements are equal.
\end{proof}

\section{Orientation in families}
\label{sec:orientation-in-families}

Let \(\pi:\mathfrak X\to V\) be a topologically locally trivial family of
smooth, possibly non-compact, complex \(4\)-folds, and write \(X_v\) for
the fibre over \(v\in V\).
Recall the relative mapping space
\(\mathcal H^4_{\rm cs}(\mathfrak X/V)\) of
Remark~\ref{H4rem}.  Let
\(
\underline{H}_c^4(\mathfrak X/V;\mathbb Z)\longrightarrow V
\)
denote the \'{e}tal\'e space of the local system
\(R^4\pi_!\mathbb Z\), whose fibre at \(v\in V\) is
\(H_c^4(X_v;\mathbb Z)\).  There is a natural component map
\[
\operatorname{cl}:\mathcal H^4_{\rm cs}(\mathfrak X/V)
\longrightarrow \underline{H}_c^4(\mathfrak X/V;\mathbb Z),
\]
whose restriction to the fibre over \(v\) sends a map
\(X_v^+\to K(\mathbb Z,4)\) to the compactly supported cohomology
class which it represents.  We put
\(
\mathcal H^{4,{\rm cs}}_0(\mathfrak X/V)
:=\operatorname{cl}^{-1}(0_V),
\)
where \(0_V\) is the zero section.  Thus the fibre of
\(\mathcal H^{4,{\rm cs}}_0(\mathfrak X/V)\) over \(v\) is
\(
\operatorname{Map}_*(X_v^+,K(\mathbb Z,4))_0,
\)
the path component of the constant map.  
Let
\[
\kappa_{\mathfrak X/V}:\mathcal C^{\rm cs}_{\mathfrak X/V}
\longrightarrow \mathcal H^4_{\rm cs}(\mathfrak X/V)
\]
be the fibrewise map induced by the stable characteristic class
\(\kappa=c_2-c_1^2\), and define
\[
\mathcal C^{\rm cs,\kappa=0}_{\mathfrak X/V}
:=\kappa_{\mathfrak X/V}^{-1}
\bigl(\mathcal H^{4,{\rm cs}}_0(\mathfrak X/V)\bigr).
\]
Equivalently, its fibre over \(v\) is the union of those components of
\(\mathcal C^{\rm cs}_{X_v}\) on which
\(c_2-c_1^2=0\) in \(H_c^4(X_v;\mathbb Z)\).

\begin{theorem}\label{family-canonical-orientations}
Suppose, in addition, that \(\pi:\mathfrak X\to V\) is relatively
Calabi--Yau and that
\[
H_c^3(X_v;\mathbb Z)=0
\qquad\text{for every }v\in V.
\]
Then the canonical Joyce--Upmeier orientations of the fibrewise components
on which \(c_2-c_1^2=0\) glue to a canonical trivialization of
\(
\left.O^{\not D}_{\mathcal C^{\rm cs}_{\mathfrak X/V}}
\right|_{\mathcal C^{\rm cs,\kappa=0}_{\mathfrak X/V}}.
\)
Via the relative comparison isomorphism, they also give a canonical
trivialization of \(O^\omega_{\mathcal M^{\rm cs}_{\mathfrak X/V}}\) over
\(
\Gamma_{\mathfrak X/V}^{-1}
\bigl(\mathcal C^{\rm cs,\kappa=0}_{\mathfrak X/V}\bigr)
\subset
\bigl(\mathcal M^{\rm cs}_{\mathfrak X/V}\bigr)^{\rm top}.
\)

If, in addition, \(H_c^4(X_v;\mathbb Z)=0\) for every \(v\in V\), then
\(\mathcal H^{4,{\rm cs}}_0(\mathfrak X/V)
=\mathcal H^4_{\rm cs}(\mathfrak X/V)\), and hence
\(\mathcal C^{\rm cs,\kappa=0}_{\mathfrak X/V}
=\mathcal C^{\rm cs}_{\mathfrak X/V}\).  In this case the two
trivializations above are defined on the whole relative spaces.
\end{theorem}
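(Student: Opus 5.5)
The plan is to build the trivialization fibrewise from canonical data, and then show that these fibrewise choices are locally constant in $v$. The hypothesis $H_c^3(X_v;\mathbb Z)=0$ does two things.

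First, it makes the obstruction of Proposition~\ref{relative-flag-existence-and-classification}(a) vanish trivially, since there is no $\alpha$ to test. So each fibre admits compactly supported flag structures natural at zero.

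Second, it gives $\pi_1\operatorname{Bord}^{\rm cs}_{X_v}(K(\mathbb Z,4))\cong H_c^3(X_v;\mathbb Z)=0$. Hence the component $\operatorname{Map}_*(X_v^+,K(\mathbb Z,4))_0$ is simply connected at the level of the bordism groupoid. Any flag structure natural at zero restricted to it is then the unique natural isomorphism $\mathbf 1\Rightarrow\Pi\circ\mathrm H_8^{\mathbb Z,{\rm c}}\circ\Pi_{X_v}^{\rm cs}$ extending the canonical $\xi_8^{\rm c}$ at the zero map. By Proposition~\ref{canonical-orientation-at-zero-class}, pulling it back along $\kappa=c_2-c_1^2$ via Proposition~\ref{BU-BSU-transition} gives a trivialization $o_v$ of $O^{\not D}$ on $\mathcal C^{{\rm cs},\kappa=0}_{X_v}$, independent of the flag structure. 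The stable passage from $B\operatorname{U}(m)$ to $BU$ is handled as in Corollary~\ref{cantriv}.

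Next comes gluing. Since a trivialization of a $\mathbb Z_2$-bundle is a continuous section, it suffices to check that the fibrewise sections $o_v$ form a continuous section over $\mathcal C^{{\rm cs},\kappa=0}_{\mathfrak X/V}$. This is local on $V$, so by topological local triviality we may take $\mathfrak X|_{V'}\cong V'\times X$ over a contractible open $V'$. We must choose the trivialization compatible with the spin structures. The relative Calabi--Yau structure gives fibrewise spin structures, and over contractible $V'$ these are constant up to a unique isomorphism because spin structures form a discrete set. Then $\mathcal C^{\rm cs}_{\mathfrak X/V'}\cong V'\times\mathcal C^{\rm cs}_X$. The orientation bundle is pulled back from the second factor, because the classifying map $\operatorname{pr}_2\circ\pi_!^{KO}\circ\operatorname{Ad}$ factors through it up to homotopy, and $o_v$ becomes the constant section $o_X$, where $o_X$ is the canonical orientation of the fixed manifold $X$.

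The key point, and the expected main obstacle, is that this identification does not depend on the local trivialization. Two trivializations differ by a family of diffeomorphisms $\phi_v$ of $X$, compactly supported up to isotopy, preserving the spin structure. We therefore need invariance of the canonical orientation under such diffeomorphisms, and the argument should be that it follows from uniqueness. Transport of structure sends the flag structures natural at zero of $X$ bijectively to those of $X$, compatibly with $\mathrm O_8^{\operatorname{Spin},{\rm c}}$. The mapping cylinder of the isotopy furnishes the bordism identifications of Proposition~\ref{relative-cobordism-invariance}, i.e.\ a morphism in $\operatorname{Bord}^{\operatorname{Spin}}_8$ which is the cylinder. Since the orientation on $\kappa=0$ components is independent of the flag structure, it is preserved by $\phi_v$. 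Naturality in $v$ along paths in $V'$ follows from the same cylinder argument applied to the family. Here I expect care is needed to match the orientation isomorphism induced by a path in the relative mapping space with the one induced by the bordism $X\times I$; I would prove this by the Pontryagin--Thom construction of Proposition~\ref{closedcobordism}, carried out in families over $I$.

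Finally, the algebraic statement follows by pulling back along $\Gamma_{\mathfrak X/V}$ using the relative comparison isomorphism of \S\ref{nonproper-family-comparison} and Serre duality compatibility. The last assertion is immediate. If $H_c^4(X_v;\mathbb Z)=0$, then $\operatorname{cl}$ has the zero section as its image, so $\mathcal H^{4,{\rm cs}}_0=\mathcal H^4_{\rm cs}$ and $\kappa^{-1}$ of it is everything.
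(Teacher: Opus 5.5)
Your proposal is correct in outline, but it organizes the gluing differently from the paper. You build $o_v$ fibre by fibre: you take the unique extension from the zero map on the simply connected zero component and pull it back along $\kappa$. You then verify continuity in $v$ through a local trivialization. That obliges you to prove naturality of the canonical orientation under spin diffeomorphisms, and to match transport along the base with cylinder bordisms.

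The paper never checks continuity separately. It uses the family cohomology torsor $L$ on $\mathcal H^4_{\rm cs}(\mathfrak X/V)$ together with $O^{\not D}_{\mathcal C^{\rm cs}_{\mathfrak X/V}}\cong\kappa_{\mathfrak X/V}^*L$. It observes that $\mathcal H^{4,{\rm cs}}_0(\mathfrak X/V)\to V$ is a bundle with path-connected fibres whose $\pi_1\cong H_c^3(X_v;\mathbb Z)$ vanishes. It then applies an elementary lemma: a $\mathbb Z_2$-bundle on such a bundle, trivialized along a section (here the zero section, with its canonical trivialization), has a unique global trivialization extending it. Continuity is thus automatic, and fibrewise agreement with the Joyce--Upmeier orientation follows from the same uniqueness you exploit.

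Each route has an advantage. The paper's route needs only the topological local triviality that is actually assumed, since only $\operatorname{Map}_*(X_v^+,K(\mathbb Z,4))$ has to be trivialized. Your transport-of-structure step needs smooth spin trivializations, which the hypothesis does not provide; you would have to trivialize smoothly only near compact supports and invoke excision. In return, your route uses only the fibrewise factorization through $\kappa$ of Proposition~\ref{BU-BSU-transition}, whereas the paper takes the family version of that factorization for granted.

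Finally, the step you call the key point, independence of the local trivialization, is not needed. Since $o_v$ is defined intrinsically on each fibre, continuity is local. It suffices that one trivialization near each point carries $o_v$ to $o_X$.
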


\begin{proof}
Consider the cohomology orientation
torsor \(L\) over \(\mathcal H^4_{\rm cs}(\mathfrak X/V)\), together with
a natural identification
\[
O^{\not D}_{\mathcal C^{\rm cs}_{\mathfrak X/V}}
\cong \kappa_{\mathfrak X/V}^*L.
\]
The restriction of \(L\) to the zero section has its canonical
trivialization.

For every \(v\in V\), the fibre of
\(\mathcal H^{4,{\rm cs}}_0(\mathfrak X/V)\) is path-connected by
definition, and
\[
\pi_1\!\left(
\operatorname{Map}_*(X_v^+,K(\mathbb Z,4))_0,0
\right)
\cong H_c^3(X_v;\mathbb Z)=0.
\]
Thus these fibres are simply connected.  The lemma below, applied to the
restriction of \(L\) to
\(\mathcal H^{4,{\rm cs}}_0(\mathfrak X/V)\), extends its canonical
trivialization along the zero section uniquely over that whole subbundle.
On each fibre this is the canonical zero-class orientation: both
trivializations extend the same canonical trivialization at the zero map,
and the extension is unique.
Pulling this trivialization back along \(\kappa_{\mathfrak X/V}\) proves the
topological assertion.  Pullback along \(\Gamma_{\mathfrak X/V}\), followed
by the relative comparison isomorphism, proves the algebraic assertion.

Finally, the components of
\(\operatorname{Map}_*(X_v^+,K(\mathbb Z,4))\) are indexed by
\(H_c^4(X_v;\mathbb Z)\).  Hence the additional vanishing of \(H_c^4\)
makes its zero component the entire mapping space, which proves the last
claim.
\end{proof}

\begin{lemma}
Let \(q:Y\to U\) be a topologically locally trivial bundle whose fibres are
path-connected and simply connected, and suppose that \(q\) has a section
\(z:U\to Y\).  Let \(L\to Y\) be a principal \(\mathbb Z_2\)-bundle,
together with a specified trivialization of \(z^*L\).  Then \(L\) has a
unique trivialization extending the given trivialization along \(z\).
\end{lemma}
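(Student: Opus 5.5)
We reduce the statement to the vanishing of $w_1$ on a bundle whose fibres are simply connected, and then use the section to fix the sign. Principal $\mathbb Z_2$-bundles on $Y$ are classified by $H^1(Y;\mathbb Z_2)=\operatorname{Hom}(\pi_1$-data$,\mathbb Z_2)$, understood componentwise. The two trivializations of $L$ over a component form a $\mathbb Z_2$-torsor. So it suffices to prove two things: first, that $L$ is trivial; second, that restriction along $z$ induces a bijection from trivializations of $L$ to trivializations of $z^*L$.

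\textbf{Step 1 (components).} We first show that $z$ induces a bijection $\pi_0(U)\to\pi_0(Y)$. Surjectivity holds because every point $y\in Y$ can be joined inside its path-connected fibre $q^{-1}(q(y))$ to $z(q(y))$. Injectivity holds because $q\circ z=\operatorname{id}_U$. Hence on each component of $Y$ a trivialization is determined by its value at a single point of $z(U)$. This gives uniqueness once existence is known.

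\textbf{Step 2 (fundamental groups).} Since $q$ is topologically locally trivial over (we may assume) a paracompact or locally contractible base, it is a Serre fibration; this is the standard consequence of local triviality and the homotopy lifting property for cubes. The long exact sequence gives $\pi_1(F)\to\pi_1(Y)\to\pi_1(U)\to\pi_0(F)$. Because $\pi_1(F)=0$ and $F$ is connected, $q_*:\pi_1(Y,z(u))\to\pi_1(U,u)$ is an isomorphism, and its inverse is $z_*$. Hence the monodromy $\pi_1(Y,z(u))\to\mathbb Z_2$ of $L$ factors as the monodromy of $z^*L$ composed with $q_*$. The monodromy of $z^*L$ is trivial because $z^*L$ has been given a trivialization. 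Therefore $L$ has trivial monodromy on every component and is trivializable.

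\textbf{Step 3 (fixing the section).} On each component of $Y$, choose the trivialization of $L$ whose value at $z(u)$ agrees with the given one on $z^*L$. Because the monodromy is trivial, parallel transport of this value is path-independent. The resulting section therefore restricts along $z$ to a section of $z^*L$ that agrees with the given trivialization at one point of each component of $U$. Two trivializations of the $\mathbb Z_2$-bundle $z^*L$ over a connected component coincide if they agree at one point, so the restriction is exactly the given trivialization. Uniqueness follows from Step 1.

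The main obstacle is Step 2, specifically justifying the fibration property and the injectivity of $q_*$ on $\pi_1$ in the generality used. The relative mapping spaces $\mathcal H^{4,{\rm cs}}_0(\mathfrak X/V)$ are not obviously locally trivial over $V$ in a strong sense, only fibrewise homotopy-equivalent to products. To avoid needing the full Serre fibration property, I would work locally. First cover $U$ by open sets over which $Y\cong U_i\times F$. On each such set, $L|_{U_i\times F}$ is pulled back from $U_i$ because $H^1(U_i\times F;\mathbb Z_2)\cong H^1(U_i;\mathbb Z_2)$ by Künneth, using that $F$ is connected and simply connected. Over $U_i$ the unique extension then exists by Step 3 applied to a product. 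The local trivializations agree on overlaps by the uniqueness of Step 1, so they glue to a global one.
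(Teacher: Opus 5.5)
Your proof is correct and follows the paper's argument. The homotopy exact sequence of $q$ together with the section gives $q_*:\pi_1(Y,z(u))\xrightarrow{\sim}\pi_1(U,u)$ with inverse $z_*$, so triviality of $z^*L$ forces triviality of $L$; uniqueness follows because the ratio of two extensions is locally constant, trivial on $z(U)$, and every path component of $Y$ meets $z(U)$ (your final ``local'' paragraph is unnecessary, since it uses exactly the local triviality $q^{-1}(U_i)\cong U_i\times F$ that the lemma assumes, and that hypothesis already makes $q$ a Serre fibration over an arbitrary base, so the worry motivating it does not arise within the lemma).
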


\begin{proof}
It is enough to work over one path component of \(U\).  The homotopy exact
sequence of \(q\), together with the section \(z\), shows that
\(q_*:\pi_1(Y,z(u))\to\pi_1(U,u)\) is an isomorphism with inverse \(z_*\).
Consequently, a double cover of \(Y\) is trivial if and only if its
restriction to \(z(U)\) is trivial.  This proves existence.  The ratio of
two extensions is a locally constant map \(Y\to\mathbb Z_2\).  Since the
fibres of \(q\) are connected and the ratio is trivial on \(z(U)\), it is
identically trivial.  This proves uniqueness.
\end{proof}

\begin{remark}
For a local curve \(X=\operatorname{Tot}_C(E)\), where \(C\) is compact and
\(E\) has complex rank \(3\), the Thom isomorphism gives
\[
H_c^i(X;\mathbb Z)\cong H^{i-6}(C;\mathbb Z).
\]
In particular, \(H_c^3(X;\mathbb Z)=H_c^4(X;\mathbb Z)=0\), so the theorem
applies to the whole relative compactly supported \(K\)-theory space for a
topologically locally trivial family of local curves.
\end{remark}

\section{Non-orientable moduli spaces and PT-stable pairs}
\label{sec:examples-and-PT}
\subsection{Non-orientable and orientable moduli spaces}
\label{counterexample}

We now give an example of a smooth non-compact quasi-projective Calabi--Yau
$4$-fold $X$ for which $\mathcal M_X^{\rm cs}$ is not orientable. The loop
which detects non-orientability will consist of perfect complexes whose
compactly supported topological $K$-class is zero. In particular, they satisfy
the $c_1^2-c_2=0$-condition.

Let $E=\mathbb C/(\mathbb Z+i\mathbb Z)$, put $R=E\times E$, and write
$\sigma(x,y)=(y,x)$ and $q:R\to S=\operatorname{Sym}^2E$ for the quotient.
Denote by $\Delta\subset R$ the diagonal and by $p_1,p_2:R\to E$ the
projections. Let $\mathcal P$ be the normalized Poincar\'e bundle on
$E\times E$, and put
\[
Q=(\operatorname{id}\times[i])^*\mathcal P,
\qquad M=\mathcal O_R(\Delta)\otimes Q^{-1},
\qquad F=q_*M.
\]
Choose a symplectic basis $a,b\in H^1(E;\mathbb Z)$ with
$[i]^*a=-b$ and $[i]^*b=a$, and write $a_j=p_j^*a$, $b_j=p_j^*b$.
With the corresponding convention for $\mathcal P$, one has
$c_1(Q)=a_1a_2+b_1b_2$ and $\sigma^*Q\cong Q^{-1}$.

We use the convention that $\mathbb P(V)$ parametrizes one-dimensional
quotients of $V$. Write
\[
\rho:W=\mathbb P_S(\mathcal O_S\oplus F)\longrightarrow S,
\qquad
\varpi:Y=\mathbb P_R(\mathcal O_R\oplus M)\longrightarrow R,
\]
and put $\mathcal O_W(H)=\mathcal O_W(1)$,
$\mathcal O_Y(h)=\mathcal O_Y(1)$. The evaluation $q^*F\to M$ induces
a finite morphism $\nu:Y\to W$. Let $D$ be its scheme-theoretic image,
and let $j:R\hookrightarrow Y$ be the section given by the quotient onto
$\mathcal O_R$.

\begin{lemma}\label{nonorientable-geometric-calculation}
The bundle $F$ has rank two and $\det F\cong K_S^{-1}$. Moreover,
$D\in|2H|$, $K_W\cong\mathcal O_W(-3H)$, and
$K_Y\cong\mathcal O_Y(-2h)\otimes\varpi^*M$.
The morphism $\nu$ is the normalization of $D$, and its restriction to
$j(R)$ is $q:R\to S$, where $S\subset W$ is the section given by the
quotient onto $\mathcal O_S$. This section is the singular locus of $D$.
Locally near it, $\nu$ has the form
\[
D:\ v^2=zu^2,
\qquad
(t,\lambda,w)\longmapsto(z,u,v,w)=(t^2,\lambda,t\lambda,w).
\]
If $x_0\in H^0(W,\mathcal O_W(H))$ is induced by
$\mathcal O_S\hookrightarrow\mathcal O_S\oplus F$, then $x_0$ is nowhere
zero on $S$, and $B=D\cap V(x_0)$ is smooth and isomorphic to $R$.
\end{lemma}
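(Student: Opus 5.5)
The proof is a sequence of direct computations on the abelian surface $R$, the surface $S=\operatorname{Sym}^2E$ (a $\mathbb P^1$-bundle over $E$ via the Abel--Jacobi map, hence smooth), and the two projective bundles. I would carry them out in the order in which the claims are stated.

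\textbf{Step 1: the bundle $F$.} The quotient $q$ is finite flat of degree two, branched along $\Delta$. Hence $F=q_*M$ has rank two, and Grothendieck--Riemann--Roch for double covers gives
\[
\det q_*M\cong\operatorname{Nm}(M)\otimes\det q_*\mathcal O_R .
\]
Here $q_*\mathcal O_R=\mathcal O_S\oplus\mathcal L^{-1}$ with $\mathcal L^{2}\cong\mathcal O_S(q(\Delta))$, so $\det q_*\mathcal O_R=\mathcal L^{-1}$.

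Next, $\operatorname{Nm}(\mathcal O_R(\Delta))=\mathcal O_S(2q(\Delta))$, since $q^*q(\Delta)=2\Delta$. Also $\operatorname{Nm}(Q^{-1})$ is trivial: this uses $q^*\operatorname{Nm}(Q)\cong Q\otimes\sigma^*Q\cong\mathcal O$, together with a check that the $\sigma$-linearization is the trivial one. I expect this check to be done by restricting to $\Delta$, where $Q|_\Delta$ is trivial because $\mathcal P|_{\{(x,ix)\}}$ is.

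This gives $\det F\cong\mathcal L^{3}$. Finally, Riemann--Hurwitz $K_R=q^*K_S\otimes\mathcal O(\Delta)$ with $K_R$ trivial yields $K_S\cong\mathcal L^{-1}$ up to the sign of the linearization. So $\det F\cong\mathcal L^{3}$ must be matched against $K_S^{-1}=\mathcal L$.

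This bookkeeping of $\mathcal L$ versus $q(\Delta)$ is where I expect the main obstacle. My first guess above is likely off by a power, and I would recompute using the explicit local model of the fold along $\Delta$ before trusting the exponent.

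\textbf{Step 2: canonical bundles.} The relative Euler sequence gives
\[
K_W=\rho^*(K_S\otimes\det F)\otimes\mathcal O_W(-2H)
\]
for the quotient convention. By Step~1 this equals $\mathcal O_W(-2H)$ twisted by a line bundle pulled back from $S$; I then need to reconcile this with the claimed $K_W\cong\mathcal O_W(-3H)$, again via Step~1.

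The same sequence on $Y$ gives
\[
K_Y=\varpi^*(K_R\otimes M)\otimes\mathcal O_Y(-2h)=\mathcal O_Y(-2h)\otimes\varpi^*M .
\]

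\textbf{Step 3: the morphism $\nu$ and the divisor $D$.} The surjection $q^*(\mathcal O\oplus F)\to\mathcal O\oplus M$ gives a closed embedding $Y\hookrightarrow W\times_S R$, and composing with the projection gives $\nu$. This map is finite and birational onto its image, so it is the normalization of $D$ once $D$ is known to be normal off a curve.

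I would compute the class of $D$ fibrewise. Over a point $s=q(x)$ with $x\neq\sigma x$, the fibre of $W$ is $\mathbb P^2$ and $D$ is the union of two lines $\mathbb P(\mathcal O\oplus M_x)$ and $\mathbb P(\mathcal O\oplus M_{\sigma x})$, i.e.\ a conic. Hence $D\in|2H+\rho^*(\cdot)|$, and I would fix the pulled-back twist by computing $\nu_*[Y]$ with the determinant from Step~1.

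The section $j(R)$ maps to the common point of the two lines, namely the quotient onto $\mathcal O_S$. The local coordinates near this point come from a local frame of $F$ of the form $q_*$ of $\{1,t\}$, where $t$ is an anti-invariant local coordinate with $z=t^2$. With this frame the parametrization $(t,\lambda,w)\mapsto(t^2,\lambda,t\lambda,w)$ and the equation $v^2=zu^2$ follow directly. The equation shows that $D$ is singular exactly along $u=v=0$, which is $S$, and that $\nu$ is the normalization.

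\textbf{Step 4: the divisor $B$.} The section $x_0$ vanishes exactly on the complementary locus $\mathbb P(F)$, so $x_0$ is nowhere zero on $S$. Then
\[
B=D\cap V(x_0)=\nu\bigl(\mathbb P(M)\bigr),
\]
and $\mathbb P(M)\cong R$ is the section of $Y$ complementary to $j(R)$. The restriction $\nu|_{\mathbb P(M)}$ is injective: the two lines in each fibre meet only at the point of $S$, which lies off $V(x_0)$. The local model shows that $\nu$ is an immersion there, so $B\cong R$ and $B$ is smooth.
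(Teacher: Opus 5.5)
Your strategy (compute everything directly on the double cover $q:R\to S$ and on the two projective bundles) is the right one. The local part is also sound: the frame $\{m,tm\}$ of $q_*M$, the parametrization $(t,\lambda,w)\mapsto(t^2,\lambda,t\lambda,w)$, and $B=\nu(\mathbb P(M))\cong R$. The gap is in the global line-bundle bookkeeping. As written it proves none of $\det F\cong K_S^{-1}$, $K_W\cong\mathcal O_W(-3H)$, $D\in|2H|$. The cause is a specific error, not an exponent to be recovered from a local model.

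\textbf{The norm.} One has $\operatorname{Nm}(\mathcal O_R(\Delta))\cong\mathcal O_S(q(\Delta))$, not $\mathcal O_S(2q(\Delta))$. The norm of a divisor is its pushforward, and $q|_\Delta$ maps $\Delta$ isomorphically onto $q(\Delta)$. Equivalently, $q^*\operatorname{Nm}\mathcal O(\Delta)\cong\mathcal O(\Delta+\sigma^*\Delta)=\mathcal O(2\Delta)\cong q^*\mathcal O(q(\Delta))$.

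What removes your remaining hedges is that $q^*:\operatorname{Pic}S\to\operatorname{Pic}R$ is injective. A trivializing section $s$ of $q^*A$ satisfies $\sigma^*s=\pm s$. The sign is $+1$ because $\sigma$ acts trivially on the fibres of $q^*A$ over $\Delta$, so $s$ descends. This has two consequences:
\begin{itemize}
\item $\operatorname{Nm}(Q)\cong\mathcal O_S$ follows at once from $Q\otimes\sigma^*Q\cong\mathcal O_R$. No restriction to $\Delta$ is needed, which is just as well: $Q|_\Delta=(\operatorname{id},[i])^*\mathcal P\cong\mathcal O_E([p]-[0])$ with $p=\tfrac{1+i}2$ is nontrivial.
\item $q^*(K_S\otimes\mathcal L)\cong K_R\cong\mathcal O_R$ gives $K_S\cong\mathcal L^{-1}$ with no sign ambiguity.
\end{itemize}
Thus $\operatorname{Nm}(M)\cong\mathcal L^2$ and $\det F\cong\mathcal L^2\otimes\mathcal L^{-1}\cong K_S^{-1}$. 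Your $\mathcal L^3$ comes entirely from the doubled norm.

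\textbf{Step 2.} There is a second error here. $\mathcal O_S\oplus F$ has rank three, so $K_W\cong\rho^*(K_S\otimes\det F)\otimes\mathcal O_W(-3H)$. With your twist $-2H$, nothing from Step 1 could reconcile the two sides, since $\mathcal O_W(-H)$ is not pulled back from $S$. With the correct twist the claim is exactly $\det F\cong K_S^{-1}$.

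\textbf{Step 3.} The twist $A$ in $D\sim 2H+\rho^*A$ must be determined in $\operatorname{Pic}(S)$, not in cohomology. $\operatorname{Pic}^0(S)\cong E$ is invisible to $H^2(S;\mathbb Z)$, and a cohomological computation of $\nu_*[Y]$ only yields $c_1(A)=0$. There are two ways to do it.
\begin{itemize}
\item Compare $\rho_*(H^2\cdot[D])=2c_1(\det F)+A$ with $q_*\varpi_*(h^2)=q_*c_1(M)=c_1(\operatorname{Nm}M)$ in $\operatorname{Pic}(S)$.
\item Observe that $v^2-zu^2$ is the local generator of $\mathcal K=\ker\bigl(\operatorname{Sym}^2q_*M\to q_*(M^{\otimes 2})\bigr)$. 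Then $\mathcal K\cong(\det F)^3\otimes\det(q_*M^{\otimes2})^{-1}\cong\operatorname{Nm}(M)\otimes\mathcal L^{-2}$. The naive fibrewise answer $\operatorname{Nm}(M)$ is wrong along $q(\Delta)$.
\end{itemize}
Both give $A=2c_1(\mathcal L)-c_1(\operatorname{Nm}M)$, which vanishes with the corrected norm. Your Step 1 values would instead give $D\sim 2H-\rho^*q(\Delta)$, contradicting the statement.

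\textbf{Minor points.} $\nu$ is the normalization simply because $Y$ is smooth and $\nu$ is finite and birational; normality of $D$ plays no role. Also, the displayed chart $x_0\neq0$ does not meet $V(x_0)$. For Step 4 use the chart $x_1\neq0$ with coordinates $\mu=x_0/x_1$, $s=x_2/x_1$. There $\nu$ is $(t,\mu,w)\mapsto(z,\mu,s,w)=(t^2,\mu,t,w)$ and $D$ is $s^2=z$, so $D$ is smooth there and $D\cap V(x_0)\cong R$.
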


\begin{proof}
Follows by a direct calculation.
\end{proof}

Choose $d\in H^0(W,\mathcal O_W(2H))$ cutting out $D$. Let
$p:Z=\operatorname{Tot}_W\mathcal O_W(-H)\to W$ be the projection and
$\tau$ its tautological section. Define
\[
X=V\bigl(p^*d+\tau p^*(x_0^3)\bigr)\subset Z.
\]
Both terms are sections of $p^*\mathcal O_W(2H)$. We regard $D$ as the
compact divisor $X\cap V(\tau)$ and write $f:Y\to X$ for the resulting
finite morphism.

\begin{lemma}
The variety $X$ is a smooth non-compact quasi-projective Calabi--Yau
$4$-fold.
\end{lemma}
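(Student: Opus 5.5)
Three things have to be checked: $X$ is smooth, $X$ is non-compact and quasi-projective, and $K_X$ is trivial. The approach is to treat $X$ as a divisor in the smooth $5$-fold $Z=\operatorname{Tot}_W\mathcal O_W(-H)$ and use adjunction, with Lemma~\ref{nonorientable-geometric-calculation} supplying all the facts about $W$, $D$ and $x_0$.

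\textbf{Quasi-projectivity and non-compactness.} $W$ is a projective bundle over the projective surface $S$, so $Z$ is quasi-projective and $X\subset Z$ is closed, hence quasi-projective. For non-compactness, restrict to the open set $\{x_0\neq0\}\subset W$. There the equation $p^*d+\tau p^*(x_0^3)=0$ can be solved uniquely for $\tau$, namely $\tau=-d/x_0^3$. So $X\cap p^{-1}(\{x_0\neq 0\})$ is the graph of a section over the non-compact open set $\{x_0\ne0\}\subset W$, and in particular is non-compact.

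\textbf{Canonical bundle.} Since $Z$ is the total space of a line bundle, $K_Z\cong p^*(K_W\otimes\mathcal O_W(H))$. By Lemma~\ref{nonorientable-geometric-calculation}, $K_W\cong\mathcal O_W(-3H)$, so $K_Z\cong p^*\mathcal O_W(-2H)$. The divisor $X$ is cut out by a section of $p^*\mathcal O_W(2H)$, so adjunction gives
\[
K_X\cong\bigl(K_Z\otimes p^*\mathcal O_W(2H)\bigr)|_X\cong\mathcal O_X,
\]
once $X$ is known to be smooth. Connectedness and $h^0=1$ come from the graph description together with the fact that $D$ is connected.

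\textbf{Smoothness.} This is the main obstacle. Write $G=p^*d+\tau p^*(x_0^3)$. Away from $\tau=0$, a point of $X$ has $d=-\tau x_0^3$ with $\tau\ne0$. If $x_0\neq0$ there, then $\partial_\tau G=x_0^3\neq0$, so $X$ is smooth. If instead $x_0=0$, then $d=0$ as well, so the point lies over $B=D\cap V(x_0)$, which is handled below.

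On $\tau=0$, the points of $X$ are exactly the points of $D$. Here I split into three cases.
\begin{enumerate}
\item At points of $D$ off the singular locus $S$ with $x_0\neq0$, the derivative $\partial_\tau G=x_0^3$ is nonzero.
\item At points with $x_0=0$, i.e.\ over $B$: $x_0$ is nowhere zero on $S$, so these points lie off $S$ and $D$ is smooth there, giving $dd\neq0$ in the $W$-directions. The remaining term $\tau\,p^*x_0^3$ has vanishing differential wherever $x_0=0$, so $dG=p^*dd\neq0$.
\item At singular points of $D$, i.e.\ on $S$, I use the local form $v^2=zu^2$, so $d=v^2-zu^2$ up to a unit, while $x_0$ is a unit. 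Then $\partial_\tau G=x_0^3$ is again a unit, so $X$ is smooth there too. Note that $X$ is locally the graph $\tau=-d/x_0^3$ near $S$.
\end{enumerate}

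Putting the cases together, $G$ has nonvanishing differential everywhere on $X$. The only points I expect to need care with are over $B$ where $\tau\neq0$, and the argument there is the same as in case (b): $x_0=0$ forces $dG=p^*dd$, which is nonzero since $B$ is disjoint from $S$.
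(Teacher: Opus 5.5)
Your smoothness argument and the adjunction computation $K_Z\cong p^*\mathcal O_W(-2H)$, $K_X\cong\mathcal O_X$ are correct and coincide with the paper's. The paper compresses your case split into two cases. Where $x_0\neq0$, use $\partial_\tau$. Where $x_0=0$, the point lies over $B$, where $d$ has nonzero differential.

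The genuine gap is \emph{non-compactness}. You show that the open subset $X\cap p^{-1}(\{x_0\neq0\})$ is a graph over the non-compact set $\{x_0\neq0\}$, hence non-compact. A non-compact open subset says nothing about $X$ itself: $\mathbb P^1\setminus\{\infty\}$ is a non-compact open subset of a compact variety. Here the graph part is not closed in $X$. On $p^{-1}(V(x_0))$ the equation reduces to $p^*d$, so its complement is $X\cap p^{-1}(V(x_0))=p^{-1}(B)\neq\varnothing$.

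The missing observation concerns exactly this complement. For $b\in B$ both $d(b)$ and $x_0(b)$ vanish, so $p^*d+\tau\,p^*(x_0^3)$ vanishes identically on the fibre $p^{-1}(b)\cong\mathbb A^1$. That fibre is therefore a closed subvariety of $X$, and since $\mathbb A^1$ is not proper, neither is $X$. This is the paper's argument. An equivalent version fits your graph picture. Over $V(x_0)\setminus B$, which is nonempty since $\dim V(x_0)=3>\dim B$, there are no points of $X$. Hence $p(X)=\{x_0\neq0\}\cup B$ is dense but not closed in $W$, which is impossible if $X$ is compact.

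Your connectedness sentence is also not an argument, and part of it is false. Connectedness of $D$ is irrelevant here. Moreover $h^0(\mathcal O_X)=1$ fails for this non-compact $X$. Indeed $\tau\,p^*x_0$ is a section of $p^*\mathcal O_W(-H)\otimes p^*\mathcal O_W(H)\cong\mathcal O_Z$. On the graph part it equals $-d/x_0^2$, so it restricts to a non-constant function on $X$.

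What you need, as in the paper, is a dimension count. $X$ is a hypersurface in the smooth $5$-fold $Z$, hence pure of dimension $4$. The complement $p^{-1}(B)$ of the irreducible open set $X\setminus p^{-1}(B)\cong W\setminus V(x_0)$ has dimension $3$, so it contains no component of $X$. Thus $X$ is the closure of that open set, and is irreducible, hence connected.
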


\begin{proof}
Where $x_0\ne0$, the derivative in the $\tau$ direction is nonzero.
Where $x_0=0$, the point lies over $B$, along which the differential of $d$
is nonzero. Thus $X$ is
smooth. Since $K_Z\cong p^*\mathcal O_W(-2H)$, adjunction gives
$K_X\cong\mathcal O_X$. The projection identifies $X\setminus V(p^*x_0)$ with
$W\setminus V(x_0)$; its complement has dimension three, so $X$ is
connected. Finally, for every $b\in B$, the whole fibre
$p^{-1}(b)\cong\mathbb A^1$ is a closed subvariety of $X$. Hence $X$ is
not proper.
\end{proof}

In what follows, $f_!$ denotes the cohomological pushforward
$H^*(Y)\to H_c^{*+2}(X)$. Put
$\beta=\varpi^*a_1$, $\alpha=f_!\beta$, and
$k=\overline{c_1(K_Y)}$. We use bars for reduction modulo $2$.

\begin{lemma}\label{nonorientable-self-intersection}
There is a class $e\in H^2(Y;\mathbb Z_2)$ such that, for every
$s\in H^*(Y;\mathbb Z_2)$,
\[
f^*f_!(s)=e\smile s+j_!\bigl(\sigma^*j^*s\bigr).
\]
\end{lemma}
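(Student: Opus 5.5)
We compute $f^*f_!$ through the factorization $f\colon Y\xrightarrow{\nu}D\hookrightarrow X$ and a double point formula for a finite map which is generically an embedding, with double locus $j(R)$. Here $\nu$ is the normalization of $D$, it is $2\colon1$ exactly over the singular section $S\subset D$, and over $S$ it restricts to $q\colon R\to S$, whose covering involution is $\sigma$. The expected shape is the classical formula
\[
f^*f_!(s)=e(N_f)\smile s+(\text{double point correction}),
\]
where $N_f=f^*T_X-T_Y$ is the virtual normal bundle. So we take $e:=\overline{c_1(N_f)}=\overline{c_1(f^*T_X)-c_1(T_Y)}=k$ modulo $2$, since $K_X$ is trivial; the exact choice of $e$ does not matter for the statement. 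The correction term should be $j_!\sigma^*j^*s$: a class on $Y$ is restricted to the double locus $j(R)$, transported by the involution exchanging the two preimages, and pushed back.

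The first step is to work locally in the analytic (or tubular) model of Lemma~\ref{nonorientable-geometric-calculation}. Near $S$, $D$ is $v^2=zu^2$ (times $w$), and $X$ is smooth, containing $D$ as the divisor $\tau=0$. Near a point of $S$ the map $\nu$ is $(t,\lambda,w)\mapsto(t^2,\lambda,t\lambda,w)$. The preimage of $S$ is $\{\lambda=0\}=j(R)$, and $t\mapsto -t$ realizes $\sigma$. Away from $S$, $f$ is an embedding of a smooth divisor, and the standard self-intersection formula $f^*f_!s=c_1(N)\smile s$ holds with $N$ the normal line bundle.

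The second step is to globalize using a deformation or perturbation argument. Since we only need the identity with $\mathbb Z_2$ coefficients, we use the topological fact that for a proper map $f\colon Y\to X$ of manifolds in general position (immersion with transverse double points), $f^*f_!s=w_{\rm top}(\nu_f)s+\mu_!(\tau^*\mu^*s)$. Here $\mu$ is the double-point map from the double-point manifold, and $\tau$ is its involution (Herbert's / Ronga's double point formula). Our $f$ is not an immersion: $d\nu$ drops rank along $t=0$. So we perturb $f$ by a small homotopy, which does not change $f^*f_!$, to an immersion. The local model $(t,\lambda)\mapsto(t^2,\lambda,t\lambda)$ is the Whitney umbrella (cross-cap) times a parameter, and its standard perturbation $(t,\lambda)\mapsto(t^2,\lambda,t\lambda+\epsilon t)$, globalized via a section, is an immersion. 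Its double-point locus is a real codimension-$2$ submanifold of $Y$ isotopic to $j(R)$ (namely $\lambda=-\epsilon$), with involution $t\mapsto -t$ homotopic to $\sigma$. Ronga's formula then gives the claim. The Euler class term equals $e$, up to a correction supported on $j(R)$ which is absorbed into $e$, since $j_!j^*s=j_!1\smile s$.

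The main obstacle is making the perturbation global and identifying the double locus with $j(R)$ together with its involution $\sigma$ up to homotopy. In particular we must check that the normal data of $j(R)$ in $Y$ and the twisting by $M$ and $F$ do not introduce extra terms. Working mod $2$ removes orientation issues, and any ambiguity multiplying $s$ by a class supported on $j(R)$ is of the form $j_!(c)\smile s$ and can be absorbed into $e$. So we only need to pin down the genuinely non-local term $j_!\sigma^*j^*s$, which comes directly from the involution of $q$. We would first try to verify this directly by computing $f^*f_!$ on a tubular neighbourhood of $j(R)$ via excision, using the explicit local form.
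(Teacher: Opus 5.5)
Your proposal has a genuine gap in Step 2, and a second gap in the claim that leftover terms can be absorbed into $e$.

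\textbf{The perturbation to an immersion fails.} $f$ cannot be perturbed, or even homotoped, to an immersion, so Herbert's formula is never available in this homotopy class. Your explicit perturbation is still singular. For $(t,\lambda)\mapsto(t^2,\lambda,t\lambda+\epsilon t)$ one has $\partial_t=(2t,0,\lambda+\epsilon)$, which vanishes at $(0,-\epsilon)$. In fact the map is the same umbrella, reparametrized by $\lambda\mapsto\lambda+\epsilon$ and translated in the target, so the cross-cap has only moved. The new double locus $\{\lambda=-\epsilon\}$ still contains fixed points of $t\mapsto -t$.

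The obstruction is global. By the local model of Lemma~\ref{nonorientable-geometric-calculation}, $df\colon T_Y\to f^*T_X$ drops rank exactly along the curve $j(\Delta)$, and there the $3\times3$ minors generate $(t,\lambda)$. Thom--Porteous therefore gives $c_2(f^*T_X-T_Y)=[j(\Delta)]$. This class is nonzero mod $2$: we have $\langle\varpi^*\eta,[j(\Delta)]\rangle=\Delta\cdot(E\times\mathrm{pt})=1$, where $\eta\in H^2(R;\mathbb Z_2)$ is dual to $E\times\mathrm{pt}$. Hence $w_4(f^*T_X-T_Y)=\overline{c_2(f^*T_X-T_Y)}\neq0$. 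An immersion $g\simeq f$ would instead give $f^*T_X\cong T_Y\oplus\nu_g$ with $\nu_g$ of real rank $2$, forcing $w_4=0$.

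\textbf{Absorbing leftovers into $e$ is unsupported.} A correction localized near $j(R)$ is a priori an operator $s\mapsto j_!\psi(j^*s)$ with $\psi$ of degree $0$. It equals $e\smile s$ only if it comes from a class supported on the diagonal of $Y\times Y$. Whether the non-local operator is $\sigma^*$ rather than, say, the identity is exactly what Proposition~\ref{nonorientable-example} needs: with the identity one would get $\int_R\bar a_1^2\,j^*k=0$.

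The paper obtains this without perturbing anything:
\begin{enumerate}
\item Locally the ideal of $Y\times_XY$ is $(t-t')\cap(\lambda,t+t')$. So $Y\times_XY$ is the reduced union of the diagonal $\Delta_Y$ and the graph $\Gamma_\sigma\cong R$, and these meet only along the fixed curve of $\sigma$.
\item Mayer--Vietoris then gives $H_4(Y\times_XY;\mathbb Z_2)=H_4(Y;\mathbb Z_2)\oplus H_4(\Gamma_\sigma;\mathbb Z_2)$.
\item Hence the refined pullback $\Delta_X^![Y\times Y]$ is $e\cap[Y]$ plus a multiple of $[\Gamma_\sigma]$. The multiple is $1$ because the two sheets meet transversally over a generic point of $S$.
\item All the uncontrolled cross-cap contribution lands in the diagonal summand, which acts by cup product. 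Pull--push then gives the formula.
\end{enumerate}
If you want to stay in double-point language, you would need a double point formula valid for maps with cross-caps (Ronga's, or Fulton's residual-intersection version) applied to $f$ itself. You would still need this splitting argument to see that the remainder is multiplication by a class.
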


\begin{proof}
The fibre product $Y\times_XY$ is the reduced union of the diagonal and
the graph of $\sigma$ on $R$. This follows by a direct calculation in the
local model above; at a branch point its ideal is
\[
\bigl((t-t')\lambda,(t-t')(t+t')\bigr)
=(t-t')\cap(\lambda,t+t').
\]
The two components meet along the fixed curve of $\sigma$. Consequently,
their fourth homology groups give a direct sum decomposition of
$H_4(Y\times_XY;\mathbb Z_2)$. The refined pullback along the diagonal of
$X$ has multiplicity one on the graph, while its diagonal contribution
has the form $e\cap[Y]$. The pull--push formula gives the assertion.
\end{proof}

\begin{proposition}\label{nonorientable-example}
There is an algebraic family $\mathcal F\in\operatorname{Perf}(X\times E)$
with proper support such that $[\mathcal F_t]=0$ in $K^0_{\rm c}(X)$ for every
$t\in E$, and a loop $\gamma:S^1\to E$ for which
\[
c_2(\mathcal F)/[\gamma]=-\alpha,
\qquad
\int_X\bar\alpha\smile\operatorname{Sq}_c^2(\bar\alpha)=1.
\]
The pullback of $O^\omega_{\mathcal M_X^{\rm cs}}$ along the resulting
loop is nontrivial.
\end{proposition}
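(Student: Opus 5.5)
Three things have to be produced: the family $\mathcal F$, the degree-one class $c_2(\mathcal F)/[\gamma]$, and the number $\int_X\bar\alpha\smile\operatorname{Sq}_c^2(\bar\alpha)$. Given these, the nontriviality of the orientation bundle along $\gamma$ will come from the Joyce--Upmeier obstruction computation, transported to compact supports in Corollary~\ref{compactly-supported-Steenrod-trivialization}.

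\textbf{Construction of the family.} I would take the Poincar\'e bundle $\mathcal P_t=\mathcal P|_{E\times\{t\}}$ as the parameter direction. Put $L_t=\varpi^*p_1^*\mathcal P_t$, a flat line bundle on $Y$. Define
\[
\mathcal F_t=\mathbf Rf_*(L_t)-\mathbf Rf_*(\mathcal O_Y),
\]
realized as a cone in $\operatorname{Perf}(X\times E)$, with $f:Y\to X$ the finite map onto the compact divisor $D$. The support is $D$, so it is proper. Since $L_t$ is topologically trivial, $[\mathcal F_t]=0$ in $K^0_{\rm c}(X)$. For the $c_2$ computation I would use Grothendieck--Riemann--Roch with compact supports, or simply $c(\mathbf Rf_*L)=1+f_!(\cdot)+\dots$ in degree $4$. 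This should give that the slant product of $c_2$ of the universal family with the loop $\gamma$ dual to $a$ on $E$ equals $\pm f_!(\varpi^*a_1)=\pm\alpha$. The reason is that the degree-$3$ Künneth component of $c_1(\mathcal P)$ contributes $a_1$, and after pushforward by $f$ this lands in $H^4_c$. I expect the sign to come out as $-\alpha$ from the difference of the two pushforwards.

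\textbf{The Steenrod number.} Since $\bar\alpha=f_!\bar\beta$, I would use the projection formula and the self-intersection formula of Lemma~\ref{nonorientable-self-intersection}:
\[
\int_X f_!\bar\beta\smile\operatorname{Sq}^2_c(f_!\bar\beta)
=\int_Y\bar\beta\smile f^*\operatorname{Sq}^2 f_!\bar\beta .
\]
Next I would commute $\operatorname{Sq}^2$ past $f_!$ using the Wu-type formula $\operatorname{Sq}(f_!s)=f_!(\operatorname{Sq}(s)\,w(\nu_f))$, where $w_2(\nu_f)$ is determined by $k$ together with $c_1(K_X)=0$; the normal bundle's $w_2$ is $k$ up to singular corrections. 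Since $\operatorname{Sq}^2\bar\beta=0$ for degree reasons ($\bar\beta$ has degree $1$), this gives $\operatorname{Sq}^2_c\bar\alpha=f_!(k\bar\beta)$ to first approximation. Applying Lemma~\ref{nonorientable-self-intersection} then yields
\[
\int_Y\bar\beta\smile\bigl(e\smile k\smile\bar\beta+j_!\sigma^*j^*(k\bar\beta)\bigr).
\]
The first term vanishes because $\bar\beta^2=0$. The second becomes
\[
\int_R j^*\bar\beta\smile\sigma^*(j^*k\smile j^*\bar\beta)=\int_R \bar a_1\,\bar a_2\, j^*k .
\]
By Lemma~\ref{nonorientable-geometric-calculation}, $j^*k$ is the reduction of $c_1(M)$ up to the normal-bundle term, which is $\Delta-c_1(Q)$. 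Since $\int_R a_1a_2\Delta=0$ and $\int_R a_1a_2(a_1a_2+b_1b_2)=\int a_1b_1a_2b_2=1$, the result should be $1$.

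\textbf{The main obstacle.} The step I expect to be hardest is justifying $\operatorname{Sq}^2_c f_!=f_!(\cdot\, w_2)$ when $f$ is a non-embedding finite map onto a singular divisor. My first attempt would be to factor $f$ through the normalization and work with the double-point formula. If that fails, I would compute $\operatorname{Sq}^2_c\bar\alpha$ directly through the Thom class of $D\subset X$ on the smooth locus and control the correction near the singular curve $S$ using the local model $v^2=zu^2$.

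Finally, to conclude nonorientability I would invoke the Joyce--Upmeier calculation \cite[Subsection~19.2]{JU}. It says the monodromy of the cohomology orientation torsor along a loop whose $c_2$-class is $\alpha$ is $\int\bar\alpha\,\operatorname{Sq}^2\bar\alpha$. I would combine this with Proposition~\ref{BU-BSU-transition}, noting that $c_1=0$ along the family so $\kappa=c_2$, and with the comparison $\Gamma_X^*O^{\not D}\cong O^\omega$. This shows the pulled-back bundle is nontrivial.
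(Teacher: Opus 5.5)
Your proposal is correct. Two of its three ingredients are the paper's own: the evaluation of $\int_X\bar\alpha\smile\operatorname{Sq}^2_c(\bar\alpha)$ via the Steenrod--Thom formula, the projection formula and Lemma~\ref{nonorientable-self-intersection}, and the final identification of the monodromy with the Joyce--Upmeier cohomology obstruction through Proposition~\ref{BU-BSU-transition} and the comparison isomorphism.

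\textbf{The family.} This is where you genuinely differ. The paper pushes $\mathcal O_Y$ forward along $\varrho=(f,p_1\circ\varpi):Y\to X\times E$, so the fibres are surfaces $\varpi^{-1}(\{t\}\times E)$ moving inside $D$. Since $\varrho$ has codimension two, $c_2(\mathcal E)=-\varrho_*[Y]$ is the \emph{leading} Chern class. It is integral for free, because it lives in $H^4_{\varrho(Y)}(X\times E;\mathbb Z)\cong H_6(\varrho(Y);\mathbb Z)\cong\mathbb Z$, and its slant with the Poincar\'e dual of $a$ is immediately $-f_!g^*a$.

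You instead twist $\mathcal O_Y$ by the flat bundles $\varpi^*p_1^*\mathcal P_t$. Every fibre is then supported on all of $D$, and $[\mathcal F_t]=0$ is immediate. With the paper's conventions one finds $c_1(\mathcal P)=a_1b_2-b_1a_2$, so slanting with the Poincar\'e dual of $a$ does extract $\pm\beta$, as you predicted.

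The price is that $f\times\mathrm{id}_E$ has codimension one. So $c_2(\mathcal F)=-(f\times\mathrm{id}_E)_*c_1(\mathcal L)$, with $\mathcal L=(p_1\varpi\times\mathrm{id}_E)^*\mathcal P$, is a \emph{subleading} term, and Grothendieck--Riemann--Roch gives it only rationally. Since the obstruction depends only on the mod-$2$ reduction of an integral class, you need an integrality remark. Here it is cheap: the computation can be done in $H^3_D(X;\mathbb Z)\cong H_5(D;\mathbb Z)\cong H_5(Y;\mathbb Z)\cong\mathbb Z^4$ (Mayer--Vietoris for $D=Y\cup_R S$), which is torsion-free. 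Also, there is no natural morphism to take a cone of. Use $\mathbf R(f\times\mathrm{id}_E)_*\mathcal L\oplus\operatorname{pr}_X^*(\mathbf Rf_*\mathcal O_Y)[1]$, as the paper does with $\mathcal E_{t_0}$.

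\textbf{The flagged obstacle.} The step you single out as the main obstacle is not one, and your fallbacks (double-point formula, Thom class of $D$ on its smooth locus) are unnecessary. The formula $\operatorname{Sq}(f_!s)=f_!\bigl(\operatorname{Sq}(s)\,w(f^*TX-TY)\bigr)$ holds for every proper smooth map of smooth manifolds, embedding or not. Factor $f$ as an embedding $Y\hookrightarrow X\times\mathbb R^N$, whose normal bundle is stably $f^*TX-TY$, followed by the projection. The compactly supported pushforward along the projection is a suspension isomorphism and commutes with Steenrod squares.

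The singularities of $D=f(Y)$ never enter; the local model $v^2=zu^2$ is used only inside Lemma~\ref{nonorientable-self-intersection}. Moreover $w_2(f^*TX-TY)=f^*\overline{c_1(TX)}+\overline{c_1(TY)}=k$ exactly, with no correction terms. Likewise $j^*K_Y\cong M$ on the nose, since $j^*\mathcal O_Y(h)\cong\mathcal O_R$. This is exactly how the paper obtains $\operatorname{Sq}^2_c(\bar\alpha)=f_!(\bar\beta\smile k)$.

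\textbf{Two small slips.}
\begin{enumerate}
\item The second term equals $\int_R\bar a_1\bar a_2\,\sigma^*j^*k$, not $\int_R\bar a_1\bar a_2\,j^*k$. This is harmless, because $\sigma^*c_1(M)=\Delta+c_1(Q)\equiv c_1(M)$ modulo $2$.
\item Corollary~\ref{compactly-supported-Steenrod-trivialization} supplies only the existence direction. For nontriviality you need the compact monodromy formula of \cite{JU} transferred to compact supports. The paper does this by doubling a compact neighbourhood of the support with smooth boundary and extending the classes by zero.
\end{enumerate}
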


\begin{proof}
The Steenrod--Thom formula gives
$\operatorname{Sq}_c^2(\bar\alpha)=f_!(\bar\beta\smile k)$:
the virtual normal bundle $f^*TX-TY$ has second Stiefel--Whitney class
$k$, while $\operatorname{Sq}^1(\bar\beta)=\bar\beta^2=0$ and
$\operatorname{Sq}^2(\bar\beta)=0$. The preceding lemma and the projection
formula therefore give
\[
\begin{split}
\int_X\bar\alpha\smile\operatorname{Sq}_c^2(\bar\alpha)
&=\int_Y\bar\beta\smile e\smile\bar\beta\smile k
  +\int_R\bar a_1\bar a_2\,\sigma^*j^*k\\
&=\int_R\bar a_1\bar a_2
  \bigl(\bar\Delta+\overline{c_1(Q)}\bigr)=1.
\end{split}
\]
Indeed, $j^*c_1(K_Y)=c_1(M)=\Delta-c_1(Q)$. The diagonal term is zero
because $a_1a_2$ restricts to $a^2=0$ on $\Delta$, and the remaining
term is $\int_R\bar a_1\bar a_2\bar b_1\bar b_2=1$.

Put $g=p_1\circ\varpi:Y\to E$, $\varrho=(f,g):Y\to X\times E$, and
$\mathcal E=\mathbf R\varrho_*\mathcal O_Y$. The morphism $\varrho$ is
finite and generically of degree one onto its image, which has codimension
two. Thus $\mathcal E$ is perfect with proper support, $c_1(\mathcal E)=0$,
and the leading Chern class formula gives
$c_2(\mathcal E)=-\varrho_*[Y]$, also in cohomology with compact support.
Choose $\gamma$ representing the Poincar\'e dual of $a$ in $E$, with the
orientation for which
$c_2(\mathcal E)/[\gamma]=-f_!g^*a=-\alpha$.

Fix $t_0\in E$ and define
\[
\mathcal F=\mathcal E\oplus
\operatorname{pr}_X^*(\mathcal E_{t_0})[1].
\]
All fibres of $\mathcal E$ have the same class in $K^0_{\rm c}(X)$, since the
parameter curve is connected and the supports lie in the compact divisor
$D$. Hence $[\mathcal F_t]=0$, and in particular
$(c_1^2-c_2)(\mathcal F_t)=0$ in $H_c^4(X;\mathbb Z)$. Moreover,
$c_1(\mathcal F)=0$ and
$c_2(\mathcal F)=c_2(\mathcal E)-\operatorname{pr}_X^*c_2(\mathcal E_{t_0})$.
The second term has zero slant product with $[\gamma]$, proving the
asserted formula for $c_2(\mathcal F)$.

Finally, let $\ell:S^1\to(\mathcal M_X^{\rm cs})^{\rm top}$ be the loop
defined by $\mathcal F$ and $\gamma$. The comparison of
Subsections~\ref{compactly-supported-comparison}--\ref{KO-theoretic-interpretation}
and the factorization of
Proposition~\ref{BU-BSU-transition} identify its orientation monodromy with
that of the cohomology orientation functor along
$\kappa(\mathcal F)=c_2(\mathcal F)-c_1(\mathcal F)^2$.
By \cite[Theorem~3.2(c), equation~(3.22)]{JU}, this monodromy is
\[
(-1)^{\int_X\bar\alpha\smile\operatorname{Sq}_c^2(\bar\alpha)}=-1.
\]
Here the compactly supported formula follows from the compact one by
doubling a compact neighbourhood of the common support with smooth boundary
and extending the classes by zero. This proves that the orientation torsor 
is nontrivial.
\end{proof}

Such a situation can, however, never happen if we restrict ourselves to the stack of PT-pairs.

\begin{proposition}\label{nonorientable-PT-orientability}
Let $X$ be a smooth quasi-projective Calabi--Yau $4$-fold, and let
$P=P_n(X,\beta)$ be the moduli space of PT-stable pairs with proper
one-dimensional support. For a loop $\gamma:S^1\to P^{\rm an}$, write
$\mathcal I_\gamma=[\mathcal O\to\mathcal F_\gamma]$ for the topological
restriction of the universal pair, and put
\[
\xi_\gamma=[\mathcal I_\gamma]-[\mathcal O]=-[\mathcal F_\gamma],
\qquad
\alpha_\gamma=
\bigl(c_2(\xi_\gamma)-c_1(\xi_\gamma)^2\bigr)/[S^1]
\in H_c^3(X;\mathbb Z).
\]
Then $\alpha_\gamma=0$. In particular,
\[
\int_X\bar\alpha_\gamma\smile
\operatorname{Sq}_c^2(\bar\alpha_\gamma)=0.
\]
\end{proposition}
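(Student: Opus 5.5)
The plan is to show directly that $\kappa(\xi_\gamma)=c_2(\xi_\gamma)-c_1(\xi_\gamma)^2$ vanishes already as a class on $S^1\times X$ with compact support along $X$. The input is that $\xi_\gamma$ is supported on a subset of $S^1\times X$ of very small real dimension. The slant product with $[S^1]$ is then zero, so $\alpha_\gamma=0$, and the Steenrod-square integral vanishes trivially.

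\textbf{Step 1: the support of the family.} The universal pair $\mathcal O\to\mathcal F$ restricted to the loop gives a family $\mathcal F_\gamma$ on $S^1\times X$. Each fibre is a sheaf of dimension at most one with proper support, so the union of supports $Z_\gamma\subset S^1\times X$ is compact. Its real dimension is at most $2+1=3$: curves of real dimension two, swept along the circle. Using properness of the support over $P$ and a real-analytic (semi-analytic) triangulation, I would take a compact neighbourhood $N$ of $Z_\gamma$ which deformation retracts onto $Z_\gamma$.

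\textbf{Step 2: lifting the class.} The complex $\mathcal I_\gamma\to\mathcal O$ is a quasi-isomorphism off $Z_\gamma$, so $\xi_\gamma=-[\mathcal F_\gamma]$ comes with a canonical trivialization on $(S^1\times X)\setminus N^\circ$. Hence $\xi_\gamma$ lifts to relative $K$-theory $K^0(S^1\times X,\,(S^1\times X)\setminus N^\circ)$. In terms of mapping spaces, this is a map of pairs into $(BU,*)$, as in Subsection~\ref{sec:push} and Remark~\ref{H4rem}. Consequently $c_1(\xi_\gamma)$ and $c_2(\xi_\gamma)$, together with the cup product $c_1^2$, lift to cohomology supported in $N$, namely
\[
H^{2i}\bigl(S^1\times X,(S^1\times X)\setminus N^\circ;\mathbb Z\bigr)\cong H^{2i}(N,\partial N;\mathbb Z).
\]

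\textbf{Step 3: vanishing by duality.} Poincaré--Lefschetz duality on the compact $10$-manifold-with-boundary $N$ gives
\[
H^{2i}(N,\partial N;\mathbb Z)\cong H_{10-2i}(N;\mathbb Z)\cong H_{10-2i}(Z_\gamma;\mathbb Z).
\]
Since $\dim_{\mathbb R}Z_\gamma\le3$, this group is zero for $10-2i\ge4$, that is for $i\le3$. So $c_1(\xi_\gamma)=0$ and $c_2(\xi_\gamma)=0$ with supports, and therefore also in $H^*_c$ of $S^1\times X$ relative to $S^1$. Taking the slant product with $[S^1]$ gives $\alpha_\gamma=0$, and then $\bar\alpha_\gamma\smile\operatorname{Sq}_c^2(\bar\alpha_\gamma)=0$.

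The main obstacle is Step 1 together with the support-refinement in Step 2: producing a neighbourhood $N$ that is homotopy equivalent to a $3$-dimensional complex, and checking that the topological realization $\Gamma_X$ of the universal pair really carries the trivialization off $N$ compatibly. To handle the first point, I would first try to triangulate the pair $(S^1\times X,Z_\gamma)$ after homotoping $\gamma$ to a piecewise real-analytic loop in $P^{\rm an}$. For the second, I would use the compactly supported comparison of Subsection~\ref{compactly-supported-comparison} applied with $X$ replaced by a small open neighbourhood of the support, as in the excision formalism.
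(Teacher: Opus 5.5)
Your argument is essentially the paper's: the supports over the loop form a compact set $Z_\gamma\subset S^1\times X$ of real dimension at most $3$, $\xi_\gamma$ refines to $K$-theory supported on $Z_\gamma$, and duality kills the supported classes $c_1$ and $c_2$ (the paper uses duality with supports, $H^j_{Z_\gamma}(T)\cong H_{9-j}(Z_\gamma)=0$ for $j<6$, directly instead of your regular neighbourhood $N$). One slip: $S^1\times X$ has real dimension $9$, not $10$, so the correct isomorphism is $H^{2i}(N,\partial N)\cong H_{9-2i}(Z_\gamma)$, which vanishes only for $i\le 2$; this is still exactly the range needed for $c_1$, $c_2$ and $c_1^2$.
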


\begin{proof}
We may replace $\gamma$ by a homotopic piecewise semialgebraic loop. Let
$Z_\gamma\subset X^{\rm an}\times S^1$ be the pullback of the universal
sheaf's support. Since this support is proper over $P$ and its fibres have
complex dimension one, $Z_\gamma$ is compact and semialgebraic of real
dimension at most three.

The manifold $T=X^{\rm an}\times S^1$ is oriented of real dimension nine.
Duality with supports therefore gives
\[
H^j_{Z_\gamma}(T;\mathbb Z)
\cong H_{9-j}(Z_\gamma;\mathbb Z)=0
\qquad(j<6).
\]
Here $H^j_{Z_\gamma}(T;\mathbb Z)=H^j(T,T\setminus Z_\gamma;\mathbb Z)$.
Outside $Z_\gamma$, the pair is canonically identified with $\mathcal O$,
so $\xi_\gamma$ belongs to $K^0_{Z_\gamma}(T)$. Its supported Chern classes
$c_1(\xi_\gamma)$ and $c_2(\xi_\gamma)$ therefore vanish, and hence vanish
also in compactly supported cohomology. Slanting
$c_2(\xi_\gamma)-c_1(\xi_\gamma)^2=0$ with $[S^1]$ gives
$\alpha_\gamma=0$, proving the assertion.
\end{proof}

It immediately follows that all $PT$-stacks are orientable. The same can be proven in families, as the next Subsection shows.

\subsection{Canonical orientations for PT-stable pairs}
\label{sec:canonical-PT-orientations}

We now strengthen Proposition~\ref{nonorientable-PT-orientability}
by constructing a canonical orientation in families. We retain the conventions of
Remark~\ref{orientation-conventions}.

\begin{lemma}\label{PT-curve-neighbourhoods}
Let $C\subset X$ be a compact complex-analytic curve in a smooth complex
$4$-fold. Every neighbourhood of $C$ contains a relatively compact open
neighbourhood $U$ which deformation retracts onto $C$. In particular,
$H_c^j(U;\mathbb Z)=0$ for $j<6$.
\end{lemma}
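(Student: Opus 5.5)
The plan is to use the fact that $C$ is a compact complex-analytic subvariety, hence triangulable, together with standard regular-neighbourhood theory. By Łojasiewicz's triangulation theorem for (semi-)analytic sets, there is a triangulation of a neighbourhood of $C$ in $X$ in which $C$ is a compact subcomplex. Taking the second barycentric subdivision, the simplicial (regular) neighbourhood $N$ of $C$ deformation retracts onto $C$. By subdividing finely enough, $N$ lies in any prescribed neighbourhood of $C$. We then take $U$ to be the interior of $N$, or a slightly shrunken open version of it, for example the open star of $C$ in the subdivision. This $U$ is relatively compact, since $N$ is compact, and it still deformation retracts onto $C$ via the standard straight-line retraction along simplices. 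An alternative route, which I would mention, is to use a real-analytic function $\rho\ge 0$ with $\rho^{-1}(0)=C$ (for instance a sum of squares of local generators, patched by a partition of unity) and take $U=\{\rho<\varepsilon\}$; for small $\varepsilon$ the sublevel sets retract onto $C$ by Łojasiewicz's gradient inequality. I would use the triangulation argument as the primary one.

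For the cohomological claim, I would argue as follows. Since $N$ is a compact PL manifold with boundary of real dimension $8$ (a regular neighbourhood of a subcomplex in the PL $8$-manifold $X$), Lefschetz–Poincaré duality gives
\[
H_c^j(U;\mathbb Z)\cong H^j(N,\partial N;\mathbb Z)\cong H_{8-j}(N;\mathbb Z)\cong H_{8-j}(C;\mathbb Z).
\]
Here the last isomorphism uses the deformation retraction. Since $C$ is a compact CW complex of real dimension $2$, $H_{8-j}(C;\mathbb Z)=0$ whenever $8-j>2$, that is, whenever $j<6$. Orientability of $N$ is automatic because $X$ is complex. If one prefers to avoid PL boundaries, the same conclusion follows from duality with supports on the oriented manifold $X$, as in the proof of Proposition~\ref{nonorientable-PT-orientability}: identify $H^j_c(U)$ with $\varinjlim H^j(X,X\setminus K)$ over compacta $K\subset U$, which are cofinal with the $N$'s.

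I expect the main obstacle to be the first step: producing neighbourhoods that are simultaneously open, relatively compact, arbitrarily small, and genuinely deformation retract onto a possibly singular curve. I would handle this by quoting triangulability of analytic sets together with regular-neighbourhood theory, rather than constructing the retraction by hand. The dimension count afterwards is routine.
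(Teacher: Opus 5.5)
Your proposal is correct and follows essentially the same route as the paper: triangulate $X$ compatibly with $C$, take the open star of $C$ in a sufficiently fine subdivision as the relatively compact neighbourhood that deformation retracts onto $C$ (the paper cites Trivedi--Trotman for this step), and conclude by duality together with $H_{8-j}(C;\mathbb Z)=0$ for $j<6$. The one minor difference is that the paper applies Poincar\'e duality $H_c^j(U;\mathbb Z)\cong H_{8-j}(U;\mathbb Z)$ directly on the open oriented $8$-manifold $U$. This spares you from needing the simplicial neighbourhood $N$ to be a PL manifold with collared boundary, which holds for a combinatorial triangulation but otherwise requires an extra citation.
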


\begin{proof}
Choose a locally finite triangulation of $X$ compatible with $C$. Since
$C$ is compact, open stars in sufficiently fine subdivisions give relatively
compact neighbourhoods which deformation retract onto $C$; see
\cite[Theorem~2.4 and Section~3]{TrivediTrotman2020}. Poincar\'e duality gives
\[
H_c^j(U;\mathbb Z)\cong H_{8-j}(U;\mathbb Z)
\cong H_{8-j}(C;\mathbb Z)=0
\qquad(j<6).\qedhere
\]
\end{proof}

\begin{proposition}\label{PT-canonical-orientation-in-families}
Let $\pi:\mathfrak X\to B$ be a smooth, topologically locally trivial
complex-analytic family of $4$-folds, equipped with a relative Calabi--Yau
structure. Let $\mathcal P=\mathcal P(\mathfrak X/B)$ be the relative stack
of PT-stable pairs with proper one-dimensional support. Then
$O^\omega_{\mathcal P}$ has a canonical trivialization, compatible with
base change. On every fibre admitting a flag structure natural at zero,
this trivialization agrees with the induced Joyce--Upmeier orientation.
\end{proposition}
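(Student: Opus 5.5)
The plan is to construct the trivialization locally near the support of a pair, using neighbourhoods on which every obstruction vanishes, and then to glue the local choices by excision. Fix a point of $\mathcal P$ over $b\in B$, i.e.\ a pair $s:\mathcal O_{X_b}\to\mathcal F$ with support curve $C\subset X_b$. By Lemma~\ref{PT-curve-neighbourhoods}, choose a relatively compact neighbourhood $U\supset C$ which deformation retracts onto $C$, so that $H_c^3(U;\mathbb Z)=H_c^4(U;\mathbb Z)=0$. By topological local triviality, $U$ extends to a family $\mathfrak U\subset\mathfrak X$ over a neighbourhood $B'$ of $b$ whose fibres are homeomorphic to $U$ and contain the supports of all nearby pairs. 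This uses properness of the support over $\mathcal P$, after shrinking to an open $\mathcal P'\subset\mathcal P$. Following Proposition~\ref{nonorientable-PT-orientability}, the PT-pair defines the class $\xi=[\mathcal I]-[\mathcal O]=-[\mathcal F]$. This class has compact support in $U$, so it gives a map $\mathcal P'\to(\mathcal M^{\rm cs}_{\mathfrak U/B'})^{\rm top}$. Along this map the determinant and orientation torsors of the pair are identified with those of $-\mathcal F$: the trivial summand $\mathcal O$ contributes canonically, since $\operatorname{Ext}$ of $\mathcal O$ with itself gives a fixed piece that is compatible with Serre duality.

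Since $H_c^3$ and $H_c^4$ of the fibres of $\mathfrak U$ vanish, Theorem~\ref{family-canonical-orientations} applies to all of $\mathcal C^{\rm cs}_{\mathfrak U/B'}$. It gives a canonical trivialization of $O^\omega_{\mathcal M^{\rm cs}_{\mathfrak U/B'}}$, and I pull this back to $\mathcal P'$. To transport it to $O^\omega_{\mathcal M^{\rm cs}_{\mathfrak X/B}}$, I use the excision isomorphism $\operatorname{Exc}_{U,X}$ of \eqref{intro:excision-isomorphism}, in its family form. It is realized on the bordism side by the excision trick of \S\ref{bordcatextr} and Proposition~\ref{open-Pi-functor}.

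Independence of choices is the key step. Given two such neighbourhoods $U_1,U_2$ of $C$, I choose a third neighbourhood $U_3\subset U_1\cap U_2$ of the same type, again by Lemma~\ref{PT-curve-neighbourhoods}. It then suffices to show that the canonical orientation on $U_3$, pushed forward by $\operatorname{Exc}_{U_3,U_i}$, equals the canonical orientation on $U_i$. Both orientations are obtained by the unique-extension lemma from the zero-class trivialization: on $U_i$, all of $\mathcal H^4_{\rm cs}$ is the simply connected zero component. Excision carries zero maps to zero maps and is compatible with the cohomology orientation torsor $L$. Uniqueness of the extension in the lemma then forces the two orientations to agree.

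The same uniqueness argument shows that the local trivializations agree on overlaps of the $\mathcal P'$, so they glue to a global trivialization over $\mathcal P$. Base-change compatibility follows because every ingredient restricts along maps $B''\to B$. Finally, suppose a fibre $X_b$ carries a flag structure $F$ natural at zero. Restricting $F$ along the excision functor $\operatorname{Bord}^{\rm cs}_U\to\operatorname{Bord}^{\rm cs}_{X_b}$ gives a flag structure on $U$ natural at zero. Because $H_c^4(U)=0$, Proposition~\ref{relative-flag-existence-and-classification} shows it is the unique such structure, so the induced Joyce--Upmeier orientation is the one used above. The consistency of the first step with the statement that $\kappa(\xi)=0$ on $U$ follows from Proposition~\ref{canonical-orientation-at-zero-class}.

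The main obstacle I expect is the family form of excision: showing that $\operatorname{Exc}$ is compatible with the comparison isomorphisms $\Gamma$ and with the gluing over varying $b$ in the complex-analytic setting. For this I would first try to reduce to the statement at the level of compactly supported topological $K$-theory spaces, where extension by zero is evidently compatible with $\pi_!^{KO}$, and then invoke Theorem~\ref{thm:analytic-comparison}.
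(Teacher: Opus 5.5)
Your proposal is correct and follows essentially the same route as the paper. You choose a neighbourhood $U$ of the support with $H_c^3(U)=H_c^4(U)=0$ via Lemma~\ref{PT-curve-neighbourhoods}, spread it into an open subfamily $\mathfrak U$ using local topological triviality and properness of the support, and apply Theorem~\ref{family-canonical-orientations} on $\mathfrak U$. You then transport by excision, prove independence of $U$ by the unique-extension argument, glue, and match with Joyce--Upmeier via uniqueness at the zero class; your intermediate $U_3\subset U_1\cap U_2$ only makes explicit what the paper does for nested $U'\subset U$.

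One small correction concerns a step the paper does not spell out. The identification of the pair's torsor with that of $-[\mathcal F]$ is not due to $\operatorname{Ext}^*(\mathcal O,\mathcal O)$. That piece is removed by passing to the traceless part, and it is infinite-dimensional on non-compact fibres anyway. The real reason is the cross terms $\mathbf R\operatorname{Hom}(\mathcal O,\mathcal F)$ and $\mathbf R\operatorname{Hom}(\mathcal F,\mathcal O)\simeq\mathbf R\Gamma(\mathcal F)^\vee[-4]$: they are Serre dual to each other, so they contribute a canonically oriented summand of the form $V\oplus V^\vee$.
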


\begin{proof}
First consider a pair $\mathcal I=[\mathcal O_X\to\mathcal F]$ on a
single fibre $X$, and put $C=\operatorname{Supp}\mathcal F$. Choose
$U\subset X$ as in Lemma~\ref{PT-curve-neighbourhoods}, with the restricted
Calabi--Yau structure. Since $H_c^3(U;\mathbb Z)=H_c^4(U;\mathbb Z)=0$,
the theorem of Section~\ref{sec:orientation-in-families}, applied over
a point, gives the canonical Joyce--Upmeier orientation on the whole
space $\mathcal C_U^{\rm cs}$. Apply it to
$\xi=-[\mathcal F]\in K_c^0(U)$. The Excision identifies its
orientation torsor on $U$ with the one on $X$.

We check independence of $U$. For an inclusion $U'\subset U$ between
neighbourhoods satisfying the cohomological vanishing above, extension
by zero induces
$\mathcal H^4_{\rm cs}(U')\to\mathcal H^4_{\rm cs}(U)$. The canonical
trivializations of the cohomology orientation torsors agree under this
map by excision. 
Now fix a pair over $b_0\in B$ and choose $U\subset X_{b_0}$ as above.
A local topological trivialization of $\mathfrak X/B$ extends $U$ to an
open subfamily $\mathfrak U\subset\mathfrak X|_V$ over an open
neighbourhood $V$ of $b_0$, with underlying topological family
$U\times V\to V$. Since $\mathfrak U$ is open in $\mathfrak X|_V$, it
is a complex-analytic family with the restricted Calabi--Yau structure.
Its fibres have vanishing third and fourth compactly supported cohomology.

Properness of the  support allows us to shrink a neighbourhood
of the chosen pair in a local analytic chart of $\mathcal P$ so that all
supports lie in $\mathfrak U$. Indeed, the image of the part of the
universal support outside $\mathfrak U$ is closed. The theorem of
Section~\ref{sec:orientation-in-families} applies to $\mathfrak U/V$.
Together with the excision, it therefore gives a
local relative orientation on $\mathcal P$.

At every pair, this local orientation agrees with the one already
constructed: choose a smaller neighbourhood of its support inside the
corresponding fibre of $\mathfrak U$ and apply independence of the
neighbourhood. Thus the local relative orientations agree on overlaps
and glue. The same argument gives independence of the local topological
trivializations, and naturality of the construction gives compatibility
with base change.

Finally, suppose that $X_b$ admits a flag structure natural at zero.
Its cohomology orientation, pulled back to $\mathcal H^4_{\rm cs}(U)$,
extends the prescribed trivialization at zero, and hence equals the
one used above. Proposition~\ref{canonical-orientation-at-zero-class}
therefore identifies the resulting orientation with the Joyce--Upmeier
orientation on PT pairs.
\end{proof}

\appendix
\section{Complex-analytic K-theory d'apr\`{e}s Levy}
\label{sec:appendix-analytic-comparison}

\subsection{The formulation and some notation}
The purpose of this appendix is to supply the comparison used in
Subsections~\ref{compstatement} and~\ref{nonproper-family-comparison} for the
complex analytic families. 

\begin{remark}
Note that a non-proper complex-analytic manifold may have no smooth complex-analytic compactification, or, indeed, no embedding into a smooth proper complex-analytic manifold.

We shall thus construct the homotopy
without choosing a compactification and without explicitly using the Atiyah-Singer index
theorem.

Moreover, there is still no notion of complex-analytic Blanc's $K$-theory, so we cannot employ methods used above.
\end{remark}

We add  now the following
notation. 

For a complex analytic space $Y$, let
$\operatorname{Perf}_{\rm an}(Y)$ be the stable $\infty$-category of analytic
perfect complexes on $Y$, and let
$\operatorname{Perf}_{\rm an}(Y)^{\simeq}$ be its space of objects and
equivalences.  The assignment
$Y\mapsto\operatorname{Perf}_{\rm an}(Y)^{\simeq}$ is the analytic stack of
perfect complexes.  Applied after a change of base $T\to U$, and restricted to
objects whose cohomological support is proper over $T$, it gives the analytic
version of the stack $\mathcal M_{\mathfrak X/U}^{\rm cs}$. 

For a stable $\infty$-category
$\mathcal A$, we write $\mathbf K^{\rm cn}(\mathcal A)$ for its
connective algebraic $K$-theory spectrum and
$K(\mathcal A)=\Omega^\infty\mathbf K^{\rm cn}(\mathcal A)$ for its
$K$-theory space.  Thus, \textit{in this Appendix, $K(\mathcal A)$ always stands for a space, not a group}; its set of
components is the usual Grothendieck group.  

Let us finally also mention that $\pi_*^{\rm alg}$ is the notation for the analytic
derived direct image, and $\pi_!^{KU}$ for the pushforward induced by the $\operatorname{Spin}^c$-orientation.

The result proved below is the following.

\begin{theorem}\label{thm:analytic-comparison}
(a) Let $\pi:\mathfrak X\to U$ be a separated holomorphic submersion between
complex manifolds, and suppose that its underlying topological family is
locally trivial.  There is a natural homotopy making the square
\[
\begin{tikzcd}[column sep=5.6em]
 (\mathcal M_{\mathfrak X/U}^{\rm cs})^{\rm top}
   \arrow[r,"\Gamma_{\mathfrak X/U}"]
   \arrow[d,"\pi_*^{\rm alg}"'] &
 \mathcal C_{\mathfrak X/U}^{\rm cs}
   \arrow[d,"\pi_!^{KU}"] \\
 U\times\operatorname{Perf}(\mathbb C)^{\rm top}
   \arrow[r,"\operatorname{id}_U\times\Gamma_{\operatorname{pt}}"'] &
 U\times KU
\end{tikzcd}
\]
commute.  

(b) The comparison maps respect direct sums, tensor products, and
duals.  The homotopy is compatible with composition, change of base, the
projection formula, and pushforward by a closed holomorphic embedding. 

(c) The homotopy carries a natural ``2-cell'' identifying relative analytic Serre
duality with the topological Serre duality of
Subsection~\ref{compstatement}. 
\end{theorem}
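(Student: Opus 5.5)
The plan is to construct the homotopy in Theorem~\ref{thm:analytic-comparison}(a) without compactifications. The tool is Levy's analytic $K$-theory with supports \cite{Levy1987,Levy2008}. For a complex manifold $Y$ and a closed subset $Z$, Levy defines a $K$-theory of coherent (equivalently perfect) analytic complexes supported on $Z$. He proves a Riemann--Roch comparison: for a proper holomorphic map, the analytic direct image agrees with the topological pushforward defined by the $\operatorname{Spin}^c$ (stable complex) orientation. The argument factors through a closed embedding followed by a projection, and the closed-embedding step is the Koszul/Thom class comparison.

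\textbf{Step 1: families of supports.} Because the support is only proper over the base $T$, I work with the directed system of closed subsets $Z\subset \mathfrak X_T$ that are proper over $T$. For each such $Z$ I build the comparison map
\[
K\bigl(\operatorname{Perf}_{\rm an}(\mathfrak X_T)_Z\bigr)\to \operatorname{Map}_*\bigl(\mathfrak X_T/(\mathfrak X_T\setminus Z),BU\times\mathbb Z\bigr),
\]
using Hermitian resolutions by smooth vector bundles, which exist locally on $T$. Passing to the colimit over $Z$ and realizing over test manifolds $T\to U$ gives $\Gamma_{\mathfrak X/U}$. Naturality in $T$ then yields the map from $(\mathcal M^{\rm cs}_{\mathfrak X/U})^{\rm top}$, as in the construction of $\Gamma$ recalled in Subsection~\ref{stacks}.

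\textbf{Step 2: the homotopy for $\pi$, via a local factorization.} Locally on $U$, and near a given compact support, I choose a holomorphic embedding of a neighbourhood of the support into $\mathbb C^N\times U$ over $U$. Such embeddings exist on Stein neighbourhoods; this is where I would use Siu's theorem on Stein neighbourhoods of Stein subvarieties, or replace global embeddings by Levy's local-to-global gluing. The analytic direct image along the embedding is compared with the $KU$ Thom pushforward through Levy's Koszul argument: the Koszul complex of the normal bundle is the Thom class $\lambda_{-1}$, as in Subsection~\ref{sec:serre-duality-without-quasi-projectivity}. The direct image along $\mathbb C^N\times U\to U$ of compactly supported objects is handled by Bott periodicity. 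This reduces to the Dolbeault computation $H^N_c(\mathbb C^N,\mathcal O)$ paired with the Bott element, which is exactly the index identification in \cite{Levy2008}.

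\textbf{Step 3: independence and gluing.} Independence of the embedding and the neighbourhood is proved by comparing two embeddings through their product embedding, using a deformation to the normal cone. Gluing over covers of $U$ and over the directed system of supports then produces a natural homotopy. The obstruction to gluing lies in higher coherences. These coherences are controlled because the space of embeddings is stably contractible, parallel to Step~3 of Proposition~\ref{closedcobordism}.

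\textbf{Part (b).} Compatibility with sums, tensor products and duals comes from working at the level of $K$-theory spectra of symmetric monoidal categories with duality. Compatibility with composition and base change follows from the construction. The projection formula follows because Thom classes are module maps.

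\textbf{Part (c).} I take the description of analytic Serre duality as the Poincar\'e functor attached to the trace $\operatorname{Tr}_\pi$, as in Subsection~\ref{sec:serre-duality-without-quasi-projectivity}. For the embedding step, Grothendieck duality for a Koszul resolution is $\tau u^{KU}(V)\simeq(-1)^c[\det V]\,u^{KU}(V)$. For the projection step, it is the self-duality of the Bott class on $\mathbb C^N$.

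I expect Step~3 to be the main obstacle. Producing a coherent homotopy, rather than one homotopy per choice, requires higher-categorical gluing over all choices of embeddings and supports, and the non-existence of compactifications prevents any global shortcut. My first approach would be to index the construction over the contractible $\infty$-category of pairs (Stein neighbourhood, embedding) and apply a descent argument.
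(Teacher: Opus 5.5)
Your Step~2 breaks down: a neighbourhood of the support cannot be holomorphically embedded into $\mathbb C^N\times U$ over $U$. Over a point $u$, the support of an object of $\mathcal M^{\rm cs}_{\mathfrak X/U}$ is a compact analytic subset of $X_u$. In the cases this theorem exists to serve, it has positive-dimensional components: PT pairs on compact curves, and complexes on the compact divisor $D$ of \S\ref{counterexample}.

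Every holomorphic function near a compact connected analytic set is constant on that set. So any holomorphic map to $\mathbb C^N$ collapses each positive-dimensional component of the support to a point. Equivalently, such a set is not Stein and has no Stein neighbourhood, so Siu's theorem gives you nothing. Dually, an analytic subset of $\mathbb C^N\times U$ that is proper over $U$ is finite over $U$, so your projection-plus-Bott step could only ever see zero-dimensional supports.

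This is also why Levy's Riemann--Roch is not a ``closed embedding followed by projection'' argument in the style of \cite{BFM}. His method exists precisely because global holomorphic embeddings are unavailable, even for maps between compact non-projective complex manifolds. Your Step~3 rests on the same factorization: deformation to the normal cone and contractibility of spaces of holomorphic embeddings. So does your part~(c): Koszul duality for the embedding plus Bott self-duality for the projection. Both inherit the gap. Your fallback, ``Levy's local-to-global gluing'', points the right way, but it is not a drop-in replacement: the rest of the plan has to be rebuilt around it.

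The missing idea is to use holomorphy only locally.

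\textbf{Local data.} On a finite Stein system $\{V_\alpha\}$ the paper takes holomorphic graph embeddings $j_\alpha=(f|_{V_\alpha},w_\alpha):V_\alpha\hookrightarrow U_\alpha\times D_\alpha$.

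\textbf{Global data.} Globally it uses only a smooth, almost complex embedding $e=(f,\rho):X\hookrightarrow Y\times\mathbb C^N$.

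\textbf{Fredholm families.} Levy forms Koszul complexes on the nuclear Fr\'echet spaces $\Gamma(V_\alpha,L^\bullet_\alpha)$ and glues them by his correcting maps $r_{m,\alpha',\alpha}$. The result is a uniformly Fredholm family. After Levy's approximation and topologization it defines classes in $KU$ of the compact pair $(Y\times D,Y\times\partial D)$.

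\textbf{The homotopy.} It comes from the scaling $p_\alpha(x,t)=(z_\alpha(x),t\,w_\alpha(x),t)$. At $t=0$ the family is the Koszul model of $\mathbf Rf_*L^\bullet$ tensored with the Thom class of the zero section. At $t=1$ it is $e_!$. Running this family through $S_\bullet$ and the interval lemma, then applying $T_N^{-1}$, gives $H_f$ as a homotopy of $K$-theory space maps.

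\textbf{Canonicity and duality.} Canonicity is the contractibility of the space of Levy models, built from product embeddings $j^{\rm prod}_\alpha$. Serre duality is tracked along the same interval, using the Ramis--Ruget--Verdier trace on \v Cech--Cousin complexes and the scaled Koszul self-duality.

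Your treatment of proper supports does match the paper: closed analytic $Z$ proper over the base, followed by a colimit over $Z$.
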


The main instrument is Levy's Riemann--Roch theorem for complex spaces
\cite{Levy1987,Levy2008}. In fact, all the main work was already done by Levy; our remaining task is to explain how his argument for \textit{$K$-theory groups} can be adapted to \textit{$K$-theory spaces}.

Levy's main instruments are certain systems of local resolutions; he then passes from such local resolutions to
complexes of nuclear Fr\'echet spaces and then to Fredholm families.  He
formulates the construction using Quillen's $Q$-construction and the plus
construction and states the conclusion in terms of the higher groups $K_i$.  

We
retain his analytic machinery, but we identify Levy's model with the
Waldhausen $K$-theory space. We then carry his continuous deformation through
Waldhausen's construction $S_\bullet$; this gives the homotopy in the theorem.

Properness is imposed on the support from the beginning, so the argument is
intrinsic to a non-compact $\mathfrak X$, and does not use any choice of a compactification: this is a general feature of Levy's approach.  

To tackle Serre duality, we carry over the machinery of the analytic relative trace of
Ramis--Ruget--Verdier \cite{RRV} to Levy's formalism.  On a Koszul
complex this trace becomes the pairing defined by the complex $KU$ Thom
class, which gives the asserted compatibility with Serre duality.

\subsection{Perfect complexes, connective \texorpdfstring{$K$}{K}-theory,
and the object map}
\label{sec:appendix-object-map}

A perfect
complex on a complex analytic space $Y$ is a complex of $\mathcal O_Y$-modules
which is locally quasi-isomorphic to a bounded complex of finite rank locally
free modules.  (Note that this is a local condition.) 

The stable $\infty$-category
$\operatorname{Perf}_{\rm an}(Y)$ contains the perfect complexes and all
their morphisms, whereas
$\operatorname{Perf}_{\rm an}(Y)^{\simeq}$ retains only objects and
equivalences.  These spaces satisfy descent and form the analytic stack of
perfect complexes; see \cite[Subsection~4.1]{HolsteinPorta}.

For a test manifold $T\to U$, put
$\mathfrak X_T=\mathfrak X\times_U T$ and
\[
 \mathcal A_T=
 \operatorname{Perf}_{\rm an}^{\rm cs}(\mathfrak X_T/T)
 =\bigl\{E\in\operatorname{Perf}_{\rm an}(\mathfrak X_T):
   \operatorname{Supp}H^*(E)\longrightarrow T\text{ is proper}\bigr\}.
\]
This is a stable, idempotent complete subcategory, and
$\mathcal A_T^{\simeq}=\mathcal M_{\mathfrak X/U}^{\rm cs}(T)$.  

Derived pullback preserves this condition, so the
categories $\mathcal A_T$ vary contravariantly with $T$.

We will be concerned with $K$-theory of analytic perfect complexes. For a small stable $\infty$-category $\mathcal A$, its connective
$K$-theory is obtained from Waldhausen's construction $S_\bullet$; we use
the formulation of \cite[Section~7]{BGT}.  The space
$\bigl(S_n\mathcal A\bigr)^{\simeq}$ may be pictured as the space of filtrations
$0=E_0\to E_1\to\cdots\to E_n$, together with their cofibers, and
$K(\mathcal A)=\Omega^\infty\mathbf K^{\rm cn}(\mathcal A)$ is naturally
equivalent to
$\Omega\bigl|\bigl(S_\bullet\mathcal A\bigr)^{\simeq}\bigr|$.  Since
$S_1\mathcal A\simeq\mathcal A$, an object gives a loop and hence a natural
map
\[
 \iota_{\mathcal A}:\mathcal A^{\simeq}\longrightarrow K(\mathcal A),
 \qquad E\longmapsto[0\to E].
\]
A cofiber sequence $E'\to E\to E''$ is a point of
$\bigl(S_2\mathcal A\bigr)^{\simeq}$ and supplies the path expressing
$[E]=[E']+[E'']$.  
The following lemma is standard.

\begin{lemma}\label{lem:appendix-object-map}
For every exact functor $F:\mathcal A\to\mathcal B$, the square
\[
\begin{tikzcd}[column sep=4.5em]
 \mathcal A^{\simeq}\arrow[r,"F"]
   \arrow[d,"\iota_{\mathcal A}"']&
 \mathcal B^{\simeq}\arrow[d,"\iota_{\mathcal B}"]\\
 K(\mathcal A)\arrow[r,"K(F)"']&K(\mathcal B)
\end{tikzcd}
\]
commutes naturally.  The resulting squares are compatible with composition,
natural equivalences between exact functors, and direct sums.
\end{lemma}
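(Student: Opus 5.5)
We first need the map $\iota_{\mathcal A}$ written as an actual natural transformation of functors on stable $\infty$-categories, and not merely as a map chosen one category at a time. Following \cite[Section~7]{BGT}, $K(\mathcal A)\simeq\Omega\bigl|(S_\bullet\mathcal A)^{\simeq}\bigr|$, and $(S_\bullet\mathcal A)^{\simeq}$ is a functor from stable $\infty$-categories with exact functors to simplicial spaces. Since $(S_0\mathcal A)^{\simeq}$ is contractible, the inclusion of $1$-simplices gives a map $\Sigma\,(S_1\mathcal A)^{\simeq}\to\bigl|(S_\bullet\mathcal A)^{\simeq}\bigr|$. Its adjoint, precomposed with the equivalence $\mathcal A^{\simeq}\simeq(S_1\mathcal A)^{\simeq}$ given by $E\mapsto[0\to E]$, is $\iota_{\mathcal A}$. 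Every ingredient here is functorial in exact functors: the construction $S_n$, the geometric realization, and the unit $\mathrm{id}\to\Omega\Sigma$. The equivalence $S_1\mathcal A\simeq\mathcal A$ is natural because it is restriction along $[1]\to\operatorname{Ar}[1]$ in the gap-functor model. Hence $\iota$ is a natural transformation $(-)^{\simeq}\Rightarrow K(-)$ of functors $\operatorname{Cat}^{\rm ex}_\infty\to\mathcal S$. For each exact $F$, the required square is then the naturality square of $\iota$ at $F$.

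Compatibility with composition, and with natural equivalences $F\simeq F'$, follows formally once $\iota$ is an honest natural transformation of $\infty$-functors. The coherent homotopies for $G\circ F$ then come from the functoriality data of the source and target functors. A natural equivalence $F\simeq F'$ is an equivalence in the functor $\infty$-category $\operatorname{Fun}^{\rm ex}(\mathcal A,\mathcal B)$, and both $(-)^{\simeq}$ and $K$ act on these mapping spaces. This uses that $\operatorname{Cat}^{\rm ex}_\infty$ is an $(\infty,2)$-category, or at least that its mapping spaces are the cores of the functor categories. The main obstacle is to verify that $\iota$ respects this $2$-categorical structure. I would check it by noting that $S_n$ is itself a functor on $\operatorname{Fun}^{\rm ex}$, namely $S_n(\mathcal B)^{\mathcal A}$-type cotensoring. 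A natural equivalence therefore induces a levelwise equivalence of simplicial spaces, compatible with the $1$-simplex inclusion.

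For direct sums, both functors $(-)^{\simeq}$ and $K$ preserve finite products: $K$ does by additivity, or directly from $S_n(\mathcal A\times\mathcal B)\simeq S_n\mathcal A\times S_n\mathcal B$. The map $\iota$ is compatible with the resulting identifications. The direct sum functor $\oplus:\mathcal A\times\mathcal A\to\mathcal A$ is exact, so naturality applied to $\oplus$ gives $\iota(E\oplus E')\simeq K(\oplus)(\iota E,\iota E')$. By additivity, $K(\oplus)$ agrees with the $H$-space sum on $K(\mathcal A)$, and $\iota$ therefore becomes an $E_\infty$-map from the direct-sum structure on $\mathcal A^{\simeq}$. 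This is the standard group-completion map, and the compatibility with $F$ follows because $F$ preserves $\oplus$ up to a canonical equivalence.
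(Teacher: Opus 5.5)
Your proposal is correct and follows the route the paper intends. The paper gives no proof: it only describes $\iota_{\mathcal A}$ through $S_1\mathcal A\simeq\mathcal A$ and $K(\mathcal A)\simeq\Omega|(S_\bullet\mathcal A)^{\simeq}|$ and calls the lemma standard, and your promotion of $\iota$ to a natural transformation of $\infty$-functors on $\operatorname{Cat}^{\rm ex}_\infty$ is exactly what makes every listed compatibility formal.

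Two small corrections. First, compatibility with natural equivalences is not an obstacle at all. Such equivalences are paths in the mapping spaces $\operatorname{Fun}^{\rm ex}(\mathcal A,\mathcal B)^{\simeq}$, and any natural transformation of $\infty$-functors respects them automatically. Second, $\iota_{\mathcal A}$ is not itself a group-completion map for Waldhausen $K$-theory of a stable category; already on $\pi_0$ for $\operatorname{Perf}(\mathbb C)$ the two differ, and $\iota_{\mathcal A}$ only factors through the direct-sum group completion. This aside plays no role in your argument.
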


The maps $\iota_{\mathcal A_T}$ therefore assemble into a map from
$\mathcal M_{\mathfrak X/U}^{\rm cs}$ to the presheaf
$T\mapsto K(\mathcal A_T)$.  Properness of the support and Grauert's direct
image theorem imply that
$\mathbf R\pi_{T*}E$ is a bounded coherent complex on $T$; it is perfect
because $T$ is smooth.  Thus $\mathbf R\pi_{T*}:\mathcal A_T\to
\operatorname{Perf}_{\rm an}(T)$ is exact, and the lemma applies to it.
In the resulting square, the topological realization of the upper horizontal
map is $\pi_*^{\rm alg}$.  

Levy's construction will thus provide a comparison between the functorialities
of analytic and topological $K$-theory spaces: this is similar in spirit to the passage to Blanc's $K$-theory in Section \ref{sec:blanc-recollection}.  

\begin{remark}
Observe that no detail in this section uses either a compactification (or
an embedding into a smooth compact complex manifold) or the resolution
property.
\end{remark}

\subsection{Levy's construction}
\label{sec:appendix-levy}

Let $f:X\to Y$ be a holomorphic map between compact complex manifolds.
Write $q_X:X\to\operatorname{pt}$ and
$q_Y:Y\to\operatorname{pt}$, so that $q_X=q_Y\circ f$.
For any compact complex manifold $Z$, write
$\operatorname{cyc}_Z:K(\operatorname{Perf}_{\rm an}(Z))\to KU(Z)$ for
the comparison map from analytic to topological $K$-theory constructed
below.  On a bounded complex of locally free $\mathcal O_Z$-modules of
finite rank, it is induced by forgetting the holomorphic structure.  For an
arbitrary perfect complex it is obtained from local finite locally free
resolutions by Levy's construction, which makes it a natural map of
$K$-theory spaces.
The comparison to be constructed is a homotopy
\begin{equation}\label{eq:appendix-fixed-comparison}
 H_f:\operatorname{cyc}_Y\circ K(\mathbf Rf_*)
 \simeq
 f_!^{KU}\circ\operatorname{cyc}_X
 :
 K\bigl(\operatorname{Perf}_{\rm an}(X)\bigr)\longrightarrow KU(Y).
\end{equation}
 We now outline Levy's construction of this homotopy using modern language and formalism.

\subsubsection{The analytic models}

Given a finite diagram of perfect complexes on $X$, choose finite Stein
covers of $X$ and $Y$.  Refine the cover of $X$ on multiple intersections
so that all terms of the diagram admit bounded resolutions by free modules
of finite rank.
Levy's lifting lemmas (\cite[Lemmas 1.6-1.9, Remark 1.10]{Levy2008}) allow the resolutions and the arrows
between them to be chosen simultaneously after a common refinement. 

Levy denotes the resulting category of local resolutions by
$\operatorname{LocFfd}(q_X)$.  

\begin{remark}\label{Levylocsyst}

Let us spell out what ``local'' means here.
If $\mathfrak V=\{V_i\}_{i\in I}$ is the chosen cover, put
$V_\alpha=\bigcap_{i\in\alpha}V_i$ whenever this intersection is nonempty.
An object consists of a finite complex $L^\bullet_\alpha$ of free
$\mathcal O_{V_\alpha}$-modules of finite rank for every such $\alpha$ and,
for $\beta\subset\alpha$, a monomorphic quasi-isomorphism
$L^\bullet_\beta|_{V_\alpha}\to L^\bullet_\alpha$.  These connecting
morphisms compose for $\gamma\subset\beta\subset\alpha$ and are the identity
for $\beta=\alpha$.  A morphism of local systems is a collection of
morphisms of complexes which commutes with the connecting morphisms. See
\cite[Definition~1.2, pp.~574--575]{Levy2008}.
\end{remark}

Write
$\Gamma_f=(f,\operatorname{id}_X):X\hookrightarrow Y\times X$ for the graph
embedding, so that $f=\operatorname{pr}_Y\circ\Gamma_f$.  After refining
both covers once more, for each nonempty intersection $V_\alpha$ choose a
coordinate chart $U_\alpha\subset Y$ containing $f(V_\alpha)$, a polydisc
$D_\alpha\subset\mathbb C^r$ centered at the origin, and holomorphic
functions
$w_\alpha:V_\alpha\to D_\alpha$ such that
\begin{equation}\label{eq:appendix-local-graph-embedding}
 j_\alpha=(f|_{V_\alpha},w_\alpha):V_\alpha
 \lhook\joinrel\longrightarrow U_\alpha\times D_\alpha,
 \qquad
 \operatorname{pr}_{U_\alpha}\circ j_\alpha=f|_{V_\alpha},
\end{equation}
is a closed holomorphic embedding.  In source coordinates one may take
$w_\alpha$ to be the coordinate map; adding zero coordinates gives a common
$r$ for the finite system.  Thus $j_\alpha$ is the expression of
$\Gamma_f$ in the chosen source and target charts.  This is Levy's local
product embedding; see \cite[\S2, p.~583]{Levy2008} and
\cite[proof of Lemma~3.4(d), pp.~181--182]{Levy1987}.

Three further exact categories enter the construction. 

The category
$\operatorname{LocF}(q_X)$ consists of local systems of essentially
holomorphic complexes of nuclear Fr\'echet spaces; the Koszul construction
below takes values in this category.  Here essentially holomorphic means
that locally the complex is the direct sum of a holomorphic complex and an
exact complex which depends smoothly on the parameter.  

The category
$\operatorname{LocFred}(q_X)$ consists of local
systems of continuous complexes of Fr\'echet spaces which are uniformly
Fredholm, that is, uniformly quasi-isomorphic to bounded continuous
complexes of vector bundles of finite rank; their connecting morphisms are
monomorphic uniform quasi-isomorphisms with closed image. 

Finally,
$\operatorname{LocCBun}_0(q_X)$ consists of local systems of bounded
complexes of continuous vector bundles of finite rank, together with a
fixed trivialization of every bundle on the corresponding member of the
cover; see \cite[Definitions~3.3 and~3.4, pp.~593--594]{Levy2008}.

One has approximations 
\begin{equation}\label{eq:appendix-levy-models}
\begin{gathered}
 K\bigl(\operatorname{LocFfd}(q_X)\bigr)
 \simeq K\bigl(\operatorname{Perf}_{\rm an}(X)\bigr),\\
 K\bigl(\operatorname{LocCBun}_0(q_X)\bigr)
 \xrightarrow{\ \sim\ }K\bigl(\operatorname{LocFred}(q_X)\bigr).
\end{gathered}
\end{equation}
The first is analytic descent together with local finite resolutions
\cite[Proposition~2.1]{Levy2008}.  The second is Levy's approximation theorem
\cite[Lemma~3.5]{Levy2008}.

We recall now the local construction that supplies an alternative description (for the equivalence of the two, see equation \ref{CechtoK}) of the natural map $\operatorname{LocFfd}(q_X)\to\operatorname{LocF}(q_X)$. This description is important since it is well-behaved with respect to pushforward operations.

Let $V$
be a bounded Stein coordinate domain, let $z=(z_1,\ldots,z_n)$ be its
coordinates, and let $L^\bullet$ be a local finite resolution, i.e. an object of Levy's category of local systems of resolutions recalled in Remark \ref{Levylocsyst}.

Multiplication by $z_i$ acts on the nuclear Fr\'echet complex
$\Gamma(V,L^\bullet)$.  Levy forms the Koszul complex
\begin{equation}\label{eq:appendix-local-koszul}
 K_V(L^\bullet)(\lambda)=
 K\bigl(M_{z_1}-\lambda_1,\ldots,M_{z_n}-\lambda_n;
        \Gamma(V,L^\bullet)\bigr).
\end{equation}
For $x\in V$, evaluation of sections at $x$ annihilates the operators
$M_{z_i}-z_i(x)$ and hence gives an augmentation from the Koszul complex at
$\lambda=z(x)$ to $L^\bullet_x$.  Sheafifying in $x$ gives a
quasi-isomorphism from the sheaf of holomorphic sections of
$K_V(L^\bullet)$ to the extension by zero of $L^\bullet|_V$; see
\cite[Lemma~2.3, pp.~165--166]{Levy1987} and
\cite[\S3.2, pp.~590--591]{Levy2008}.

On an inclusion of intersections $V_\alpha\subset V_{\alpha'}$, the
restriction map need not commute with the Koszul differentials associated
with the two coordinate systems. Levy constructs higher maps
$r_{m,\alpha',\alpha}$ which correct this failure. Together with the
Koszul differentials and the restriction maps, they define a total complex
$KC^\bullet(\mathfrak V,L^\bullet)$
\cite[Lemma~3.3, pp.~177--179]{Levy1987}; the construction is recalled in
\cite[\S3.2, pp.~591--593]{Levy2008}. On each $V_\alpha$, the map given by evaluation in
Koszul degree zero and by zero in positive Koszul degrees defines a local
evaluation quasi-isomorphism. These maps commute with the corrected total
differential and therefore assemble into a natural quasi-isomorphism
\begin{equation}\label{CechtoK}
 \epsilon:KC^\bullet(\mathfrak V,L^\bullet)
 \longrightarrow \check C^\bullet(\mathfrak V,L^\bullet).
\end{equation}
(Here $\check C^\bullet(\mathfrak V,L^\bullet)$ is the globally defined
complex obtained from $L^\bullet$ by Levy's \v Cech functor.  Its
restriction to each $V_\alpha$ is quasi-isomorphic to
$L^\bullet_\alpha$, so it represents the global perfect complex determined
by the local system.  If $L^\bullet$ is the system obtained by restricting
a globally defined complex $E^\bullet$, this identification is induced by
the usual augmentation
$E^\bullet\to\check C^\bullet(\mathfrak V,L^\bullet)$
\cite[Lemma~1.4, pp.~578--579, and \S3.2, pp.~592--594]{Levy2008}.)

As $L^\bullet$ and its morphisms vary, the corrected Koszul construction
gives Levy's exact functor
$$KC:\operatorname{LocFfd}(q_X)\to\operatorname{LocF}(q_X).$$

After the
holomorphic structure has been forgotten, it takes values in
$\operatorname{LocFred}(q_X)$, and $K(KC)$ in the diagram below is the map
induced by this composite.

\begin{remark}
Let us also make explicit the two equivalences
in \eqref{eq:appendix-levy-models}.  Levy's \v Cech functor
$\check C:\operatorname{LocFfd}(q_X)\to
\operatorname{Perf}_{\rm an}(X)$ sends a local system $L^\bullet$ to the
total complex formed from the extensions by zero of the
$L^\bullet_\alpha$, with the ordinary \v Cech differential induced by its
connecting morphisms.  Its restriction to each $V_\alpha$ is
quasi-isomorphic to $L^\bullet_\alpha$; hence
\cite[Lemma~1.4 and Proposition~2.1]{Levy2008} show that $K(\check C)$ is
an equivalence.  

The other functor is Levy's natural embedding
$\jmath_X:\operatorname{LocCBun}_0(q_X)\to
\operatorname{LocFred}(q_X)$.  Using the chosen local trivializations, it
regards a bounded complex of continuous vector bundles of finite rank as a
continuous family of complexes of finite-dimensional Fr\'echet spaces,
which is uniformly Fredholm. $K(\jmath_X)$ is an equivalence by \cite[Lemma~3.5]{Levy2008}.
\end{remark}
\subsubsection{Some preparation}

Let $\operatorname{LocCBun}(q_X)$ be obtained from
$\operatorname{LocCBun}_0(q_X)$ by omitting the chosen trivializations.
Forgetting the holomorphic structure on a local finite resolution defines
an exact functor
\[
 \alpha_X:\operatorname{LocFfd}(q_X)
 \longrightarrow\operatorname{LocCBun}(q_X).
\]

We record the following very standard proposition in order to show that the use of Waldhausen's $S_{\bullet}$-construction upgrades Levy's comparison map (defined a priori at the level of higher $K$-theory groups) to a map of spectra.

\begin{proposition}\label{prop:appendix-topologization}
There are natural maps of spectra
\[
 \rho_X:K\bigl(\operatorname{LocCBun}(q_X)\bigr)\longrightarrow KU(X),
 \qquad
 \tau_X:K\bigl(\operatorname{LocFred}(q_X)\bigr)\longrightarrow KU(X),
\]
for which the diagram
\[
\begin{tikzcd}[column sep=4.8em]
 K\bigl(\operatorname{LocFfd}(q_X)\bigr)
   \arrow[r,"K(KC)"]
   \arrow[dr,"\rho_X\circ K(\alpha_X)"'] &
 K\bigl(\operatorname{LocFred}(q_X)\bigr)
   \arrow[d,"\tau_X"] \\
 & KU(X)
\end{tikzcd}
\]
commutes up to a natural homotopy.  Under the first equivalence in
\eqref{eq:appendix-levy-models}, the common composite is the promised map
$\operatorname{cyc}_X:K(\operatorname{Perf}_{\rm an}(X))\to KU(X)$.
\end{proposition}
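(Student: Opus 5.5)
We construct $\rho_X$ and $\tau_X$ by realizing each local system of finite-rank continuous bundles (resp.\ uniformly Fredholm complexes) as a point of a classifying space for $KU(X)$, and compare the two composites on $\operatorname{LocFfd}(q_X)$ by an explicit continuous deformation carried through Waldhausen's $S_\bullet$-construction.

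\textbf{Construction of $\rho_X$.} For each $n$, an object of $S_n\operatorname{LocCBun}(q_X)$ is a filtered local system of bounded complexes of continuous vector bundles. Gluing the local pieces with the monomorphic quasi-isomorphisms of Remark~\ref{Levylocsyst} yields, after a partition of unity subordinate to the finite cover, a global bounded complex of topological vector bundles; it is well defined up to a contractible space of choices. We then take Euler characteristics in the topological category, using Segal's model of $KU(X)$ as the $K$-theory of the topological category of vector bundles on $X$. Done levelwise in $n$, this gives a map of simplicial spaces $|(S_\bullet\operatorname{LocCBun}(q_X))^{\simeq}|\to|S_\bullet\operatorname{Vect}^{\rm top}(X)|$, and hence a map of spectra after looping. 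Exactness of the gluing with respect to cofiber sequences makes this compatible with the simplicial operators.

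\textbf{Construction of $\tau_X$.} We define $\tau_X$ as the composite of the inverse of $K(\jmath_X)$ from \eqref{eq:appendix-levy-models} with $\rho_X$ precomposed with the functor forgetting trivializations. Levy's Lemma~3.5 makes $K(\jmath_X)$ an equivalence of spaces. Since $\jmath_X$ is an exact functor, $K(\jmath_X)$ is a map of spectra, and an equivalence on $\Omega^\infty$ of connective spectra is an equivalence of spectra; so the inverse exists as a map of spectra. Alternatively, and equivalently, $\tau_X$ is the families index of Atiyah--J\"anich applied levelwise.

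\textbf{The homotopy.} The key step is to produce a natural transformation, through exact functors into $\operatorname{LocFred}(q_X)$, between $KC$ and $\jmath_X\circ\alpha_X$, modulo the trivialization data. We plan to use the evaluation quasi-isomorphism $\epsilon$ of \eqref{CechtoK}. After forgetting holomorphy, the Koszul complex $K_V(L^\bullet)(\lambda)$ is uniformly quasi-isomorphic, via evaluation at $\lambda=z(x)$, to the finite-rank complex $L^\bullet_x$. This gives a zigzag of uniform quasi-isomorphisms in $\operatorname{LocFred}(q_X)$, natural in $L^\bullet$ and compatible with cofiber sequences.

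By Waldhausen's additivity and approximation, a natural weak equivalence between exact functors induces a homotopy of $K$-theory maps; this is the standard fact that weak equivalences give paths in $|wS_\bullet|$. The homotopy is natural, so it is compatible with composition and refinement of covers, and independence of the cover follows by passing to common refinements, which form a filtered (hence contractible) system. Finally, precomposing with the first equivalence in \eqref{eq:appendix-levy-models} identifies the common composite with $\operatorname{cyc}_X$: on globally free complexes, $\check C$ composed with the augmentation gives back the underlying bundle complex, so this composite is induced by forgetting holomorphic structure there.

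\textbf{Main obstacle.} We expect the hardest part to be checking that the evaluation maps intertwine Levy's higher correction maps $r_{m,\alpha',\alpha}$ with the \emph{trivial} connecting data of $\alpha_X(L^\bullet)$, uniformly in parameters. We would first verify this by noting that $\epsilon$ kills positive Koszul degrees, where all the $r_m$ with $m\ge1$ live.
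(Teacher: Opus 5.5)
Your $\tau_X$ is the paper's: $\rho_X$ transported through the inverse of $K(\jmath_X)$ and the functor forgetting trivializations. Your identification of the composite with $\operatorname{cyc}_X$ on globally free complexes also matches the paper.

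\textbf{The homotopy: a different route.} The paper does not construct the homotopy; it quotes Levy's comparison of his two constructions (Proposition~3.6 of the 2008 paper). You instead propose to reprove that comparison. You build a natural zigzag of uniform quasi-isomorphisms between $KC$ and $\jmath_X\circ\alpha_X$ from fibrewise evaluation, and then use that natural weak equivalences of exact functors induce homotopies of $K$-theory spectra. This gives a self-contained homotopy whose naturality and compatibility with $S_\bullet$ are automatic. The price is checking what Levy checks: uniformity of the evaluation maps (continuous Koszul contractions, after shrinking the cover) and their compatibility with the correction maps $r_{m,\alpha',\alpha}$, which you rightly single out as the crux. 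Two points need to be made precise:
\begin{enumerate}
\item The sheaf-level $\epsilon$ of \eqref{CechtoK} lands in the \v Cech complex, not in $\operatorname{LocFred}(q_X)$. What you actually use is its fibrewise version.
\item Free local resolutions come with their bases, so $\alpha_X$ lifts canonically to $\operatorname{LocCBun}_0(q_X)$. This lift is what gives $\jmath_X\circ\alpha_X$ a meaning; ``modulo the trivialization data'' should be replaced by it.
\end{enumerate}

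\textbf{The construction of $\rho_X$ does not work as written.}
\begin{enumerate}
\item A levelwise Euler characteristic is not a map of simplicial spaces. An alternating sum is not an object of the category of bundles, and passing to the degreewise pieces of a complex is not an exact functor once quasi-isomorphisms are the weak equivalences.
\item A partition-of-unity gluing that is unique only up to a contractible space of choices does not give strict compatibility with the face maps of $S_\bullet$. Moreover, the existence of such a gluing is itself the nontrivial input.
\end{enumerate}

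The paper avoids both problems, and you should adopt its device:
\begin{enumerate}
\item Levy's Lemma~3.2 shows that $\operatorname{CBun}(X)\to\operatorname{LocCBun}(q_X)$ is an equivalence on $K$-theory, so no gluing ever has to be chosen.
\item On global bundle complexes, one passes to the exact category of bundles by the Gillet--Waldhausen theorem.
\item One then passes to $KU(X)$ by the identity map from $GL(C(X))$ with the discrete topology to the compact-open topology, via the plus construction (Levy's Lemma~3.1), applied at every stage of $S_\bullet$. Your Segal-type topological category can play the role of this last step.
\end{enumerate}
These equivalences are between connective spectra, so they can be inverted exactly as you invert $K(\jmath_X)$. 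With $\rho_X$ defined this way, the rest of your outline stands.
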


\begin{proof}
 Let $\operatorname{CBun}(X)$ be the Waldhausen
category of bounded complexes of continuous vector bundles of finite rank
on $X$.  The inclusion
\[
 \operatorname{CBun}(X)\longrightarrow\operatorname{LocCBun}(q_X)
\]
induces an equivalence on $K$-theory: Levy proves that every local system is
quasi-isomorphic to a globally defined bundle complex
\cite[\S3.1, pp.~588--590, especially Lemma~3.2]{Levy2008}.

Levy writes the resulting map on homotopy groups as
\begin{equation}\label{eq:appendix-Q-plus-topologization}
 K_i\bigl(\operatorname{CBun}(X)\bigr)
 \longrightarrow K_i^Q(X)\longrightarrow K^i(X).
\end{equation}
Here $K_i^Q(X)=\pi_{i+1}BQ\operatorname{Vect}(X)$ is Quillen's
$K$-group of the exact category of continuous vector bundles.  The first
arrow in \eqref{eq:appendix-Q-plus-topologization} is the
Gillet--Waldhausen comparison.  For the second, the comparison of the
$Q$- and plus constructions identifies its source with the plus
construction on $GL(C(X))$ with the discrete topology; the identity
homomorphism to the same matrix group with the compact-open topology gives
the arrow to $K^i(X)$.  This is Levy's construction in
\cite[\S3, pp.~587--588, especially Lemma~3.1]{Levy2008}.  Applying the
construction at every stage of Waldhausen's $S_\bullet$ construction gives
the map of spectra $\rho_X$.

It remains to compare the bundle and Fredholm descriptions.  Levy's
Koszul construction gives the exact functor
$KC:\operatorname{LocFfd}(q_X)\to\operatorname{LocFred}(q_X)$ recalled
above.  Define $\tau_X$ by transporting $\rho_X$ through the equivalences
\[
 K\bigl(\operatorname{LocFred}(q_X)\bigr)
 \xleftarrow{\ \sim\ }
 K\bigl(\operatorname{LocCBun}_0(q_X)\bigr)
 \xrightarrow{\ \sim\ }
 K\bigl(\operatorname{LocCBun}(q_X)\bigr).
\]
The first is \cite[Lemma~3.5, pp.~594--595]{Levy2008}; the second forgets
the chosen local trivializations.  Levy's comparison of his two
constructions \cite[Proposition~3.6, pp.~594--595]{Levy2008} gives the
homotopy in the diagram.  Composing with
$K(\operatorname{LocFfd}(q_X))\simeq
K(\operatorname{Perf}_{\rm an}(X))$ gives $\operatorname{cyc}_X$.
\end{proof}

For a finite compact polyhedron $P$, write
$\operatorname{LocFred}(q_Y;P)$ for the exact category of continuous
$P$-families in $\operatorname{LocFred}(q_Y)$.  Levy's approximation
theorem and the preceding topologization apply to this category after
replacing $Y$ by $Y\times P$.

The following is clear.

\begin{lemma}\label{lem:appendix-interval}
Let $\mathcal E$ be an exact category, and let $P$ be a finite compact
polyhedron.  An exact functor
$\mathcal E\to\operatorname{LocFred}(q_Y;P)$ to Levy's
Fredholm model over $Y\times P$ naturally determines a map
\begin{equation}\label{eq:appendix-parameter-map}
 K(\mathcal E)\longrightarrow\operatorname{Map}\bigl(P_+,KU(Y)\bigr).
\end{equation}
For $P=[0,1]$, its endpoint restrictions form a homotopy between the maps
induced by the two exact functors obtained by restricting the family to
$t=0$ and $t=1$.
\end{lemma}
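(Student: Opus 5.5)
The plan is to build the map \eqref{eq:appendix-parameter-map} in two stages: first the non-parametric case $P=\operatorname{pt}$, then its functorial extension to $P$-families. Compose the given exact functor $\Phi:\mathcal E\to\operatorname{LocFred}(q_Y;P)$ with the topologization of Proposition~\ref{prop:appendix-topologization}, applied over the compact space $Y\times P$. Concretely, $\operatorname{LocFred}(q_Y;P)$ is regarded as Levy's Fredholm category for the cover $\{V_\alpha\times P\}$ of $Y\times P$. Proposition~\ref{prop:appendix-topologization} then gives a natural map of spectra
\[
\tau_{Y\times P}:K\bigl(\operatorname{LocFred}(q_Y;P)\bigr)\longrightarrow KU(Y\times P).
\]
Compose it with $K(\Phi)$, and then with the standard identification $KU(Y\times P)\simeq\operatorname{Map}(P_+,KU(Y))$, i.e. the exponential law for function spectra $F(\Sigma^\infty_+(Y\times P),KU)\simeq F(\Sigma^\infty_+P,F(\Sigma^\infty_+Y,KU))$. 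The result is \eqref{eq:appendix-parameter-map}. Naturality in $\mathcal E$ follows because $K(-)$ is functorial on exact functors, as in Lemma~\ref{lem:appendix-object-map}, and $\tau$ is natural.

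The key verification is compatibility with restriction to a subpolyhedron, in particular to the endpoints $i_t:\{t\}\hookrightarrow[0,1]$. Restriction of families along $i_t$ is an exact functor
\[
i_t^*:\operatorname{LocFred}(q_Y;[0,1])\to\operatorname{LocFred}(q_Y).
\]
I will show that the square formed by $K(i_t^*)$, $\tau_{Y\times[0,1]}$, $\tau_Y$ and the restriction $KU(Y\times[0,1])\to KU(Y)$ commutes up to natural homotopy. To do this I trace through the definition of $\tau$. The equivalences of Levy's Lemma~3.5 between $\operatorname{LocCBun}_0$ and $\operatorname{LocFred}$ are compatible with pullback of parameters, because the approximating bundle complexes can be chosen uniformly and then restricted. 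The map $\rho$ is built from the Gillet--Waldhausen comparison and the map from discrete to compact-open $GL(C(-))$; both are natural under pullback along continuous maps of compact spaces. Since all of these constructions are applied levelwise in $S_\bullet$, they give natural transformations of spectra, so the square commutes.

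With this in hand, evaluating the map from the first step at $t=0,1$ recovers $\tau_Y\circ K(i_0^*\Phi)$ and $\tau_Y\circ K(i_1^*\Phi)$. The map into $\operatorname{Map}([0,1]_+,KU(Y))$ is then a homotopy between these two maps.

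The main obstacle is the naturality of $\tau$ under change of the parameter space. Its definition passes through an equivalence that must be inverted, namely Levy's approximation theorem, so I must check that the inverse is natural as well. I would handle this by working throughout with the zigzag itself rather than with a chosen inverse. The two equivalences commute strictly with $i_t^*$ at the level of exact functors, so the induced zigzag of spectra is natural, and homotopy inverses are determined up to contractible choice.
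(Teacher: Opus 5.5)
Your proposal is correct and is the argument the paper leaves implicit: the paper gives no proof beyond noting that Levy's approximation theorem and the topologization of Proposition~\ref{prop:appendix-topologization} apply over $Y\times P$, with the identification $KU(Y\times P)\simeq\operatorname{Map}(P_+,KU(Y))$ understood, and calls the lemma clear. Your check that restriction to $t=0,1$ commutes with the zigzag defining $\tau$ is the detail the paper omits; that zigzag should also include the equivalence $K(\operatorname{CBun}(Y))\simeq K(\operatorname{LocCBun}(q_Y))$ used inside $\rho$, which is likewise strictly compatible with restriction, so your argument covers it.
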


\subsection{The comparison between pushforwards}

We now give the local map from which our main comparison is obtained.  For each
nonempty intersection $V_\alpha$, let $L_\alpha^\bullet$ be the chosen local
resolution, and put $B_\alpha=U_\alpha\times D_\alpha$.  Write
$\lambda=(\lambda_1,\ldots,\lambda_n)$ for the coordinates on $U_\alpha$
and $\mu=(\mu_1,\ldots,\mu_r)$ for those on $D_\alpha$, and put
$z_\alpha=\lambda\circ f|_{V_\alpha}$.  Thus, in these coordinates,
$j_\alpha(x)=(z_\alpha(x),w_\alpha(x))$.  The symbols $\lambda$ and $\mu$
are variables on $B_\alpha$, whereas $z_\alpha$ and $w_\alpha$ are
holomorphic functions on $V_\alpha$.  For $I=[0,1]$, define
\begin{equation}\label{eq:appendix-local-scaling-map}
 p_\alpha:V_\alpha\times I\longrightarrow B_\alpha\times I,
 \qquad (x,t)\longmapsto\bigl(z_\alpha(x),t w_\alpha(x),t\bigr),
\end{equation}
and write $p_{\alpha,t}$ for its restriction at $t$.

Let $M_{z_{\alpha,i}}$ and $M_{w_{\alpha,j}}$ denote multiplication on
$\Gamma(V_\alpha,L_\alpha^\bullet)$.  

Denote Levy's local Koszul sheaf on
$B_\alpha\times I$ by $\mathcal K^{\rm loc}_{\alpha,I}(L^\bullet)$ and its
restriction at $t$ by $\mathcal K^{\rm loc}_{\alpha,t}(L^\bullet)$.  It is
holomorphic in $(\lambda,\mu)$ and continuous in $t$; for an open set
$U\subset B_\alpha$, its restriction at $t$ has sections
\begin{equation}\label{eq:appendix-levy-interval}
 \mathcal K^{\rm loc}_{\alpha,t}(L^\bullet)(U)=
 K\!\left(M_{z_\alpha}-\lambda,tM_{w_\alpha}-\mu;
 \mathcal O(U)\widehat\otimes
 \Gamma(V_\alpha,L_\alpha^\bullet)\right).
\end{equation}
Let $L_{\alpha,I}^\bullet$ be the pullback of $L_\alpha^\bullet$ to
$V_\alpha\times I$, regarded as a family continuous in $t$.  There is a
morphism of complexes on
$B_\alpha\times I$
\begin{equation}\label{eq:appendix-local-evaluation}
 \epsilon_\alpha:\mathcal K^{\rm loc}_{\alpha,I}(L^\bullet)
 \longrightarrow (p_\alpha)_*L_{\alpha,I}^\bullet.
\end{equation}
In Koszul degree zero it is given by restriction along $p_\alpha$: a local section
$g\otimes s$ is sent to
$$g(z_\alpha(x),t w_\alpha(x),t)s(x).$$  It is zero in nonzero Koszul
degrees.  This is a chain map because the functions
$z_{\alpha,i}-\lambda_i$ and $t w_{\alpha,j}-\mu_j$ vanish along
$p_\alpha$.  The map $\epsilon_\alpha$ is a quasi-isomorphism of sheaf complexes by
\cite[Lemma~2.3$'$, p.~166, and proof of Lemma~3.4(d), p.~182]{Levy1987}.

It remains to put these local constructions together.  Choose $N$
sufficiently large and an almost complex embedding
$e=(f,\rho):X\hookrightarrow Y\times\mathbb C^N$, with the complex
structure on its normal bundle determined by $TX$ and $TY$.
Let $i:Y\hookrightarrow Y\times\mathbb C^N$ be the zero section.
Choose a closed ball $D\subset\mathbb C^N$ large enough that Levy's
family is uniformly exact near $Y\times\partial D$, and put
$A=Y\times D$, $B=Y\times\partial D$.

Levy first regards the local complexes as complexes on this same space,
using the changes of coordinates in
\cite[proof of Lemma~3.4(d), pp.~182--183]{Levy1987}, which depend
continuously on $t$.
His maps $r_{m,\alpha',\alpha,t}$ then add the terms required to make the
total differential square to zero.  They are constructed in
\cite[Lemma~3.3, pp.~177--179]{Levy1987}; their extension over $I$ is in
\cite[proof of Lemma~3.4(d), pp.~182--184]{Levy1987}, and is recalled in
\cite[\S4.2, p.~597]{Levy2008}.  Denote the resulting total complex by
$\mathcal K_t(L^\bullet)$.
The changes of coordinates are necessary here, since the sheaves
$(p_{\alpha,t})_*L_\alpha^\bullet$ are defined on different spaces
$B_\alpha$ and cannot be compared merely by restricting on the source.

Levy proves that this family is uniformly Fredholm by restricting the
target charts and refining the covers
\cite[proof of Lemma~3.4(d), pp.~184--185]{Levy1987}.
The construction is functorial in $L^\bullet$
\cite[\S4.2, p.~597]{Levy2008}: the correcting matrices are chosen
for the structure sheaf and then used for every local system
\cite[\S3.2, pp.~592--593]{Levy2008}.
Since the cover is finite, totalization preserves exact sequences.
We therefore obtain an exact functor
\begin{equation}\label{eq:appendix-total-evaluation}
 \mathcal H_f:\operatorname{LocFfd}(q_X)
 \longrightarrow\operatorname{LocFred}(A\times I,B\times I).
\end{equation}
The target consists of local systems of continuous uniformly Fredholm
complexes, uniformly exact on $B\times I$.
For the source, we use local resolutions on a neighbourhood of the graph
of $f$, as in \cite[\S4.1]{Levy2008}.  Their $K$-theory is identified
with that of $\operatorname{Perf}_{\rm an}(X)$ by the first equivalence
in \eqref{eq:appendix-levy-models}.

We now identify the complexes at $t=0$ and $t=1$.

At $t=0$, the auxiliary coordinate operators vanish.  Levy identifies
the resulting complex, by a natural uniform quasi-isomorphism, with the
tensor product of his Koszul complex for $\mathbf Rf_*L^\bullet$, pulled
back from $Y$, and $K(-\mu_1,\ldots,-\mu_N)$.
The second factor represents the Thom class of the zero section $i$.
One computes the direct image  by the relative \v Cech complex of
\cite[\S4.1]{Levy2008}.  To compare it with a local finite resolution,
one includes the resolution and its quasi-isomorphism to this complex
among the data; forgetting this choice induces an equivalence on
$K$-theory by \cite[Proposition~2.1]{Levy2008}.

At $t=1$, the local maps satisfy $p_{\alpha,1}=j_\alpha$.
The resulting construction gives the pushforward $e_!$: one tensors
with the Thom class of the complex normal bundle of $e$ and then extends
the support from a tubular neighbourhood to $(A,B)$.
These descriptions of the two ends of the family are proved in
\cite[proof of Lemma~3.4(d), pp.~183--185]{Levy1987} and
\cite[\S4.2, p.~597]{Levy2008}.
Proposition~3.6 of \cite{Levy2008} identifies the Koszul construction
with the cycle map defined using local finite resolutions.

We may now apply approximation by complexes of finite rank vector bundles and the
passage to topological $K$-theory to the compact pair $(A,B)$
\cite[Lemmas~3.1 and~3.5]{Levy2008}.
Write $T_N=i_!:KU(Y)\xrightarrow{\sim}KU(A,B)$ for the Thom map,
which is an equivalence by Bott periodicity.
The maps at $t=0$ and $t=1$ are therefore
$T_N\circ\operatorname{cyc}_Y\circ K(\mathbf Rf_*)$ and
$e_!\circ\operatorname{cyc}_X$, respectively, after composing with
the equivalence from the category of local resolutions.
Composing with the inverse of $T_N$ gives the two maps in
\eqref{eq:appendix-fixed-comparison}, since
$f_!^{KU}=T_N^{-1}\circ e_!$.

\begin{proposition}\label{prop:appendix-compact-levy}
Waldhausen's $S_\bullet$-construction applied to this family, followed
by $T_N^{-1}$, determines a homotopy between the two maps
\[
 K\bigl(\operatorname{LocFfd}(q_X)\bigr)\longrightarrow KU(Y).
\]
\end{proposition}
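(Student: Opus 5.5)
The plan is to feed the exact functor $\mathcal H_f$ of \eqref{eq:appendix-total-evaluation} into Lemma~\ref{lem:appendix-interval} with $P=I$, and then to read off its endpoints. There are three steps.

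\emph{Step 1: the parametrized map.} First upgrade Lemma~\ref{lem:appendix-interval} to relative pairs. The target category $\operatorname{LocFred}(A\times I,B\times I)$ consists of families that are uniformly exact over $B\times I$. Levy's approximation theorem \cite[Lemma~3.5]{Levy2008} and the topologization of Proposition~\ref{prop:appendix-topologization} are applied to the compact pair $(A\times I,B\times I)$. The approximating bundle complexes are chosen acyclic near $B\times I$, which is possible because exactness there is uniform. This produces a natural map of spectra
\[
K(\operatorname{LocFred}(A\times I,B\times I))\to KU(A\times I,B\times I)\simeq\operatorname{Map}(I_+,KU(A,B)).
\]
Since $\mathcal H_f$ is exact, it induces a map on Waldhausen $S_\bullet$ levelwise. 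Composing gives a map $K(\operatorname{LocFfd}(q_X))\to\operatorname{Map}(I_+,KU(A,B))$, which is the same thing as a homotopy between its restrictions to $t=0$ and $t=1$. Naturality of the topologization in restriction to $t\in\{0,1\}$ identifies these restrictions with the maps induced by the exact functors $\mathcal H_f|_{t=0}$ and $\mathcal H_f|_{t=1}$.

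\emph{Step 2: the endpoints.} I use the identifications recalled just before the proposition.
\begin{itemize}
\item At $t=0$ there is a natural uniform quasi-isomorphism to the Koszul complex of $\mathbf Rf_*L^\bullet$ tensored with $K(-\mu)$. Quasi-isomorphic exact functors induce homotopic maps on $K$-theory, since weak equivalences are inverted in $S_\bullet$. With Proposition~3.6 of \cite{Levy2008} this gives $T_N\circ\operatorname{cyc}_Y\circ K(\mathbf Rf_*)$. Here one must also use that the auxiliary relative \v Cech resolution choice is $K$-theoretically invisible, by \cite[Proposition~2.1]{Levy2008}.
\item At $t=1$, $p_{\alpha,1}=j_\alpha$ gives $e_!\circ\operatorname{cyc}_X$. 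This uses the identification of the Koszul complex $K(\mu-w)$ on the tubular neighbourhood with the $KU$ Thom class of the complex normal bundle, together with extension of support.
\end{itemize}
Finally, postcompose with $T_N^{-1}$, which is an equivalence by Bott periodicity, and use $f_!^{KU}=T_N^{-1}\circ e_!$.

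\emph{Step 3: naturality.} The homotopy should be independent of the choices (covers, charts $U_\alpha,D_\alpha$, the embedding $e$, the ball $D$, the correcting maps $r_m$) up to higher homotopy. I would argue this by applying the same construction over $P=I\times I$, or over a simplex of choices: the spaces of such choices are filtered-contractible after refinement and stabilization in $N$, exactly as for the Pontryagin--Thom data in Proposition~\ref{closedcobordism}. The proposition itself only asks for a homotopy, so this step is bookkeeping.

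\textbf{Main obstacle.} I expect the hard part to be Step 1's uniformity claim. Levy's continuity in $t$ of the corrected total complex $\mathcal K_t$, including the coordinate changes and the $r_{m,\alpha',\alpha,t}$, must be uniformly Fredholm and exact near $B\times I$ for \emph{all} objects simultaneously and functorially, so that $\mathcal H_f$ is genuinely an exact functor and not just defined objectwise. My first approach is to fix the correcting matrices for the structure sheaf, as in \cite[\S3.2]{Levy2008}, and to note that every object of $\operatorname{LocFfd}(q_X)$ is built from free modules. Uniform estimates for $\mathcal O$ would then propagate to finite complexes of free modules, and since the cover is finite the estimates stay bounded. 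Finally, I choose $D$ after these estimates, using compactness of $X$ and the properness of $e$.
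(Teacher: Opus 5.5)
Your proposal is correct and follows essentially the same route as the paper. You feed the exact functor $\mathcal H_f$ into the relative version of Lemma~\ref{lem:appendix-interval} (obtained from $KU(A\times I,B\times I)\simeq\operatorname{Map}_*(I_+,KU(A,B))$), identify the two ends with $T_N\circ\operatorname{cyc}_Y\circ K(\mathbf Rf_*)$ and $e_!\circ\operatorname{cyc}_X$ via Levy's descriptions, and then compose with $T_N^{-1}$ using $f_!^{KU}=T_N^{-1}\circ e_!$; your uniformity concern is what the paper delegates to Levy (correcting matrices fixed for the structure sheaf, $D$ chosen so the family is uniformly exact near $Y\times\partial D$), and your Step~3 corresponds to Lemma~\ref{lem:appendix-levy-canonicity}, which the bare statement does not need.
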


Indeed, the relative version of Lemma~\ref{lem:appendix-interval} follows
from $KU(A\times I,B\times I)\simeq\operatorname{Map}_*(I_+,KU(A,B))$.
Using the first equivalence in \eqref{eq:appendix-levy-models}, we obtain
the homotopy \eqref{eq:appendix-fixed-comparison}.

\subsubsection{Independence of the auxiliary choices}

The construction uses a Stein cover, local resolutions, the embeddings
$j_\alpha$ in \eqref{eq:appendix-local-graph-embedding}, an ordering of
their auxiliary coordinates, and correcting maps.  

We now establish \textit{canonicity}: 
the homotopy obtained above is
independent of these choices. 

Let $\mathcal D$ be any finite collection of perfect complexes, morphisms,
and filtrations together with their quotients, and let $P$ be a finite cube
of parameters.  After refining the Stein covers, Levy's resolution lemmas
give local resolutions for the whole collection at once.  His product
construction has the following concrete form.  Suppose that the finite
diagram involves embeddings
$j_\alpha^{(a)}=(f|_{V_\alpha},w_\alpha^{(a)})$, $1\leq a\leq N$.
After a common refinement, they are dominated by the single embedding
\begin{equation}\label{eq:appendix-product-embedding}
 j_\alpha^{\rm prod}=
 (f|_{V_\alpha},w_\alpha^{(1)},\ldots,w_\alpha^{(N)}):V_\alpha
 \lhook\joinrel\longrightarrow
 U_\alpha\times D_\alpha^{(1)}\times\cdots\times D_\alpha^{(N)}.
\end{equation}
The factors $D_\alpha^{(a)}$ give the coordinate blocks, ordered once and
for all.  This is the construction in
\cite[proof of Lemma~3.2(b), pp.~174--175]{Levy1987}.
The Koszul construction and the correcting maps may then be formed
simultaneously over $P$: the formulas of Lemma~3.3 depend algebraically on
the commuting coordinate operators and hence apply to a finite cube of
continuous parameters; see \cite[Remark~(2), p.~179]{Levy1987}.  The
Koszul contraction of \cite[Lemma~2.3$'$, p.~166]{Levy1987} is uniform on
the compact cube $P$.  

We call
the resulting global \v Cech complexes, together with their Koszul
augmentation quasi-isomorphisms to the \v Cech complexes of Fr\'echet
sections representing the given perfect complexes, a \textit{Levy model for
$(\mathcal D,P)$.}

The following lemma is then standard and finishes the proof of canonicity.

(If $Q$ is a union of faces of $P$ and a Levy model has been chosen over
$Q$, by an extension we mean a Levy model over $P$ together with an
equivalence between its restriction to $Q$ and the chosen model which is
compatible with these augmentations.)

\begin{lemma}\label{lem:appendix-levy-canonicity}
After uniform quasi-isomorphisms have been inverted, the
space of Levy models for $(\mathcal D,P)$, with all comparisons required
to commute with the augmentations, is nonempty and contractible.  If a model has
already been chosen on some faces of $P$, or for part of $\mathcal D$,
the space of extensions is again nonempty and contractible.
\end{lemma}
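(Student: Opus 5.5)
The plan is to prove nonemptiness and contractibility together, by the usual obstruction-theoretic scheme for spaces of auxiliary choices: build the space of Levy models as the realization of a simplicial set (or a filtered colimit of such), and show every finite boundary-extension problem is solvable. Concretely, a $k$-simplex will be a Levy model for $(\mathcal D,P\times\Delta^k)$ whose faces are the given models, with comparisons required to commute with the Koszul augmentations. Contractibility then reduces to the relative statement in the second sentence of the lemma, applied to $P\times\Delta^k$ with the model already fixed on $P\times\partial\Delta^k$ (and, by the same argument, on faces of $P$ or on a sub-diagram of $\mathcal D$). So I will prove only the extension statement, since nonemptiness is the case of empty $Q$.

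The extension has four steps, following the order of Levy's construction.

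\textbf{Step 1 (covers and resolutions).} Take a common refinement of the Stein covers underlying the given model on $Q$. Levy's lifting lemmas \cite[Lemmas~1.6--1.9, Remark~1.10]{Levy2008} then extend the local finite resolutions and the maps between them from $Q$ to $P$. They do so compatibly with the resolutions already chosen, because $(P,Q)$ is a finite polyhedral pair and free resolutions over Stein intersections extend from a closed parameter subset. Inverting uniform quasi-isomorphisms makes refinement harmless.

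\textbf{Step 2 (embeddings).} Given local embeddings $j^{(a)}_\alpha$ over $Q$ and any new choice over $P$, pass to the product embedding \eqref{eq:appendix-product-embedding}. The only discrepancy between the old and new coordinate blocks is then a linear scaling in the extra coordinates. I will interpolate it along a cube parameter, using the same formula $t\,w_\alpha$ as in \eqref{eq:appendix-local-scaling-map}. At the endpoints, adding a coordinate block changes the Koszul complex by tensoring with $K(-\mu)$ on an auxiliary polydisc. By the Koszul contraction \cite[Lemma~2.3$'$]{Levy1987}, uniform on compact cubes, this is a uniform quasi-isomorphism commuting with the augmentation.

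\textbf{Step 3 (orderings and correcting maps).} Orderings of the coordinate blocks form a finite set. Changing an ordering acts on the Koszul complex by a sign-permutation isomorphism that commutes with the augmentation, so orderings contribute no homotopy once quasi-isomorphisms are inverted. The correcting maps $r_{m,\alpha',\alpha}$ are solutions of Levy's recursive equations \cite[Lemma~3.3]{Levy1987}, which are linear in the unknown at each stage. The space of solutions at each stage is therefore an affine space of null-homotopies over a contractible Koszul resolution. An affine combination over $P$ of the prescribed solution on $Q$ with any solution on $P$ (via a collar cutoff function) extends it.

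\textbf{Step 4 (uniformity).} Check that the resulting global family is uniformly Fredholm over all of $P$ by Levy's argument \cite[proof of Lemma~3.4(d)]{Levy1987}, using compactness of $P$.

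I expect the main obstacle to be the coherence requirement that every comparison commute with the augmentations. Affine interpolation of correcting maps and embeddings may break strict commutation with $\epsilon$ in \eqref{CechtoK}. My first attempt will be to work in the slice category over the \v Cech complex of Fr\'echet sections. The key point is that the augmentation is evaluation in Koszul degree zero and vanishes in positive degrees, while the correcting maps live in positive Koszul degree. The interpolation therefore does not change $\epsilon$, and only the degree-zero data (resolutions, restriction maps) need strict compatibility; Step 1 supplies that. If this fails, I would replace strict commutation by a homotopy-coherent version and appeal to contractibility of the space of augmented Koszul resolutions.
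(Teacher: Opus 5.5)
Your opening reduction does not work as set up. The simplicial set whose $k$-simplices are Levy models over $(\mathcal D,P\times\Delta^k)$ with faces \emph{equal} to prescribed models inverts nothing: it is a space of strict families.

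As the paper sets up a model over a cube, the Stein cover and the local resolutions are chosen once for the whole cube, so they are constant along a simplex. A model built from $L_\alpha$ and one built from $L_\alpha\oplus[\mathcal O\xrightarrow{\mathrm{id}}\mathcal O]$ (or one on a different cover) therefore lie in different components. So the boundary-extension property you reduce to fails already for $k=1$.

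Even if you let resolutions vary with the parameter, as in your Step~1, strict extension from $Q$ to $P$ is false. Take $\mathcal D=\{0\}$ on a Stein ball $V$, and the family $[\mathcal O^n\xrightarrow{g_q}\mathcal O^n]$ over $Q=\partial[0,1]^2$, where $g_q$ is a loop in $\mathrm{GL}_n(\mathcal O(V))$ for which $\det g_q(x_0)$ winds once around $0$. Any extension over $[0,1]^2$ with the same ranks consists of isomorphisms $h_p$, and $\det h_p(x_0)$ would contract that loop in $\mathbb C^*$.

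Your own Steps~1--2 in fact only produce a model over $P$ whose restriction to $Q$ is \emph{equivalent} to the prescribed one. This happens after a common refinement, or after passing to the product embedding and tensoring with $K(-\mu)$ over a larger target. That is exactly the paper's notion of extension, but it is not what your simplices require. Your remarks that refinement and orderings become harmless once quasi-isomorphisms are inverted appeal to an inversion your formalism never performs.

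Separately, the cutoff combination of correcting maps in Step~3 does not solve Levy's recursive equations off $Q$. The differential and the inhomogeneous terms vary with the parameter, so a combination of solutions for different parameter values is not a solution.

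The missing idea is to make the comparisons part of the data, and to use compatibility with the augmentation as the \emph{source} of contractibility rather than as an obstacle. Regard a Levy model as an object $(M,\epsilon)$ over a fixed representative $\check C$ of the Fr\'echet sections of $\mathcal D$. Form the space from models and augmentation-compatible uniform quasi-isomorphisms, after localization.

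In the localized slice over $\check C$, the space of maps from $(M,\epsilon)$ to $(M',\epsilon')$ is the fibre of composition with $\epsilon'$, and $\epsilon'$ is an equivalence. So this space is contractible, and no parametric obstruction theory for embeddings, orderings or correcting maps is needed. The relative statement is then the homotopy fibre of restriction from models over $P$ to models over $Q$.

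What genuinely remains is existence: a model over any finite cube, together with chain-level comparisons. This is the domination argument the paper sketches. After a common refinement, the product embedding \eqref{eq:appendix-product-embedding} dominates both the prescribed model and a newly built one, and Levy's algebraic formulas and the Koszul contraction are uniform over the compact cube.

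Your Steps~1, 2 and~4, with strict face-matching dropped, supply exactly these existence statements. Your fallback of appealing to contractibility of the space of augmented Koszul resolutions is the heart of the proof, not a contingency.
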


\subsection{Proper supports}
\label{sec:appendix-proper-support}

We now discuss the compactly supported analogue of the above constructions. First of all, it is easy to see that the above proofs generalize to the following context.

\begin{proposition}\label{prop:appendix-proper-support}
Let $f:X\to Y$ be a holomorphic map of complex manifolds, and let
$Z\subset X$ be a closed analytic subset such that $f|_Z:Z\to Y$ is
proper.  Derived direct image defines an exact functor
$\mathbf Rf_*:\operatorname{Perf}_Z(X)\to
\operatorname{Perf}_{\rm an}(Y)$, and there is a canonical homotopy making
the square
\[
\begin{tikzcd}[column sep=5.4em]
 K\bigl(\operatorname{Perf}_Z(X)\bigr)
   \arrow[r,"K(\mathbf Rf_*)"]
   \arrow[d,"\operatorname{cyc}_{X,Z}"']&
 K\bigl(\operatorname{Perf}_{\rm an}(Y)\bigr)
   \arrow[d,"\operatorname{cyc}_Y"]\\
 KU_Z(X)\arrow[r,"f_!^{KU}"']&KU(Y)
\end{tikzcd}
\]
commute.  If $Z\subset Z'$ and $Z'$ is also proper over $Y$, the
restriction of the homotopy for $Z'$ to
$K(\operatorname{Perf}_Z(X))$ is canonically identified with the homotopy
for $Z$, with all higher compatibilities.
\end{proposition}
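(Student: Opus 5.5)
Since $f|_Z$ is proper, the plan is to reduce to the compact construction of Proposition~\ref{prop:appendix-compact-levy} by working only over a compact region of $X$ that contains all the supports. The first step handles a finite diagram $\mathcal D$ of objects of $\operatorname{Perf}_Z(X)$ over a parameter cube $P$, together with a relatively compact open set $Y_0\Subset Y$. Set $Z_0=Z\cap f^{-1}(\overline{Y_0})$; it is compact, so we choose a relatively compact open neighbourhood $X_0\supset Z_0$ in $X$. Stein covers and Levy's local resolutions are then needed only on $X_0$. Outside a compact subset of $X_0$ every complex in $\mathcal D$ is exact there, and we take the resolutions to be uniformly exact on the collar. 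With these choices, the Koszul and scaling family $\mathcal H_f$ from \eqref{eq:appendix-total-evaluation} can be formed verbatim with the embedding $e=(f,\rho):X_0\hookrightarrow Y\times\mathbb C^N$. The family lands in $\operatorname{LocFred}$ of the pair $\bigl(\overline{Y_0}\times D,\ \overline{Y_0}\times\partial D\cup(\text{collar part})\bigr)$. To see that this is correct, we note that exactness off $Z$ together with properness makes $\epsilon_\alpha$ a quasi-isomorphism onto a sheaf supported in $e(Z_0)\times I$ near the boundary. Hence the family is uniformly exact away from a compact subset of $Y_0\times\operatorname{int}D$. The endpoint identifications then go through unchanged. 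At $t=0$ we obtain $T_N\circ\operatorname{cyc}_{Y_0}\circ K(\mathbf Rf_*)$, where Grauert's theorem applied to the proper support gives coherence of the direct image. At $t=1$ we obtain $e_!\circ\operatorname{cyc}_{X,Z}$, where the Thom class is now taken with supports in $Z$.

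The second step globalizes over $Y$. Since $KU(Y)=\lim KU(\overline{Y_0})$ over an exhaustion by compact $\overline{Y_0}$, it suffices to produce homotopies $H_{Y_0}$ compatible under the restrictions $Y_0\subset Y_1$. Any two choices, for two values of $Y_0$ or for two values of $X_0$, can be placed inside a common Levy model over a larger cube. Lemma~\ref{lem:appendix-levy-canonicity} then says that the space of such models, relative to the choices already fixed on faces, is nonempty and contractible. This yields coherent homotopies $H_{Y_0}|_{Y_0'}\simeq H_{Y_0'}$ together with all higher coherences. Assembling these gives a point in the limit, which is the asserted canonical homotopy. The same extension argument applied to the diagram category handles the $K$-theory space: we run Waldhausen's $S_\bullet$ levelwise, exactly as in Proposition~\ref{prop:appendix-compact-levy}.

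The third step is compatibility under enlarging supports from $Z$ to $Z'$. A Levy model built for $Z'$ is in particular a Levy model for the objects of $\operatorname{Perf}_Z(X)$ along the inclusion $\operatorname{Perf}_Z(X)\subset\operatorname{Perf}_{Z'}(X)$, and the maps $KU_Z(X)\to KU_{Z'}(X)$ commute with $e_!$ because the construction is supports-natural. The relative form of Lemma~\ref{lem:appendix-levy-canonicity}, with the model for $Z$ prescribed on part of $\mathcal D$, then identifies the two homotopies. Contractibility supplies the higher compatibilities.

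I expect the main obstacle to be the first step, specifically uniform Fredholmness of the family near the new boundary created by cutting $X$ down to $X_0$. Levy's argument uses compactness of $X$ to control the correcting maps $r_{m,\alpha',\alpha,t}$. My first attempt would be to choose the cover so that on the collar every local resolution is the zero complex up to uniform quasi-isomorphism; this uses exactness off $Z$ on a Stein cover. With that choice all Koszul complexes and correcting terms there are contractible uniformly in $t$, and Levy's finite-cover estimates apply only on the compact core.
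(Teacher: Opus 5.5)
Your route is the one the paper intends. The paper only asserts that the compact Levy construction (Proposition~\ref{prop:appendix-compact-levy}) and Lemma~\ref{lem:appendix-levy-canonicity} carry over. Your Steps~2 and~3 (exhaustion of $Y$, gluing, and the comparison for $Z\subset Z'$ via the canonicity lemma) are fine as sketched.

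There is, however, a genuine error in Step~1: you misidentify the relative pair on which the interval family is exact. You assert that the family is uniformly exact away from a compact subset of $Y_0\times\operatorname{int}D$, and you add a ``collar part'' to the relative part of the target pair. Both are false, whether ``collar part'' means the image of the collar or the part over $\partial Y_0$. At time $t$ the family is supported on $\{(f(x),t\rho(x)):x\in Z_0\}$, not on $e(Z_0)$. At $t=0$ it is $\mathbf Rf_*E$ tensored with $K(-\mu_1,\ldots,-\mu_N)$, supported on $f(Z_0)\times\{0\}$. This set generally meets $\partial Y_0\times\{0\}$; take $f=\operatorname{id}_Y$, $Z=X=Y$, $E=\mathcal O_Y$. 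It also meets $f(\text{collar})\times\{0\}$, which is where the $t=0$ end sends the collar. So the family does not lie in $\operatorname{LocFred}$ of the pair you wrote down. If it did, it would define compactly supported classes over $Y_0$, contradicting your own identification of the $t=0$ end with $T_N\circ\operatorname{cyc}_{Y_0}\circ K(\mathbf Rf_*)$. Support proper over $Y$ does not make the direct image compactly supported over $Y_0$.

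The repair is to keep the compact-case pair $(A,B)=(\overline{Y_0}\times D,\overline{Y_0}\times\partial D)$, with $D$ a ball centred at $0$ such that $\rho(Z_0)\subset\operatorname{int}D$. Then $t\rho(Z_0)\subset\operatorname{int}D$ for all $t\in I$, so the family is uniformly exact near $B\times I$, and $T_N^{-1}$ lands in $KU(\overline{Y_0})$ as required. This is where properness of $Z$ over $Y$ enters, besides Grauert's theorem and the finiteness of covers. It makes $Z_0$ compact and so bounds the support in the $\mathbb C^N$-direction, but it does nothing in the $Y$-direction.

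The collar matters only on the source side. Since $E$ is exact off $Z$, excision shows that \v Cech complexes over $X_0\cap f^{-1}(Y_0)$ compute $(\mathbf Rf_*E)|_{Y_0}$. For uniform Fredholmness, the cleaner route is the usual shrinking argument around $Z_0$. Take finite Stein covers $V_i'\Subset V_i$ that both cover $Z_0$; restriction between their complexes is compact, and by the same excision it is a quasi-isomorphism. So no special choice of resolutions on the collar is needed. What you do need is this excision input, because a \v Cech complex over a neighbourhood of a compact ``core'' is not Fredholm by itself.
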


Now let $\operatorname{Perf}_{\rm pr/Y}(X)$ be the full stable subcategory of
perfect complexes whose cohomological support is proper over $Y$.  Every
finite diagram in this category is contained in
$\operatorname{Perf}_Z(X)$ for one closed analytic $Z$ proper over $Y$.
Connective $K$-theory preserves this filtered union, as explained in
Subsection~\ref{sec:appendix-object-map}.  

Write $\operatorname{cyc}_X^{\rm pr/Y}$ for the map
$K(\operatorname{Perf}_{\rm pr/Y}(X))\simeq
\operatorname*{colim}_Z K(\operatorname{Perf}_Z(X))\to
\operatorname*{colim}_Z KU_Z(X)$, and write $f_!^{KU}$ for the map from
the latter colimit to $KU(Y)$.  Proposition~\ref{prop:appendix-proper-support}
gives a canonical homotopy
\begin{equation}\label{eq:appendix-proper-support-comparison}
 \operatorname{cyc}_Y\circ K(\mathbf Rf_*)
 \simeq f_!^{KU}\circ\operatorname{cyc}_X^{\rm pr/Y}:
 K\bigl(\operatorname{Perf}_{\rm pr/Y}(X)\bigr)\longrightarrow KU(Y).
\end{equation}
Here both colimits range over the closed analytic subsets $Z\subset X$
proper over $Y$.  

\subsection{Compatibility with Serre duality}
\label{sec:appendix-serre-duality}

We now trace the Serre duality throughout the comparison homotopy. 

Let
$q:X\to S$ be a separated holomorphic submersion between complex manifolds,
of relative complex dimension $d$, and let $Z\subset X$ be a closed analytic subset proper
over $S$.  We use the notation of
Subsection~\ref{sec:serre-duality-without-quasi-projectivity}:
\[
\begin{gathered}
 \omega_q=\det(T_{X/S})^\vee,\\
 D_q(E)=\mathbf R\mathcal Hom_X(E,\omega_q[d]),\qquad
 D_S(F)=\mathbf R\mathcal Hom_S(F,\mathcal O_S).
\end{gathered}
\]
Thus $D_q:\operatorname{Perf}_Z(X)^{\rm op}\to
\operatorname{Perf}_Z(X)$ and
$D_S:\operatorname{Perf}_{\rm an}(S)^{\rm op}\to
\operatorname{Perf}_{\rm an}(S)$ are exact dualities.  Write $\tau$ for
complex dualization in $KU$, and put
\begin{equation}\label{eq:appendix-topological-relative-duality}
 S_q(x)=(-1)^d[\omega_q]\tau(x).
\end{equation}
Here $(-1)^d$ is additive inversion when $d$ is odd; it is the image of
the shift $[d]$ under additivity in $K$-theory.  This is the operation
denoted $S_X$ in Subsection~\ref{sec:serre-duality-without-quasi-projectivity}
when $S=\operatorname{pt}$.

Put $\mathcal A_Z=\operatorname{Perf}_Z(X)$, and denote the opposition
equivalences by
\[
 o_Z:K(\mathcal A_Z^{\rm op})\xrightarrow{\sim}K(\mathcal A_Z),
 \qquad
 o_S:K(\operatorname{Perf}_{\rm an}(S)^{\rm op})
 \xrightarrow{\sim}K(\operatorname{Perf}_{\rm an}(S)).
\]
Duals, tensor products, and shifts of finite locally free resolutions
commute with forgetting holomorphic structure.  Thus there is a
natural homotopy
\begin{equation}\label{eq:appendix-realization-duality}
 \delta_{q,Z}:\operatorname{cyc}_{X,Z}K(D_q)
 \simeq S_q\operatorname{cyc}_{X,Z}o_Z:
 K(\mathcal A_Z^{\rm op})\longrightarrow KU_Z(X).
\end{equation}
For the identity of $S$ this gives
$\delta_S:\operatorname{cyc}_S K(D_S)\simeq
\tau\operatorname{cyc}_S o_S$.  Below we use the naturality identification
$o_S K((\mathbf Rq_*)^{\rm op})\simeq K(\mathbf Rq_*)o_Z$.

\subsubsection{The square to be filled}

The analytic Serre equivalence and its topological counterpart will be
constructed below.  Their sources and targets are
\begin{equation}\label{eq:appendix-two-serre-maps}
 s_q^{\rm an}:\mathbf Rq_*D_q(E)\xrightarrow{\sim}
 D_S(\mathbf Rq_*E),
 \qquad
 s_q^{KU}:q_!^{KU}S_q(x)\xrightarrow{\sim}\tau q_!^{KU}(x).
\end{equation}
Let $H_{q,Z}$ denote the homotopy of
Proposition~\ref{prop:appendix-proper-support}.  The assertion to be proved
is a canonical filling of
\begin{equation}\label{eq:appendix-serre-square}
\begin{tikzcd}[column sep=2em,row sep=4.0em]
 \operatorname{cyc}_S K(\mathbf Rq_*D_q)
   \arrow[r,"H_{q,Z}D_q"]
   \arrow[d,"\operatorname{cyc}_S(s_q^{\rm an})"']&
 q_!^{KU}\operatorname{cyc}_{X,Z}K(D_q)
   \arrow[r,"q_!^{KU}\delta_{q,Z}"]
   \arrow[d,phantom,"\scriptstyle\Downarrow\,\Xi_{q,Z}" description]&
 q_!^{KU}S_q\operatorname{cyc}_{X,Z}o_Z
   \arrow[d,"s_q^{KU}\operatorname{cyc}_{X,Z}o_Z"]\\
 \operatorname{cyc}_S K\bigl(D_S\circ(\mathbf Rq_*)^{\rm op}\bigr)
   \arrow[r,"\delta_S(\mathbf Rq_*)^{\rm op}"']&
 \tau\operatorname{cyc}_S K(\mathbf Rq_*)o_Z
   \arrow[r,"\tau H_{q,Z}o_Z"']&
 \tau q_!^{KU}\operatorname{cyc}_{X,Z}o_Z .
\end{tikzcd}
\end{equation}

We now recall some ingredients.

\subsubsection{The analytic trace with supports}

We start with the first ingredient.

The relative residue trace of Ramis--Ruget
\cite[Th\'eor\`eme~1, pp.~91--92, Remarque~3, p.~93, and \S6]{RamisRuget1974}
gives a canonical map $\operatorname{Tr}_q:\mathbf Rq_!\omega_q[d]\to\mathcal O_S$,
and hence
\[
 \begin{aligned}
 \operatorname{Tr}_{q,Z}:\mathbf Rq_*\mathbf R\Gamma_Z\omega_q[d]
 &\simeq\mathbf Rq_!\mathbf R\Gamma_Z\omega_q[d]\\
 &\longrightarrow\mathbf Rq_!\omega_q[d]
 \xrightarrow{\operatorname{Tr}_q}\mathcal O_S;
 \end{aligned}
\]
for a proper map, this is the trace of Ramis--Ruget--Verdier
\cite{RamisRuget1974,RRV}.  Cup product, evaluation, and this trace give
the pairing
\begin{equation}\label{eq:appendix-supported-serre-pairing}
 \mathbf Rq_*D_q(E)\otimes^{\mathbf L}\mathbf Rq_*E
 \longrightarrow \mathbf Rq_*\bigl(D_q(E)\otimes^{\mathbf L}E\bigr)
 \longrightarrow \mathbf Rq_*\mathbf R\Gamma_Z\omega_q[d]
 \xrightarrow{\operatorname{Tr}_{q,Z}}\mathcal O_S.
\end{equation}

The following follows from proper analytic duality
\cite[\S\S1--2]{RRV} and 
\cite[Th\'eor\`eme~20, pp.~121--123]{RamisRuget1974}.

\begin{proposition}\label{prop:appendix-supported-serre}
For $E\in\operatorname{Perf}_Z(X)$, the adjoint of
\eqref{eq:appendix-supported-serre-pairing} is the equivalence
$s_{q,E}^{\rm an}$ in \eqref{eq:appendix-two-serre-maps}.  It is
independent of the analytic support chosen to contain
$\operatorname{Supp}\mathcal H^*(E)$ and is compatible with enlargement
of that support.
\end{proposition}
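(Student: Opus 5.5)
The statement to prove is Proposition~\ref{prop:appendix-supported-serre}: the adjoint of the supported pairing \eqref{eq:appendix-supported-serre-pairing} is the Serre equivalence $s_{q,E}^{\rm an}$, independently of the chosen support. The plan is to reduce everything to proper relative analytic duality. The device is to factor the supported trace through the compactly supported direct image, so that $\operatorname{Tr}_{q,Z}$ is literally the Ramis--Ruget trace $\operatorname{Tr}_q$ precomposed with $\mathbf Rq_!\mathbf R\Gamma_Z\omega_q[d]\to\mathbf Rq_!\omega_q[d]$.

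\textbf{Step 1 (identifying the two pairings).} Since $\operatorname{Supp}\mathcal H^*(E)\subset Z$ and $Z$ is proper over $S$, we have canonical equivalences $\mathbf R\Gamma_Z E\simeq E$ and $\mathbf Rq_*E\simeq\mathbf Rq_!E$. Likewise $D_q(E)$ and $D_q(E)\otimes^{\mathbf L}E$ are supported in $Z$. Cup product followed by evaluation therefore lands canonically in $\mathbf Rq_!\mathbf R\Gamma_Z\omega_q[d]$. After these identifications, \eqref{eq:appendix-supported-serre-pairing} becomes the standard Ramis--Ruget--Verdier pairing
\[
\mathbf Rq_!D_q(E)\otimes^{\mathbf L}\mathbf Rq_!E\longrightarrow\mathbf Rq_!\omega_q[d]\xrightarrow{\operatorname{Tr}_q}\mathcal O_S.
\]

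\textbf{Step 2 (nondegeneracy).} Relative duality \cite[Th\'eor\`eme~20]{RamisRuget1974} states that the adjoint $\mathbf Rq_!\mathbf R\mathcal Hom(E,\omega_q[d])\to\mathbf R\mathcal Hom_S(\mathbf Rq_!E,\mathcal O_S)$ is an equivalence for coherent $E$ with support proper over $S$. Applied here, it shows that the adjoint of the pairing in Step~1 is $s_{q,E}^{\rm an}$; this is also how $s_{q,E}^{\rm an}$ is defined. Perfectness of $\mathbf Rq_*E$ is supplied by Grauert, as in Subsection~\ref{sec:appendix-object-map}.

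\textbf{Step 3 (independence of the support).} Take $Z\subset Z'$, both proper over $S$. The map $\mathbf R\Gamma_Z\to\mathbf R\Gamma_{Z'}$ commutes with $\mathbf R\Gamma_{Z'}\to\operatorname{id}$. The traces are therefore compatible: $\operatorname{Tr}_{q,Z}=\operatorname{Tr}_{q,Z'}\circ(\text{enlargement})$. The cup-product maps are natural in the support as well, so the two pairings agree, and hence so do their adjoints. The coherence needed for all higher compatibilities follows from the fact that these are maps in a filtered diagram of supports with contractible choices.

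\textbf{Main obstacle.} I expect the difficulty to lie in Step~1. One has to check that the Ramis--Ruget trace with compact supports (Remarque~3 and \S6) agrees with the proper-case Verdier trace after restriction to $Z$. This should follow from the localization property of $\operatorname{Tr}_q$ under the identification $\mathbf Rq_*\simeq\mathbf Rq_!$ on objects with proper support. Failing that, I would reduce to the case where $q|_Z$ embeds into a proper subfamily via a relatively compact neighbourhood of $Z$, and invoke compatibility of the trace with open restriction.
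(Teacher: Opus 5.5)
Your proposal follows the paper's route: the paper obtains the statement by citing proper analytic duality (Ramis--Ruget--Verdier) and Ramis--Ruget's Th\'eor\`eme~20, and your Steps~1--3 spell out what that citation leaves implicit, namely $\mathbf Rq_*\simeq\mathbf Rq_!$ on objects supported in $Z$ and the naturality of $\mathbf R\Gamma_Z\to\mathbf R\Gamma_{Z'}\to\operatorname{id}$ for independence of the support. The comparison with Verdier's proper trace that you flag as the main obstacle is only asserted in the paper, in the remark just before the proposition; you can avoid it by applying Th\'eor\`eme~20 directly to the Ramis--Ruget trace $\operatorname{Tr}_q$, since that trace is how $\operatorname{Tr}_{q,Z}$ is defined.
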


Fix $s\in S$, and choose Stein coordinate balls
$s\in V_0\Subset V_1\subset S$.  Thus $V_0$ is an open subset of the
base.  Put $K=Z\cap q^{-1}(\overline V_0)$.  Choose a finite Stein
system in $q^{-1}(V_1)$ whose inner members cover $K$.  We write
$V_\alpha$ for its intersections after restriction to $q^{-1}(V_0)$,
choose local finite resolutions $L_\alpha$ there, and put
$L_\alpha^!=\mathcal Hom(L_\alpha,\omega_q[d])$.  The \v Cech
complex $C_{V_0}^+(L)$, formed as the total of the derived sections
$\mathbf R\Gamma(V_\alpha,E)$ computed locally using $L_\alpha$, represents
$\mathbf R\Gamma(V_0,\mathbf Rq_*E)$.

For each intersection $V_\alpha$, let 
$\mathcal A_c(V_\alpha/V_0,L_\alpha^!)$ be the relative Cousin complex of
Ramis--Ruget \cite[\S\S4--6]{RamisRuget1974}, with supports proper over $V_0$.
Consider also the \textit{Cech--Cousin complex} $C_{V_0}^-(L^!)$ representing
$\mathbf R\Gamma(V_0,\mathbf Rq_!D_q(E))$. 

\begin{lemma}
\label{lem:appendix-cech-cousin-trace}
There are natural
$\mathcal O(V_0)$-linear pairings in the derived category
\[
 C_{V_0}^-(L^!)\otimes^{\mathbf L}_{\mathcal O(V_0)}C_{V_0}^+(L)
 \longrightarrow\mathcal O(V_0).
\]
Its adjoint
\begin{equation}\label{eq:appendix-cech-cousin-pairing}
 \rho_{V_0,E}:C_{V_0}^-(L^!)\longrightarrow
 \mathbf R\operatorname{Hom}_{\mathcal O(V_0)}
 \bigl(C_{V_0}^+(L),\mathcal O(V_0)\bigr)
\end{equation}
is a quasi-isomorphism and represents
$s_{q,E}^{\rm an}$.  After shrinking $V_0$, it is represented on bounded
finite free $\mathcal O(V_0)$-models by a continuous chain homotopy equivalence.
\end{lemma}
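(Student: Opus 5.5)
The plan is to build the pairing locally on each intersection $V_\alpha$, assemble it by the \v Cech formalism, and only then deduce the duality statement from Proposition~\ref{prop:appendix-supported-serre}.

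\textbf{Local pairing.} On each intersection $V_\alpha$ I would first construct it from two ingredients. One is the evaluation
\[
L_\alpha^!\otimes L_\alpha\longrightarrow\omega_q[d],
\]
which is a genuine map of complexes because $L_\alpha$ is a finite free resolution. The other is the Ramis--Ruget relative Cousin resolution
\[
\omega_q[d]\to\mathcal A(V_\alpha/V_0,\omega_q[d]),
\]
with supports proper over $V_0$, together with its residue trace
\[
\Gamma_c\bigl(V_\alpha/V_0,\mathcal A(\omega_q[d])\bigr)\to\mathcal O(V_0),
\]
taken from \cite[\S\S4--6]{RamisRuget1974}. Tensoring the Cousin complex with $L_\alpha^!$ and pairing against sections of $L_\alpha$ gives the local pairing.

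\textbf{Assembly and adjoint.} The \v Cech--Cousin complex $C^-_{V_0}(L^!)$ is a \v Cech complex of compactly supported (relative to $V_0$) sections, so it is covariant in inclusions, by extension by zero. The complex $C^+_{V_0}(L)$ is contravariant, by restriction. The standard \v Cech cap pairing of a homological-type and a cohomological-type \v Cech complex, where degrees match on the same multi-index $\alpha$, then assembles the local pairings. It is a chain map because restriction and extension by zero are adjoint with respect to the trace: the trace is compatible with the extension-by-zero maps $\Gamma_c(V_\alpha)\to\Gamma_c(V_\beta)$. Naturality in $E$ follows from the functoriality of Levy's resolution systems. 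Under the identifications of $C^\pm$ with $\mathbf R\Gamma(V_0,\mathbf Rq_!D_qE)$ and $\mathbf R\Gamma(V_0,\mathbf Rq_*E)$, this pairing coincides with \eqref{eq:appendix-supported-serre-pairing}, and $q_!=q_*$ on $\operatorname{Perf}_Z$. Hence, by Proposition~\ref{prop:appendix-supported-serre}, the adjoint $\rho_{V_0,E}$ represents $s^{\rm an}_{q,E}$ and is a quasi-isomorphism after $\mathbf R\Gamma(V_0,-)$. Since $V_0$ is Stein, $\mathbf R\Gamma(V_0,-)$ commutes with $\mathbf R\mathcal Hom$ into $\mathcal O$ for perfect complexes, so the global identification suffices.

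\textbf{Topological refinement (main obstacle).} The hard part is the last claim. Both sides are complexes of Fr\'echet (dual Fr\'echet) $\mathcal O(V_0)$-modules, so a quasi-isomorphism need not be a continuous homotopy equivalence. First, $\mathbf Rq_*E$ is perfect on $S$ by Grauert, so after shrinking $V_0$ (to a smaller Stein ball with $V_0\Subset V_1$) it has a bounded finite free model $P$. Next I would use Grauert's method, as in Levy's lemmas, to produce a continuous quasi-isomorphism $P\to C^+_{V_0}(L)$ with a continuous homotopy inverse after restriction; this is the open mapping/Banach-space argument of Grauert's direct image theorem applied to the restriction $V_1\to V_0$. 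The dual $P^\vee$ then models the target of $\rho$. For the source, I would apply the same procedure to $C^-$ via $D_q(E)$, which is also perfect with support in $Z$. The composite of these identifications with $\rho$ is a quasi-isomorphism between bounded finite free complexes over $\mathcal O(V_0)$, which is automatically a chain homotopy equivalence, and continuous because all maps are $\mathcal O(V_0)$-linear on finite free modules. The remaining point to check is the continuity of the model maps themselves; that is where the shrinking is genuinely needed.
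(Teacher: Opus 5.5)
Your proposal is correct and follows essentially the same route as the paper's proof. Both construct a local pairing from evaluation followed by the Ramis--Ruget relative residue, and assemble it over the \v Cech--Cousin complexes using that extension by zero is adjoint to restriction (the paper justifies this by Cauchy's theorem). Both then identify the result with the supported Serre pairing, so that Proposition~\ref{prop:appendix-supported-serre} gives $s_{q,E}^{\rm an}$. Finally, both pass to bounded finite free models and use Theorem~B, so that $\rho_{V_0,E}$ becomes a quasi-isomorphism of bounded finite free complexes, hence a chain homotopy equivalence given by holomorphic matrices. Your extra Grauert-type argument for the continuity of the comparison maps to these models covers a point the paper passes over without comment.
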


\begin{proof}
On a relative coordinate chart $p:U\to V_0$, an element 
$\eta\in\mathcal A_c(U/V_0,L^!)$ and a holomorphic section
$s\in\Gamma(U,L)$ are paired by
\begin{equation}\label{eq:appendix-local-cousin-pairing}
 \langle\eta,s\rangle_U=
 \operatorname{res}_{U/V_0}\bigl(\operatorname{ev}(\eta,s)\bigr).
\end{equation}
Here evaluation contracts the $L^!$ and $L$ factors and produces a class
in the relative Cousin complex for $\omega_q[d]$; in relative
coordinates $z_1,\ldots,z_d$, the normalization is
\[
 \operatorname{res}_0
 \frac{dz_1\wedge\cdots\wedge dz_d}{z_1\cdots z_d}=1.
\]
Thus \eqref{eq:appendix-local-cousin-pairing} is the local chain form of
the trace $\operatorname{Tr}_{q,Z}$ in
\eqref{eq:appendix-supported-serre-pairing}.

Suppose that $V_\beta\subset V_\alpha$, and let
$e_{\beta,\alpha}$ denote extension by zero of analytic functionals.
Cauchy's theorem gives
\[
 \langle e_{\beta,\alpha}\eta,s\rangle_{V_\alpha}
 =\langle\eta,s|_{V_\beta}\rangle_{V_\beta}.
\]
Hence the extension by zero maps in $C_{V_0}^-(L^!)$ are adjoint to the
restriction maps in $C_{V_0}^+(L)$.  Taking the \v Cech totals with the usual
graded signs gives the asserted derived pairing.
The formula for change of variables for the Grothendieck residue shows
that this pairing is independent of the relative coordinates.

Under the identifications of the two total complexes above, the resulting pairing
is the derived global sections of \eqref{eq:appendix-supported-serre-pairing}.
Its adjoint is therefore $s_{q,E}^{\rm an}$ by
Proposition~\ref{prop:appendix-supported-serre}.
After shrinking $V_0$, choose bounded finite free models
$P\simeq C_{V_0}^+(L)$ and $Q\simeq C_{V_0}^-(L^!)$.
Cartan's theorem~B identifies the derived dual with
$\operatorname{Hom}_{\mathcal O(V_0)}(P,\mathcal O(V_0))$; compare
\cite[Lemmes~21--22, pp.~121--122]{RamisRuget1974}.
The adjoint is represented by a quasi-isomorphism $Q\to P^\vee$,
hence a chain homotopy equivalence.  Its maps and homotopies are holomorphic
matrices, so are continuous on sections.
\end{proof}

\subsubsection{The pairing along Levy's interval}

The preceding subsection gave a chain-level representative of the
analytic Serre pairing.  We now consider the local Koszul pairing
used in the interval which defines $H_{q,Z}$.

Locally, the value at $t=0$ will recover evaluation followed by the
relative residue, while the value at $t=1$ will be the graph Thom pairing.

  Consider a relative
coordinate polydisc $U\subset q^{-1}(V_0)$, write
$a=q|_U:U\to V_0$, and let
$w=(w_1,\ldots,w_r):U\to\mathbb C^r$ be the auxiliary map used in
Levy's local embedding.  Put
\[
 Y_U=U\times\mathbb C_\mu^r,
 \qquad j_t:U\longrightarrow Y_U,
 \qquad j_t(x)=(x,tw(x)).
\]
Then $a=(a\circ\operatorname{pr}_U)\circ j_t$.  

The relative residue of
the preceding subsection is taken in relative coordinates on $U/V_0$.
Put $b=a\circ\operatorname{pr}_U$.  

Let $F=\bigoplus_{i=1}^r\mathcal O_{Y_U}e_i$, let $N=F^\vee$, and set
\[
 f_{t,i}=tw_i-\mu_i,
 \qquad K_t^{-j}=\Lambda^jF,
 \qquad d_t=\iota_{f_t}.
\]
Thus $K_t$ is the Koszul resolution of $j_{t*}\mathcal O_U$.  The normal bundle of
$j_t$ is $j_t^*N$, with conormal identification $e_i\mapsto df_{t,i}$,
where the differentials are relative over $V_0$.

The following is now a direct calculation.

\begin{proposition}
\label{prop:appendix-scaled-koszul-pairing}
One has a chain isomorphism and a trace map
that vary continuously for $0\leq t\leq1$:
\begin{equation}\label{eq:appendix-koszul-self-duality}
 \kappa_t:K_t^\vee\xrightarrow{\sim}
 (K_t\otimes\det(N))[-r],
 \qquad
 \epsilon_t:(K_t\otimes\det(N))[-r]\longrightarrow\mathcal O_{Y_U}.
\end{equation}
If $L$ is a finite locally free complex on $U$ and
$L^!=\mathcal Hom(L,\omega_{U/V_0}[d])$, there is a natural chain isomorphism
\begin{equation}\label{eq:appendix-scaled-local-duality}
 K_t\otimes\operatorname{pr}_U^*L^!
 \xrightarrow{\sim}
 \mathcal Hom_{Y_U}\!\left(
 K_t\otimes\operatorname{pr}_U^*L,
 \omega_{Y_U/V_0}[d+r]\right).
\end{equation}
At $t=0$ the auxiliary residue reduces to evaluation on $U$, giving
\eqref{eq:appendix-local-cousin-pairing} after the relative residue on $U/V_0$;
at $t=1$ it is the Koszul duality pairing for the Thom class of $j_1$.
\end{proposition}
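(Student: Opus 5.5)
We make everything explicit on the Koszul complex $K_t$ and then check that the constructions depend only on the data $(w,t)$, through the functions $f_{t,i}=tw_i-\mu_i$. Since $K_t=(\Lambda^\bullet F,\iota_{f_t})$ has free terms of finite rank, we define $\kappa_t$ by the standard perfect pairing
\[
\Lambda^jF\otimes\Lambda^{r-j}F\longrightarrow\Lambda^rF=\det(N)^{\vee},
\]
that is, $\Lambda^jF^\vee\cong\Lambda^{r-j}F\otimes\det(N)$, with the usual Koszul signs $(-1)^{j(j-1)/2}$ (or whatever sign convention the paper fixes later). We then check that it is a chain map, that is, that $\iota_{f_t}$ is transported to the dual of $\iota_{f_t}$ up to the standard sign. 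This is the classical self-duality of the Koszul complex. It is an isomorphism of complexes of free modules whose entries are polynomial in $t$, hence continuous in $t$.

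For $\epsilon_t$ we use the fact that, after the shift $[-r]$, the top term $\Lambda^rF\otimes\det N\cong\mathcal O$ sits in degree $0$. The map $\epsilon_t$ is the projection onto this term, which is a chain map because the incoming differential lands in it trivially in the relevant degree. Equivalently, $\epsilon_t$ is dual to the augmentation $\mathcal O_{Y_U}=K_t^0\to K_t$.

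Next we construct \eqref{eq:appendix-scaled-local-duality} by tensoring. For $L$ finite locally free we have
\[
\mathcal Hom(K_t\otimes\operatorname{pr}_U^*L,\ \omega_{Y_U/V_0}[d+r])
\cong K_t^\vee\otimes\operatorname{pr}_U^*\mathcal Hom(L,\omega_{U/V_0}[d])\otimes\det(N^\vee)[r],
\]
using $\omega_{Y_U/V_0}\cong\operatorname{pr}_U^*\omega_{U/V_0}\otimes\det(F)^{\vee\vee}$. The last isomorphism is given by the trivialization $d\mu_1\wedge\cdots\wedge d\mu_r$, which is where the conormal identification $e_i\mapsto df_{t,i}$ enters. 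Composing with $\kappa_t$ cancels the $\det N$ twist and the shift, which yields the natural chain isomorphism, again continuous in $t$. Naturality in $L$ is clear because every step is $\mathcal O$-linear in the $L$ factor.

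The endpoints are handled as follows. At $t=0$ the Koszul complex is $K(-\mu_1,\dots,-\mu_r)$, pulled back from $\mathbb C^r_\mu$, and it resolves $\mathcal O_{U\times\{0\}}$. The auxiliary residue along $\mu$ with normalization $\operatorname{res}_0\,d\mu/\mu=1$ (up to the sign $(-1)^r$ coming from $-\mu_i$) is precisely restriction to $\mu=0$. Hence the pairing becomes evaluation of $L^!$ against $L$ on $U$, and composing with the relative residue on $U/V_0$ gives \eqref{eq:appendix-local-cousin-pairing}. At $t=1$, $K_1$ resolves $j_{1*}\mathcal O_U$, and $\kappa_1,\epsilon_1$ constitute the Grothendieck duality pairing for the regular embedding $j_1$ with normal bundle $j_1^*N$. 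The paper identified this Koszul class with the $KU$ Thom class in Subsection~\ref{sec:serre-duality-without-quasi-projectivity} via $u^{KU}=\lambda_{-1}$.

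The main obstacle is the sign bookkeeping. The signs in $\kappa_t$, the shift $[-r]$, the orientation of $\det N$ through $df_{t,i}$, and the factor $(-1)^r$ at $t=0$ must all be consistent, so that the $t=0$ end gives exactly the residue normalization and not its negative. We would first fix every sign at $r=1$, then use the multiplicativity of Koszul complexes, $K_t=\bigotimes_i K(f_{t,i})$, to reduce the general case to this one.
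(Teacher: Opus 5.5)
Your proposal is correct and is essentially the ``direct calculation'' the paper has in mind; the paper supplies no further detail. You use the wedge-pairing self-duality $\Lambda^jF^\vee\cong\Lambda^{r-j}F\otimes\det(N)$ for $\kappa_t$, projection onto the degree-zero term $\Lambda^rF\otimes\det(N)\cong\mathcal O_{Y_U}$ for $\epsilon_t$, tensor--hom adjunction together with $\Omega^r_\mu\cong\det F$ for the local duality, and the $\mu$-residue (evaluation at $\mu=0$) at $t=0$. Two small corrections: $\epsilon_t$ is a chain map simply because degree $0$ is the lowest degree of $(K_t\otimes\det(N))[-r]$, so there is no incoming differential at all; and every module isomorphism involved is independent of $t$, with only the differentials $\iota_{f_t}$ varying, so continuity is immediate.
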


We leave the check that these local compatibilities glue via Levy's machinery to an interested reader. This finishes the proof of the compatibility with Serre duality.


\begin{thebibliography}{39}
\addcontentsline{toc}{section}{References}

\bibitem{AdamsExceptional}
J.F. Adams, {\it Lectures on Exceptional Lie Groups}, Chicago Lectures in
Mathematics, University of Chicago Press, Chicago, IL, 1996. Edited by
Z.~Mahmud and M.~Mimura, with a foreword by J.P. May.

\bibitem{ABS}
M.F. Atiyah, R. Bott, and A. Shapiro, {\it Clifford modules}, Topology
3, Supplement 1 (1964), 3--38.

\bibitem{BaeKoolPark}
  Y.~Bae, M.~Kool, and H.~Park, {\it
Counting surfaces on {Calabi--Yau} 4-folds {I}: Foundations},
Geometry \& Topology, \textit{to appear}.

\bibitem{BFM}
P. Baum, W. Fulton, and R. MacPherson, {\it Riemann--Roch and topological
$K$-theory for singular varieties}, Acta Math. 143 (1979), no. 3--4,
155--192.

\bibitem{Blanc}
A. Blanc, {\it Topological $K$-theory of complex noncommutative spaces},
Compositio Math. 152 (2016), no. 3, 489--555.

\bibitem{BRTV}
A. Blanc, M. Robalo, B. To\"en, and G. Vezzosi, {\it Motivic realizations
of singularity categories and vanishing cycles}, J. \'Ec. polytech.
Math. 5 (2018), 651--747.

\bibitem{BGT}
A.J. Blumberg, D. Gepner, and G. Tabuada, {\it A universal characterization
of higher algebraic $K$-theory}, Geom. Topol. 17 (2013), no. 2, 733--838.

\bibitem{Bojko2021}
A. Bojko, {\it Orientations for DT invariants on quasi-projective
Calabi--Yau $4$-folds}, Adv. Math. 388 (2021), no. 107859.

\bibitem{BorisovJoyce2017}
D. Borisov and D. Joyce, {\it Virtual fundamental classes for moduli
spaces of sheaves on Calabi--Yau four-folds}, Geom. Topol. 21 (2017),
no. 6, 3231--3311.

\bibitem{BottStable}
R. Bott, {\it The stable homotopy of the classical groups}, Ann. of Math.
(2) 70 (1959), no. 2, 313--337.

\bibitem{CHN}
B. Calm\`es, Y. Harpaz, and D. Nardin, {\it A motivic spectrum representing
hermitian $K$-theory}, 2024.

\bibitem{CaoGrossJoyce2020}
Y. Cao, J. Gross, and D. Joyce, {\it Orientability of moduli spaces of
$\operatorname{Spin}(7)$-instantons and coherent sheaves on Calabi--Yau
$4$-folds}, Adv. Math. 368 (2020), no. 107134.

\bibitem{CMT2022}
Y. Cao, D. Maulik, and Y. Toda, {\it Stable pairs and Gopakumar--Vafa
type invariants for Calabi--Yau $4$-folds}, J. Eur. Math. Soc. 24
(2022), no. 2, 527--581.

\bibitem{Carvalho}
C.C. Carvalho, {\it Cobordism invariance of the family index}, Math.
Nachr. 285 (2012), no. 7, 808--820.

\bibitem{CF}
P.E. Conner and E.E. Floyd, {\it The Relation of Cobordism to
$K$-Theories}, Lecture Notes in Math. 28, Springer-Verlag,
Berlin--Heidelberg, 1966.

\bibitem{Dyer}
E. Dyer, {\it Cohomology theories}, Mathematics Lecture Note Series 27,
W.A. Benjamin, New York--Amsterdam, 1969.

\bibitem{FultonHarris}
W. Fulton and J. Harris, {\it Representation Theory: A First Course},
Graduate Texts in Math. 129, Springer-Verlag, New York, 1991.

\bibitem{GMTW}
S. Galatius, I. Madsen, U. Tillmann, and M. Weiss, {\it The homotopy type
of the cobordism category}, Acta Math. 202 (2009), no. 2, 195--239.

\bibitem{Genauer}
J. Genauer, {\it Cobordism categories of manifolds with corners}, Trans.
A.M.S. 364 (2012), no. 1, 519--550.

\bibitem{HK}
Z. Halladay and Y. Kamel, {\it Real spin bordism and orientations of
topological $K$-theory}, arXiv e-prints, 2024.

\bibitem{HolsteinPorta}
J. Holstein and M. Porta, {\it Analytification of mapping stacks},
Algebraic Geometry 12 (2025), no. 1, 13--83.

\bibitem{Jin}
F. Jin, {\it Algebraic $G$-theory in motivic homotopy categories}, 2018.

\bibitem{JoyceTanakaUpmeier2020}
D. Joyce, Y. Tanaka, and M. Upmeier, {\it On orientations for
gauge-theoretic moduli spaces}, Adv. Math. 362 (2020), no. 106957.

\bibitem{JoyceUpmeierSpin2021}
  D. Joyce, M. Upmeier, 
  {\it On spin structures and orientations for gauge-theoretic moduli
             spaces}, Adv. Math. 381
 (2021), no. 107630.


\bibitem{3-folds}
D. Joyce and M. Upmeier, {\it Orientation data for moduli spaces of
coherent sheaves over Calabi--Yau $3$-folds}, Adv. Math. 381 (2021),
no. 107627.

\bibitem{JU}
D. Joyce and M. Upmeier, {\it Bordism categories and orientations of
moduli spaces}, \href{https://arxiv.org/abs/2503.20456}{arXiv:2503.20456},
March 2025.

\bibitem{Karpov}
I. Karpov, {\it Canonical orientations and Gopakumar--Vafa invariants},
to appear.

\bibitem{KlemmPandharipande2008}
A. Klemm and R. Pandharipande, {\it Enumerative geometry of Calabi--Yau
$4$-folds}, Comm. Math. Phys. 281 (2008), 621--653.

\bibitem{Laures}
G. Laures, {\it On cobordism of manifolds with corners}, Trans. A.M.S.
352 (2000), no. 12, 5667--5688.

\bibitem{Levy1987}
R.N. Levy, {\it The Riemann--Roch theorem for complex spaces}, Acta
Math. 158 (1987), 149--188.

\bibitem{Levy2008}
R.N. Levy, {\it Riemann--Roch theorem for higher bivariant $K$-functors},
Ann. Inst. Fourier 58 (2008), no. 2, 571--601.

\bibitem{MaulikToda}
  D. Maulik, Y. Toda, {\it Gopakumar--Vafa invariants via vanishing cycles},
  Invent. Math.,
  213,
  2018.

\bibitem{Miller}
H.R. Miller, {\it The Riemann--Roch formula},
\url{https://math.mit.edu/~hrm/papers/rrt.pdf}.
A note on the topological Riemann--Roch theorem.

\bibitem{Moulinos}
T. Moulinos, {\it Derived azumaya algebras and twisted $K$-theory},
Adv. Math. 351 (2019), 761--803.

\bibitem{OhThomas2023}
J. Oh and R.P. Thomas, {\it Counting sheaves on Calabi--Yau $4$-folds, I},
Duke Math. J. 172 (2023), no. 7, 1333--1409.

\bibitem{PandharipandeThomas2010}
R. Pandharipande and R.P. Thomas, {\it Stable pairs and BPS invariants},
J. Amer. Math. Soc. 23 (2010), no. 1, 267--297.

\bibitem{Pardon2023}
J. Pardon, {\it Universally counting curves in Calabi--Yau threefolds},\hfil\break
\href{https://arxiv.org/abs/2308.02948}{arXiv:2308.02948}, August 2023.

\bibitem{RamisRuget1974}
J.-P. Ramis and G. Ruget, {\it R\'esidus et dualit\'e}, Invent. Math.
26 (1974), no. 2, 89--131.

\bibitem{RRV}
J.-P. Ramis, G. Ruget, and J.-L. Verdier, {\it Dualit\'e relative en
g\'eom\'etrie analytique complexe}, Invent. Math. 13 (1971), no. 4,
261--283.

\bibitem{Shi}
Y. Shi, {\it Orientation data for coherent sheaves on the local
projective plane}, arXiv e-prints, 2018.

\bibitem{SteenrodEpstein1962}
N.E. Steenrod, {\it Cohomology Operations}, Annals of Mathematics
Studies 50, Princeton University Press, Princeton, NJ, 1962. Lectures
by N.E. Steenrod, written and revised by D.B.A. Epstein.

\bibitem{TrivediTrotman2020}
S. Trivedi and D. Trotman, {\it Deformation retracts to intersections
of Whitney stratifications}, J. Singularities 22 (2020), 315--320.

\end{thebibliography}
\end{document}